\DeclarePairedDelimiterX{\Iintv}[1]{\llbracket}{\rrbracket}{\iintvargs{#1}}
\NewDocumentCommand{\iintvargs}{>{\SplitArgument{1}{,}}m}
{\iintvargsaux#1} %
\NewDocumentCommand{\iintvargsaux}{mm} {#1\mkern1.5mu,\mkern1.5mu#2}
\newcommand{\sncomment}[1]{\marginnote{\color{magenta}\setlength{\fboxrule}{1.5pt}%
    \tiny\sffamily\fbox{\parbox{1.3cm}{\raggedright{N: #1}}}}}
\newtheorem*{rep@theorem}{\rep@title}
\newcommand{\newreptheorem}[2]{%
\newenvironment{rep#1}[1]{%
 \def\rep@title{#2 \ref{##1}}%
 \begin{rep@theorem}}%
 {\end{rep@theorem}}}
\definecolor{RedOrange}{cmyk} {0, 0.77, 0.87, 0}
\definecolor{RoyalPurple}{cmyk} {0.84, 0.53, 0, 0}
\definecolor{YellowGreen}{cmyk} {0.44, 0, 0.74, 0}
\definecolor{Fuchsia}{cmyk} {0.47, 0.91, 0, 0.08}
\definecolor{Blue}{cmyk} {0.84, 0.53, 0, 0}
\definecolor{BlueViolet}{cmyk} {0.84, 0.53, 0, 0}
\definecolor{Black}{cmyk} {0.75, 0.68, 0.67, 0.9}
\newcommand{\R}{\mathbb{R}}
\newcommand{\SH}{\mathfrak{s}}
\newcommand{\Q}{\mathbb{Q}}
\newcommand{\G}{\mathcal{G}}
\newcommand{\N}{\mathbb{N}}
\newcommand{\e}{\varepsilon}
\newcommand{\ep}{\varepsilon}
\newcommand{\E}{\mathbb{E}}
\newcommand{\Z}{\mathbb{Z}}
\newcommand{\sZ}{\mathbb{Z}}
\renewcommand{\P}{\mathbb{P}}
\newcommand{\kD}{\mathcal{D}}
\newcommand{\kA}{\mathcal{A}}
\newcommand{\kC}{\mathcal{C}}
\newcommand{\cD}{\mathcal{D}}
\newcommand{\sC}{\mathscr{C}}
\newcommand{\kF}{\mathcal{F}}
\newcommand{\kG}{\mathcal{G}}
\newcommand{\kE}{\mathcal{E}}
\newcommand{\cE}{\mathcal{E}}
\newcommand{\rH}{\mathrm{H}}
\newcommand{\rL}{\mathrm{L}}
\newcommand{\rN}{\mathrm{N}}
\newcommand{\fp}{\mathfrak{p}}
\newcommand{\rmB}{\mathrm{B}}
\newcommand{\lin}{\left[\kern-0.15em\left[}
\newcommand{\rin} {\right]\kern-0.15em\right]}
\newcommand{\linf}{[\kern-0.15em [}
\newcommand{\rinf} {]\kern-0.15em ]}
\newcommand{\ilin}{\left]\kern-0.15em\left]}
\newcommand{\irin} {\right[\kern-0.15em\right[}
\def\ben#1{\begin{equation}#1\end{equation}}
\def\al#1{\begin{align*}#1\end{align*}}
\def\aln#1{\begin{align}#1\end{align}}
\newcommand{\secno}[1]{\thesection.\arabic{#1}}
\renewcommand{\tilde}{\widetilde}
\newtheorem{lem}{Lemma}[section]
\newtheorem{claim}[lem]{Claim}
\newtheorem{prop}[lem]{Proposition}
\newtheorem{thm}[lem]{Theorem}
\newtheorem{cor}[lem]{Corollary}
\newtheorem {defin}[lem] {Definition}
\newtheorem {rem}[lem] {Remark}
\newcounter{assu}
\definecolor{lilas}{RGB}{182, 102, 210}
\newcommand{\SN}{\color{red}}
\numberwithin{equation}{section}
\def\ben#1{\begin{equation}#1\end{equation}}
\DeclareMathOperator{\Diam}{Diam}
\title[On upper tail large deviation rate function for chemical distance]
{On upper tail large deviation rate function  for chemical distance in supercritical percolation}
\date{\today}
\author{Barbara Dembin} 
\address[Barbara Dembin]
{ETH, Zurich, Switzerland}
\email{barbara.dembin(at)math.ethz.ch}
\author{Shuta Nakajima} 
\address[Shuta Nakajima]
{Meiji University, Kanagawa, Japan}
\email{njima(at)meiji.ac.jp}
\keywords{large deviations, chemical distance, percolation, rate function }
\subjclass[2010]{Primary 60K37; secondary 60K35; 82A51; 82D30}
\begin{document}

\maketitle

\begin{abstract}
We consider the supercritical bond percolation on $\Z^d$ and study the graph distance on the percolation graph called the chemical distance. It is well-known that there exists a deterministic constant $\mu(x)$ such that the chemical distance $\cD(0,nx)$ 
between two connected points $0$ and $nx$ grows like $n\mu(x)$.  Garet and Marchand \cite{GaretMarchand} proved that the probability of the upper tail large deviation event $\left\{n\mu(x)(1+\ep)<\cD(0,nx)<\infty\right\} $ decays exponentially with respect to $n$. In this paper, we prove the existence of the rate function for upper tail large deviation when $d\ge 3$ and $\ep>0$ is small enough. Moreover, we show that for any $\ep>0$, the upper tail large deviation event is created by  space-time cut-points (points that all paths from $0$ to $nx$ must cross after a given time) that force the geodesics   to consume more time  by going in a non-optimal direction or by  wiggling considerably. This enables us to express the rate function in regards to space-time cut-points.
\end{abstract}
\section{Introduction and main results}

The model of {\bf Bernoulli bond percolation} was introduced by Broadbent and Hammersley in 1957 to model the circulation of water in a porous medium \cite{MR91567}. Since then, various models of percolation have been developed, such as level-sets of random fields, Boolean and Voronoi percolation, random interlacements, etc. 
 A common feature among all of these models is that they involve a random subset of the underlying space, which is characterized by a parameter. This random subset grows in size as the parameter increases. The primary focus is on studying the connectivity of the random subset, specifically identifying the parameter values at which the random subset has an unbounded connected component. Of particular interest is the existence of a phase transition that occurs in the parameter when transitioning from a regime where there is no infinite connected component to a regime where one exists. 
 
The study of these problems has uncovered numerous properties and techniques, giving rise to the so-called {\bf percolation theory}. This theory holds significant importance in probability theory and statistical physics, especially regarding its relationship with other statistical physics models like lattice spin models, interacting particle systems, and random walks in random environments. These connections enable the successful application of powerful results and arguments from percolation theory to other fields, even in the absence of a direct link. Refer to \cite{grimmett:percolation} for comprehensive background information and known results on percolation theory.

 In the model of  Bernoulli bond percolation, each edge of $(\Z^d,\E^d)$ is independently removed with probability $1-p$. When $p$ increases the set of remaining edges increases.
 In particular,  Aizenman--Kesten--Newman \cite{MR901151} proved the existence of a phase transition  in the sense that there exists a special parameter $p_c(d)\in(0,1)$ such that below $p_c(d)$, there is no infinite connected component; above $p_c(d)$, there exists a unique one. In Bernoulli percolation, the subcritical regime $p<p_c(d)$ and the supercritical regime $p>p_c(d)$ are both well-studied and a lot of important results have been obtained rigorously. 
  On the other hand, the study of {\bf critical percolation} (i.e. the case $p=p_c(d)$) have appeared considerably more complicated. One of the major challenges in percolation is to exclude the existence of an infinite connected component in critical percolation. In $d=2$, Kesten \cite{MR575895} proved that $p_c(2)=1/2$ and there is no infinite connected component at $p=p_c(2)$. Additionally, planar critical percolation was hypothesized to be associated with Schramm-Loewner evolution with $\kappa=6$. As a result, this area has been an active research field for the past three decades. For $d\geq 11$, the absence of an infinite cluster in critical percolation has been conclusively proven through the use of lace expansion and mean-field approximation techniques \cite{hara1990mean,10.1214/17-EJP56}. Nonetheless, for $d\in\{3, ..., 10\}$, this problem remains unsolved and is regarded as one of the most significant conjectures in probability theory. 
  
     From the viewpoint of graph theory, it is also important to study the graph distance in a graph since the graph distance gives even more indepth information on geometric structures of the graph compared to connectivity alone. In a percolation graph, the graph distance is referred to as the chemical distance, a name coined by a physicist S. Alexander. A central theme in this field is the asymptotic behavior of the chemical distance between two distant endpoints. This question has been explored for various models, including those with long-range interactions \cite{AntalPisztora,Biskup,Drewitz,Ding,Baumler}. 
    The extensive study of the chemical distance started in the 1980s. Physicists were particularly interested in the growth exponent for the chemical distance in critical percolation, known as the {\bf chemical distance exponent}.  Despite numerous efforts and advancements in this area, there is still no consensus on the exact values of the chemical distance exponents even for the planar critical percolation. Recently,  Damron--Hanson--Sosoe \cite{DamronHansonSosoe} established an upper bound, showing that the chemical distance exponent is strictly smaller that the growth exponent for the lowest left/right crossing. This latter exponent is related to the three-arm exponent in percolation and its value is  expected to be universal over the 2-dimensional lattices, which was, in particular, rigorously obtained for the triangular lattice. This suggests that the chemical distance exponent is not characterized by a well-known quantity in connectivity. For more on the background and known results on chemical distance, we refer the
reader to \cite{https://doi.org/10.48550/arxiv.1602.00775}.

In Bernoulli percolation on $\Z^d$ lattice, the properties of the chemical distance in supercritical percolation are also of great interest both in mathematics and physics. Garet and Marchand proved a kind of law of large numbers for the chemical distance \cite{GaretMarchandexistencemu}. So, the next step is to study the fluctuations and large deviations. However, there have been few results on the fluctuations of the chemical distance. Recently, Dembin \cite{dembinvar} proved super-concentration for the chemical distance, that is, the variance of the chemical distance incleases at most sublinearly in the distance between the endpoints. However, as far as we know, there are no predictions in mathematical literature regarding the expected behavior of the variance of chemical distance. For the lower bound, the problem whether the variance diverges or not is important but still open for any dimension larger than $1$.

   Moving on to the large deviations, Antal--Pisztora and subsequently Garet--Marchand determined the correct speed of the upper tail large deviations \cite{AntalPisztora,GaretMarchand}. However, previous studies did not establish  the existence of the rate function and a precise description for upper tail large deviations. The aim of this paper is to obtain the rate function by identifying the correct scenario responsible for  upper tail large deviations. We believe that the methods and concepts introduced in this paper  provide new insights on upper tail large deviations for other percolation models. 

\subsection{Chemical distance in Bernoulli percolation}\label{section: Model}
 The model of Bernoulli percolation is formally defined as follows. 
 Let $\E^d$ be the set of all pairs of nearest neighbours in $\sZ^d$. We consider i.i.d. Bernoulli random variables $(B_e)_{e\in\E^d}$ of parameter  $p\in [0,1]$. If $B_e=1$, then the edge $e$ is called {\bf open}; otherwise, the edge is called {\bf closed}. Let $\G_p$ be the graph of the open edges:
\[\G_p:=(\Z^d,\{e\in \E^d: B_e=1\})\,.\]
The graph $\G_p$ is called the {\bf percolation graph}.  A path is said to be {\bf open} if the path consists only of open edges. We write $x\leftrightarrow y$ if $x$ and $y$ are connected in $\kG_p$. This model exhibits a phase transition. Indeed, when $d\geq 2$, there exists a critical parameter $p_c(d)\in(0,1)$ such that for $p>p_c(d)$ ({\bf supercritical regime}), there {almost surely} exists a unique
infinite open cluster $\sC_\infty$ in $\G_p$. In contrast, for $p<p_c(d)$ ({\bf subcritical regime}), there are no infinite open clusters. We refer to  \cite{grimmett:percolation} for general backgrounds and known results on Bernoulli percolation. \textbf{Throughout the paper, we always assume $p>p_c(d)$}.  We denote by $\cD^{\G_p}$ the graph distance on the graph $\G_p$, i.e. for $x,y\in \Z^d$,
\begin{equation}
\cD^{\G_p}(x,y):=\inf\Big\{\,|r|: \text{$r$ is a path from $x$ to $y$ in $\G_p$}\,\Big\}\,,
\end{equation}
where $|r|$ denotes the number of edges in the path $r$ and we use the convention  $\inf\emptyset=+\infty$. In particular, if $x$ and $y$ are not connected in $\kG_p$, then  $\kD^{\G_p}(x,y)=\infty$. For later purposes, we extend the chemical distance to a function of real vectors by setting $\kD^{\G_p}(x,y)=\kD^{\G_p}(\lfloor x\rfloor,\lfloor y\rfloor )$ for $x,y\in \R^d$, where $\lfloor\cdot \rfloor $ stands for the floor function. This $\kD^{\G_p}$ is the so-called {\bf chemical distance}.  We interpret the quantity  $\cD^{\G_p}(x,y)$ as the time to go from $x$ to $y$ (see Section \ref{sec:link}).  Any path achieving the infimum in $\cD^{\G_p}(x,y)$ is called a {\bf geodesic}.  Note that any geodesic is  self-avoiding.\\

\paragraph{\bf Time constant}
 Garet and Marchand   \cite{GaretMarchandexistencemu} obtained an asymptotic behavior of $\kD^{\kG_p}(0,nx)$ as $n\to\infty$: 
  {for any $p>p_c(d)$,} there exists a deterministic {norm} $\mu:\R^d\rightarrow [0,+\infty)$ such that
\begin{align}\label{time constant}
\forall x\in\R^d\qquad\lim_{\substack{n\rightarrow \infty\\0\leftrightarrow \lfloor nx\rfloor}} \frac{\cD^{\G_p}(0,nx)}{n}=\mu(x)\qquad\text{a.s.}
\end{align}
The function $\mu$ is the so-called {\bf time constant}.  We remark that they obtain \eqref{time constant} in a more general context of stationary integrable ergodic fields. It is well-known that $\mu$ is a norm, in particular convex and  continuous.\\

\paragraph{\bf Upper tail large deviations}
 Our main goal is to study the upper tail large deviation event for $x\in\R^d$:  
 \aln{\label{ldp event}
 \{\mu(x)(1+\xi)n<\cD^{\G_p}(0,nx)<\infty\},\quad\xi>0.
 }
 We here consider the case $x=\mathbf{e}_1=(1,0,\dots,0)$.
 To better understand the decay rate of its probability, let us give an example configuration. 
{Picture} the configuration where all the horizontal edges along the segment from  $-\xi n\mathbf{e}_1$ to $0$ are open, and all edges sharing exactly one endpoint with that segment are closed  except for the horizontal edge $\langle -( \xi n+1)\mathbf{e}_1, -\xi n\mathbf{e}_1\rangle$ (see Figure \ref{fig1}). Moreover, we further assume that $-\xi n\mathbf{e}_1$ is connected to $n\mathbf{e}_1$. {With} this configuration, the geodesic is forced to go to $-\xi n\mathbf{e}_1$, hence the upper tail large deviation event  occurs with high probability. Note that the probability of such configurations decays exponentially with respect to  $n$. 
 \begin{figure}[!ht]
\def\svgwidth{0.3\textwidth}
 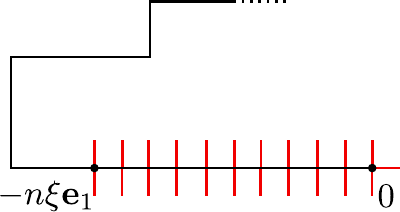
 \caption[fig1]{\label{fig1}Construction of a cut-point at $-\xi n \mathbf{e}_1$. The red edges are closed; the black edges are open.}
\end{figure}

 Garet and Marchand proved in \cite{GaretMarchand} that the probability of the event \eqref{ldp event} decays exponentially with respect to $n$. We aim to prove the existence of the so-called {\bf rate function}  $J_x$ such that  \[\P(\mu(x)(1+\xi)n<\cD^{\G_p}(0,nx)<\infty) =e^{-nJ_x(\xi)+o(n)},\]
and give an explicit description of the rate function. In particular, we prove that on the upper tail large deviation event, typical scenario{s are} similar to the example we gave above (a cut-point where all the geodesics are forced to pass). For the study of cut-point, we should not only record its position but also the time when the geodesics pass through the cut-point (that is the graph distance from $0$ to this cut-point, see \eqref{def:st cutpoint} for a formal definition). We call this a {\bf space-time cut-point}. Space-time cut-points play a central role in studying upper tail large deviation for chemical distance as we will see later.  In the example above, we created a cut-point in a non-optimal location so that  the geodesics  make a detour and {even} go in the opposite direction $-\mathbf{e}_1$. However, there may exist other scenarios where cut-points are located in optimal directions, but the upper tail large deviation event still occurs. 
 For instance, picture a space-time cut-point located around $0$ whose chemical distance to $0$ is larger than $\xi n $ (see Figure \ref{fig2}). Hence, we need to compare the probabilities of all the space-time cut-points leading to the upper tail large deviations and determine the best scenario among them.
\begin{figure}[!ht]
\def\svgwidth{0.4\textwidth}
 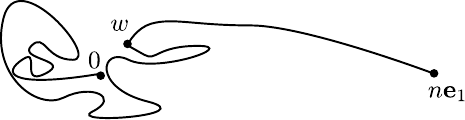
 \caption[fig2]{\label{fig2}Construction of a cut-point $w_n$ close to $0$. The path corresponds to a geodesic from $0$ to $n\mathbf{e}_1$.}
\end{figure}

\subsection{Main results} 
We assume $p>p_c(d)$. Let us give a formal definition of space-time cut-point. For $t\ge 1$ and $z\in\R^ d$, define
\[\rmB_t(z):=\{x\in\Z^d: \kD^{\kG_p}(z,x)\le t\}.\]
For short, we write $\rmB_t$ instead of $\rmB_t(0)$. Let $s>0$ and $x\in\R^ d$. We denote $\Lambda_s(x):=x+[-s,s]^d$. {We define $\alpha_d:=1-\frac{1}{6d}<1$. The value of $\alpha_d$ is due to technical reasons, and its precise value is not important.}
 We define $\kA_{s,x}(n)$ the event that there is a cut-point located around $n x$ whose chemical distance from $0$ is larger than or equal to $ sn $:
\ben{\label{def:st cutpoint}
\kA_{s,x}(n):=\left\{\exists s_n\ge sn\quad \exists x_n\in \Lambda_{n^{\alpha_d}}(nx); \,{\rm B}_{s_n}\setminus {\rm B}_{s_n-1}=\{x_n\}  \right\}.
}
The most technical and innovative part of this paper is the following.
\begin{thm}[Rate function for space-time cut-point]\label{prop:existencelimit} 
 There exists a function $I:[0,\infty)\times \R^d\mapsto [0,+\infty)$ such that for all $(s,x)\in[0,+\infty)\times\R^ d$,
$$\lim_{n\rightarrow\infty}-\frac{1}{n}\log{\P(\kA_{s,x}(n))}=I(s,x),$$
where the convergence is locally uniform on $[0,+\infty)\times\R^ d$.
For any  $s>0$ and $x\in\R^d$, $I(s,x)>0$. 
Moreover, the function $I$ is convex and
 homogeneous (i.e. $I(\lambda s,\lambda x)=\lambda I(s,x)$ for $\lambda\geq 0$).
\end{thm}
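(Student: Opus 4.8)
The plan is to establish the existence of the limit via a subadditivity argument along a sequence of nested space-time cut-point events, then deduce the qualitative properties (positivity, convexity, homogeneity, local uniformity) by combining subadditivity with a priori exponential bounds that are already available from \cite{GaretMarchand,AntalPisztora}.

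\emph{Existence of the limit and homogeneity.} First I would prove homogeneity and existence simultaneously by a superadditivity/subadditivity scheme for the quantity $f_n(s,x):=-\log\P(\kA_{s,x}(n))$ along integer multiples. The key geometric observation is that a space-time cut-point at scale $n$ with chemical distance $\ge sn$ can be built by concatenating two independent space-time cut-point configurations at scales $m$ and $n-m$: if on the box around $mx$ there is a cut-point $x_m$ at chemical time $s_m\ge sm$, and independently (using edges in a disjoint region, with an overlap absorbed into the $n^{\alpha_d}$-fluctuation window) one can continue from $x_m$ to produce a cut-point $x_n$ near $nx$ at time $s_n\ge sn$, then $\kA_{s,x}(m)\cap(\text{shifted }\kA_{s,x}(n-m))$ forces $\kA_{s,x}(n)$ up to a correction. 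Because the polynomial window $\Lambda_{n^{\alpha_d}}(nx)$ grows sublinearly, the gluing error is $o(n)$, and the FKG inequality plus independence on disjoint edge sets gives $\P(\kA_{s,x}(n))\ge \P(\kA_{s,x}(m))\,\P(\kA_{s,x}(n-m))\,e^{-o(n)}$, i.e. $f_n$ is approximately subadditive. Fekete's lemma then yields $\lim_n f_n(s,x)/n=:I(s,x)$ for fixed $(s,x)$ with rational coordinates, and homogeneity $I(\lambda s,\lambda x)=\lambda I(s,x)$ for rational $\lambda$ follows from comparing scale $n$ at $(\lambda s,\lambda x)$ with scale $\lfloor\lambda n\rfloor$ at $(s,x)$, again up to the sublinear window correction. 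The homogeneous extension then defines $I$ on all of $[0,\infty)\times\R^d$.

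\emph{Positivity.} For $s>0$ and $x\in\R^d$ we must show $I(s,x)>0$, i.e. $\P(\kA_{s,x}(n))$ decays genuinely exponentially. The point is that the event $\kA_{s,x}(n)$ implies $\{\,sn\le\cD^{\kG_p}(0,x_n)<\infty\,\}$ for some $x_n\in\Lambda_{n^{\alpha_d}}(nx)$, together with the existence of a cut-point separating $0$ from $x_n$ — an atypically large chemical distance on a scale-$n$ event if $s>0$ (when $x=0$, the distance $\ge sn$ to a point in a polynomial neighborhood of $0$ is already a large deviation; when $x\ne 0$ with $s$ possibly below $\mu(x)$, one instead uses that the \emph{existence of a single cut-point} at macroscopic chemical time is itself exponentially costly, by an argument in the spirit of the lower tail / isolation estimates of \cite{AntalPisztora,GaretMarchand}). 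A union bound over the polynomially many choices of $x_n$ together with the exponential estimate from \cite{GaretMarchand} for each fixed endpoint gives $\P(\kA_{s,x}(n))\le \mathrm{poly}(n)\,e^{-cn}$, hence $I(s,x)\ge c>0$.

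\emph{Convexity and local uniform convergence.} Convexity of $I$ follows once existence and homogeneity are known: a homogeneous function is convex iff it is subadditive, and subadditivity of $(s,x)\mapsto I(s,x)$ is obtained from the same gluing construction as above but now concatenating a cut-point configuration near $m_1 x_1$ at time $\ge s_1 m_1$ with one near $m_2 x_2$ at time $\ge s_2 m_2$ to produce one near $m_1 x_1+m_2 x_2$ at time $\ge s_1 m_1+s_2 m_2$; taking $m_1=m_2=n\to\infty$ gives $I(s_1+s_2,x_1+x_2)\le I(s_1,x_1)+I(s_2,x_2)$. Finally, for the local uniform convergence, the standard route is: (i) the limit function $I$ is finite and convex hence continuous on the interior; (ii) the sequence $f_n/n$ is locally uniformly bounded (upper bound from explicit constructions as in Figure~\ref{fig1}, lower bound from the exponential decay above); and (iii) a Dini-type argument — monotone-in-$n$ behavior of an averaged version of $f_n$ from near-subadditivity, plus pointwise convergence to a continuous limit on a dense set — upgrades pointwise convergence to local uniform convergence; alternatively one invokes the convexity of the limit together with pointwise convergence on a dense set, since pointwise convergence of convex (or nearly convex) functions to a continuous limit is automatically locally uniform.

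\emph{Main obstacle.} The crux is the gluing step: one must show that two space-time cut-point configurations at adjacent scales can be concatenated so that the resulting configuration genuinely has a \emph{single} cut-point near $nx$ at the prescribed chemical time, controlling (a) that the second configuration can be "restarted" from the exit point $x_m$ of the first rather than from the lattice origin — which is where the translation-invariance of the model and the sublinear window $\Lambda_{n^{\alpha_d}}$ are essential to absorb the mismatch between $x_m$ and $mx$ — and (b) that the chemical distances add up without a hidden shortcut destroying the cut-point structure, which requires that the two pieces be realized on (essentially) disjoint edge sets so that independence and FKG apply. Managing this while keeping all errors $o(n)$, and making precise the "restart" of a space-time cut-point event from an arbitrary boundary point (presumably via a renewal-type or regeneration structure for cut-points that should be set up earlier in the paper), is the technically demanding part.
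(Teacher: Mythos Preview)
Your outline correctly identifies the skeleton --- approximate subadditivity yields existence via de Bruijn--Erd\H{o}s, then homogeneity, convexity, positivity, and local uniformity follow --- and you are right that the entire difficulty sits in the gluing step. But the mechanism you propose for gluing does not work, and this is exactly the point the paper spends most of its technical effort on.

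\medskip

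\textbf{Why FKG and disjointness fail.} The event $\kA_{s,x}(n)=\{\exists s_n\ge sn,\ \exists x_n\in\Lambda_{n^{\alpha_d}}(nx):\ \rmB_{s_n}\setminus\rmB_{s_n-1}=\{x_n\}\}$ is neither increasing nor decreasing: opening an extra edge can add a second point to $\rmB_{s_n}\setminus\rmB_{s_n-1}$ and destroy the cut-point, while closing an edge can shrink the ball and also destroy it. So FKG is unavailable. Nor is independence on disjoint edge sets automatic: the ball $\rmB_{s_n}$ can be a space-filling object (think of a spiral filling most of a box of side $n^{1/d}$), so two such balls centred at $0$ and near $mx$ can heavily overlap. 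The paper first controls the \emph{boundary} of the ball (Proposition~\ref{prop:sizeboundary}: $\rmB_{s_n}\subset\mathrm{Int}(\Gamma)$ with $|\Gamma|\le Kn$, hence $|\rmB_{s_n}|\le Cn^{d/(d-1)}$), and even then must \emph{search} for a translation of the second ball that avoids the first.

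\medskip

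\textbf{The missing idea.} Even once the two balls are made disjoint, one must open a path from the cut-point $w_n$ of the first ball to the origin of the (translated) second ball \emph{without} creating a shortcut inside either ball --- otherwise the cut-point structure is ruined. As the paper explains in Section~\ref{sec:idea}, $w_n$ may fail to be connected to infinity in $\Z^d\setminus\rmB_{s_n}$, or any such connection may have length of order $n$, so the naive cost of the connection does not vanish in the limit. The paper's solution is Lemma~\ref{lem:drilling} (``drilling a free line''): an elaborate resampling that shifts pieces of the configuration across carefully chosen hyperplanes so as to free up an entire line $\rL_i(w_n)$ meeting the ball only at $w_n$, while preserving the metric structure enough that $w_n$ remains a cut-point with essentially the same chemical time. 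This is not a renewal/regeneration argument; it is a hands-on surgery whose cost is $e^{O(n^{\alpha_d})}$. Once both cut-points carry free lines, the connection can be routed through them with controlled combinatorics (Lemma~\ref{lem:disjpathscont}), giving Lemma~\ref{lem:aux}: $\P(\kA_{s,x}(n))\P(\kA_{s',x'}(m))\le e^{cn^{\alpha_d}}\P(\kA_{\cdot,\cdot}(n+m))$.

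\medskip

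\textbf{Smaller points.} Your positivity argument is incomplete when $0<s<\mu(x)$; the paper instead uses subadditivity plus symmetry to get $2I(s,x)=I(s,x)+I(s,-x)\ge I(2s,0)$, and $I(2s,0)>0$ follows cleanly from Antal--Pisztora since a cut-point near $0$ at time $\ge 2sn$ forces $\cD^{\kG_p}(0,x_n)\ge 2sn$ with $\|x_n\|_\infty\le n^{\alpha_d}$. For local uniformity, the paper does not invoke a Dini argument: it proves a quantitative continuity estimate (Corollary~\ref{continuity of cutpoint}, itself a consequence of Lemma~\ref{lem:aux}) and passes through a finite $\delta$-net on each compact.
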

We define the rate function $J_x$ for $x\in \Z^d$ as
\ben{\label{def J}
J_x(\xi):=\inf\left\{I(s,y):~y\in \R^d,\,s\geq 0 ;\,s+\mu(y-x)\geq (1+\xi)\mu(x)\right\}.
}
\begin{prop}\label{prop:Jcont}
 For any $\xi>0$, $J_x(\xi)>0$ and the function $\xi\mapsto J_x(\xi)$ is  non-decreasing and continuous. 
\end{prop}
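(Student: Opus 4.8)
The plan is to deduce everything about $J_x$ from the already-established properties of $I$ in Theorem \ref{prop:existencelimit}, together with the fact that $\mu$ is a norm. Write $\xi\mapsto J_x(\xi)$ as the value of the constrained minimization problem \eqref{def J}. First I would record that the feasible set is nonempty for every $\xi>0$: taking $y=x$ and $s=\xi\mu(x)$ satisfies $s+\mu(y-x)=\xi\mu(x)\geq(1+\xi)\mu(x)$ is \emph{false}, so instead one must take $s$ somewhat larger, e.g. $s=(1+\xi)\mu(x)$ and $y=x$, which clearly lies in the feasible region; hence $J_x(\xi)\leq I((1+\xi)\mu(x),x)<\infty$. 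Monotonicity is immediate: if $\xi_1\leq\xi_2$, then any pair $(s,y)$ feasible for $\xi_2$ is also feasible for $\xi_1$ (the constraint $s+\mu(y-x)\geq(1+\xi)\mu(x)$ only relaxes as $\xi$ decreases), so the infimum over a larger set is no larger, giving $J_x(\xi_1)\leq J_x(\xi_2)$.

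For positivity, fix $\xi>0$ and suppose $(s,y)$ is feasible. Since $\mu$ is a norm we have $s+\mu(y)\geq \mu(y-x)-\mu(x)\cdot(-1)$... more carefully: the triangle inequality gives $\mu(y-x)\leq \mu(y)+\mu(x)$, hence $s+\mu(y)\geq s+\mu(y-x)-\mu(x)\geq (1+\xi)\mu(x)-\mu(x)=\xi\mu(x)>0$. Thus along any feasible sequence, $s+\mu(y)$ is bounded below by the fixed positive constant $\xi\mu(x)$, so $(s,y)$ cannot approach $(0,0)$; more precisely $s+\mu(y)\geq\xi\mu(x)$ forces $\max(s,\mu(y))\geq \tfrac12\xi\mu(x)$. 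Now invoke the two key properties of $I$: $I$ is homogeneous and, by Theorem \ref{prop:existencelimit}, $I(s,y)>0$ whenever $s>0$, and $I(1,z)\geq$ some positive constant for $z$ in a compact set; combining homogeneity with the locally uniform convergence (hence lower semicontinuity and coercivity of $I$) one shows $\inf\{I(s,y): \max(s,\mu(y))\geq \tfrac12\xi\mu(x)\}>0$. Concretely: if $s\geq\tfrac12\xi\mu(x)$, homogeneity gives $I(s,y)=s\,I(1,y/s)\geq s\cdot 0$, which is not yet enough, so instead I would use that $I(s,y)\geq I(s,0)$ is false in general — better to argue that the sublevel set $\{I\leq c\}$ is compact for small $c$ (from local uniform convergence and homogeneity, $I$ grows linearly at infinity), and that it does not meet the closed feasible region except possibly at points where $I>0$; a compactness/continuity argument then yields a positive lower bound. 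This gives $J_x(\xi)>0$.

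For continuity, the main tool is convexity. I claim $\xi\mapsto J_x(\xi)$ is convex on $(0,\infty)$. Indeed, given $\xi_0,\xi_1$ and near-optimizers $(s_0,y_0),(s_1,y_1)$, for $\lambda\in[0,1]$ set $\xi_\lambda=(1-\lambda)\xi_0+\lambda\xi_1$, $s_\lambda=(1-\lambda)s_0+\lambda s_1$, $y_\lambda=(1-\lambda)y_0+\lambda y_1$; by convexity of $\mu$ (a norm) and of the affine expression in $s$, the pair $(s_\lambda,y_\lambda)$ is feasible for $\xi_\lambda$, and by convexity of $I$ (Theorem \ref{prop:existencelimit}) we get $I(s_\lambda,y_\lambda)\leq(1-\lambda)I(s_0,y_0)+\lambda I(s_1,y_1)$, whence $J_x(\xi_\lambda)\leq (1-\lambda)J_x(\xi_0)+\lambda J_x(\xi_1)$. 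A convex finite function on the open interval $(0,\infty)$ is automatically continuous there; continuity at $0$ (i.e. $\lim_{\xi\downarrow 0}J_x(\xi)=J_x(0)$, with the convention $J_x(0)=0$ since $(s,y)=(0,x)$ is feasible and $I(0,x)=0$) follows from monotonicity plus an explicit upper bound: $J_x(\xi)\leq I((1+\xi)\mu(x),x)=(1+\xi)\mu(x)\,I(1,x/((1+\xi)\mu(x)))$, and by local uniform convergence $I$ is continuous, so the right side tends to $\mu(x)\,I(1,x/\mu(x))$ — but this limit need not be $0$, so instead I should bound $J_x(\xi)$ using the pair $(\xi\mu(x'), x')$ for suitable $x'$ making $s$ small; taking $x'$ on the ray through $x$ scaled so that $\mu(x'-x)+\xi\mu(x')\geq(1+\xi)\mu(x)$ with $\mu(x')\to 0$ is impossible, so the cleanest route is: pick $y$ with $\mu(y-x)$ slightly below $\mu(x)$ and $s\to 0$... this requires care. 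The honest statement is that right-continuity at $0$ needs an argument showing feasible pairs with small $I$-value exist as $\xi\downarrow 0$; since $(s,y)=(\xi\mu(x)(1+o(1)), x)$ works once we allow $s$ as small as $\sim\xi\mu(x)$ — wait, check: $s+\mu(x-x)=s\geq(1+\xi)\mu(x)$ forces $s\geq(1+\xi)\mu(x)$, NOT small. So one genuinely must move $y$: take $y=(1+\xi)x$, then $\mu(y-x)=\xi\mu(x)$ and we need $s+\xi\mu(x)\geq(1+\xi)\mu(x)$, i.e. $s\geq\mu(x)$, still not small. The geometry forces either $s\gtrsim\mu(x)$ or $y$ far from the segment, so in fact $J_x$ is \emph{not} continuous at $0$ in the naive sense and $J_x(0^+)\geq$ some positive constant; the Proposition only asserts continuity, which I interpret as continuity on $(0,\infty)$, and this follows from convexity as above.

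The main obstacle I anticipate is the positivity argument $J_x(\xi)>0$: it requires ruling out a feasible sequence $(s_k,y_k)$ with $I(s_k,y_k)\to 0$, which by homogeneity and the coercivity/lower-semicontinuity of $I$ extracted from the locally uniform convergence in Theorem \ref{prop:existencelimit} means extracting a convergent direction and using $I>0$ off the origin — the delicate point being that $I(s,y)=0$ is possible when $s=0$, so one must exploit that feasibility with $\xi>0$ forces $s+\mu(y)\geq\xi\mu(x)>0$, preventing the mass from escaping entirely into the $s=0$ slice where $I$ could vanish. Handling the interplay between homogeneity (which lets $I$ be small only near the origin or along zero-rays) and the feasibility constraint (which pushes points away from the origin) is the crux; everything else (monotonicity, convexity, continuity on $(0,\infty)$, finiteness) is a routine transfer of the corresponding properties of $I$ and $\mu$.
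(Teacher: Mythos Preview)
Your monotonicity argument is fine and matches the paper. The rest has two issues, one minor and one fatal.

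\textbf{Positivity.} You correctly reduce to showing that $I$ is bounded away from zero on the feasible set, and you correctly worry about the slice $s=0$. The missing observation is that from the very definition of $\kA_{s,y}(n)$ one has $\kA_{s,y}(n)=\kA_{s\vee\|y\|_1,y}(n)$ (a cut-point near $ny$ is at $\ell_1$-distance roughly $n\|y\|_1$ from $0$, forcing $s_n\gtrsim n\|y\|_1$), hence $I(s,y)=I(s\vee\|y\|_1,y)$. In particular $I(0,y)=I(\|y\|_1,y)>0$ for $y\ne0$ by Theorem~\ref{prop:existencelimit}, so $I$ vanishes only at the origin and your compactness sketch goes through. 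The paper makes exactly this observation (see Lemma~\ref{lem:J prop} and the line using $I(s_*,y_*)=I(s_*\vee\|y_*\|_1,y_*)>0$).

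\textbf{Continuity.} Your convexity argument is wrong, and this is the real gap. You claim that if $(s_0,y_0)$ is feasible for $\xi_0$ and $(s_1,y_1)$ for $\xi_1$, then $(s_\lambda,y_\lambda)$ is feasible for $\xi_\lambda$. But convexity of the norm $\mu$ gives
\[
\mu(y_\lambda-x)\;\le\;(1-\lambda)\mu(y_0-x)+\lambda\mu(y_1-x),
\]
which is the \emph{wrong direction}: you only get $s_\lambda+\mu(y_\lambda-x)\le(1-\lambda)\bigl(s_0+\mu(y_0-x)\bigr)+\lambda\bigl(s_1+\mu(y_1-x)\bigr)$, and knowing the right-hand side is $\ge(1+\xi_\lambda)\mu(x)$ tells you nothing about the left-hand side. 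The feasible region is a superlevel set of the convex function $(s,y)\mapsto s+\mu(y-x)$, hence not convex in general, and your interpolated point need not lie in it. So convexity of $J_x$ does not follow this way, and you have no continuity argument.

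The paper proceeds differently: for right-continuity it perturbs a near-minimizer $(s,y)$ for $\xi$ to $(s+\delta\mu(x),y)$, which is feasible for $\xi+\delta$, and uses continuity of $I$; for left-continuity it restricts the infimum to a fixed compact set (Lemma~\ref{lem:J prop}), extracts a convergent subsequence of near-minimizers for $\xi-1/n$, and uses continuity of $I$ and $\mu$ to pass to the limit. Both halves are short once you have the compact restriction, and neither uses convexity of $J_x$.
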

 The following is our main result.
\begin{thm}\label{thm:main} For any $x\in \R^d\setminus\{0\}$, there exists $\xi_0=\xi_0(x)>0$ such that for any $\xi\in(0, \xi_0)$, 
\[\lim_{n\rightarrow\infty}\frac{1}{n}\log \P(\mu(x)(1+\xi)n<\cD^{\G_p}(0,nx)<\infty)=- J_x(\xi).\]
\end{thm}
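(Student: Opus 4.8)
The plan is to prove the two matching estimates
\[
\limsup_{n\to\infty}\frac1n\log\P\!\left(\mu(x)(1+\xi)n<\cD^{\G_p}(0,nx)<\infty\right)\ \le\ -J_x(\xi)
\]
and $\displaystyle\liminf_{n\to\infty}\frac1n\log\P\!\left(\mu(x)(1+\xi)n<\cD^{\G_p}(0,nx)<\infty\right)\ge -J_x(\xi)$, taking as input Theorem~\ref{prop:existencelimit} (existence, positivity, convexity, homogeneity, and \emph{locally uniform} convergence of $I$) and Proposition~\ref{prop:Jcont} (positivity, monotonicity, and continuity of $J_x$). By homogeneity of $I$, hence of $J_x$, only the direction of $x$ matters. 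Abbreviate $\mathrm{LD}_n:=\{\mu(x)(1+\xi)n<\cD^{\G_p}(0,nx)<\infty\}$, and note that $I(\cdot,y)$ is non-decreasing, since $\kA_{s,y}(n)\supseteq\kA_{s',y}(n)$ for $s\le s'$; hence in \eqref{def J} one may restrict to $(s,y)$ with $s+\mu(y-x)=(1+\xi)\mu(x)$.

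\textbf{Lower bound on the probability (the construction).} Fix $\delta>0$. Using \eqref{def J}, continuity of $I$, and a small increase of $s$, I would choose $(s_\star,y_\star)$ with $s_\star+\mu(y_\star-x)>(1+\xi)\mu(x)$ and $I(s_\star,y_\star)\le J_x(\xi)+\delta$, and --- perturbing $(s_\star,y_\star)$ inside the constraint set if need be --- arrange that the associated space-time cut-point is geometrically unavoidable to geodesics from $0$ to $nx$ (for instance, putting $ny_\star$ on the far side of the origin as in Figure~\ref{fig1}, or far enough that $\rmB_{s_\star n}(0)$ typically misses $nx$). Introduce the good event $G_n$ on which $0,nx\in\sC_\infty$ and every $w\in\Lambda_{2n^{\alpha_d}}(ny_\star)\cap\sC_\infty$ satisfies $\cD^{\G_p}(w,nx)\ge(\mu(y_\star-x)-\delta)n$; by the asymptotic shape theorem of \cite{GaretMarchandexistencemu} together with a uniform lower-tail estimate, $\P(G_n^c)\to0$. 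On $\kA_{s_\star,y_\star}(n)\cap G_n$ there is a cut-point $x_n\in\Lambda_{n^{\alpha_d}}(ny_\star)$ with $\cD^{\G_p}(0,x_n)=s_n\ge s_\star n$; since $x_n$ is the unique vertex at distance $s_n$ from $0$, once $\cD^{\G_p}(0,nx)>s_n$ (which the choice of $(s_\star,y_\star)$ secures) every geodesic from $0$ to $nx$ runs through $x_n$, so
\[
(1+\xi)\mu(x)n\ <\ s_\star n+\big(\mu(y_\star-x)-\delta\big)n\ \le\ s_n+\cD^{\G_p}(x_n,nx)\ =\ \cD^{\G_p}(0,nx)\ <\ \infty .
\]
Thus $\mathrm{LD}_n\supseteq\kA_{s_\star,y_\star}(n)\cap G_n$, and it remains to see $\P(\kA_{s_\star,y_\star}(n)\cap G_n)=e^{-nI(s_\star,y_\star)+o(n)}$. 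This I would get by revealing $\rmB_{s_n}(0)$ together with its closed outer boundary and applying the domain Markov property: the environment governing $G_n$ is then fresh $p$-percolation, and on the dominant, ``thin'' realizations of $\kA_{s_\star,y_\star}(n)$ supplied by the proof of Theorem~\ref{prop:existencelimit} this fresh region contains $nx$ and a neighbourhood of $ny_\star$, so $\P(G_n\mid\kA_{s_\star,y_\star}(n))\to1$. Hence $\P(\mathrm{LD}_n)\ge e^{-n(J_x(\xi)+\delta)+o(n)}$; letting $n\to\infty$ and then $\delta\to0$ gives $\liminf_n\frac1n\log\P(\mathrm{LD}_n)\ge-J_x(\xi)$.

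\textbf{Upper bound on the probability.} On $\mathrm{LD}_n$, fix a geodesic $\gamma$ from $0$ to $nx$ and set $D:=\cD^{\G_p}(0,nx)>(1+\xi)\mu(x)n$. Using the cut-point (renewal) structure of geodesics in supercritical percolation, $\gamma$ is pinched at the cut times $t$ of the growing ball --- those with $\rmB_t(0)\setminus\rmB_{t-1}(0)=\{x_t\}$ a singleton --- at which $x_t\in\gamma$, $\|x_t\|_1\le t$, and $\cD^{\G_p}(x_t,nx)=D-t$; in particular $D$ is the sum of the increments between consecutive cut times and the corresponding displacements sum to $nx$. The crux is a \emph{localization statement}: up to an event whose probability I write as $e^{-n\psi(\varepsilon)+o(n)}$, with $\psi(\varepsilon)$ bounded below by a constant that exceeds $J_x(\xi)$ once $\xi<\xi_0$ (uniformly over small $\varepsilon$), there is a cut time $t_\star\le D$ at which the entire excess is absorbed:
\[
\frac{t_\star}{n}+\mu\!\left(x-\frac{x_{t_\star}}{n}\right)\ \ge\ (1+\xi)\mu(x)-\varepsilon ,\qquad \|x_{t_\star}\|_1\le t_\star .
\]
The content here is that the competing scenarios are sub-dominant: a geodesic with no macroscopic cut-point, or one whose excess is carried by a long cut-free but time-slow segment, are excluded by geometric estimates on cut-free time-slow geodesics; and an excess spread over $\Theta(n)$ increments is excluded because reassembling a macroscopic time-excess out of microscopic ones is strictly costlier than a single cut-point trap --- this is where the super-concentration of $\cD^{\G_p}$ \cite{dembinvar} and the convexity/subadditivity of $I$ from Theorem~\ref{prop:existencelimit} are used. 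Granting the localization statement, $\kA_{t_\star/n,\,x_{t_\star}/n}(n)$ occurs, so after truncating at $\{D\le M\mu(x)n\}$ (whose complement has probability $e^{-n\rho(M)+o(n)}$ with $\rho(M)\to\infty$, by \cite{AntalPisztora,GaretMarchand}),
\[
\mathrm{LD}_n\ \subseteq\ \{D>M\mu(x)n\}\ \cup\ \{\text{localization fails}\}\ \cup\ \bigcup_{(s,y)\in\mathcal F_n}\kA_{s,y}(n),
\]
where $\mathcal F_n$ is the deterministic family of $(s,y)\in n^{-1}\Z\times n^{-1}\Z^d$ with $0\le s\le M\mu(x)$, $\|y\|\le M$ and $s+\mu(y-x)\ge(1+\xi)\mu(x)-2\varepsilon$, of polynomial cardinality. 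A union bound, together with Theorem~\ref{prop:existencelimit} and its locally uniform convergence, then gives
\[
\P(\mathrm{LD}_n)\ \le\ e^{-n\rho(M)+o(n)}+e^{-n\psi(\varepsilon)+o(n)}+\exp\!\left(-n\,J_x\!\big(\xi-\tfrac{2\varepsilon}{\mu(x)}\big)+o(n)\right),
\]
using $\inf\{I(s,y):\,s+\mu(y-x)\ge(1+\xi)\mu(x)-2\varepsilon\}=J_x(\xi-2\varepsilon/\mu(x))$. Letting $n\to\infty$, then $M\to\infty$, then $\varepsilon\to0$ --- the last step using the continuity of $J_x$ from Proposition~\ref{prop:Jcont} and the uniform lower bound on $\psi(\varepsilon)$ --- yields $\limsup_n\frac1n\log\P(\mathrm{LD}_n)\le-J_x(\xi)$.

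\textbf{The main obstacle.} The entire weight of the proof is the localization statement in the upper bound: that the upper-tail deviation is produced, up to negligible probability, by a single space-time cut-point rather than by a cheaper competing mechanism. Establishing it calls for a delicate, quantitative analysis of the cut-time renewal structure of geodesics --- controlling, at the level of exponential rates rather than merely qualitatively, long cut-free yet time-slow stretches of geodesics, and ruling out that a macroscopic time-excess can be assembled cheaply from many microscopic ones. This is precisely where the super-concentration input \cite{dembinvar} and the structural properties of $I$ from Theorem~\ref{prop:existencelimit} come in, and it is the reason one must take $\xi<\xi_0$ small and (through Theorem~\ref{prop:existencelimit}) work in dimension $d\ge3$. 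By comparison the lower bound is soft: its only genuine points are the domain-Markov bookkeeping that makes $G_n$ asymptotically costless and the geometric adjustment forcing $\cD^{\G_p}(0,nx)>s_n$, both of which rely on the structure of near-optimal $\kA$-configurations coming from the proof of Theorem~\ref{prop:existencelimit}.
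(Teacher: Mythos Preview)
Your lower bound is close in spirit to the paper's Proposition~\ref{prop:LLD}, but you gloss over the one genuine difficulty: on $\kA_{s_\star,y_\star}(n)$ the cut-point $x_n$ need not be connected to $nx$ (or even to infinity) in $\Z^d\setminus\rmB_{s_n-1}$, so the ``fresh region'' you appeal to via domain Markov may simply not see $nx$. The paper handles this with Lemma~\ref{lem:toinfinity}, which uses the drilling construction (Lemma~\ref{lem:drilling}) to first produce a free line at the cut-point and then exploits the density of $\sC_\infty$ to connect $x_n$ to $nx$ at sub-exponential cost. Your parenthetical that this ``relies on the structure of near-optimal $\kA$-configurations'' is pointing at exactly this, but it is not a detail one can wave through.

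Your upper bound has a genuine gap, and the approach is different from the paper's. You propose to find a \emph{natural} cut time $t_\star$ of the growing ball along the geodesic and argue that the entire excess is localized there. But on $\mathrm{LD}_n$ there is no reason any cut time exists at all (the boundary $\rmB_t\setminus\rmB_{t-1}$ may always have several points), and even granting existence, your ``localization statement'' --- that the excess cannot be spread over many increments --- is precisely the hard part, and the tools you invoke (super-concentration from \cite{dembinvar}, convexity of $I$) do not yield it. The paper does something structurally different: it looks at the balls $\rmB_{s_n^*}(0)$ and $\rmB_{t_n^*}(nx)$ at the first times their sizes reach $n^{7/4}$, uses renormalization and the combinatorial Lemmas~\ref{lem:proj} and~\ref{lem:dislines} to find points $x_n,y_n$ in these balls joined by a path of length $(1+\ep)\mu(x_n-y_n)$, and then \emph{creates} cut-points at $x_n$ and $y_n$ by resampling (Lemma~\ref{lem:createcutpoin}). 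This produces the two-cut-point event $\kA_{s,x}(n)\cap(n\mathbf e_1+\kA_{t,y}(n))$; for $\xi<\xi_0$ these occur disjointly, so BK and the subadditivity $I(s+t,x-y)\le I(s,x)+I(t,y)$ reduce to a single cut-point and give $-J_x(\xi)$. No renewal structure or super-concentration enters.
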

{An explicit expression of $\xi_0(x)$ is given in \eqref{eq:defxi0}.}
\begin{rem}
We shall discuss the expression as defined in \eqref{def J}. Let us examine the point $(s, y) \in [0, +\infty) \times \mathbb{R}^d$ such that
\[s + \mu(y - x) \geq (1 + \xi) \mu(x).\]
Upon the occurrence of event $\kA_{s, y}(n)$, a geodesic connecting the origin and $n\mathbf{e}_1$ arrives later than time $sn$ at a cut-point in proximity to $ny$. According to \eqref{time constant}, the chemical distance from this cut-point to $nx$ is typically about $n\mu(y - x)$. Consequently, it is not possible for the geodesic to arrive at $nx$ earlier than $(1 + \xi) \mu(x)n$.  Consequently, $J_x$ can be interpreted as the minimum cost across all cut-point situations that result in the occurrence of a large deviation event in the upper tail.
\end{rem}
\begin{rem}
 In this remark, we provide a rationale for our exclusive focus on the regime $\xi < \xi_0$ in the main result. We will demonstrate that the large deviation event arises from the presence of two cut-points, each connected to one of the endpoints, as detailed in Proposition~\ref{prop:creationcutpoint}. When $\xi \le \xi_0$, the balls associated with these cut-points are in close enough proximity to their corresponding endpoints, ensuring that they do not intersect. Consequently, the scenario can be simplified to a single cut-point. However, when $\xi$ is large, the two cut-points may leverage shared closed edges to increase the probability relative to a single cut-point. Hence, this larger $\xi$ regime might necessitate further studies of more complicated interacting cut-points.
\end{rem}
\subsection{2D v.s. 3D and higher}
We expect that the upper tail large deviations behave differently in $d=2$ and  $d\ge 3$. For $d\ge 3$,  only space-time cut-points can create an upper tail large deviation event. Whereas, for $d=2$, we need to consider other scenarios unrelated to space-time cut-points, which create the event. One possible scenario is to add a close wall in the middle that forces the geodesic to deviate from the horizontal line (see figure \ref{fig3}). In  $d=2$, the cost of adding the wall is of order $\exp(cn)$, so it is of the same order as the speed of large deviations. However, in dimensions $d\ge 3$, the cost becomes of order  $\exp(cn^{d-1})$. 
According to this picture, we have the following theorem, which will  be proved in Section~\ref{proof of thm:3}. 

\begin{figure}[b]
\def\svgwidth{0.3\textwidth}
 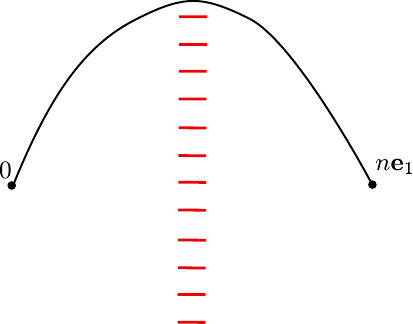
 \caption[fig3]{\label{fig3}Construction of a closed wall. The edges in red are closed and force the geodesic to deviate a lot from the straight line. }
\end{figure} 
\begin{thm}\label{thm:3}
For any $\e>0$ and $\xi>0$, there exists $c>0$ such that for $n$ large enough,
\al{
\P(\cD^{\G_p}( \Lambda_{\e n}(0),\Lambda_{\ep n}(n\mathbf{e}_1))>(1+\xi)\mu(\mathbf{e}_1)n) \leq e^{-c n^{d-1}},
}
 where $\cD^{\G_p}(A,B):=\inf_{x\in A,y\in B}\cD^{\G_p}(x,y)$ for $A,B\subset\Z^d$.
\end{thm}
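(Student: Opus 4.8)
\textbf{Proof strategy for Theorem \ref{thm:3}.} The plan is to produce, with probability at least $1-e^{-cn^{d-1}}$, an open path from $\Lambda_{\e n}(0)$ to $\Lambda_{\e n}(n\mathbf{e}_1)$ of length at most $(1+\xi)\mu(\mathbf{e}_1)n$. The key point is that we have an enormous amount of freedom: roughly $n^{d-1}$ choices of ``lanes'' running parallel to $\mathbf{e}_1$, and the failure event requires \emph{all} of them to behave badly, which forces the probability down to $e^{-cn^{d-1}}$ rather than merely $e^{-cn}$.

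First I would fix a small scale $K$ (a large constant, independent of $n$) and tile a slab $\Lambda_{\e n}(0)$-to-$\Lambda_{\e n}(n\mathbf{e}_1)$ neighbourhood by $K$-boxes, so that along the $\mathbf{e}_1$-direction there are about $n/K$ boxes and transverse to it there are about $(\e n/K)^{d-1}$ disjoint ``tubes'' $T^{(j)}$, each tube being a $K$-thick cylinder joining the two end-boxes. For a single tube $T$, a standard renormalization/block argument (Antal--Pisztora \cite{AntalPisztora}, or the block-percolation machinery behind \eqref{time constant} and Garet--Marchand \cite{GaretMarchandexistencemu}) shows that, with probability bounded below by a constant $q=q(K)$ that tends to $1$ as $K\to\infty$, the tube contains a ``good'' open crossing realizing chemical distance at most $(1+\xi/2)\mu(\mathbf{e}_1)n$ between its two ends, \emph{and} its endpoints lie in the infinite cluster restricted to the end-boxes. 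Crucially, the events $G^{(j)}=\{T^{(j)}\text{ is good}\}$ for different $j$ depend on disjoint sets of edges, hence are independent. The probability that \emph{no} tube is good is therefore at most $(1-q)^{(\e n/K)^{d-1}}\le e^{-c_1 n^{d-1}}$ for a suitable $c_1=c_1(K,\e,\xi)>0$.

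The remaining ingredient is to connect a good tube's endpoints back to $\Lambda_{\e n}(0)$ and $\Lambda_{\e n}(n\mathbf{e}_1)$ cheaply. Inside each end-box of side $\Theta(\e n)$, with overwhelming probability (again by renormalization, cf.\ \cite{AntalPisztora}) the infinite cluster restricted to the box has chemical diameter $O(\e n)$, so any good-tube endpoint is joined to, say, the center of the box by an open path of length $O(\e n)$; taking $\e$ small enough (after first choosing $\xi$), this extra length is at most $\xi\mu(\mathbf{e}_1)n/2$, so the total length stays below $(1+\xi)\mu(\mathbf{e}_1)n$. Wrapping up: on the complement of the bad event (probability $\ge 1-e^{-c_1 n^{d-1}}$) intersected with the good-end-box event (probability $\ge 1-e^{-c_2 n^{d-1}}$ by the same reasoning, or simply $1-e^{-c_2 n}$ which is absorbed), there is an open path from $\Lambda_{\e n}(0)$ to $\Lambda_{\e n}(n\mathbf{e}_1)$ of length at most $(1+\xi)\mu(\mathbf{e}_1)n$, so $\cD^{\G_p}(\Lambda_{\e n}(0),\Lambda_{\e n}(n\mathbf{e}_1))\le(1+\xi)\mu(\mathbf{e}_1)n$, and complementing gives the claimed bound $e^{-cn^{d-1}}$.

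\textbf{Main obstacle.} The delicate step is the single-tube estimate: showing that a $K$-thick tube of length $n$ carries an open path of chemical length at most $(1+\xi/2)\mu(\mathbf{e}_1)n$ with probability bounded \emph{below by a constant} in $n$. One cannot simply invoke \eqref{time constant} inside the thin tube, because that limit is an unconditioned statement on all of $\Z^d$ and geodesics there may wander far outside any fixed-width tube. The fix is to work at a coarse-grained scale: pick $K$ large so that the renormalized bond process on $K$-blocks dominates a highly supercritical Bernoulli percolation, concatenate $n/K$ independent ``good block'' events along the tube, and use that a good sequence of blocks yields a crossing whose length per block is within a $(1+\xi/2)$ factor of optimal — the per-block overhead being controlled because $\mu$ is a norm and the tube points in the coordinate direction $\mathbf{e}_1$ (so the straight concatenation is already near-optimal). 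Making the constants in ``good block'' uniform and the resulting crossing-length bound quantitative is where the real work lies, but it is the kind of renormalization argument that is by now standard in supercritical percolation, so I would cite \cite{AntalPisztora, GaretMarchandexistencemu, grimmett:percolation} and carry out the block construction in detail.
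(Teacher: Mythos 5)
Your overall architecture (many disjoint regions joining the two boxes, independence across regions, multiplying the per-region failure probabilities) is in the same spirit as the paper, but the single-tube estimate on which your proof rests is false, and this is exactly where the dimensional bookkeeping matters. You take tubes of \emph{constant} cross-section $K$ and length of order $n$, and claim that one such tube contains an open crossing of chemical length at most $(1+\xi/2)\mu(\mathbf{e}_1)n$ with probability $q(K)>0$ bounded below uniformly in $n$. This cannot hold: the tube is sliced by order $n$ disjoint unit-thick cross-sections, each consisting of $O(K^{d-1})$ edges, and each cross-section is entirely closed with probability at least $(1-p)^{CK^{d-1}}=:\delta_K>0$, independently across cross-sections. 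Hence the probability that the tube admits \emph{any} open crossing at all is at most $(1-\delta_K)^{cn}\le e^{-c(K)n}$, i.e.\ exponentially small in $n$ rather than a constant; consequently the event that all $(\ep n/K)^{d-1}$ tubes fail has probability close to $1$, and your bound $(1-q)^{(\ep n/K)^{d-1}}\le e^{-c_1 n^{d-1}}$ evaporates. (Antal--Pisztora gives crossing estimates for boxes all of whose sides grow, not for cylinders of fixed width.) Widening the tubes to width $\log n$, so that crossings do exist with good probability, only yields $e^{-cn^{d-1}/(\log n)^{d-1}}$, which is weaker than the stated bound.

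The correct way to extract the factor $n^{d-1}$ --- and the way the paper does it --- is to use about $n^{d-2}$ disjoint \emph{two-dimensional} slabs of constant thickness (translates of $\Z^2\times[-\rho N,\rho N]^{d-2}$ with $N$ a large constant), rather than $n^{d-1}$ one-dimensional tubes. In such a slab the renormalized good-box process percolates, and the renormalization argument (Propositions \ref{prop:connexpoints} and \ref{thm:slab}) shows that the slab-constrained distance between the two boxes exceeds $(\mu+\xi)n$ with probability at most $e^{-cn}$: an exponential-in-$n$ cost per slab, not a constant. Since the large deviation event forces every one of the $\asymp n^{d-2}$ disjoint slabs to fail simultaneously, independence gives $e^{-cn\cdot n^{d-2}}=e^{-cn^{d-1}}$. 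So the product structure must be (number of slabs $\asymp n^{d-2}$) times (exponential cost $\asymp n$ per slab), not (number of tubes $\asymp n^{d-1}$) times (constant cost per tube); repairing your argument essentially amounts to replacing the tubes by slabs and proving the per-slab exponential estimate, which is the real content of the paper's proof.
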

The aforementioned theorem emphasizes the distinction between dimension $2$ and dimensions $d\ge3$. In dimensions $d\ge3$, the upper tail large deviation event for the chemical distance between two macroscopic boxes decays significantly more rapidly than the upper tail large deviation event for the chemical distance between two points. This implies that if the balls $\rmB_{\ep n}(0)$ and $\rmB_{\ep n}(n\mathbf{e}_1)$ are both substantially large (as would be the case in a typical supercritical percolation), the probability of the upper tail large deviation event decays much faster than $e^{-cn}$. Consequently, during the upper tail large deviation event, the balls at the endpoints do not exhibit typical behavior and are noticeably small. In this context, we describe the upper tail large deviation as {\bf local}, since it only pertains to edges in close proximity to the endpoints. However, in $d=2$, the balls may maintain their usual volumetric properties, and both global scenarios (e.g. closed walls mentioned earlier) and local scenarios (e.g. space-time cut-points) may coexist, necessitating the identification of the most advantageous scenario among them.
\subsection{Link between First-passage percolation and chemical distance}\label{sec:link}
The model of {\bf First-passage percolation (FPP)} was first introduced by Hammersley and Welsh \cite{HammersleyWelsh} as a model for the spread of a fluid in a porous medium. In the model, we assign a non-negative random variable $\tau_e$ to each edge $e$ such that the family $(\tau_e)_{e\in\E^d}$ is independent and identically distributed with a distribution $F$. The random variable $\tau_e$ may be interpreted as the time needed for the fluid to cross the edge $e$. For any pair of vertices $x,y\in\sZ^d$, the random variable $T(x,y)$, called the {\bf first passage time}, is the shortest time to go from $x$ to $y$. We are interested in the asymptotic behavior of the quantity $T(0,x)$ when $\|x\|$ goes to infinity. Under some integrability conditions on $F$, it is proved that \aln{\label{Kingman thm}
\lim_{n\rightarrow\infty}\frac{1}{n}T(0,nx)=
\inf_{n\in\N}\frac{1}{n}\E T(0,nx):= \mu_F(x)\qquad\text{a.s.},
}
 where $\mu_F$ is  a semi-norm associated to the distribution $F$ called the {\bf time constant for FPP}. Indeed, Cox and Durrett \cite{CoxDurrett} proved  \eqref{Kingman thm}  under necessary
and sufficient integrability conditions on the distribution $F$ in dimension $2$; Kesten \cite{Kesten:StFlour} extended the result to dimensions $d\geq 2$, and proved that $\mu_F$ is exactly a norm if and only if $\P(\tau_e=0)<p_c(d)$. See \cite{auffinger201750} for more detailed backgrounds on FPP.

We here mention an important correspondence between the chemical distance in percolation and FPP. Consider the bond percolation $\kG_p$ introduced in Section~\ref{section: Model}. Let us couple the bond percolation with the time configuration $(\tau_e)_{e\in \E^d}$ in the following way:
\aln{\label{Chemical-FPP}
\tau_e:=
\begin{cases}
1&\text{ if $e$ is open,}\\
\infty&\text{ if $e$ is closed.} 
\end{cases}
}
 In this setting, the infinite cluster made of the edges with passage time $1$ corresponds to the infinite cluster $\sC_\infty$. 
 For $x,y\in\Z^d$, we have
 \[T(x,y):=\inf_{\gamma:x\leftrightarrow  y}\sum_{e\in \gamma}\tau_e=\inf_{\gamma:x\leftrightarrow y\subset \G_p} |\gamma|=\cD^ {\G_p}(x,y),\]
 where the first infimum is taken over all paths from $x$ to $y$ in $\Z^d$; the second infimum is taken over all open paths from $x$ to $y$. Hence, the first passage time for the distribution $G_p=p\delta_1+(1-p)\delta_\infty$ is the same as the chemical distance in percolation. 

\subsection{Related work}
In this section, we discuss related work on the large deviations for chemical distance.
\subsubsection{Large deviations for the chemical distance in supercritical percolation}
The first upper tail large deviation bound on the chemical distance for supercritical percolation was obtained by Antal and Pisztora \cite{AntalPisztora}. They obtained the correct speed in the regime of large $\xi$: for every $p>p_c(d)$, there exists $C\geq 1$ such that 
\begin{equation}\label{eq:antalpisztora}
    \limsup_{\|x\|_1\rightarrow \infty}\frac{1}{\|x\|_1}\log\P\left(  C \|x\|_1\le \cD^{\kG_p}(0,x)<\infty\right)<0\,. 
\end{equation}
\begin{rem}
Antal and Pisztora actually proved a stronger claim \cite[(4.9)]{AntalPisztora}:  there exists $C\ge 1$ and $c>0$ such that 
\begin{equation}\label{eq:antalpisztora2}
\forall x\in\Z^d\quad\forall l\ge C\|x\|_1\quad\P\left(  l\le \cD^{\kG_p}(0,x)<\infty\right)\le \exp(-cl).
\end{equation}
We use this claim several times in this paper.
\end{rem}
The result was later extended to the entire regime of $\xi$ by Garet and Marchand
in \cite{GaretMarchand}: for every $p>p_c(d)$ and $\xi>0$,
\begin{equation}\label{eq:garetmarchand}
   \limsup_{\|x\|_1\rightarrow \infty}\frac{1}{\|x\|_1}\log\P\left((1+\xi) \mu(x)\le \cD^{\kG_p}(0,x)<\infty \right)<0\,.
\end{equation}
In the same paper, they also proved the existence of the rate function for  lower tail large deviation:
 for every $p>p_c(d)$, $\xi>0$, and $x\in\R^d\setminus\{0\}$, the following limit exists and is negative or $-\infty$:
\ben{\label{lower ldp}
\lim_{n\to\infty}\frac{1}{n}\log{\P(\cD^{\G_p}(0,n x)<\mu(nx)(1-\xi))}.
}
\subsubsection{ Upper tail large deviations in FPP with compactly supported distributions}
Basu, Ganguly, and Sly \cite{MR4275334} established the existence of a rate function for upper tail large deviation when the distribution $F$ is both compactly supported and possesses a continuous density. It is important to note the contrast between the behavior of the upper tail large deviation for the chemical distance and that for the FPP with a compactly supported distribution. In the first scenario, the speed of large deviation exhibits a linear order. Conversely, in the second scenario, increasing the first passage time necessitates the involvement of the entire environment, resulting in a volumetric speed, {i.e. of order $n^d$}. The foundation of the proof in \cite{MR4275334} lies in the presence of an underlying limiting metric accountable for the upper tail large deviation. Although the authors did not formalize this limiting metric, it enabled them to derive an involved subadditivity for the upper tail large deviation event. Dembin and Théret \cite{Dembintheretldmaxflow} later formalized the concept of the limiting environment for maximal flow in FPP in order to demonstrate the upper tail large deviation principle.

\subsubsection{ Upper tail large deviations in FPP with distributions under tail estimates}
 Our approach was initially inspired by the arguments in \cite{cosco2021variational}. We briefly explain the main results and the sketch of the proof in \cite{cosco2021variational} to discuss obstacles when adapting the strategies there to chemical distance. Cosco and Nakajima \cite{cosco2021variational} considered the FPP on $\Z^d$ with distributions satisfying $$\P(\tau_e\geq t)\asymp e^{-\alpha t^r}\text{ as $t\to\infty$ with some constants } r\in (0,d],\,\alpha>0.$$
 They derived a specific rate function for upper tail large deviations, known as the {\bf discrete p-capacity}. In order to demonstrate this, they considered numerous slabs and found a good slab among them where the first passage time between any pair of points $x,y$ inside the slab is approximately equal to the time constant $\mu_F(y-x)$.  This suggests that on the upper tail large deviation event, the first passage time to join one of the endpoints and this good slab is abnormally large. This enables us to replace the large deviation event $\{T(0,n\mathbf{e}_1)>\mu_F(\mathbf{e}_1)(1+\xi)n\}$ with a local event where there are many high weights around the endpoints.  To calculate the rate function, they explicitly estimated the probability of the local event, invoking the Laplace principle.

Nonetheless, these arguments are inapplicable to the current model. Contrary to FPP, there are closed edges that a geodesic is unable to traverse. In FPP, when a geodesic reaches a suitable slab, it is understood that the geodesic moves at a typical speed. {In this study, however,} arriving at an appropriate slab does not {necessarily} guarantee that a geodesic will maintain a typical speed within the slab. In fact, even when the geodesic crosses a slab that possesses beneficial connectivity within its infinite cluster, if the geodesic avoids this infinite cluster, the slab's advantageous connectivity cannot be utilized.

Furthermore, the high weights around the endpoints in FPP are substituted by the presence of space-time cut-points in our situation. Specifically, we cannot employ the Laplace principle, so we resort to the rate function for space-time cut-point instead.
\subsection{Sketch of the proofs}\label{sec:idea}
\subsubsection{Sketch of the proof of Theorem \ref{thm:3}}
     The proof is inspired by the arguments in \cite{cosco2021variational}. Let $N$ be a fixed large integer. We consider  disjoint $\Z^ 2$-slabs of order $n^{d-2}$ and of thickness $N$ that intersect $\Lambda_{\e n}(0),\Lambda_{\ep n}(n\mathbf{e}_1)$. Thanks to a renormalization argument, we can prove that for each slab the probability that the time in the slab between the two boxes $\Lambda_{\e n}(0),\Lambda_{\ep n}(n\mathbf{e}_1)$ is larger than $(1+\xi)\mu(\mathbf{e}_1)$ decays exponentially fast in $n$. Thus, if $\cD^{\G_p}(\Lambda_{\e n}(0),\Lambda_{\ep n}(n\mathbf{e}_1))>(1+\xi)\mu(\mathbf{e}_1)n$, then the time between  $\Lambda_{\e n}(0),\Lambda_{\ep n}(n\mathbf{e}_1)$ on each slab is abnormally large. Since we consider disjoint slabs of order $n^{d-2}$, it follows that the decay of this event is of order $\exp(-cn^{d-1})$.
\subsubsection{ Sketch of the proof of Theorem \ref{prop:existencelimit}} Consider two configurations in $\kA_{s,x}(n)$ and $nx+ \kA_{s,x}(m)$, where
\aln{\label{shifted kA}
y+ \kA_{s,x}(m):=\left\{\exists t\geq s m\quad \exists x_m\in \Lambda_{m^{\alpha_d}}(y+mx): \,{\rm B}_{t}(y)\setminus {\rm B}_{s-1}(y)=\{x_m\}  \right\}.
}
Let $s_n\ge sn ,s_m\ge s m,$ and $w_n,z_m\in \Z^ d$ such that 
$\rmB_{s_n}\setminus \rmB_{s_n-1}=\{w_n\}$ and $\rmB_{s_m}(nx)\setminus \rmB_{s_m-1}(nx)=\{z_m\}$. The two balls come from different time configurations.
To build a configuration in $\kA_{s,x}(n+m)$, we want to join $w_n$ to $x n$ without changing the metric structure so that  $z_m$ is a still cut-point and the chemical distance between $0$ and $z_m$ is larger than $s(n+m)$. A first problem is when the two balls intersect. We can circumvent this problem by finding an appropriate translation of $\rmB_{s_m}(nx)$ not  far from its original position. Such a translation can be found using the ball size control.

The main problem comes from the situations where $w_n$ is not connected to infinity in $\Z^ d\setminus  \rmB_{s_n}$, or where it is connected to infinity but a path to join $w_n$ and $nx$ wiggles a lot. These scenarios may seem pathological, but we believe there are no easy ways to discard these scenarios. Indeed, when $x=0$, we can consider the events where the ball $\rmB_{s_n}$ fills most of the box $\Lambda_{n^{1/d}}(0)$ (e.g. a space-filling spiral) such that $0$ is not connected to $\infty$ in $\Z^ d\setminus \rmB_{s_n}$ or $0$ is connected to $\partial \Lambda_{n^{1/d}}(0)$ but with a very large graph distance (of order $n$). In the first scenario, it is not possible to open edges to create a connection between the cut-point $w_n$ of the first ball to the origin $nx$ of the second ball without modifying the second ball structure. In the second scenario, the cost to join $w_n$ and $nx$ is at least of order exponential in $n$ (in particular, it does not vanish in the limit). 

To solve this issue, we modify the ball structure to create a free line, that is a line intersecting $\rmB_{s_n}$ only at $w_n$ (see Lemma~\ref{lem:drilling}).  The main difficulty of this step is to only slightly modify the ball structure so that the new space-time cut-point is still close to the original one. In particular, the metric structure of the ball has to be preserved as much as possible to ensure that the event $\kA_{s,x}(n)$ still occurs for the modified environment.
The technical part in this procedure is that the cost of modifying the structure of the ball has to vanish in the limit.
Creating the free lines at $w_n$ and $nx$  gives more freedom to build a connection between these two vertices.
 \subsubsection{Sketch of the proof of Theorem \ref{thm:3}} Consider a couple $(s,x)\in\R_+\times \R^ d$ such that 
 \begin{equation}\label{eq:star}
     s+\mu(x-\mathbf{e}_1)> (1+\xi)\mu(\mathbf{e}_1).
 \end{equation}
    On the event $\kA_{s,x}(n)$, any geodesic between $0$ and $n\mathbf{e}_1$ arrives at a cut-point located close to $x n$ at time larger than $sn$. Moreover, the time needed to go from $nx$ to $n\mathbf{e}_1$ is typically at least $n\mu(x-\mathbf{e}_1)>((1+\xi)\mu(\mathbf{e}_1)-s)n$ thanks to \eqref{time constant} and \eqref{eq:star}. Hence, the event $\{\kD^{\kG_p}(0,n\mathbf{e}_1)>\mu(\mathbf{e}_1)(1+\xi)n\}$ is very likely to occur if the event $\kA_{s,x}(n)$ occurs. Using Theorem \ref{prop:existencelimit}, it follows that 
\[-\liminf_{n\rightarrow\infty}\frac{1}{n}\log \P(\mu(\mathbf{e}_1)(1+\xi)n<\cD^{\G_p}(0,n\mathbf{e}_1)<\infty)\ge I(s,x).\]
Taking the infimum over $(s,x)$ satisfying \eqref{eq:star} on the right hand side, we get 
\begin{equation}\label{eq1}
\begin{split}
   -\liminf_{n\rightarrow\infty}\frac{1}{n}\log\P&(\mu(\mathbf{e}_1)(1+\xi)n<\cD^{\G_p}(0,n\mathbf{e}_1)<\infty) \ge J(\xi).
   \end{split}
\end{equation}

To get the converse inequality, we have to prove that if the upper tail large deviation event occurs, then there exists a space-time cut-point. The idea goes as follows. Consider first the balls $\rmB_s(0)$, $\rmB_t(n\mathbf{e}_1)$ at both endpoints at the moments $s$ and $t$ when their respective size reaches  $n^{7/4}$ (See \eqref{def of s and t} for formal definitions).
 Since the balls are large enough, it is very likely to find $x\in\rmB_s(0)$ and $y\in\rmB_t(n\mathbf{e}_1)$ such that $\cD^{\G_p}(x,y)\le \mu(x-y)+o(n)$. Moreover, since the balls are of size negligible compared to $n^2$, we can make $x$ and $y$ cut-points (see Lemma~\ref{lem:createcutpoin}).
Besides, since we are on the upper tail large deviation event, these cut-points must satisfy
\[\cD^{\G_p}(0,x)+\mu(x-y)+\cD^{\G_p}(y,n\mathbf{e}_1)+o(n)\ge \mu(\mathbf{e}_1)(1+\xi)n\,.\]
Hence, one expects from the Laplace principle and the continuity of $I$ (Theorem~\ref{prop:existencelimit}) that
\begin{equation}
\begin{split}
   \limsup_{n\rightarrow\infty}\frac{1}{n}\log\P&(\mu(\mathbf{e}_1)(1+\xi)n<\cD^{\G_p}(0,n\mathbf{e}_1)<\infty) \notag\\
   &\le \limsup_{n\rightarrow\infty}\frac{1}{n}\log\P\left(\bigcup_{\substack{s,t\ge0,\,x,y\in\R^d:\\s+t+\mu(x-y-\mathbf{e}_1)\ge (1+\xi)\mu(\mathbf{e}_1)}}\kA_{s,x}(n)\cap(n\mathbf{e}_1+\kA_{t,y}(n))\right)\notag\\
   &\leq \sup_{\substack{s,t\ge0,\,x,y\in\R^d:\\s+t+\mu(x-y-\mathbf{e}_1)\ge (1+\xi)\mu(\mathbf{e}_1)}} \limsup_{n\rightarrow\infty}\frac{1}{n}\log\P\left(\kA_{s,x}(n)\cap(n\mathbf{e}_1+\kA_{t,y}(n))\right).\label{upper idea}
   \end{split}
\end{equation}
When $\xi\le \xi_0$, the two space-time cut-points at $0$ and $n\mathbf{e}_1$ occur disjointly (see the proof of Propostion \ref{prop:ULD}). 
Using the property of $I$ (convexity and homogeneity), we  argue that the cost of having a unique cut-point at one endpoint $\kA_{s+t,x-y}(n)$ is smaller than the cost of having two cut-points one at each endpoint of $\kA_{s,x}(n)\cap(n\mathbf{e}_1+\kA_{t,y}(n))$. Therefore, \eqref{upper idea} is further bounded from above by 
\begin{equation}\label{eq2}
\begin{split}
  \limsup_{n\rightarrow\infty}\frac{1}{n}\log\P\left(\bigcup_{\substack{s\ge0,\,x\in\R^d:\\s+\mu(x-\mathbf{e}_1)\ge (1+\xi)\mu(\mathbf{e}_1)}}\kA_{s,x}(n)\right) \le -J(\xi).
   \end{split}
\end{equation}
The result follows combining \eqref{eq1} and \eqref{eq2}.

\subsection{Notations and terminologies} 
In this section, we collect useful notations, terminologies, and claims.
 For $m\ge 1$, set $[m]:=\{1,\cdots, m\}$. 
 
 \noindent \textit{Distances.} We denote by $\|\cdot\|_1,\|\cdot\|_\infty,\|\cdot\|_2$ respectively the $\ell_1$, $\ell_\infty$ and $\ell_2$ norms.  For $A,B\subset \R^d$ and $i\in\{1,2,\infty\}$, we denote
\[\mathrm{d}_i(A,B):=\inf_{x\in A, y\in B}\|x-y\|_i\,.\]
 \noindent \textit{Diameter.} 
 For a set $S \subset \Z^ d$ and $i\in[d]$, we define 
\[\Diam_i(S):=\max_{x,y\in S}|x_i-y_i|\qquad \text{and}\qquad \Diam(S)=\max_{1\le i\le d} \Diam_i(S).\]
 
 \noindent \textit{$\Z^d$-path.} 
  We say that a sequence $\gamma=(v_0,\dots,v_n)\in\Z^d$ is a $\Z^d$-path if for all $i\in [n]$, $\|v_i-v_{i-1}\|_1=1$. Note that a $\Z^d$-path may also be seen as a set of edges $\{\langle v_i,v_{i+1}\rangle, \,0\le i \le n-1\}$. In what follows, a path will implicitly mean a $\Z^ d$-path.
  
\noindent \textit{Cluster.} Given $x,y\in \Z^d$ and $A\subset \Z^d$, if there exists a $\Z^d$-path $\gamma\subset A$ from $x$ to $y$ in $\G_p$, then we write $x\overset{A}{\leftrightarrow} y.$ When $A=\Z^d$, we simply write $x\leftrightarrow y$.  Given $x\in \Z^d$, we define the open cluster containing $x$ as
\aln{
\mathcal{C}(x):=\{y\in \Z^d:~x\leftrightarrow y\}.
}

\noindent \textit{Paths concatenation.}
 Given two paths $\gamma^1,\gamma^2$ such that the end point of $\gamma^1$ is the same as the starting point of $\gamma^2$, we denote by $\gamma^1\oplus \gamma^2$ the concatenation of $\gamma^1$ and $\gamma^2$. Inductively, we can define the concatenation of $n$ paths $\gamma^1\oplus\cdots \oplus \gamma^n$.


\noindent \textit{Interior and exterior boundary.}
For a set $\Gamma\subset \Z^d$, we denote by $\mathrm{Int}(\Gamma)$ the set of points in $\Z^d$ enclosed by $\Gamma$:
\[\mathrm{Int}(\Gamma):=\{x\in\Z^d\setminus \Gamma : \text{ $x$ is not connected to $\infty$ in $\Z^d\setminus \Gamma$}\}.\] We define the outer boundary of $\Gamma$ as
\[\partial ^{\rm ext}\Gamma:=\{x\in \Z^d\setminus \Gamma: x\text{ is connected to infinity in $\Z^d\setminus \Gamma$ and has a neighbour in $\Gamma$}\}.\]
 There exists a constant $\kappa_d>1$ such that for every connected set $\Gamma\subset\Z^d$, we have the following discrete isoperimetric inequality (see, for instance,  \cite[Theorem 1]{Coulhon1993})
\begin{equation}\label{eq:isoperimetry}
    |\Gamma|\le \kappa_d |\partial ^{\rm ext}\Gamma|^{d/(d-1)}.
\end{equation}

\noindent \textit{$*$-connected and lattice animals.}
  We say that $x$ and $y$ are $*$-neighbor if $\|x-y\|_\infty=1$. We say that a sequence $\gamma=(v_0,\dots,v_n)\subset \Z^d$ is a $*$-path from $x$ to $y$ in $\Gamma\subset \Z^d$ if $v_0=x\in \Gamma$, $v_n=y$ and for all $i\in[n]$, $v_i\in \Gamma$ and $\|v_i-v_{i-1}\|_\infty=1$. We say that $x$ and $y$ are $*$-connected in $\Gamma$ if such a path exists. We say that $\Gamma$ is $*$-connected if for any $x,y\in \Gamma$, $x$ and $y$ are $*$-connected in $\Gamma$.
 We denote by $\textbf{Animal}_{\mathbf{i}}^k$ the set of $*$-connected components of $\Z^ d$ of size $k$ containing $\mathbf{i}\in \Z^d$.
We have  (see for instance Grimmett \cite[p85]{grimmett:percolation})
\begin{equation}\label{eq:animals}
    |\textbf{Animal}_{\mathbf{i}}^k|\le 7^{dk}.
\end{equation}

\noindent \textit{Lines and hyperplanes.} Given $w\in \Z^d$, $n\ge 1$ and $i\in[d]$, we define 
\al{
&\mathrm {L}_{i}(w):=\{w+k\mathbf{e}_i:~k\in \Z\},\\
&\mathrm {L}_{i}^n(w):=\{w+k\mathbf{e}_i:~|k|\le n\},\\
&\mathrm{H}_{i}(w):=\{w+ \textstyle \sum_{j\neq i} k_j \mathbf{e}_j:~k_j\in \Z \}.
}
Let $P_i$ be the projection on the hyperplane $\mathrm{H}_{i}(0)$.
\subsection{Organisation of the paper}
In Section \ref{sec:2}, we introduce a renormalization procedure and prove Theorem \ref{thm:3}. In Section \ref{sec:3}, we prove the existence of the rate function for space-time cut-point (Theorem \ref{prop:existencelimit}). Finally, in Section \ref{sec:5}, we prove that on the upper tail large deviation event, there exist space-time cut-points and deduce the existence of the rate function for upper tail large deviation (Theorem \ref{thm:main}). As before, we always assume $p>p_c(d)$ in the rest of the paper.\\

\textbf {Acknowledgement.} The authors
are grateful to FIM, for the hospitality during the second author's stay at ETH where the present work was completed. The research of the first author has received funding from the European Research Council (ERC) under the European Union’s Horizon 2020 research and innovation program (grant agreement No
851565). The second author is supported by JSPS KAKENHI 22K20344.

\section{Renormalization}\label{sec:2}
For simplicity, we write $\Lambda_s:=\Lambda_s(0)=[-s,s]^d$.
\subsection{Preliminary on percolation}

The following estimate controls the probability that the density of the infinite cluster is atypically small.
\begin{thm}{\cite[Theorem 2]{Pisztora1996}}\label{thm:density}
Let $\ep>0$. There exists $c>0$ such that for $n$ large enough
\[\P\left( \frac {|\sC_\infty \cap \Lambda_{n}(w)|}{|\Lambda_{n}(w)|}\le \theta(p)-\ep\right)\le e^{-cn^{d-1}},\]
where $\theta(p):=\P(0\in\sC_\infty)$ corresponds to the density of the infinite cluster.
\end{thm}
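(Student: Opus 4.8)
The plan is to run a block renormalisation and reduce the statement to a Peierls--type estimate for a subcritical ``bad block'' process. First I would fix a large integer $K=K(p,\ep,d)$, tile $\R^d$ by the cubes $B_{\mathbf i}:=K\mathbf i+[0,K)^d$ and their dilates $\widehat B_{\mathbf i}:=K\mathbf i+[-K,2K)^d$ for $\mathbf i\in\Z^d$, and call $\mathbf i$ \emph{good} if $\widehat B_{\mathbf i}$ contains a \emph{unique} cluster $C_{\mathbf i}$ of $\G_p$ crossing it between every pair of opposite faces, this cluster satisfies $|C_{\mathbf i}\cap B_{\mathbf i}|\ge(\theta(p)-\tfrac\ep2)K^d$, and $C_{\mathbf i}$ and $C_{\mathbf j}$ agree on $\widehat B_{\mathbf i}\cap\widehat B_{\mathbf j}$ for every nearest neighbour $\mathbf j$ of $\mathbf i$. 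Using the Grimmett--Marstrand theorem (existence and uniqueness of crossing clusters throughout the supercritical phase) together with the ergodic theorem (for $|\sC_\infty\cap B_{\mathbf i}|/K^d\to\theta(p)$) one gets $\P(\mathbf i\text{ not good})=\delta(K)\to0$ as $K\to\infty$; since this event depends only on the edges inside $\widehat B_{\mathbf i}$, the field $(\1{\mathbf i\text{ good}})_{\mathbf i}$ is finite-range dependent with marginals $\ge1-\delta(K)$, so by the Liggett--Schonmann--Stacey lemma it stochastically dominates a Bernoulli site percolation of parameter $1-\delta'(K)$ with $\delta'(K)\to0$. Two elementary geometric facts I would then record: the union of the crossing clusters over a nearest-neighbour connected family of good blocks is connected in $\G_p$, so the a.s.\ unique infinite cluster $\mathcal G_\infty$ of good blocks has all of its crossing clusters inside $\sC_\infty$; and the external boundary of any \emph{finite} connected component of good blocks is made of bad blocks and is $*$-connected.

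Next I would turn this into a density lower bound. Let $\mathcal I$ be the set of blocks meeting $\Lambda_n(w)$ and $N:=\lceil n/K\rceil$, so $|\mathcal I|\asymp N^d$ and $K^d|\mathcal I|\le(1+o(1))|\Lambda_n(w)|$. Every point of $\Lambda_n(w)$ not in $\sC_\infty$ lies in a block that is either bad, or good but not in $\mathcal G_\infty$ — and in the latter case, by the second geometric fact, enclosed by a $*$-connected set of bad blocks. Hence it suffices to show that, outside an event of probability $e^{-cn^{d-1}}$, at most an $\tfrac{\ep}{2\theta(p)}$-fraction of $\Lambda_n(w)$ is covered by blocks of $\mathcal I$ that are bad or good-but-not-in-$\mathcal G_\infty$, since then every remaining block is good and in $\mathcal G_\infty$, contributing $\ge(\theta(p)-\tfrac\ep2)K^d$ points of $\sC_\infty$, so that $|\sC_\infty\cap\Lambda_n(w)|\ge(\theta(p)-\tfrac\ep2)(1-\tfrac{\ep}{2\theta(p)})|\Lambda_n(w)|\ge(\theta(p)-\ep)|\Lambda_n(w)|$. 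The bad blocks are handled by a crude union bound: by the domination above their number in $\mathcal I$ is stochastically dominated by $\mathrm{Bin}(|\mathcal I|,\delta')$, so once $K$ is chosen with $\delta'(K)$ small enough, $\P\big(\#\{\text{bad blocks in }\mathcal I\}>\tfrac{\ep}{4\theta(p)}|\mathcal I|\big)\le e^{-c|\mathcal I|}\le e^{-cn^{d-1}}$ (indeed $e^{-cn^d}$).

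The remaining and main difficulty is to bound the number of good blocks of $\mathcal I$ lying outside $\mathcal G_\infty$. Each such block sits in a finite good component $A$ whose bad boundary $S=\partial^{\mathrm{ext}}A$ is a $*$-connected set of bad blocks with $A\subset\mathrm{Int}(S)$; applying the isoperimetric inequality \eqref{eq:isoperimetry} to the connected set $\mathrm{Int}(S)\cup S$, whose external boundary has at most $2d|S|$ elements, gives $|A|\le C_d|S|^{d/(d-1)}$. Consequently, if the total number of such blocks in $\mathcal I$ exceeds $\tfrac{\ep}{4\theta(p)}|\mathcal I|$, then either more than $\delta_0N^d$ bad blocks lie in the box of side $3N$ around $w/K$ (for a constant $\delta_0>2\delta'$, again rate $e^{-cn^d}$), or there exists a $*$-connected set of bad blocks of size at least $c_1N^{d-1}$, which by a Peierls count using the lattice-animal bound \eqref{eq:animals} and the Bernoulli domination has probability at most $(3N)^d\sum_{k\ge c_1N^{d-1}}(7^d\delta')^{k}\le e^{-cN^{d-1}}\le e^{-c'n^{d-1}}$ once $7^d\delta'<\tfrac12$. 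I expect this dichotomy to be the delicate point: one must rule out the intermediate regime in which many medium-sized holes — each carrying a contour that is non-macroscopic yet of non-constant size — conspire to create a macroscopic density deficit, and making this quantitative requires carefully balancing the isoperimetric cost of a hole against the number of holes while keeping the total bad-block count subcritical. Combining the three exceptional events (anomalously many bad blocks, a macroscopic bad contour, and the atypically small crossing-cluster densities already excluded above) gives the desired bound with some $c=c(p,\ep,d)>0$.
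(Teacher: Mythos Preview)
The paper does not prove this theorem: it is quoted as \cite[Theorem~2]{Pisztora1996} and used only as a black box (in the proof of Lemma~\ref{lem:toinfinity}). There is therefore no proof in the paper to compare your argument against.

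For what it is worth, your renormalisation scheme is essentially Pisztora's, and the dichotomy you flag as delicate is precisely where the surface order $n^{d-1}$ enters. Two remarks. First, the sentence ``every point of $\Lambda_n(w)$ not in $\sC_\infty$ lies in a block that is either bad, or good but not in $\mathcal G_\infty$'' is false as written --- a good block in $\mathcal G_\infty$ still has roughly a $1-\theta(p)$ fraction of its points outside $\sC_\infty$ --- but the ``it suffices'' clause that follows is correct and is what you actually need. Second, the medium-hole scenario is genuinely awkward in the formulation you chose, because a finite good component touching $\mathcal I$ may extend far outside it, so the bad contours you must sum over are not confined to any box of side $O(N)$ and your dichotomy does not close as stated. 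The standard way around this is to work inside a fixed dilate (say the renormalised box of side $2N$) rather than with the global $\mathcal G_\infty$: one bounds separately the probability that the giant good component of that box has density below $1-\eta$ (a volume-order large deviation for highly supercritical site percolation) and the probability that its associated microscopic cluster fails to be part of $\sC_\infty$ (a single macroscopic bad contour surrounding the box, Peierls at rate $e^{-cN^{d-1}}$).
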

The following theorem may be seen as a corollary of the previous theorem. It controls the probability that the infinite cluster does not intersect a given box.
\begin{thm}[Holes]\label{thm:holes}
There exists $c>0$ such that for $n\in\N$ large enough,
\[\P(\sC_\infty\cap \Lambda_n=\emptyset)\le e^{-cn^{d-1}}.\]
\end{thm}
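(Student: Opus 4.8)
The plan is to deduce this as a direct corollary of Theorem~\ref{thm:density}, as the paragraph preceding the statement already suggests. Throughout we are in the supercritical regime $p>p_c(d)$, so the density $\theta(p)=\P(0\in\sC_\infty)$ is strictly positive; fix $\ep:=\theta(p)/2>0$.

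First I would record the trivial inclusion of events
\[
\{\sC_\infty\cap\Lambda_n=\emptyset\}\ \subseteq\ \left\{\frac{|\sC_\infty\cap\Lambda_n|}{|\Lambda_n|}\le\theta(p)-\ep\right\},
\]
which is valid because on the left-hand event $|\sC_\infty\cap\Lambda_n|=0$ while $\theta(p)-\ep=\theta(p)/2>0$. Then I would apply Theorem~\ref{thm:density} with this $\ep$ and $w=0$ (recalling $\Lambda_n=\Lambda_n(0)$): there exists $c>0$, depending only on $p$ and $d$, such that for all $n$ large enough
\[
\P\left(\frac{|\sC_\infty\cap\Lambda_n|}{|\Lambda_n|}\le\theta(p)-\ep\right)\le e^{-cn^{d-1}}.
\]
Monotonicity of $\P$ combined with the inclusion then yields $\P(\sC_\infty\cap\Lambda_n=\emptyset)\le e^{-cn^{d-1}}$ for $n$ large, which is exactly the claim.

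There is essentially no obstacle; the only place the hypotheses enter is the strict positivity $\theta(p)>0$, which is precisely what $p>p_c(d)$ guarantees and which makes $\ep=\theta(p)/2$ an admissible choice in Theorem~\ref{thm:density}. Any $\ep\in(0,\theta(p))$ would work equally well. If one preferred to avoid invoking Theorem~\ref{thm:density}, one could instead tile $\Lambda_n$ by $\asymp n^d/N^d$ disjoint sub-boxes of a large fixed side $N$, note that each meets $\sC_\infty$ with probability bounded below uniformly in position once $N$ is large, and control the dependence between these events by a standard renormalization / stochastic-domination comparison with a highly supercritical site percolation; this also produces the $n^{d-1}$ rate, but it is unnecessary given Theorem~\ref{thm:density}.
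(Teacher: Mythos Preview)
Your proposal is correct and matches the paper's approach exactly: the paper does not give a separate proof but simply states that the result ``may be seen as a corollary of the previous theorem'' (Theorem~\ref{thm:density}), and your inclusion argument with $\ep=\theta(p)/2$ is precisely how that corollary is obtained.
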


The following theorem enables us to control the probability of the existence of two large disjoint clusters (e.g. see   \cite[Lemma 7.104]{grimmett:percolation} for a reference).
\begin{thm}[Distinct clusters]\label{thm:distclus} For any $\ep>0$, there exists $c>0$ such that for $n\in\N$ large enough,
\[\P(\exists \text{two disjoint open clusters of diameter at least $\ep n$ in $\Lambda_n$})\le e^{-cn}.\]
\end{thm}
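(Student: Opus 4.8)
The plan is to split the bad event according to whether each of the two large clusters meets the infinite cluster $\sC_\infty$, treating the two regimes with exponential decay of the radius of a finite cluster and with a renormalization argument giving local uniqueness of $\sC_\infty$. First, a cluster of $\G_p\cap\Lambda_n$ of diameter at least $\ep n$ that does not meet $\sC_\infty$ is contained in a finite open cluster of $\Z^d$ of radius at least $\ep n/2$; by the exponential decay of the radius of a finite open cluster in the supercritical phase (Chayes--Chayes--Newman--Schulman, see \cite{grimmett:percolation}), namely $\P(0\leftrightarrow\partial\Lambda_k,\,0\nleftrightarrow\infty)\le e^{-c_0 k}$, a union bound over the at most $(2n+1)^d$ possible basepoints $v\in\Lambda_n$ bounds the probability of this sub-event by $e^{-cn}$ for any $c<c_0\ep/2$ and $n$ large. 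It therefore remains to show that, with probability at least $1-e^{-cn}$, the infinite cluster has at most one component of diameter $\ge\ep n$ inside $\Lambda_n$.

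For this I would run the standard static renormalization. Fix a large integer $K$, tile $\Z^d$ by the blocks $B_{\mathbf i}=K\mathbf i+\{0,\dots,K-1\}^d$, and call $B_{\mathbf i}$ \emph{good} if $\sC_\infty$ crosses $B_{\mathbf i}$ and, inside the $3^d$-block neighbourhood $\widetilde B_{\mathbf i}$ of $B_{\mathbf i}$, every open cluster of diameter at least $K/10$ belongs to one and the same cluster of $\G_p\cap\widetilde B_{\mathbf i}$. By the Grimmett--Marstrand theorem, $\P(B_{\mathbf i}\text{ is bad})\le\eta(K)$ with $\eta(K)\to 0$ as $K\to\infty$; since badness depends only on edges within $\widetilde B_{\mathbf i}$, the field of bad blocks is stochastically dominated by a Bernoulli site percolation whose parameter tends to $0$ as $K\to\infty$, hence subcritical for $K$ large. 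If $\sC_\infty$ had two components $C_1,C_2$ of diameter $\ge\ep n$ inside $\Lambda_n$, a routing argument through the good blocks (good blocks meeting a large piece of $\sC_\infty$ are $*$-connected and carry that piece, so one could join $C_1$ to $C_2$ inside $\Lambda_n$, a contradiction) forces a $*$-connected family of bad blocks meeting $\Lambda_{2n}$ of cardinality at least $c'\ep n/K$. By the lattice-animal estimate \eqref{eq:animals} and the subcritical domination, such a family exists with probability at most $7^{dm}\eta(K)^m\le e^{-c''\ep n}$ with $m=\lceil c'\ep n/K\rceil$, provided $K$ was chosen large enough. Combining the two parts gives the claim.

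The technical core --- and the step I expect to be the main obstacle --- is making this renormalization argument precise: showing that two macroscopically separated pieces of $\sC_\infty$ inside $\Lambda_n$ genuinely force a macroscopic $*$-cluster of bad blocks, while controlling boundary effects, since a priori the two pieces could merge only just outside $\Lambda_n$, so the connecting path of good blocks must be kept inside the box. This, together with the Grimmett--Marstrand input (itself a deep result for $d\ge 3$), is where all the real work lies; the rest is a union bound. It is worth noting that the linear rate $e^{-cn}$ is of the correct order and cannot be improved to the surface rate $e^{-cn^{d-1}}$ of Theorem~\ref{thm:holes}: a single finite open cluster shaped like a thin tube of length $\ep n$ already has probability only $e^{-\Theta(n)}$.
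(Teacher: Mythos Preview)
The paper does not prove Theorem~\ref{thm:distclus}; it is quoted as a known result with a pointer to \cite[Lemma~7.104]{grimmett:percolation}, so there is no in-paper argument to compare your proposal against.

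Your two-case decomposition and both sub-arguments constitute the standard route and are correct in outline. Case~1 is clean; the exponential tail of the radius of a finite supercritical cluster is exactly the right input (the usual attribution is to Chayes--Chayes--Grimmett--Kesten--Schonmann rather than Chayes--Chayes--Newman--Schulman, but the statement you use is the right one). In Case~2 the Pisztora-type renormalization is also correct. Two small imprecisions: first, your lattice-animal bound $7^{dm}\eta(K)^m$ is missing the polynomial factor $(Cn/K)^d$ for the root of the bad $*$-cluster, which is of course harmless. Second, and more to the point you yourself flag, the sentence ``forces a $*$-connected family of bad blocks of cardinality at least $c'\ep n/K$'' is not how the deduction actually runs. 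The cleaner contrapositive is: assume every bad $*$-cluster in the block lattice meeting $\Lambda_{2n/K}$ has \emph{diameter} less than $\delta\ep n/K$ for a small $\delta$; then each $\mathbf{C}_i$ must contain a good block well inside the box (otherwise $\mathbf{C}_i$ itself is a connected bad set of diameter $\ge\ep n/K-2$), and any block path between these two good blocks can be rerouted around each bad $*$-cluster along its (good, $*$-connected) exterior boundary, the small-diameter hypothesis keeping the detours inside the box. This yields a $*$-path of good blocks from $\mathbf{C}_1$ to $\mathbf{C}_2$, hence $C_1=C_2$ by the local-uniqueness property, a contradiction. So what you are really bounding is $\P(\exists$ bad $*$-cluster of diameter $\ge\delta\ep n/K)$, which indeed implies cardinality $\ge\delta\ep n/K$ and is controlled by the animal count plus Liggett--Schonmann--Stacey domination.
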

\subsection {Macroscopic lattice}
We define the $N$-box and the enlarged box for $\mathbf{k}\in\Z^d$: $$\underline\Lambda_N(\mathbf{k}):=[-N,N)^ d\cap \sZ^d+2\mathbf{k}N,\quad \underline\Lambda_N'(\mathbf{k}):=\bigcup_{\|\mathbf{i}-\mathbf{k}\|_\infty \le 1}\underline\Lambda_N(\mathbf{i}).$$ 
Then, we have the decomposition of $\Z^d$ as $\sZ^d=\bigsqcup_{\mathbf{i}\in\sZ^d}\underline\Lambda_N(\mathbf{i})$ where $\sqcup$ denotes the disjoint union. 
Thus, for any $x\in\Z^d$, there exists a unique $\mathbf{i}\in\Z^d$ such that $x\in\underline{\Lambda}_N(\mathbf{i})$. With abuse of notation, we use the notation $\underline{\Lambda}_N(x)$ for the box  $\underline{\Lambda}_N(\mathbf{i})$ containing $x$. The sites corresponding to the boxes are the so-called {\bf macroscopic lattice} of sidelength $N$; whereas the standard vertices in $\Z^d$ correspond to the {\bf microcopic lattice}.

\begin{defin}\label{defgoodbox}Let $\ep\in(0,{\frac 1 2})$ and $N\in\N$. We say that a site $\mathbf{i}\in\Z^d$ is {\bf $\ep$-good} if the following hold:
\begin{enumerate}
\item \label{enum1:1} There exists a unique open cluster, denoted by $\sC(\mathbf{i})$, in $\underline\Lambda'_{N}(\mathbf{i})$ with diameter at least $\frac{N}2$;
\item \label{enum1:2} This cluster $\sC(\mathbf{i})$ intersects with  every subbox in $\underline\Lambda'_{N}(\mathbf{i})$ of sidelength $\ep N$;
\item \label{enum1:3}For all $x,y\in \underline\Lambda'_{N}(\mathbf{i})\cap \sC(\mathbf{i})$, we have \[\cD^{\kG_p}(x,y)\leq \mu(x-y)+\ep N.\]
\end{enumerate}
{Otherwise, if at least one of the three conditions does not hold, we say that the site $\mathbf{i}$ is {\bf $\ep$-bad}.}
\end{defin}
\begin{rem}
Thanks to our definition of $\ep$-good, for any $*$-neighbours $\mathbf{i}$ and $\mathbf{j}$ that are $\ep$-good, the clusters $\sC(\mathbf i)$ and $\sC(\mathbf j)$ are connected in $\underline\Lambda'_{N}(\mathbf{j}) $. Indeed, thanks to Property \eqref{enum1:2}, $\sC(\mathbf i)\cap \underline\Lambda'_{N}(\mathbf{j}) $ has a connected component of diameter at least $N/2$ and the claim follows from Property \eqref{enum1:1} for the site $\mathbf{j}$.
\end{rem}
When there is no confusion, we say good instead of $\ep$-good.  The states of the boxes have a short-range dependence. {To see this,} set 
\begin{equation}\label{def:rho}
    \rho:=\lfloor 10d\mu(\mathbf{e}_1)\rfloor.
\end{equation}
Thanks to the definition of $\ep$-good, for $\mathbf{i,j}\in\Z^ d$ with $\|\mathbf{i}-\mathbf{j}\|_\infty\ge \rho $, the states of the sites $\mathbf{i}$ and $\mathbf{j}$ are independent.
Finally, set
\[\underline \Lambda_N^ \rho(\mathbf{i}):=\bigcup_{\mathbf{j}:\|\mathbf{i}-\mathbf{j}\|_\infty\leq  \rho}\underline \Lambda_N(\mathbf{j}).\]
\begin{prop}\label{prop:path}Let $\ep>0$. For $p>p_c(d)$, there exists $c>0$ such that for $N\in\N$ large enough,
\[\P(\underline\Lambda_N({\bf 0})\text{ is $\ep$-bad})\leq e^{-cN}\,.\]
\end{prop}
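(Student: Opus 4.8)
The statement asserts that a box of side length $N$ is $\ep$-good except with probability exponentially small in $N$. Since ``$\ep$-good'' is the conjunction of the three properties in Definition~\ref{defgoodbox}, the plan is to bound the probability of each failure separately and take a union bound. The first I would handle is Property~\eqref{enum1:1}, the existence of a unique large cluster in $\underline\Lambda'_N(\mathbf 0)$. The existence of \emph{some} cluster of diameter at least $N/2$ follows from Theorem~\ref{thm:holes} (the infinite cluster meets every sub-box of $\underline\Lambda'_N(\mathbf 0)$ except with probability $e^{-cN^{d-1}}$, and on that event it crosses the box), and uniqueness follows from Theorem~\ref{thm:distclus} applied with $\ep$ replaced by a suitable constant like $1/6$: the probability that two disjoint clusters of diameter at least $N/6$ coexist in $\underline\Lambda'_N(\mathbf 0)$ is at most $e^{-cN}$.

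For Property~\eqref{enum1:2}, once we know the unique large cluster is (a piece of) the infinite cluster with overwhelming probability, we need the infinite cluster to intersect every sub-box of side length $\ep N$ inside $\underline\Lambda'_N(\mathbf 0)$. There are $O((1/\ep)^d)$ such sub-boxes, and for each one Theorem~\ref{thm:holes} gives that $\sC_\infty$ misses it with probability at most $e^{-c(\ep N)^{d-1}}$; a union bound over the $O(\ep^{-d})$ sub-boxes still leaves something like $e^{-c'N^{d-1}}$ for $N$ large (with $c'$ depending on $\ep$). One must also make sure the cluster witnessing \eqref{enum1:1} really is the one meeting all sub-boxes, but this is automatic on the event that there is a unique cluster of diameter $\ge N/2$: any cluster hitting two opposite faces of a sub-box and extending across $\underline\Lambda'_N(\mathbf 0)$ has diameter $\ge N/2$, hence must coincide with $\sC(\mathbf 0)$.

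Property~\eqref{enum1:3} is the substantive point and the main obstacle. We need that for all $x,y$ in the cluster $\sC(\mathbf 0)$ inside $\underline\Lambda'_N(\mathbf 0)$, the chemical distance satisfies $\cD^{\kG_p}(x,y)\le \mu(x-y)+\ep N$. The natural tool is a shape-theorem / concentration estimate for the chemical distance: Garet--Marchand's large deviation bound \eqref{eq:garetmarchand} (together with \eqref{eq:antalpisztora2}) controls the upper tail $\P((1+\xi)\mu(x-y)\le \cD^{\kG_p}(x,y)<\infty)\le e^{-c\|x-y\|_1}$. One applies this for every pair $x,y$ in $\underline\Lambda'_N(\mathbf 0)$ with $\|x-y\|_1\ge \delta N$ for a small constant $\delta$ (choosing $\xi$ small enough that $(1+\xi)\mu(x-y)\le\mu(x-y)+\tfrac12\ep N$ when $\|x-y\|_1\le 3N\sqrt d$ — this fixes $\xi\sim \ep$); there are at most $N^{2d}$ such pairs, so the union bound gives failure probability $\le N^{2d}e^{-c\delta N}=e^{-c'N}$. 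For pairs with $\|x-y\|_1<\delta N$ one argues directly: on the event of \eqref{enum1:1} and \eqref{enum1:2}, a standard renormalization / block argument (or Antal--Pisztora's estimate on short-range chemical distances within the infinite cluster) shows that $\cD^{\kG_p}(x,y)\le K\|x-y\|_1 \le K\delta N\le \tfrac12\ep N$ except on an event of probability $e^{-cN}$, provided $\delta$ is chosen small relative to $\ep/K$. Here $K$ is the Antal--Pisztora constant; the subtlety is that one needs the bound to hold \emph{uniformly} over all such nearby pairs inside the box, which is handled by a further union bound over the $O(N^d)$ choices of $x$ and the observation that for each $x$ the cost of a bad excursion near $x$ is again exponentially small in $N$ after summing over the $\le (\delta N)^d$ choices of $y$.

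Collecting the three estimates, the failure probability of $\ep$-goodness is at most $e^{-cN^{d-1}}+e^{-cN}+e^{-cN}\le e^{-c''N}$ for $N$ large, which is the claim. The one genuinely delicate ingredient is the uniform upper-tail control of the chemical distance over all pairs in the box in Property~\eqref{enum1:3}; everything else is a routine union bound built on Theorems~\ref{thm:density}, \ref{thm:holes}, \ref{thm:distclus} and the Garet--Marchand / Antal--Pisztora estimates \eqref{eq:garetmarchand}--\eqref{eq:antalpisztora2}.
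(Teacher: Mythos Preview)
Your proposal is correct and follows essentially the same route as the paper's own proof: a union bound over the three defining conditions, handling~\eqref{enum1:1} via Theorems~\ref{thm:holes} and~\ref{thm:distclus}, \eqref{enum1:2} via Theorem~\ref{thm:holes} applied to each sub-box, and~\eqref{enum1:3} via~\eqref{eq:garetmarchand} and~\eqref{eq:antalpisztora2} together with a union bound over the $O(N^{2d})$ pairs $x,y$ in $\underline\Lambda'_N(\mathbf 0)$. The paper's proof is simply a terse two-line version of this same plan; one small cleanup in your nearby-pair case is to apply~\eqref{eq:antalpisztora2} directly with $l=\tfrac12\ep N$ (valid once $\delta\le \ep/(2C)$) rather than via the intermediate bound $K\|x-y\|_1$, since the latter only fails with probability $e^{-cK\|x-y\|_1}$ rather than $e^{-cN}$.
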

\begin{proof}
Thanks to Theorems \ref{thm:holes}, $\P(\sC_\infty\cap\underline\Lambda_N(\mathbf{0})=\emptyset)\leq e^{-cN}$. Hence, we suppose $\sC_\infty\cap\underline\Lambda_N(\mathbf{0})\neq \emptyset.$ Then, we can take a connected component $\sC(\mathbf{0})$ in $\sC_\infty\cap\underline\Lambda'_N(\mathbf{0})$ with diameter at least $N/2$.  Thanks to Theorem \ref{thm:distclus}, this is a unique connected component of diameter at least $N/2$ with probability at least $1-e^ {-cN/2}$. 
The result follows from the union bound with \eqref{eq:antalpisztora2}, \eqref{eq:garetmarchand}, and Theorem \ref{thm:holes}. 
\end{proof}
The following lemma controls the number of bad boxes.
\begin{lem}\label{lem:nobadbox} Let $\ep>0$ and
 $N=N_n:=\log^2n:=\lfloor(\log{n})^2\rfloor$. For $n$ large enough,
\aln{
\P\left(\#\{\mathbf{i}\in \Lambda_{n^2}\cap\Z^ d: \mathbf{i}\text{ is $\ep$-bad}\}>n \right)\leq e^{-n\log{n}}.
}
\end{lem}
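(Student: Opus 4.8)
The plan is a standard renormalisation estimate. Write $K_n:=\#\{\mathbf i\in\Lambda_{n^2}\cap\Z^d:\mathbf i\text{ is }\ep\text{-bad}\}$ and $N=N_n=\lfloor(\log n)^2\rfloor$, and fix the constant $c>0$ of Proposition~\ref{prop:path}, so that $\P(\mathbf i\text{ is }\ep\text{-bad})\le e^{-cN}$ for every $\mathbf i$. The first step is to reduce to an independent family: partition the macroscopic sites of $\Lambda_{n^2}$ into the $\rho^d$ residue classes modulo $\rho$. Within one class, any two distinct sites $\mathbf i\ne\mathbf j$ satisfy $\|\mathbf i-\mathbf j\|_\infty\ge\rho$; by the short-range dependence recorded below \eqref{def:rho} (a box's status is measurable with respect to the edges in a neighbourhood of it of macroscopic radius $<\rho$, since any path realising the bound in property \eqref{enum1:3} of Definition~\ref{defgoodbox} has length $O(N)$, its two endpoints lying in a box of side $O(N)$), the events $\{\mathbf i\text{ is }\ep\text{-bad}\}$ with $\mathbf i$ in one class are mutually independent, each of probability $\le e^{-cN}$. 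Consequently, if $K_n>n$ then, by pigeonhole, some residue class contains at least $m:=\lceil n/\rho^d\rceil$ bad sites.

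Each residue class has at most $|\Lambda_{n^2}\cap\Z^d|\le(2n^2+1)^d\le C_d\,n^{2d}$ sites, so by stochastic domination by a binomial and a union bound over $m$-subsets,
\[\P(K_n>n)\le \rho^d\,\P\!\big(\Bin(C_d n^{2d},e^{-cN})\ge m\big)\le \rho^d\binom{C_d n^{2d}}{m}e^{-cNm}\le \rho^d\big(C_d\,n^{2d}\,e^{-cN}\big)^{m}.\]
Since $N\ge(\log n)^2-1$, for $n$ large the base obeys $C_d n^{2d}e^{-cN}\le C_d e^{c}\exp(2d\log n-c(\log n)^2)\le\exp(-\tfrac c2(\log n)^2)$, the quadratic term in $\log n$ dominating the linear one as well as the constant. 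Hence
\[\P(K_n>n)\le \rho^d\exp\!\Big(-\tfrac c2\,m\,(\log n)^2\Big)\le \rho^d\exp\!\Big(-\tfrac{c}{2\rho^d}\,n\,(\log n)^2\Big)\le e^{-n\log n}\]
for $n$ large, because $\tfrac{c}{2\rho^d}(\log n)^2\ge\log n$ once $\log n\ge 2\rho^d/c$, at which point the prefactor $\rho^d$ is absorbed. This is the asserted bound.

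The only genuine point here is the reduction to an independent family in the first step; the remainder is bookkeeping. It is worth stressing that the scaling $N_n\asymp(\log n)^2$ is used crucially: the single-box probability $e^{-cN_n}$ must decay faster than every power of $n$ in order to absorb the volume factor $|\Lambda_{n^2}|\asymp n^{2d}$ produced by the union bound, and this would fail were $N_n$ only of order $\log n$.
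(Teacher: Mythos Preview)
Your proof is correct and follows essentially the same approach as the paper: both reduce to an independent family via the $\rho$-range dependence stated below \eqref{def:rho}, then combine Proposition~\ref{prop:path} with a union bound, exploiting that $N_n\asymp(\log n)^2$ beats the polynomial volume $|\Lambda_{n^2}|$. The only cosmetic difference is that the paper unions over all subsets $I$ of size $>n$ and extracts from each a $\rho$-separated subset of size $\ge|I|/(4\rho)^d$, while you pigeonhole first into one of the $\rho^d$ residue classes and then bound a binomial tail; these are two packagings of the same computation.
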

\begin{proof}
By the union bound, we have
\begin{equation*}
   \P( \#\{\mathbf{i}\in \Lambda_{n^2}\cap\Z^ d: \mathbf{i}\text{ is $\ep$-bad}\}>n )\le \sum_{I\subset \Lambda_{n^2}\cap\Z^ d: |I|>n} \P( \forall \mathbf{i}\in I\quad \mathbf{i}\text{ is $\ep$-bad} ).
\end{equation*}
If $\|\mathbf{i}-\mathbf{j}\|_\infty\ge \rho$, then the states of $\mathbf{i}$ and $ \mathbf{j}$ are independent.
It is easy to see (e.g. \cite[Lemma 4.3]{CerfDembin}) that there exists $J\subset I$ such that $|J|\ge |I|/(4\rho)^ d$ and for all $\textbf{i}\ne \textbf{j} \in J$, $\|\textbf{i}-\textbf{j}\|_\infty\ge \rho$.
It follows from  Proposition~\ref{prop:path} that for $n$ large enough,
\aln{\label{eq:bad1}
    &\P( \#\{\mathbf{i}\in \Lambda_{n^2}\cap\Z^ d: \mathbf{i}\text{ is $\ep$-bad}\}>n )\le \sum_{\substack{I\subset \Lambda_{n^2}\cap\Z^ d:\\ |I|>n}} \exp\left(-c\frac{|I|}{(4\rho)^ d}\log ^ 2 n\right)\notag\\
   &\le \sum_{k> n} (4n^ 2)^{dk}\exp\left(-c\frac{k}{(4\rho)^ d}\log ^ 2 n\right)\leq e^{-n\log{n}}.
}
\end{proof}
\subsection {Construction of microscopic path from  macroscopic path} Let $\ep\in(0,1/2)$ and $N\in \N$.  Consider the macroscopic lattice of sidelength $N$. For $\mathbf{w}\in\Z^d$, denote by $\mathfrak C(\mathbf{w})$ the  $\ep$-bad  {$\Z^d$}-cluster of $\mathbf{w}$ in the macroscopic lattice, that is, the set of  all $\ep$-bad sites connected to $\mathbf{w}$ by a  macroscopic  {$\Z^d$}-path of bad sites. We define $\mathfrak C(\mathbf{w}):=\emptyset$ if $\mathbf{w}$ is $\ep$-good.
\begin{lem}\label{lem:maxsizebadclus}
Let $\delta,\ep>0$. If $N$ is large enough, then for any   set of macroscopic sites $\Gamma$, 
\[\P\left(\left|\bigcup_ {\mathbf{w}\in  \Gamma}\mathfrak C(\mathbf{w})\right|\ge \delta|\Gamma|\right)\le \exp(-\delta |\Gamma|/2).\]
\end{lem}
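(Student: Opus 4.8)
The plan is to bound the event $\{|\bigcup_{\mathbf{w}\in\Gamma}\mathfrak{C}(\mathbf{w})|\ge \delta|\Gamma|\}$ by summing over all possible realizations of the union of bad clusters. The key structural observation is that $\bigcup_{\mathbf{w}\in\Gamma}\mathfrak{C}(\mathbf{w})$ is a union of $*$-connected (more precisely, $\Z^d$-connected) components of bad sites, each of which either touches $\Gamma$ or is reachable from $\Gamma$; in any case, writing $A:=\bigcup_{\mathbf{w}\in\Gamma}\mathfrak{C}(\mathbf{w})$, the set $A$ consists entirely of $\ep$-bad sites and it decomposes into at most $|\Gamma|$ connected clusters. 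So if $|A|\ge \delta|\Gamma|$, there is a set $A$ of $\ge\delta|\Gamma|$ bad sites that is a union of at most $|\Gamma|$ $\Z^d$-connected lattice animals, each meeting (or anchored at) a site of $\Gamma$.

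First I would enumerate such sets: choosing an admissible $A$ with $|A|=m\ge\delta|\Gamma|$ amounts to choosing how to partition $m$ among at most $|\Gamma|$ animals rooted near the (at most $|\Gamma|$) sites of $\Gamma$, and then choosing each animal. Using the lattice-animal bound \eqref{eq:animals}, the number of $\Z^d$-connected (a fortiori $*$-connected) animals of size $k$ rooted at a fixed site is at most $7^{dk}$, so the total number of admissible $A$ of size $m$ is at most $2^{|\Gamma|}\cdot 7^{dm}\cdot(\text{poly factors})$, which I can absorb into $C^{m}$ for a dimensional constant $C$ once $m\ge\delta|\Gamma|$ (the $2^{|\Gamma|}\le 2^{m/\delta}$ factor is also of the form $C^m$). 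Next, for a fixed admissible $A$, I want to bound $\P(\text{all sites of }A\text{ are }\ep\text{-bad})$. Here I use the short-range dependence: by the argument already used in Lemma~\ref{lem:nobadbox} (citing \cite[Lemma 4.3]{CerfDembin}), inside $A$ there is a subset $A'\subset A$ with $|A'|\ge |A|/(4\rho)^d$ whose sites are pairwise at $\ell_\infty$-distance $\ge\rho$, hence have independent states; by Proposition~\ref{prop:path}, for $N$ large each is $\ep$-bad with probability $\le e^{-cN}$, so $\P(\text{all of }A\text{ bad})\le \exp(-c|A|N/(4\rho)^d)$.

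Putting the two bounds together, for $N$ large enough (so that $e^{-cN/(4\rho)^d}$ beats the entropy constant $C$, i.e. $cN/(4\rho)^d \ge 2\log C + \log 2/\delta + 1$, say), the sum over $m\ge\delta|\Gamma|$ of $C^m e^{-cmN/(4\rho)^d}$ is dominated by its first term and is at most $\exp(-\delta|\Gamma|/2)$, which is exactly the claimed bound. The main obstacle is the combinatorial bookkeeping in the first step: making precise the claim that $A$ is a union of at most $|\Gamma|$ animals and that the number of such $A$ of total size $m$ is at most $C^m$ (uniformly in how the mass is split among the animals and in the positions of the anchor points in $\Gamma$) — this requires a careful but standard Peierls-type counting, noting that once $m\ge\delta|\Gamma|$ every factor depending on $|\Gamma|$ is also exponential in $m$. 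Everything else (the independence reduction and the final geometric-series summation) is routine given the results already established in this section.
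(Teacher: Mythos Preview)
Your Peierls-type enumeration is correct and will go through: the decomposition of $A=\bigcup_{\mathbf w\in\Gamma}\mathfrak C(\mathbf w)$ into at most $|\Gamma|$ $\Z^d$-connected animals each rooted at a site of $\Gamma$ is exactly right, the entropy bound $C_\delta^m$ (with $C_\delta$ depending on $\delta$ through the $2^{|\Gamma|}\le 2^{m/\delta}$ factor) is legitimate, and the independence reduction via the $\rho$-separated subset followed by Proposition~\ref{prop:path} gives the needed $e^{-cmN/(4\rho)^d}$. The geometric tail then yields the claim for $N$ large depending on $\delta,d,p$.

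The paper, however, takes a shorter route that bypasses your ``main obstacle'' entirely. Instead of enumerating admissible unions $A$, it invokes a stochastic domination (citing \cite[Lemma~3.6]{GMPT}): the total size $|\bigcup_{\mathbf w\in\Gamma}\mathfrak C(\mathbf w)|$ is stochastically dominated by $\sum_{i=1}^{|\Gamma|}|\mathfrak C_i|$, where the $\mathfrak C_i$ are i.i.d.\ copies of $\mathfrak C(\mathbf 0)$. An exponential Markov inequality then reduces everything to bounding $\E e^{|\mathfrak C(\mathbf 0)|}$, which is handled by a single-cluster tail estimate (animals plus Proposition~\ref{prop:path}) showing $\E e^{|\mathfrak C(\mathbf 0)|}\le e^{\delta/2}$ for $N$ large. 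So the paper trades your multi-animal combinatorics for a black-box domination lemma plus an exponential-moment computation on one cluster; your argument is more self-contained but heavier on bookkeeping, while the paper's is slicker once the domination is granted.
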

\begin{proof}
Let $(\mathfrak C_i)_{i\ge 1}$ be a family of independent cluster following the law of $\mathfrak C(\mathbf{0})$. 
We have
\begin{equation}\label{proof lem 2.7}
    \P\left(\left|\bigcup_ {\mathbf{w}\in  \Gamma}\mathfrak C(\mathbf{w})\right|\ge \delta|\Gamma|\right)\le \P\left(\sum_{i=1}^ {|\Gamma|}|\mathfrak C_i|\ge \delta|\Gamma|\right)\le \E(\exp( | \mathfrak C(\mathbf{0})|-\delta))^ {|\Gamma|},
\end{equation}
where we reference the proof of \cite[Lemma 3.6]{GMPT} for the first inequality, while the second inequality employs the exponential Markov inequality. For $N$ large enough,  using the same arguments as in \eqref{eq:bad1}, by \eqref{eq:animals}, 
\begin{equation}\label{eq:contbadcomponents}
\P(| \mathfrak C(\mathbf{0})|\ge k)\le\sum_{\ell\ge k}\sum_{A\in \textbf{Animal}_{\textbf{0}}^\ell}\P( \forall \mathbf{i}\in A\quad \mathbf{i}\text{ is $\ep$-bad} )\le \sum_{\ell\ge k}  7^ {d\ell}\,e^{- c\ell N/(4\rho)^d}\le \exp(-ck N/(8\rho)^d).
\end{equation}
If $N$ is large enough depending on $\rho,c$ and $\delta$, then we have
\[\E(\exp(| \mathfrak C(\textbf{0})|)) \le  1+\sum_{k\ge 1} e^k \P(| \mathfrak C(\textbf{0})|\ge k)\le 1+\sum_{k\ge 1} \exp(k(1-c N/(8\rho)^d)) \le \exp(\delta/2). \]
 Combined with \eqref{proof lem 2.7}, this yields the claim.
\end{proof}

We will also need the following lemma that is an easy adaptation of   \cite[Lemma 3.2]{dembin}.
\begin{lem} \label{lem:connexiongoodbox}Let $\Gamma$ be a  $*$-connected set of $\ep$-good sites. Let $x\in \sC(\mathbf{j}){\cap \underline\Lambda_N(\mathbf{j})}$ and $y\in \sC(\mathbf{k}){\cap \underline\Lambda_N(\mathbf{k})}$ with $\mathbf{j},\,\mathbf{k}\in\Gamma$. Then, we can find a microscopic, open path joining $x$ and $y$ of length at most $2d\mu(\mathbf{e}_1)N|\Gamma|$ {in $\cup_{\mathbf{i}\in\Gamma}\underline \Lambda_N^ \rho(\mathbf{i})$. }
\end{lem}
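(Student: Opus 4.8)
The plan is to convert a macroscopic $*$-path inside $\Gamma$ into a microscopic open path by stitching together short open detours across the successive good boxes. Since $\Gamma$ is $*$-connected and contains both $\mathbf j$ and $\mathbf k$, the first step is to fix a self-avoiding macroscopic $*$-path $\mathbf i_0=\mathbf j,\mathbf i_1,\dots,\mathbf i_m=\mathbf k$ with every $\mathbf i_\ell\in\Gamma$; self-avoidance forces $m\le|\Gamma|-1$, which is ultimately responsible for the factor $|\Gamma|$ in the length bound.

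The second step is to choose, for each $\ell\in\{1,\dots,m\}$, a transfer vertex $z_\ell\in\sC(\mathbf i_{\ell-1})\cap\sC(\mathbf i_\ell)$ lying in $\underline\Lambda_N(\mathbf i_\ell)$ (and close to the face it shares with $\underline\Lambda_N(\mathbf i_{\ell-1})$), and to set $z_0:=x$, $z_m:=y$, which the hypotheses place in $\sC(\mathbf i_0)\cap\underline\Lambda_N(\mathbf i_0)$ and $\sC(\mathbf i_m)\cap\underline\Lambda_N(\mathbf i_m)$ respectively. Here the good-box definition does the work: for the $*$-adjacent good sites $\mathbf i_{\ell-1},\mathbf i_\ell$, the remark following Definition~\ref{defgoodbox} shows $\sC(\mathbf i_{\ell-1})$ and $\sC(\mathbf i_\ell)$ lie in a common open cluster of $\underline\Lambda'_N(\mathbf i_\ell)$, which by the uniqueness in Property~\eqref{enum1:1} must be $\sC(\mathbf i_\ell)$; Property~\eqref{enum1:2} then guarantees this common cluster meets every $\ep N$-subbox, in particular the relevant part of $\underline\Lambda_N(\mathbf i_\ell)$, so $z_\ell$ exists. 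Consequently $z_{\ell-1}$ and $z_\ell$ both belong to $\sC(\mathbf i_\ell)\cap\underline\Lambda'_N(\mathbf i_\ell)$, and Property~\eqref{enum1:3} provides an open path $\gamma_\ell$ from $z_{\ell-1}$ to $z_\ell$ of length at most $\mu(z_{\ell-1}-z_\ell)+\ep N$. Since $\gamma_\ell$ effectively traverses the single box $\underline\Lambda_N(\mathbf i_\ell)$ (up to an $O(\ep N)$ excursion into the neighbouring box), the $\ell^1$-displacement $\|z_{\ell-1}-z_\ell\|_1$ is bounded, up to lower order, by the diameter of an $N$-box, and using that $\mu$ is a norm invariant under the lattice symmetries, $\mu(v)\le\mu(\mathbf e_1)\|v\|_1$; a routine summation of the lengths $|\gamma_\ell|$ over $\ell\le m\le|\Gamma|$, exactly as in \cite[Lemma~3.2]{dembin}, then bounds the total length of $\gamma:=\gamma_1\oplus\cdots\oplus\gamma_m$ by $2d\mu(\mathbf e_1)N|\Gamma|$.

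Finally, for the localization claim, each $\gamma_\ell$ starts in $\underline\Lambda_N(\mathbf i_\ell)$ and has length $O(d\mu(\mathbf e_1)N)$, so all its vertices lie within $\ell^1$-distance $\rho N$ of $\underline\Lambda_N(\mathbf i_\ell)$ — this is precisely why $\rho$ was defined as $\lfloor 10d\mu(\mathbf e_1)\rfloor$ in \eqref{def:rho} — whence $\gamma_\ell\subset\underline\Lambda_N^\rho(\mathbf i_\ell)$ and therefore $\gamma\subset\bigcup_{\mathbf i\in\Gamma}\underline\Lambda_N^\rho(\mathbf i)$. The one step I would write out in detail is the construction of the transfer vertices $z_\ell$: checking that they can be taken inside the small box $\underline\Lambda_N(\mathbf i_\ell)$ and that $z_{\ell-1},z_\ell$ genuinely sit in a common cluster of $\underline\Lambda'_N(\mathbf i_\ell)$ on which Property~\eqref{enum1:3} applies. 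Everything else is the bookkeeping already carried out in \cite{dembin}, and I do not anticipate any new difficulty there.
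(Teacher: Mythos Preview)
Your proposal is correct and follows the standard argument the paper invokes by citing \cite[Lemma~3.2]{dembin} (no proof is given in the paper itself). One small indexing correction worth noting, since you flagged the transfer-vertex step as the one to write out: with your definitions $z_{\ell-1}\in\sC(\mathbf i_{\ell-2})\cap\sC(\mathbf i_{\ell-1})$ and $z_\ell\in\sC(\mathbf i_{\ell-1})\cap\sC(\mathbf i_\ell)$, so both lie in $\sC(\mathbf i_{\ell-1})\cap\underline\Lambda'_N(\mathbf i_{\ell-1})$ and Property~\eqref{enum1:3} should be applied at the site $\mathbf i_{\ell-1}$ rather than $\mathbf i_\ell$; the rest of the argument is unaffected.
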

We say that $S$ is a macroscopic slab, if there exist $\mathrm I\subset [d]$ with $|\mathrm I|\le d-2$ and  $(k_i)_{i\in\mathrm I} \subset \Z^{|I|}$ such that
\[S=\{(k_1,k_2,\dots,k_d):~ (k_i)_{i\in [d]\setminus\mathrm I}\subset \Z\}.\]
We denote by $\sC_\infty(S)$ the infinite {$\Z^d$-}connected component of $\ep$-good sites contained in $S$. Let
\[\underline S:=\bigcup_{\mathbf{i}\in S}\underline \Lambda_N^ \rho (\mathbf{i}).\]
Given $A\subset \Z^d$, we define the chemical distance constrained on $A$: for $B,C\subset A$,
\al{
\cD^{\G_p}_A(B,C):=
\inf\{|\gamma|:~x\in B,\,y\in C,\,\gamma:x\overset{A}{\leftrightarrow} y\}.
}
We write $\cD^{\G_p}_A(x,C):=\cD^{\G_p}_A(\{x\},C)$, $\cD^{\G_p}_A(x,y):=\cD^{\G_p}_A(\{x\},\{y\})$.  

\begin{prop}\label{prop:connexpoints}Let $\ep>0$. There exist $c>0$ and $N\in \N$ such that for any  macroscopic slab $S$ and  $\mathbf{x,y}\in S\cap \sZ^d$ satisfying $\mathbf{x}-\mathbf{y}\in \sZ\mathbf e_k$ with some $k\in[d]$,
\ben{\label{Brutas}
\P(\mathbf{x,y}\in\sC_\infty(S),~\exists x\in \sC(\mathbf{x})\cap \underline\Lambda_N(\mathbf{x})\,\exists y\in \sC(\mathbf{y})\cap \underline\Lambda_N(\mathbf{y}); \cD^{\kG_p}_{\underline S}(x,y)\ge (1+\ep)\mu(x-y))\le e^ {-c \|\mathbf{x}-\mathbf{y}\|_1}.
}
\end{prop}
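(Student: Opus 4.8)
The plan is to reduce the event in \eqref{Brutas} to a one-dimensional chain argument along the line segment joining $\mathbf x$ and $\mathbf y$, and then exploit the approximate subadditivity of the constrained chemical distance together with the exponential control on the size of bad macroscopic clusters from Lemma~\ref{lem:maxsizebadclus}. First I would fix $k\in[d]$ with $\mathbf x-\mathbf y\in\Z\mathbf e_k$, set $M:=\|\mathbf x-\mathbf y\|_1$, and consider the sequence of macroscopic sites $\mathbf x=\mathbf z_0,\mathbf z_1,\dots,\mathbf z_M=\mathbf y$ with $\mathbf z_{j+1}-\mathbf z_j=\pm\mathbf e_k$ lying on the segment (all inside $S$). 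For each $j$, if $\mathbf z_j$ is $\ep'$-good for a suitably small auxiliary $\ep'$ (to be chosen depending on $\ep$ and on the modulus of continuity of $\mu$), then by Definition~\ref{defgoodbox}\eqref{enum1:1}--\eqref{enum1:2} the cluster $\sC(\mathbf z_j)$ is well connected to $\sC(\mathbf z_{j+1})$ inside $\underline\Lambda_N'(\mathbf z_{j+1})$, and by \eqref{enum1:3} any two of its points are joined with chemical distance at most $\mu(\cdot)+\ep' N$. Concatenating these local good connections along a maximal run of consecutive good sites yields, via Lemma~\ref{lem:connexiongoodbox}, an open path inside $\underline S$ whose length over such a run is at most $\mu(\text{displacement})+O(\ep' N)\cdot(\text{run length})$.

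The next step handles the bad sites. Along the segment, let $\mathcal B$ be the set of indices $j$ such that $\mathbf z_j$ lies in the $\ep'$-bad $\Z^d$-cluster $\mathfrak C(\mathbf w)$ of some bad $\mathbf w$ on the segment; equivalently $\mathcal B\subseteq\bigcup_{\mathbf w\in\Gamma}\mathfrak C(\mathbf w)$ where $\Gamma$ is the set of segment sites. Lemma~\ref{lem:maxsizebadclus}, applied with $\Gamma$ the $M$ segment sites and $\delta=\delta(\ep)$ small, gives $\P(|\mathcal B|\ge \delta M)\le e^{-\delta M/2}\le e^{-cM/2}$, i.e.\ off an event of probability $e^{-c\|\mathbf x-\mathbf y\|_1}$ we may assume fewer than $\delta M$ bad segment sites. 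On that complementary event, the segment decomposes into good runs separated by short bad stretches; across a bad stretch of length $\ell$ we use \eqref{eq:antalpisztora2} (Antal--Pisztora) to cross it, with chemical distance at most $CN\ell$ up to a further exponentially small probability summed over the at most $\delta M$ bad stretches. Gathering the contributions: the total chemical distance along the concatenated path is at most $\sum(\text{good runs: }\mu+\ep'N\cdot\text{len})+\sum(\text{bad runs: }CN\ell)\le \mu(\mathbf x-\mathbf y)N+\big(C'\ep'+C''\delta\big)\|\mathbf x-\mathbf y\|_1 N$; choosing $\ep'$ and $\delta$ small enough relative to $\ep$ and using $\mu(x-y)\ge c_0 N\|\mathbf x-\mathbf y\|_1$ (as $\mu$ is a norm) makes this $\le(1+\ep)\mu(x-y)$ for the specific endpoints $x\in\sC(\mathbf x)\cap\underline\Lambda_N(\mathbf x)$, $y\in\sC(\mathbf y)\cap\underline\Lambda_N(\mathbf y)$ produced by the construction; since on the good-endpoint event these clusters are unique, this bounds $\cD^{\kG_p}_{\underline S}(x,y)$ for \emph{every} admissible pair. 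A union bound over the $O(1)$ sources of exponentially small error (too many bad boxes; each bad stretch too costly to cross; endpoints $\mathbf x,\mathbf y$ themselves bad, handled by Proposition~\ref{prop:path}) yields the bound $e^{-c\|\mathbf x-\mathbf y\|_1}$, after possibly shrinking $c$.

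The main obstacle I anticipate is making the bad stretches genuinely short and cheap simultaneously: Lemma~\ref{lem:maxsizebadclus} controls the \emph{total} number of bad segment sites but not the \emph{maximal length} of a single bad run, so a single long bad cluster crossing the segment could in principle cost an exponentially-in-$N$ large chemical detour that does not telescope nicely. The fix is to invoke the exponential tail $\P(|\mathfrak C(\mathbf 0)|\ge k)\le e^{-ckN/(8\rho)^d}$ from \eqref{eq:contbadcomponents}: any bad run of macroscopic length $\ell$ forces a bad cluster of size $\ge\ell$, so summing $e^{-c\ell N/(8\rho)^d}$ over all possible positions and lengths $\ell$ along the segment is still $\le e^{-c\|\mathbf x-\mathbf y\|_1}$ once $N$ is fixed large; one then crosses each such run through the surrounding good region using \eqref{eq:antalpisztora2} and the isoperimetric bound \eqref{eq:isoperimetry} to control the length of the detour around the bad cluster by $O(N\,\ell^{d/(d-1)})$, which is still summable. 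A secondary technical point is that the construction must stay inside $\underline S$; this is ensured because each local good connection of Lemma~\ref{lem:connexiongoodbox} stays in $\bigcup_{\mathbf i\in\Gamma}\underline\Lambda_N^\rho(\mathbf i)\subseteq\underline S$ by definition of $\underline S$, and the detours around bad clusters can be routed through the $\rho$-neighbourhood of the segment, which also lies in $\underline S$.
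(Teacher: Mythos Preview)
Your overall skeleton---decompose the segment $\Gamma=\{\mathbf z_0,\dots,\mathbf z_M\}$ into maximal good runs and bad stretches, control the good runs by Property~\eqref{enum1:3}, and bound the total bad contribution via Lemma~\ref{lem:maxsizebadclus}---is exactly the paper's strategy. The genuine gap is in how you cross the bad stretches.

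Using \eqref{eq:antalpisztora2} here does not work. First, the Antal--Pisztora bound is for the \emph{unconstrained} chemical distance $\cD^{\kG_p}$; a path realizing it has no reason to stay in $\underline S$, which is precisely what you need. Second, the endpoints of a bad stretch are random (determined by where the bad clusters sit), so you would need a union bound over all possible exit/re-entry pairs along the segment and all possible microscopic representatives; combined with the randomness of the bad clusters themselves, this does not decouple cleanly and the cost does not telescope. Third, your isoperimetric ``fix'' is applied in the wrong direction: \eqref{eq:isoperimetry} gives a \emph{lower} bound $|\partial^{\rm ext}\Gamma|\ge (|\Gamma|/\kappa_d)^{(d-1)/d}$, not an upper bound; the correct trivial bound $|\partial^{\rm ext}\Gamma|\le 2d|\Gamma|$ yields a detour of order $N\ell$, not $N\ell^{d/(d-1)}$, and only the former sums to $O(\ep_0 N|\Gamma|)$.

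The missing idea is to bypass a bad cluster \emph{deterministically} through good boxes. On the event $\{\mathbf x,\mathbf y\in\sC_\infty(S)\}$, the sites $\mathbf z_{\tau_{out}(k)}$ and $\mathbf z_{\tau_{in}(k+1)}$ both lie in the exterior boundary $\partial^{\rm ext}_S\mathfrak C_S(\mathbf z_{\tau_{out}(k)+1})$ taken inside $S$. By Tim\'ar's lemma this set is $*$-connected, and by construction it consists only of $\ep_0$-good sites. Lemma~\ref{lem:connexiongoodbox} then gives a microscopic open path between the chosen representatives $x'_{\tau_{out}(k)}$ and $x'_{\tau_{in}(k+1)}$ of length at most $2d\mu(\mathbf e_1)N\,|\partial^{\rm ext}_S\mathfrak C_S|\le (2d)^2\mu(\mathbf e_1)N\,|\mathfrak C_S|$, and this path stays in $\underline S$ automatically. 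Summing over the bad stretches, the total bad cost is at most $(2d)^2\mu(\mathbf e_1)N\cdot \ep_0|\Gamma|$, which together with the good-run contribution is $\le (1+\ep)\mu(x-y)$ once $\ep_0$ is chosen small enough. This is a single deterministic implication, so \eqref{Brutas} reduces entirely to the probability estimate of Lemma~\ref{lem:maxsizebadclus}; no separate exponential tail for individual clusters and no Antal--Pisztora step are needed.
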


\begin{proof} Without loss of generality,  we assume $\mathbf{y}=\mathbf{x}+K\mathbf e_1$ with some $K\in\N$. Let $\ep_0:=\ep/(2\ep+4d\mu(\mathbf{e}_1))^3\in (0,1/2)$. We take $N=N(\ep_0)\in\N$ large enough as in Lemma~\ref{lem:maxsizebadclus} with $\ep_0$ in place of $\ep$ and $\delta$.  We consider the macroscopic lattice of sidelength $N$ with $\ep_0$-good. Since the probability in the claim is always less than $1$, we can assume $\|\textbf{x}-\textbf{y}\|_\infty\geq 2N$ by taking $c=c(\ep_0,N)$ small enough. Let $\Gamma\subset \Z^d$ be the set of  macroscopic sites intersecting the line between $\mathbf x$ and $\mathbf y$, that is
\[\Gamma:=\{\mathbf x+i\mathbf{e_1}:i\in\{0,\dots,K\}\}.\]
Our aim is to prove 
\ben{\label{HAPPY}
\{\textbf{x,y}\in\sC_\infty(S),\,\exists x\in \sC(\mathbf{x}),\,\exists y\in \sC(\mathbf{y}),\,\cD^{\kG_p}_{\underline S}(x,y)\ge (1+\ep)\mu(x-y)\}\subset\left\{\left|\bigcup_ {\mathbf{w}\in  \Gamma}\mathfrak C(\mathbf{w})\right| \geq \ep_0|\Gamma|\right\}.
}
Note that \eqref{HAPPY} implies the claim for sufficiently large $N=N(\ep_0)$ and sufficiently small $c=c(\ep_0,N)$ since by Lemma \ref{lem:maxsizebadclus}, the LHS of \eqref{Brutas} is bounded from above by 
$$\exp{(-\ep_0|\Gamma|/2)}\leq \exp{(-c\|\textbf{x}-\textbf{y}\|_1)}. $$
To prove the contrapositive of \eqref{HAPPY}, we suppose $\left|\bigcup_ {\mathbf{w}\in  \Gamma}\mathfrak C(\mathbf{w})\right| < \ep_0|\Gamma|$,  $\textbf{x,y}\in\sC_\infty(S),$ and $x\in \sC(\mathbf{x}),\,y\in \sC(\mathbf{y})$. Denote by $\mathfrak C_S(\mathbf{w})$ the set of  all $\ep$-bad sites connected to $\mathbf{w}$ by a  macroscopic  {$\Z^d$}-path of bad sites included in $S$. Since $\mathfrak C_S(\mathbf{w})\subset \mathfrak C(\mathbf{w})$, 
$\left|\bigcup_ {\mathbf{w}\in  \Gamma}\mathfrak C_S(\mathbf{w})\right| < \ep_0|\Gamma|.$

Let $x\in \sC(\mathbf{x})$ and $y\in \sC(\mathbf{y})$ and denote by $\rL$ the line joining $x$ and $y$.
 Since $\|\textbf{x}-\textbf{y}\|_\infty=\|\textbf{x}-\textbf{y}\|_1\ge 2N$, we can take $(x_k)_{k=0}^K\subset \rL$ such that $x_0=x$, $x_K=y$, and $\|x_i-x_{i-1}\|_\infty\in [N,2N]$ and $x_i\in \underline \Lambda_N(\mathbf x_i)$ where  $\mathbf{x}_i:=\mathbf x +i \mathbf e_1\in \Gamma$. To each good site $\mathbf{x}_k\in\Gamma$, choose the closest point $x_k'$ from $x_k$ in $\sC(\mathbf{x}_k)$ in a deterministic rule breaking ties. 
Thanks to Property \eqref{enum1:2} of $\ep_0$-good, we have 
$\|x_k-x'_k\|_\infty\le \ep_0 N.$ 
We decompose $\Gamma$ into portions of paths consisting of only good boxes as follows: 
We define 
$$\tau_{in}(1):=1\,,\qquad\tau_{out}(1):=\max\big\{\,j\geq \tau_{in}(1) :\, \forall i\in\{\tau_{in}(1),\dots,j\},\quad\mathbf{x}_i\text{ is good}\,\big\}\,.$$ 
 Suppose that $\tau_{in}(1),\dots,\tau_{in}(k)$ and $\tau_{out}(1),\dots,\tau_{out}(k)$ have been defined. We define
\al{
\tau_{in}(k+1)&:=\min\big\{\,j\geq \tau_{out}(k):\,\forall  i\ge j ,\quad\mathbf{x}_i\notin \mathfrak{C}_S(\mathbf{x}_{\tau_{out}(k)+1})\,\big\}\,,\\
\tau_{out}(k+1)&:=\max\big\{\,j\geq \tau_{in}(k+1) :\, \forall i\in\{\tau_{in}(k+1),\dots,j\},\quad\mathbf{x}_i\text{ is good}\,\big\}\,.
}
We stop this procedure once $\tau_{out}(k+1)=K$. 
By construction, using $\textbf {x},\textbf{y}\in \sC_\infty(S)$,
we have  $\mathbf{x}_{\tau_{out}(k)},\mathbf{x}_{\tau_{in}(k+1)}\in  \partial ^{\rm ext}_S\mathfrak{C}_S(\mathbf{x}_{\tau_{out}(k)+1})$, where
\[\partial ^{\rm ext}_S A:=\{\mathbf{x}\in S\setminus A: \mathbf{x} \text{ is connected to infinity in $S\setminus A$ and has a neighbor in $A$}\}.\] 
Since $\partial ^{\rm ext}_S\mathfrak{C}_S(\mathbf{x}_{\tau_{out}(k)+1})$  consists only of good boxes and is  $*$-connected in $S$ by \cite[Lemma 2]{timar}, there  exists a $\ep_0$-good $*$-path from $\mathbf{x}_{\tau_{out}(k)}$ to $\mathbf{x}_{\tau_{in}(k+1)}$ in $\partial ^{\rm ext}_S\mathfrak{C}_S(\mathbf{x}_{\tau_{out}(k)+1})$. By Lemma \ref{lem:connexiongoodbox}, we can therefore build a microscopic, open path between $x'_{\tau_{out}(k)}$ and $x'_{\tau_{in}(k+1)}$ of length at most   $$2dN\mu(\mathbf{e}_1)|\partial ^{\rm ext}_S\mathfrak{C}_S(\mathbf{x}_{\tau_{out}(k)+1})|\leq (2d)^2N\mu(\mathbf{e}_1)|\mathfrak{C}_S(\mathbf{x}_{\tau_{out}(k)+1})|.$$
Between $x'_{\tau_{in}(k)}$ and $x'_{\tau_{out}(k)}$, thanks to Property \eqref{enum1:3} of $\ep_0$-good, we can build a microscopic, open path of length at most 
\begin{equation*}
    \begin{split}
        \sum_{j=\tau_{in}(k)}^{\tau_{out}(k)-1}[\mu(x'_{j+1}-x'_j)+\ep_0 N]&\le \sum_{j=\tau_{in}(k)}^{\tau_{out}(k)-1}[\mu(x_{j+1}-x_j)+ 4d\mu(\mathbf{e}_1)\ep_0 N]\\&\le \mu(x_{\tau_{out}(k)}-x_{\tau_{in}(k)})+4d\mu(\mathbf{e}_1)\ep_0 N(\tau_{out}(k)-\tau_{in}(k) ),
    \end{split}
\end{equation*}
where we  have used   $x_k\in L\cap \underline \Lambda_N(\mathbf x_k)$ in the last inequality.

Finally, by the assumption $\left|\bigcup_ {\mathbf{w}\in  \Gamma}\mathfrak C_S(\mathbf{w})\right| < \ep_0|\Gamma|$, 
$$\sum_k |\mathfrak{C}_S(\mathbf{x}_{\tau_{out}(k)+1})|\leq \ep_0 |\Gamma|,\quad\sum_k (\tau_{out}(k)-\tau_{in}(k))\leq |\Gamma|.$$
    Therefore, since $\|\textbf{x}-\textbf{y}\|_\infty \ge 2N $ and $|\Gamma|\leq 2d\mu(x-y)/N$, and $\ep_0=\ep/(2\ep+4d\mu(\mathbf{e}_1))^3$, we can build a microscopic, open path from $x$ to $y$ of length at most    
\[\mu(x-y)+ (4d)^2\mu(\mathbf{e}_1)\ep_0 N |\Gamma|\leq (1+\ep)\mu(x-y).\]
Therefore, we have \eqref{HAPPY}. 
\end{proof}

\subsection{Proof of Theorem~\ref{thm:3}}\label{proof of thm:3}

To prove Theorem \ref{thm:3}, we  need the following proposition.
\begin{prop}\label{thm:slab}
Suppose $d\geq 3$. Let $S=\Z^2\times \{0\}^{d-2}$.  We denote
$$\Lambda_{\ep,\rho}(n;N):=[-\e n,\e n]^ 2\times [-{\rho N},{\rho N}]^{d-2}.$$
For any $\xi,\e>0$, there exist $N\in \N$ and $c>0$ such that for $n$ large enough, $$\P\left(\cD^{\G_p}_{\underline S}(\Lambda_{\ep,\rho}(n;N),n\mathbf{e}_1+\Lambda_{\ep,\rho}(n;N))>(\mu+\xi)n\right)\leq e^{-c n}.$$
\end{prop}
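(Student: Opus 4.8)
Write $\mu:=\mu(\mathbf{e}_1)$. The plan is to deduce the estimate from a single application of Proposition~\ref{prop:connexpoints} to the slab $S$. Since $\cD^{\G_p}_{\underline S}$ is non-increasing in the size of the two boxes, we may and do assume $\ep\in(0,1/4)$. Fix $\ep_1>0$ small enough that $(1+\ep_1)\mu\le\mu+\xi/2$, and take $N\in\N$ large enough that simultaneously: Proposition~\ref{prop:connexpoints} holds with $\ep_1$ in place of $\ep$; Proposition~\ref{prop:path} gives $\P(\underline\Lambda_N(\mathbf{0})\ \ep_1\text{-bad})\le e^{-cN}$; and the field of $\ep_1$-good sites restricted to the macroscopic slab $S$, which is a copy of $\Z^2$, is supercritical, with its (a.s.\ unique) infinite good cluster $\sC_\infty(S)$ crossing every axis-parallel macroscopic square box of side $\ell$ with probability at least $1-e^{-c\ell}$. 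The last point is standard: the $\ep_1$-good field has range-$\rho$ dependence and marginal at least $1-e^{-cN}$ by Proposition~\ref{prop:path}, so by Liggett--Schonmann--Stacey domination it dominates a Bernoulli site percolation on $\Z^2$ of parameter tending to $1$ as $N\to\infty$, which is supercritical for $N$ large; the crossing estimate for $\sC_\infty(S)$ then follows from the classical theory of planar supercritical percolation. From now on $N$ is fixed, so $O(N)$ quantities are $o(n)$.

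\emph{Step 1: a common row reaching both endpoints.} Put $K:=\lfloor n/(2N)\rfloor$ (so $2NK\in(n-2N,n]$) and $m:=\lfloor\ep n/(8N)\rfloor$, and consider the macroscopic boxes of $S$
\[Q_1:=\{(a,b,0,\dots,0):0\le a\le m,\ |b|\le m\},\qquad Q_2:=K\mathbf{e}_1+\{(a,b,0,\dots,0):-m\le a\le 0,\ |b|\le m\}.\]
A vertical (i.e.\ $\mathbf{e}_2$-directed) crossing of $Q_i$ by $\sC_\infty(S)$ is a path of $\sC_\infty(S)$-sites confined to $Q_i$ joining $\{b=-m\}$ to $\{b=m\}$, and since along a $\Z^d$-path the second coordinate changes by at most one per step, it meets the row $R:=\{(\cdot,0,0,\dots,0)\}$ at a point of $Q_i\cap\sC_\infty(S)$. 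Hence, on the event $E_1$ that both $Q_1$ and $Q_2$ are vertically crossed by $\sC_\infty(S)$ --- of probability at least $1-2e^{-cm}\ge 1-e^{-c'n}$ since $m\asymp n$ --- there exist $\mathbf{x}=(a_1,0,\dots,0)\in Q_1\cap\sC_\infty(S)$ and $\mathbf{y}=(a_2,0,\dots,0)\in Q_2\cap\sC_\infty(S)$ with $\mathbf{y}-\mathbf{x}=(a_2-a_1)\mathbf{e}_1\in\Z\mathbf{e}_1$, $a_2-a_1\in[K-2m,K]$, so $\|\mathbf{x}-\mathbf{y}\|_1\ge K-2m\ge c_1 n$. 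A direct check using $m\le\ep n/(8N)$ and $2NK\le n$ shows that, for $n$ large, $\underline\Lambda_N(\mathbf{x})\subset\Lambda_{\ep,\rho}(n;N)\subset\underline S$ and $\underline\Lambda_N(\mathbf{y})\subset n\mathbf{e}_1+\Lambda_{\ep,\rho}(n;N)\subset\underline S$.

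\emph{Step 2: connecting the two clusters, and conclusion.} For each of the at most $(2m+1)^3\le n^3$ pairs $(\mathbf{x},\mathbf{y})$ with $\mathbf{x}\in Q_1$, $\mathbf{y}\in Q_2$ on the row $R$ (so $\mathbf{y}-\mathbf{x}\in\Z\mathbf{e}_1$), Proposition~\ref{prop:connexpoints} applied to $S$ with $\ep_1$ gives
\[\P\Big(\mathbf{x},\mathbf{y}\in\sC_\infty(S),\ \exists\,x\in\sC(\mathbf{x})\cap\underline\Lambda_N(\mathbf{x}),\ \exists\,y\in\sC(\mathbf{y})\cap\underline\Lambda_N(\mathbf{y});\ \cD^{\G_p}_{\underline S}(x,y)\ge(1+\ep_1)\mu(x-y)\Big)\le e^{-c\|\mathbf{x}-\mathbf{y}\|_1}\le e^{-cc_1 n}.\]
A union bound over these pairs bounds by $n^3e^{-cc_1 n}\le e^{-c_2n}$ the probability of the event $E_2$ that some such pair with $\mathbf{x},\mathbf{y}\in\sC_\infty(S)$ admits $x\in\sC(\mathbf{x})\cap\underline\Lambda_N(\mathbf{x})$, $y\in\sC(\mathbf{y})\cap\underline\Lambda_N(\mathbf{y})$ with $\cD^{\G_p}_{\underline S}(x,y)\ge(1+\ep_1)\mu(x-y)$. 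On $E_1\setminus E_2$, of probability at least $1-e^{-c_3n}$, the pair $(\mathbf{x},\mathbf{y})$ from Step~1 satisfies $\cD^{\G_p}_{\underline S}(x,y)<(1+\ep_1)\mu(x-y)$ for every $x\in\sC(\mathbf{x})\cap\underline\Lambda_N(\mathbf{x})$ and $y\in\sC(\mathbf{y})\cap\underline\Lambda_N(\mathbf{y})$; such $x,y$ exist by Property~\eqref{enum1:2}, and by Step~1 they lie in $\Lambda_{\ep,\rho}(n;N)\cap\underline S$ and in $(n\mathbf{e}_1+\Lambda_{\ep,\rho}(n;N))\cap\underline S$ respectively. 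Since $\mathbf{x},\mathbf{y}$ share the row $R$, $x-y=-2N(a_2-a_1)\mathbf{e}_1+w$ with $\|w\|_\infty<2N$, whence $\mu(x-y)\le 2NK\,\mu+\mu(w)\le n\mu+O(N)$, and therefore
\[\cD^{\G_p}_{\underline S}\big(\Lambda_{\ep,\rho}(n;N),\,n\mathbf{e}_1+\Lambda_{\ep,\rho}(n;N)\big)\le\cD^{\G_p}_{\underline S}(x,y)<(1+\ep_1)\big(n\mu+O(N)\big)\le(\mu+\xi)n\]
for $n$ large, by the choice of $\ep_1$ and since $N$ is fixed. As $\P((E_1\setminus E_2)^c)\le 2e^{-cm}+e^{-c_2n}\le e^{-c_3n}$, this proves the proposition with $c=c_3$.

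The crux is Step~1: we must force $\sC_\infty(S)$ to reach both endpoint regions \emph{along a common coordinate axis}, because Proposition~\ref{prop:connexpoints} only controls axis-parallel macroscopic displacements, and any detour between two distinct rows near the endpoints would cost an extra $\Theta(\ep n)$, breaking the bound when $\xi$ is small. Routing the two approach sites onto the common row $R$ via vertical crossings of $Q_1$ and $Q_2$ is precisely what makes the resulting $\ell_1$-displacement equal to $(1+o(1))n$ rather than $(1+\ep)n$; it also dictates that $Q_1,Q_2$ have macroscopic side of order $\ep n/N$, so that the crossing-failure probability is of order $e^{-\Theta(n)}$, matching the target speed.
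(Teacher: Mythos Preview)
Your proof is correct and follows the paper's strategy: locate macroscopic sites $\mathbf{x},\mathbf{y}\in\sC_\infty(S)$ on a common $\mathbf{e}_1$-row near each endpoint, then invoke Proposition~\ref{prop:connexpoints}. The only difference is in how the row is reached---the paper argues directly that if the axial segment $\{(k_1,0,\dots,0):|k_1|\le\ep n/(2N)\}$ misses $\sC_\infty(S)$ then a $*$-connected bad component of size $\ge\ep n/N$ must enclose it, whereas you go through LSS domination and vertical crossings of $Q_1,Q_2$; both are standard, and your explicit union bound over the $(\mathbf x,\mathbf y)$ pairs is a detail the paper leaves implicit.
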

\begin{proof}
    Let $\xi,\ep>0$, and $N\in\N$ large enough. 
Let $\kF_n$ be the following event
\[\kF_n:= \begin{array}{c}\{\sC_\infty (S)\cap \{(k_1,0,0,\dots,0):k_1\in[-\ep n/(2N),\ep n/(2N)]\}=\emptyset \}\\\cup\{\sC_\infty (S)\cap \{(\lfloor n/N\rfloor+k_1,0,\dots,0):k_1\in[-\ep n/(2N),\ep n/(2N)]\}=\emptyset \}\end{array}\,.\]
If the event occurs, then there exists a  $*$-connected component of bad sites in $S$ such that it encloses $\{(k_1,0,0,\dots,0):k_1\in[-\ep n/(2N),\ep n/(2N)]\cap\Z\}$ or  $ \{(\lfloor n/N\rfloor+ k_1,0,0,\dots,0):k_1\in[-\ep n/(2N),\ep n/(2N)]\cap\Z\}$.
In particular, this $*$-connected component is
of size at least $\ep n/N$.
By a similar computation as in \eqref{eq:contbadcomponents},
there exists $c=c(\ep,\rho,N,\kappa_2)>0$ such that for $n$ large enough,
$    \P(\kF_n)\le e^{-cn}.$

On the event $\kF_n^c$, there exist $\mathbf{x}\in\sC_\infty(S)\cap \{(k_1,0,0,\dots,0):k_1\in[-\ep n/(2N),\ep n/(2N)]\}$ and $\mathbf{y}\in\sC_\infty(S)\cap \{(\lfloor n/N\rfloor+k_1,0,0,\dots,0):k_1\in[-\ep n/(2N),\ep n/(2N)]\}$.
The result follows by using Proposition \ref{prop:connexpoints}.

\end{proof}

\begin{proof}[Proof of Theorem \ref{thm:3}] Let $\xi,\e>0$. Let $N=N(\xi,\ep)\in \N$ be as in Proposition \ref{thm:slab}. For $z\in 2{\rho} \Z^{d-2}$, denote $S(z):=\Z^2\times\{z\}$. Note that the slabs $(\underline S(z):z\in 2{\rho}\Z^{d-2})$ are all disjoint. Let $\bar{z}:=(0,0,z)$. We define
\[\cE(z):=\left\{\cD^{\G_p}_{\underline S(z)}(\bar{z}+\Lambda_{\ep,\rho}(n;N),\bar{z}+n\mathbf{e}_1+\Lambda_{\ep,\rho}(n;N))>(\mu+\xi)n\right\}.\]
Notice that 
\[\{\cD^{\G_p}(\Lambda_{2\e n}(0),\Lambda_{2\ep n}(n\mathbf{e}_1))>(\mu+\xi)n\}\subset \bigcap_{z\in{2\rho }\mathbb Z^{d-2}\cap [-\ep n/(2N),\ep n/(2N)]^{d-2}}\cE(z).\]
Since the events on the right hand side are independent, using Proposition \ref{thm:slab}, we get
\[\P(\cD^{\G_p}(\Lambda_{2\e n}(0),\Lambda_{2\ep n}(n\mathbf{e}_1))>(\mu+\xi)n)\le \exp\left(-cn\left(\frac{\ep N}{4N\rho}\right)^ {d-2}\right).\]
\end{proof}

\section{Space-time cut-points}\label{sec:3}
\subsection{Rate function for space-time cut-point}

The proof of Theorem~\ref{prop:existencelimit} relies on the following key lemma. We postpone its proof until Section \ref{subsection:proofmainlemma}.
\begin{lem}\label{lem:aux}Let $s_0>0$. There exist $ c=c(s_0)>0$ and $ n_0=n_0(s_0)\in\N$ such that the following holds. For any $s,s'\in[0,s_0]$,  $x,x'\in [-s_0,s_0]^d$ and $n\ge m\ge n_0$,
\[\P(\kA_{s,x}(n))\P(\kA_{s',x'}(m))\le e^{cn^{\alpha_d}}\P(\kA_{\frac{n}{n+m}s+\frac{m}{n+m}s',\frac{n}{n+m}x+\frac{m}{n+m}x'}(n+m)).\]
Moreover, when $n=m$, we have
\[\P(\kA_{s,x}(n))\P(\kA_{s',x'}(n))\le e^{cn^{\alpha_d}}\P(\kA_{s+s',x+x'}(n)).\]
\end{lem}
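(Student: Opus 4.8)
\textbf{Proof plan for Lemma~\ref{lem:aux}.}

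The plan is to build, from a configuration witnessing $\kA_{s,x}(n)$ and an independent configuration witnessing $\kA_{s',x'}(m)$, a single configuration witnessing the event $\kA_{\sigma,\chi}(n+m)$ with $\sigma=\frac{n}{n+m}s+\frac{m}{n+m}s'$ and $\chi=\frac{n}{n+m}x+\frac{m}{n+m}x'$ (so that $\sigma(n+m)=sn+s'm$ and $\chi(n+m)=xn+x'm$). The idea is to concatenate the two metric balls end to end: if $\rmB_{s_n}\setminus \rmB_{s_n-1}=\{w_n\}$ is the space-time cut-point of the first ball (with $s_n\ge sn$, $w_n$ close to $nx$) and $\rmB'_{s_m}(\cdot)\setminus \rmB'_{s_m-1}(\cdot)=\{z_m\}$ is that of the second (after translating the second environment so its ``root'' sits at $w_n$), then we splice a short open path connecting $w_n$ to the root of the translated second ball. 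Provided this splice does not disturb the two metric structures and does not intersect them except at the gluing points, the point $z_m$ remains a cut-point for $\rmB_{s_n+t}$ in the combined environment, where $t\ge s'm$ is the time of $z_m$ relative to the root; hence the combined configuration lies in $\kA_{\sigma,\chi}(n+m)$, possibly after noting $s_n+t\ge sn+s'm=\sigma(n+m)$ and $z_m\in\Lambda_{(n+m)^{\alpha_d}}(\chi(n+m))$ since $m^{\alpha_d}+n^{\alpha_d}+o(n)\le (n+m)^{\alpha_d}$ for $n$ large. Since the two environments are independent, the probability of the joint event factorizes as $\P(\kA_{s,x}(n))\P(\kA_{s',x'}(m))$, and the map from pairs of configurations to configurations on $(n+m)$-scale is (up to the local modification) measure-preserving on the overlap region, yielding the claimed inequality with the error $e^{cn^{\alpha_d}}$ accounting for the cost of the splice and the translation.

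The execution proceeds in the following steps. First, resolve the case where the two balls $\rmB_{s_n}$ and $\rmB_{s_m}(w_n)$ (second environment translated) overlap: using the ball-size control (the second ball has diameter $o(n)$, in fact at most polynomial), one finds a translate of the second environment by a vector of length $o(n)$ so the two balls become disjoint while keeping $z_m$ within $\Lambda_{(n+m)^{\alpha_d}}(\chi(n+m))$. Second, and this is where Lemma~\ref{lem:drilling} enters, modify the structure of $\rmB_{s_n}$ near $w_n$ so as to create a ``free line'' through $w_n$ — a line meeting the ball only at $w_n$ — at the cost of changing only $o(n^{\alpha_d})$-many edges, while preserving enough of the metric that $\kA_{s,x}(n)$ still holds for the modified environment; do the same at the root of the translated second ball. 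Third, along these free lines, open a short segment of edges to connect $w_n$ to the root; this segment meets $\rmB_{s_n}$ only at $w_n$ and the translated second ball only at its root, so no shortcut is created and the metric from $0$ to any point of the translated second ball is exactly (distance to $w_n$) plus (length of the splice) plus (distance within the second ball). Adjust the length of the splice so that it is negligible, or absorb it into the $o(n)$ slack already present in $s_n\ge sn$. Finally, count: the total number of altered edges is $O(n^{\alpha_d})$, each alteration costs a bounded factor, and the translation contributes at most $e^{o(n)}$; summing over the $O(n^{\alpha_d})$ choices of translation vector and of drilling pattern gives the factor $e^{cn^{\alpha_d}}$. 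For the $n=m$ case, the target cut-point moves to $\rmB_{s+s'}$-scale: here one does not rescale but simply concatenates at full size, and the inequality $\P(\kA_{s,x}(n))\P(\kA_{s',x'}(n))\le e^{cn^{\alpha_d}}\P(\kA_{s+s',x+x'}(n))$ follows by the identical splicing argument (now $sn+s'n=(s+s')n$ and $xn+x'n=(x+x')n$, and $2n^{\alpha_d}\le n^{\alpha_d}\cdot n^{\text{small}}$... more precisely one checks $n^{\alpha_d}+n^{\alpha_d}\le (2n)^{\alpha_d}$ is false, so instead one verifies $z_m\in\Lambda_{n^{\alpha_d}}((x+x')n)$ directly using that $w_n\in\Lambda_{n^{\alpha_d}}(xn)$, that $z_m$ is within $n^{\alpha_d}$ of $w_n+x'n$ after translation, and that these errors sum to at most $n^{\alpha_d}$ by slightly shrinking the admissible windows — a detail to be handled by a careful choice of constants in the definition of the target window).

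The main obstacle is the second step: the pathological configurations described in the sketch, where $w_n$ is poorly connected to the rest of the space (e.g.\ the ball $\rmB_{s_n}$ is a space-filling spiral disconnecting $w_n$ from infinity, or connecting it only at chemical distance of order $n$). One cannot simply open edges to reach $w_n$ without either destroying the metric structure of the ball (which would invalidate $\kA_{s,x}(n)$ for the modified environment) or paying a cost that does not vanish on the exponential scale. The resolution — creating a free line by a delicate local surgery that preserves the metric up to a $o(n)$ additive error while touching only $o(n^{\alpha_d})$ edges — is exactly the content of Lemma~\ref{lem:drilling}, and verifying that the surgery keeps the space-time cut-point within the required window $\Lambda_{(n+m)^{\alpha_d}}$ and keeps the arrival time above the required threshold is the crux of the argument. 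A secondary technical point is bookkeeping the union bound over translations and drilling patterns so that the cumulative cost stays $e^{cn^{\alpha_d}}$ rather than $e^{cn}$; this forces the window parameter $\alpha_d<1$ and explains its role.
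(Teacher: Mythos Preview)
Your plan matches the paper's approach: reduce both events to their ``free'' versions via Lemma~\ref{lem:drilling} (drilling at the cut-point $w_n$ of the first ball and at the origin of the second), translate the second ball to avoid the first, splice along a short path, and recover the product via injectivity of the gluing map. There is, however, a real gap in the connection step. A single free line at $w_n$ and a single free line at the root $z$ are not enough: the free line $\rL_i(w_n)$ avoids $\rmB_{s_n}$ but may hit the translated second ball $z+\rmB'_{s_m}$, and in any case you still need a path joining the two free lines that avoids \emph{both} balls. The paper's drilled event $\kA^{\rm free}_{s,x}(n)$ (see \eqref{free line def}) carries, beyond the free line, a hyperplane condition $\rN_k(\rH_j(w_n)\cap\rmB_{s_n})\le n^{\alpha_d}$ and a volume bound $|\rmB_{s_n}|\le n^{7/4}$. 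These are used to produce sets $\underline E_n,\underline E_m$ of $\gtrsim n^{2\alpha_d}$ points near $w_n$ and near $z$, and then Lemma~\ref{lem:disjpathscont} furnishes $n^{2\alpha_d}$ almost-disjoint candidate paths between them; since each ball has volume $O(n^{d/(d-1)})=o(n^{2\alpha_d})$ by isoperimetry, at least one candidate avoids both. Without this combinatorial step your splice may enter a ball and create a shortcut, destroying the cut-point.

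Two smaller points. First, your handling of the target window is off: since $\alpha_d<1$ one has $(n+m)^{\alpha_d}\le n^{\alpha_d}+m^{\alpha_d}$, not the reverse, so additively combining the two window errors does not fit (and certainly not in the $n=m$ case). The paper instead treats the translation vector $z\in\Lambda_{9n^{\alpha_d}}(w_n)$ as a free parameter and \emph{chooses} it so that $z+w_m$ lands directly in $\Lambda_{n^{\alpha_d}}(nx+mx')$; a counting argument shows there are $\ge n^{d\alpha_d}$ such $z$, while only $O(n^{1+d/(d-1)})=o(n^{d\alpha_d})$ of them cause the balls or free lines to intersect. This is also why the size control you invoke is a volume bound (Proposition~\ref{prop:sizeboundary} plus isoperimetry), not a diameter bound --- the diameter of $\rmB_{s_n}$ need not be $o(n)$. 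Second, the passage from the product $\P(\rmB_{s_n}=C_1)\P(\rmB_{s_m}=C_2)$ to a single probability requires checking that the gluing map $(C_1,C_2)\mapsto C_1\cup\fp\cup(z+C_2)$ is injective, so that the sum over $(C_1,C_2)$ is bounded by the probability of the target event; your ``measure-preserving'' phrasing does not make this explicit.
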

\begin{rem}
Note that $\kA_{s/2,x/2}(2n)\ne \kA_{s,x}(n)$ in general. Hence, the second inequality  does not directly follow from the first inequality in Lemma \ref{lem:aux} by setting $n=m$.
\end{rem}
We have the following corollary:
\begin{cor}\label{continuity of cutpoint}
    Let $s_0>0$. There exist $c=c(s_0)>0$ and $ n_0=n_0(s_0)\in\N$ such that the following holds. For any $\delta>0$, $s,s'\in[0,s_0]$ and $x,x'\in [-s_0,s_0]^d$ with $|s- s'|\leq \delta$ and $\|x-x'\|_1\leq \delta$, and for any $n\ge n_0$,
\[\P(\kA_{s,x}(n))\le e^{c(\delta n+n^{\alpha_d})}\P(\kA_{s',x'}(n)).\]
\end{cor}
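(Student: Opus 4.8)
The plan is to deduce Corollary~\ref{continuity of cutpoint} from the \emph{equal-size} part of Lemma~\ref{lem:aux} (the inequality $\P(\kA_{s,x}(n))\P(\kA_{s',x'}(n))\le e^{cn^{\alpha_d}}\P(\kA_{s+s',x+x'}(n))$), combined with two elementary ingredients: (i) the monotonicity $\kA_{s_2,x}(n)\subseteq\kA_{s_1,x}(n)$ whenever $s_1\le s_2$ (a later cut-point time is a more restrictive requirement), and (ii) a cheap lower bound $\P(\kA_{a,b}(n))\ge \exp(-c(an+\|b\|_1 n+1))$ valid for all $a\in[0,s_0]$, $b\in[-s_0,s_0]^d$, $n\ge1$. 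Note that only the equal-size inequality is of use here: to get an \emph{upper} bound on a single $\P(\kA_{s,x}(n))$ one must place $\kA_{s,x}(n)$ on the product side of Lemma~\ref{lem:aux} and then divide by the probability of the second factor, and only in the equal-size inequality does the event on the other side live at the same scale $n$ (the first inequality would change the scale to $n+m$, which cannot be undone). I would also first reduce to $\delta\le s_0$: since $s,s',x,x'$ vary in fixed bounded sets, the segment from $(s,x)$ to $(s',x')$ can be broken into a bounded number of steps of length $\le s_0$, and the general estimate follows by chaining the ones for each step (which costs only a bounded factor on the $n^{\alpha_d}$ term and is additive in $\delta n$).

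To prove the lower bound (ii) I would use the construction sketched in the introduction around Figure~\ref{fig1}. Pick a vertex $w\in\Lambda_{n^{\alpha_d}}(nb)$ with $w\neq0$ and $\|w\|_1\le\|b\|_1 n+d$ (e.g. $w=\lfloor nb\rfloor$, or $w=\eb_1$ in the degenerate case $\lfloor nb\rfloor=0$), and take a self-avoiding $\Z^d$-path $\gamma=(\gamma_0,\dots,\gamma_L)$ from $\gamma_0=0$ to $\gamma_L=w$ whose length $L$ is slightly larger than $\max(an,\|w\|_1)$ and has the parity of $\|w\|_1$ (possible since $d\ge2$), so that $L\asymp an+\|b\|_1 n$. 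Declare all edges of $\gamma$ open and all edges incident to $\{\gamma_0,\dots,\gamma_{L-1}\}$ but not lying on $\gamma$ closed, the remaining edges arbitrary. On this configuration every path out of $0$ is forced to follow $\gamma$, hence $\rmB_t=\{\gamma_0,\dots,\gamma_t\}$ for $t\le L$ and $\rmB_L\setminus\rmB_{L-1}=\{w\}$; since $L\ge an$ and $w\in\Lambda_{n^{\alpha_d}}(nb)$, the event $\kA_{a,b}(n)$ occurs. The cost of this configuration is at least $(p\wedge(1-p))^{Cd L}$ for an absolute $C$, which gives (ii) after bounding $L\le an+\|b\|_1 n+d+2$.

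The comparison step is then short. Given $(s,x),(s',x')$ as in the statement with $\delta\le s_0$, monotonicity (i) applied with $s_1=s\wedge s'\le s=s_2$ gives $\P(\kA_{s,x}(n))\le\P(\kA_{s\wedge s',\,x}(n))$. Now apply the equal-size inequality of Lemma~\ref{lem:aux} to the parameter pairs $(s\wedge s',\,x)$ and $\big((s'-s)^+,\,x'-x\big)$: both lie in $[0,s_0]\times[-s_0,s_0]^d$ (here one uses $\delta\le s_0$ and $(s'-s)^+\le|s-s'|\le\delta$), and they add up to $(s',x')$ since $(s\wedge s')+(s'-s)^+=s'$. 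This yields, for $n\ge n_0$,
\[
\P(\kA_{s\wedge s',\,x}(n))\,\P\big(\kA_{(s'-s)^+,\,x'-x}(n)\big)\le e^{cn^{\alpha_d}}\,\P(\kA_{s',x'}(n)).
\]
Since $(s'-s)^+\le\delta$ and $\|x'-x\|_1\le\delta$, ingredient (ii) gives $\P(\kA_{(s'-s)^+,\,x'-x}(n))\ge e^{-c_1(2\delta n+1)}$; combining the last two displays and absorbing the additive constant into $n^{\alpha_d}\ge1$ gives $\P(\kA_{s,x}(n))\le e^{c'(\delta n+n^{\alpha_d})}\P(\kA_{s',x'}(n))$, which is the claim.

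The only genuinely non-mechanical point is recognizing how Lemma~\ref{lem:aux} must be used: since it only ever bounds a \emph{product} of two $\kA$-probabilities from above, an upper bound on a single $\P(\kA_{s,x}(n))$ is obtained by dividing, which forces the introduction of the auxiliary small-parameter lower bound (ii); and the monotonicity (i) is needed precisely to handle the case $s'<s$, so that the ``difference'' pair $\big((s'-s)^+,\,x'-x\big)$ has a nonnegative time coordinate and is admissible in the lemma. Everything else — the reduction to $\delta\le s_0$, the explicit construction in (ii), and the bookkeeping of constants — is routine.
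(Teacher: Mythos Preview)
Your proof is correct and follows essentially the same route as the paper: apply the equal-size inequality of Lemma~\ref{lem:aux} to the pair $(s\wedge s',x)$ and the small-parameter pair $\big((s'-s)^+,\,x'-x\big)$, divide through using a path-construction lower bound on the latter, and use the monotonicity $\kA_{s,x}(n)\subset\kA_{s\wedge s',x}(n)$ to absorb the case $s>s'$. The paper does precisely this (splitting the two cases rather than unifying them via $s\wedge s'$ and $(s'-s)^+$), and your explicit reduction to $\delta\le s_0$ and detailed construction of the lower bound (ii) are minor refinements of points the paper leaves implicit.
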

\begin{proof}
We first suppose $s\le s'$. Thanks to Lemma \ref{lem:aux}, we have
\[ \P(\kA_{s,x}(n))\P(\kA_{s'-s,x'-x}(n))\le \P(\kA_{s',x'}(n)) e^{c'n^ {\alpha_d}},\]
with some constant $c'>0$ independent of $\delta,n$. Since $|s'-s|+\|x'-x\|_1\leq 2\delta$, one can check that (as in \eqref{eq:boundA1} below), we have
\[\P(\kA_{s'-s,x'-x}(n))\ge e^{-c'' \delta n}\]
with some constant $c''>0$ independent of $\delta,n$. Therefore, with $c:=c'+c''$,
$$\P(\kA_{s,x}(n))\le e^{c( \delta n+n^ {\alpha_d})} \P(\kA_{s',x'}(n)) .$$
If $s>s'$, by the result above, then 
$$\P(\kA_{s,x}(n))\le \P(\kA_{s',x}(n))\leq e^{c( \delta n+n^ {\alpha_d})} \P(\kA_{s',x'}(n)) .$$
\end{proof}
Let us see how this lemma and corollary imply Theorem \ref{prop:existencelimit}.
\begin{proof}[Proof of Theorem \ref{prop:existencelimit}]
Let $s\geq 0$ and $x\in\R^d$. Let $n\ge m\ge 1$. Thanks to Lemma \ref{lem:aux}, 
\[\P(\kA_{s,x}(n))\P(\kA_{s,x}(m))\le e^{cn^{\alpha_d}}\P(\kA_{s,x}(n+m)),\]
with some constant $c>0$ independent of $n,m$. It follows that
\[ -\log \P(\kA_{s,x}(n+m))\le -\log \P(\kA_{s,x}(n))-\log\P(\kA_{s,x}(m))+cn^{\alpha_d}.\]
Using deBruijn and Erd\^os's subadditive lemma \cite{deBruijunErdos},  the following limit  exists:
\begin{equation}\label{eq:existencelimit}
    I(s,x):=\lim_{n\rightarrow\infty}-\frac{1}{n} {\log}\,\P(\kA_{s,x}(n))\in [0,+\infty)\,.
\end{equation}
By Corollary~\ref{continuity of cutpoint},  $I$ is continuous. In particular, $I$ is uniformly continuous on a compact set.

 Let $K\subset [0,\infty)\times\R^d$ be a compact set and $s_0:={\rm diam}(K)+1$. Let $\delta\in (0,1/(4ds_0))$ arbitrary. Since the cardinality of $K_\delta:=(\delta \Z)^{d+1}\cap K $ is finite, the convergence \eqref{eq:existencelimit} is uniform over $ K_\delta$. We take $(s,x)\in K$ arbitrary. Let $(s',x')$ be the closest point from $(s,x)$ in $K_\delta$ satisfying $|s'-s|+ \|x'-x\|_1 \leq 4d\delta$  with  a deterministic rule breaking ties.
By Corollary~\ref{continuity of cutpoint} and the uniform convergence  over $K_\delta$, since $I$ is uniformly   continuous on $K$, we have 
\al{
\P(\kA_{s,x}(n))\leq e^{o_\delta(n)}\P(\kA_{s',x'}(n))\leq e^{o_\delta(n)} e^{-I(s',x')n}\leq e^{-I(s,x)n+o_\delta(n)},
}
where $o_\delta(n)$ are some positive constants such that $o_{\delta}(n)/n$ converges to $0$ uniformly over $(s,x)\in K$ when $n\to\infty$, and then $\delta\to 0$. Similarly, we have 
$$\P(\kA_{s,x}(n))\geq e^{-o_\delta(n)}\P(\kA_{s',x'}(n))\geq e^{-o_\delta(n)} e^{-I(s',x')n}\geq  e^{-I(s,x)n-o_\delta(n)}.$$
Thus, letting $\delta\to 0$, we find that the convergence \eqref{eq:existencelimit} is uniform on $K$.

Let $s_0>0$. We take $c>0$ and $n_0\in \N$ as in Lemma \ref{lem:aux}.
Let $s,s'\in [0,s_0]$,  $x,x'\in[-s_0,s_0]^d$, and 
Let $\lambda\in(0,1)\cap\mathbb{Q}$. Let $(u_n)_{n\ge 1}$ be a sequence of integer larger than $n_0/\min(\lambda,1-\lambda)$ such that $\lambda u_n\in\mathbb{N}$ and $u_n\to\infty$ as $n\to\infty$.
Thanks to Lemma \ref{lem:aux}, we have
\[\P(\kA_{s, x}(\lambda u_n))\P(\kA_{s', x'}((1-\lambda) u_n))\le e^{c u_n^{\alpha_d}} \P(\kA_{\lambda s+(1-\lambda)s',\lambda x+(1-\lambda)x'}( u_n)).\]
By passing to the limit using  \eqref{eq:existencelimit} with $u_n$ in place of $n$, we get
\begin{equation}\label{eq:goalconvexity}
    I(\lambda s+(1-\lambda)s',\lambda x+(1-\lambda)x')\le \lambda I(s,x)+(1-\lambda)I(s',x').
\end{equation}
This together with the continuity of $I$ implies \eqref{eq:goalconvexity} for general $\lambda$.
Moreover,
thanks to Lemma \ref{lem:aux}  with $n=m$, we have
\[\P(\kA_{s, x}(n))\P(\kA_{s', x'}(n)) \le  e^{c n^{\alpha_d}} \P(\kA_{s+s', x+x'}( n)).\]
By passing to the limit using \eqref{eq:existencelimit}, we get
\begin{equation}\label{eq:propI}
   I(s+s',x+x')\le I(s,x)+I(s',x').
\end{equation}

Next, we consider the homogeneity. Let $k,m\in \N$, $s\geq 0$, and $x\in \R^d$. By \eqref{eq:propI}, $I(ks,kx)\leq kI(s,x)$. On the other hand, the opposite inequality is trivial because of $\kA_{ks,kx}(n)\subset \kA_{s,x}(kn)$. Therefore, we have $I(ks,kx)= kI(s,x)$. This implies \ben{\label{Q homog}
I(ks/m,kx/m)=mI(ks/m,kx/m)/m=I(ks,kx)/m= kI(s,x)/m.
}
For general $\lambda\geq 0$,  consider a sequence $\lambda_n\in [0,\infty)\cap \Q$ whose limit is $\lambda$. The homogeneity follows by the continuity of $I$ and \eqref{Q homog} with $k/m=\lambda_n$.

Finally, we prove $I(s,x)>0$ for any $s>0$ and $x\in\R^d$. 
Note that by \eqref{eq:propI}
\[I(2s,0)\le I(s,x)+I(s,-x)=2I(s,x).\]
Moreover, thanks to  \eqref{eq:antalpisztora2}, $I(2s,0)>0$. Thus, we have $I(s,x)>0$.
\end{proof}

\subsection{Size of the ball corresponding to cut-point}
For $K>0$, define
\[\kA_{s,x}^K(n):=\left\{
\begin{array}{l}
\exists s_n\geq s n,\,\exists w_n\in \Lambda_{n^{\alpha_d}}(nx),\,\exists \Gamma\subset \Z^ d;\\
\,{\rm B}_{s_n}\setminus {\rm B}_{s_n-1}=\{w_n\},\,|\Gamma|\le K n,\,\rmB_{s_n}\subset \mathrm{Int}(\Gamma)  \end{array}\right\}.\]
\begin{prop}\label{prop:sizeboundary}  For any $s_0>0$, there exist $n_0=n_0(s_0)\in\N$ and $K=K(s_0)>0$  such that for any $n\geq n_0$,  
\[\P\left(\bigcup_{s\in[0,s_0],x\in[- s_0,s_0]^d}(\kA_{s,x}(n)\setminus \kA_{s,x}^ K(n))\right)\le e^{-n}\min_{s\in[0,s_0],x\in[- s_0,s_0]^d}\P(\kA_{s,x}(n)).\]

\end{prop}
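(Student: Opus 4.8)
The plan is to show that the chemical ball $\rmB_{s_n}$ produced by a space-time cut-point is, after renormalisation, a macroscopically ``filled'' region whose macroscopic surface is made of bad boxes, and that having a macroscopic surface containing more than $Kn$ bad boxes is super-exponentially unlikely. Fix $C_0=C_0(s_0)$ with $\min_{s\in[0,s_0],\,x\in[-s_0,s_0]^d}\P(\kA_{s,x}(n))\ge e^{-C_0 n}$ for $n$ large (this follows from the explicit construction behind \eqref{eq:boundA1}); it suffices to bound the left-hand side by $3e^{-(C_0+2)n}$. Any witness $(s_n,w_n)$ of $\kA_{s,x}(n)$ with $s,x$ in the relevant ranges has $w_n\in\Lambda_{(s_0+1)n}(0)$ and $w_n\leftrightarrow 0$, so by \eqref{eq:antalpisztora2} and a union bound over $w_n$, off an event of probability $\le e^{-(C_0+2)n}$ we have $s_n=\cD^{\G_p}(0,w_n)\le\beta n$ for a constant $\beta=\beta(s_0)$; in particular $\rmB_{s_n}\subset\Lambda_{\beta n}(0)$. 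When $0\notin\sC_\infty$, $\rmB_{s_n}\subseteq\sC(0)$ is a finite cluster, which in every good box has diameter $<N/2$ and is therefore macroscopically confined to an $N$-neighbourhood of the bad $*$-cluster $\mathfrak C$ of the box of $0$; since $\P(|\mathfrak C|\ge C_3 n)\le e^{-cC_3 nN/(8\rho)^d}$ by \eqref{eq:contbadcomponents}, off a further $e^{-(C_0+2)n}$-event this region — hence $\rmB_{s_n}$ — is enclosed by a macroscopic shell of $\le Kn$ micro-vertices. So I may henceforth assume $0\in\sC_\infty$.

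Fix $\ep<\theta(p)/4$ and a large $N=N(s_0)$ and pass to the macroscopic lattice of $\ep$-good boxes of sidelength $N$. The structural input is that $w_n$ is a cut vertex of $\G_p$ and $B:=\rmB_{s_n-1}=\sC_{\G_p\setminus\{w_n\}}(0)$ is a finite connected set. For any $\ep$-good box $\mathbf i$ with $w_n\notin\underline\Lambda'_N(\mathbf i)$, the cluster $\sC(\mathbf i)$ is connected and contained in $\underline\Lambda'_N(\mathbf i)$, so it cannot meet both $B$ and $B^c$ (a path inside it would join $0$ to $B^c$ in $\G_p\setminus\{w_n\}$); thus $\sC(\mathbf i)\subseteq B$ or $\sC(\mathbf i)\cap B=\emptyset$. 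Let $\mathcal I$ be the set of good boxes with $\sC(\mathbf i)\subseteq B$ and put $\widehat{\mathcal I}:=\mathcal I\cup\mathrm{Int}(\mathcal I)$. Using the good-box properties (unique macroscopic cluster of density $\ge\theta(p)-\ep$ hitting every $\ep N$-subbox, and connection of the macroscopic clusters of good $*$-neighbours) I would check: every macroscopic box met by $B\cup\{w_n\}$ lies in $\widehat{\mathcal I}\cup\partial^{\mathrm{ext}}\widehat{\mathcal I}$ or within macroscopic $\ell_\infty$-distance $1$ of $w_n$'s box; every good box in $\mathrm{Int}(\mathcal I)$ not adjacent to $w_n$ belongs to $\mathcal I$, so that $\widehat{\mathcal I}$ is connected with no macroscopic hole; and, applying the dichotomy to the inner-boundary boxes of $\widehat{\mathcal I}$, every box of $\partial^{\mathrm{ext}}\widehat{\mathcal I}$ is bad except for at most $3^d$ of them (those near $w_n$). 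By \cite{timar}, $\partial^{\mathrm{ext}}\widehat{\mathcal I}$ is $*$-connected.

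Next I would bound $|\partial^{\mathrm{ext}}\widehat{\mathcal I}|$. Since it is a $*$-connected set of macroscopic boxes inside $\Lambda_{\beta n+N}(0)$, all but $\le 3^d$ of them bad, and bad boxes at macroscopic $\ell_\infty$-distance $\ge\rho$ are independent, a union bound over the $\le 7^{dL}$ $*$-animals of size $L$ (by \eqref{eq:animals}) and over the $O(n^d)$ choices of a base box gives $\P\big(\exists\widehat{\mathcal I}\text{ as above with }|\partial^{\mathrm{ext}}\widehat{\mathcal I}|\ge L\big)\le (\beta n)^{d}\big(7^{d}e^{-cN/(2\rho)^d}\big)^{L}$ for a percolation constant $c>0$; choosing $N=N(s_0)$ so large that $7^{d}e^{-cN/(2\rho)^d}\le e^{-1}$ and then $L=C_3 n$ with $C_3=C_3(s_0)$ large makes this $\le e^{-(C_0+2)n}$. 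On the complement, $|\partial^{\mathrm{ext}}\widehat{\mathcal I}|\le C_3 n$, so $\widehat{\mathcal I}\cup\partial^{\mathrm{ext}}\widehat{\mathcal I}$ together with the $\le 3^d$ boxes near $w_n$ is a ``filled'' set of $O_{s_0}(n)$ macroscopic boxes, which is surrounded by a macroscopic shell of thickness $\rho+2$ consisting of $O_{s_0}(n)$ boxes; I then let $\Gamma$ be the set of all micro-vertices of this shell, placed far enough out that $w_n$ is enclosed, so that $|\Gamma|\le C\rho C_3 N^{d}n=:Kn$ with $K=K(s_0)$, while any microscopic path from a vertex of $\rmB_{s_n}=B\cup\{w_n\}$ to infinity must cross the shell, hence $\rmB_{s_n}\subset\mathrm{Int}(\Gamma)$. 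Adding the three exceptional probabilities gives $3e^{-(C_0+2)n}\le e^{-n}e^{-C_0 n}\le e^{-n}\min_{s,x}\P(\kA_{s,x}(n))$ for $n$ large.

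The hard part is the renormalisation step of the second and third paragraphs: although the ball $\rmB_{s_n}$ hanging off a cut-vertex can genuinely have volume of order $n^{d/(d-1)}$ — so the naive enclosures $\partial^{\mathrm{ext}}\rmB_{s_n}$ (size $\le 2d|\rmB_{s_n}|$) and $\partial^{\mathrm{ext}}\Lambda_{\beta n}$ (size $\asymp n^{d-1}$) are both useless for $d\ge3$ — one must recognise that the right quantity to control is the number of \emph{bad} macroscopic boxes on its macroscopic surface, which is $O(n)$ with overwhelming probability. Making this precise requires a careful description of how $B$ embeds in the macroscopic lattice, in particular ruling out that $B$ escapes far from $\widehat{\mathcal I}$ through good boxes it meets only via small crossing clusters of $\sC_\infty$; this is exactly where the good-box rigidity (uniqueness and density of $\sC(\mathbf i)$, connection of macroscopic clusters of $*$-neighbours) is essential, and the verification runs along the lines of the decomposition used in the proof of Proposition~\ref{prop:connexpoints}.
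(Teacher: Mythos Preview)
Your strategy is the same as the paper's: pass to the macroscopic lattice, argue that the renormalised exterior boundary of the chemical ball is essentially made of bad boxes, and then run a lattice-animal union bound. However, your implementation through the set $\mathcal I=\{\text{good }\mathbf i:\sC(\mathbf i)\subseteq B\}$ introduces structural claims that you do not actually verify and that are not obviously true. In particular, the assertion that ``every macroscopic box met by $B\cup\{w_n\}$ lies in $\widehat{\mathcal I}\cup\partial^{\mathrm{ext}}\widehat{\mathcal I}$'' is problematic: a \emph{bad} box in $\Gamma_N$ (the set of boxes meeting $\rmB_{s_n}$) can sit on a tentacle of bad boxes far from any box of $\mathcal I$, and whether it is then enclosed by $\mathcal I$ depends on a propagation argument through the good $*$-cluster that you only hint at. Likewise, the connectedness of $\widehat{\mathcal I}$ is not evident, since $\mathcal I$ could a priori consist of several good components separated by bad regions. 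The last paragraph acknowledges the difficulty but does not resolve it.

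The paper sidesteps all of this by working directly with $\Gamma_N:=\{\mathbf i:\underline\Lambda_N(\mathbf i)\cap\rmB_{s_n}\ne\emptyset\}$, which is \emph{automatically} $\Z^d$-connected because $\rmB_{s_n}$ is. The single clean structural observation is: if $\mathbf i\in\Gamma_N$ and $\mathbf j\in\partial^{\rm ext}\Gamma_N$ are $\ell_1$-neighbours with $w_n\notin\underline\Lambda'_N(\mathbf i)$, then one of $\mathbf i,\mathbf j$ must be bad --- otherwise some $x\in\sC(\mathbf i)\cap\rmB_{s_n}$ is joined inside $\underline\Lambda'_N(\mathbf i)$ to some $y\in\sC(\mathbf j)\setminus\rmB_{s_n}$ without passing through $w_n$, contradicting the cut-point property. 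Hence the $*$-connected set $\partial^{\rm ext}\Gamma_N$ (Tim\'ar) has a fixed fraction of bad boxes in its $1$-neighbourhood, and one bounds directly the event that such a $*$-animal, whose interior contains the box of $0$, has size $\ge Kn$. This removes the need for both your case split $0\in\sC_\infty$ versus $0\notin\sC_\infty$ and the Antal--Pisztora preprocessing $s_n\le\beta n$; neither appears in the paper's argument. The enclosing microscopic shell is then simply $\Gamma:=\bigcup_{\mathbf j\in\partial^{\rm ext}\Gamma_N}\underline\Lambda_N(\mathbf j)$.
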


\begin{proof}
Let $s\in[0,s_0]$, $x\in[-s_0,s_0]^d$, and $n\ge 1$. 
Let $w_n\in \Lambda_{n^{\alpha_d}}(nx) {\setminus\{0\}}$ and $\gamma$ be a deterministic self-avoiding path from $0$ to $w_n$ such that $s_0 n\leq |\gamma|\leq 2{d}s_0n$. 
Let us define 
 \[
 \cE:=\left\{\forall e\in \gamma,\text{ $e$ is open},\quad \forall e\not\in \gamma\text{ with $|e\cap \gamma|=1$, $e$ is closed} 
\right\}.\]
By construction, we have $\cE\subset \kA_{s,x}(n)$, and 
\[\P(\cE)\ge p^{|\gamma|}(1-p)^{2d|\gamma|}\ge (p(1-p)^{2d})^ {2ds_0 n}.\]
 Hence, there exists $C>0$ depending on $s_0,\,p$, and $d$ such that for all $n\ge 1$, 
\begin{equation}\label{eq:boundA1}
   \P(\kA_{s,x}(n)) \ge  e^{-C n}.
\end{equation}
Let $ s_n\geq sn$ be such that ${\rm B}_{s_n}\setminus {\rm B}_{{s_n}-1}=\{w_n\}$. Set $\ep=1/2$. Let $N,K\in \N$ be chosen later.
 Let $\Gamma_N$ be the set of boxes intersecting $\rmB_{s_n}$:
\[\Gamma_N:=\{\mathbf{i}\in\Z^d: \underline \Lambda_N(\mathbf{i})\cap \rmB_{s_n}\ne \emptyset \}.\]
Note that $\Gamma_N$ is connected in $\Z^d$. Let us consider the exterior boundary of $\Gamma_N$ as 
$$\partial^{\rm ext}\Gamma_N:=\{\mathbf{i}\in \Z^d\setminus \Gamma_N:~\exists \mathbf{j}\in \Gamma_N;~\|\mathbf{i}-\mathbf{j}\|_1=1,\,\text{$\mathbf{i}$ is connected to infinity in $\Z^ d\setminus \Gamma_N$}\}.$$ 
Note that $\partial^{\rm ext}\Gamma_N$ is $*$-connected by \cite[Lemma 2]{timar}. Let $\mathbf{i}\in \Gamma_N$ and $\mathbf{j}\in \partial^{\rm ext}\Gamma_N$ be such that $\|\mathbf{i}-\mathbf{j}\|_1=1$ and $w\notin \underline \Lambda'_N(\mathbf{i})$. 
Let us prove that $\mathbf{i}$ or $\mathbf{j}$ is $\ep$-bad.   By definition of $\Gamma_N$ and  $\partial^{\rm ext}\Gamma_N$, we have $\underline\Lambda_N(\mathbf{j})\cap \rmB_{s_n}=\emptyset$. 
Let us assume that $\mathbf{i}$ and $\mathbf{j}$ are both good. Let $x\in\rmB_{s_n}\cap \underline \Lambda_N(\mathbf{i})$. Note that $x$ is connected to $w\notin \underline \Lambda_N' (\mathbf{i})$ by a microscopic, open path inside $\rmB_{s_n}$. By definition of good box, this yields that $ x\in \sC(\mathbf{i})\cap \rmB_{s_n}\cap \underline \Lambda_N(\mathbf{i})$. Since $\underline\Lambda_N(\mathbf{j})\cap \rmB_{s_n}=\emptyset$, and $\mathbf{i}$ and $\mathbf{j}$ are good, there exists
$y\in {\sC(\mathbf{j})\cap\sC(\mathbf{i})  \setminus \rmB_{s_n}}$.  
Thus,  $x$ and $y$ are connected by a microscopic, open path in $\sC(\mathbf{i})\subset \underline \Lambda'_N(\mathbf{i})$. However, by definition of cut-point, any microscopic, open path  from $x\in\rmB_{s_n}$ to $y\notin\rmB_{s_n}$ must contain $w_n$, which derives a contradiction since $w\notin \underline \Lambda'_N(\mathbf{i})$. It follows that any $\mathbf{j}\in \partial^{\rm ext}\Gamma_N$ is $\ep$-bad, or has a $\ep$-bad neighbor, or has a neighbor whose enlarged box contains $w_n$. 
Let $A^+:=\{\mathbf{i}\in\Z^d:~\exists \mathbf{j}\in A;~ \|\mathbf{i}-\mathbf{j}\|_1\leq 1\}$ for $A\subset Z^d$. Therefore, $\partial^{\rm ext}\Gamma_N$ is $*$-connected,  its interior contains $\Gamma_N$, and $$|\{\mathbf{i}\in (\partial^{\rm ext}\Gamma_N)^+:~\mathbf{i}\text{ is bad}\}|\geq \frac{|\partial^{\rm ext}\Gamma_N|}{(4d)^d}-(4d)^d.$$
Let $\Gamma:=\bigcup_{\mathbf{j}\in\partial^{\rm ext}\Gamma_N}\underline \Lambda_N(\mathbf{j}).$ 
Since  the interior of $\partial^{\rm ext}\Gamma_N$ contains $\Gamma_N$, 
$\rmB_{s_n}\subset \mathrm{Int}(\Gamma).$

\indent Let $\kappa=1/(10d)^d$ and $K\geq (10d)^{10d}$. Denote by $\kF$ the event that there exists a macroscopic, $*$-connected set $A$ such that $|A|\geq Kn$, $|\{\mathbf{i}\in A^+:~\mathbf{i}\text{ is bad}\}|\geq \kappa|A|$,  and  its interior contains $0$. To compute its probability, we first fix such a $*$-connected set $A$. By pigeon-hole principle as in Proposition \ref{prop:path}, there exists a subset of $A^+$ containing at least $\frac{\kappa}{(4\rho)^d}|A|$ bad sites  at $\ell_\infty$ distance at least $\rho$ from each other (recall the notation $\rho $ from  \eqref{def:rho}). In particular, the states of these sites are independent.
By Proposition \ref{prop:path},  for $N$ large enough depending on $d,p$,
\begin{equation*}
    \P(|\{\mathbf{x}\in A^+: \mathbf{x} \text{ is $\ep$-bad}\}|\ge \kappa |A|)\le 2^ {|A^+|}\exp\left(- cN\frac{\kappa}{(4\rho)^ d}|A|\right) .
\end{equation*}
This yields
\begin{align*}
    \P(\kF)&\le\sum_{k\ge K n}\sum_{y\in [-k,k]^d\cap\Z^d}\sum_{A\in\textbf{Animal}^k_y}\P(|\{\mathbf{x}\in A^+: \mathbf{x}\text{ is $\ep$-bad}\}|\ge \kappa |A|)\\
    &\le \sum_{k\ge K n}7^ {dk}(2k+1)^d2^ {4^d k}\exp\left(- cN\frac{\kappa}{(4\rho)^d}k\right) {\leq \exp\left(- cN\frac{\kappa}{(8\rho)^d}K n\right)},
\end{align*}
 where we have used \eqref{eq:animals} in the last line.
 Hence, thanks to \eqref{eq:boundA1}, for $K$ large enough depending on $s_0$, $p$, and $d$, we have for all $n\in \N$, 
\begin{equation}\label{eq:boundA2}
    \P(\kF)\le e^{-  (1+C) n}\le e^{-n}  {\min_{s\in[0,s_0],x\in[- s_0,s_0]^d}\P(\kA_{s,x}(n))}.
\end{equation}
On the event $\kF^c$, $|\Gamma|\le (2N)^d|\partial^{\rm ext}\Gamma_N|\le (2N)^d K n=:K'n\,.$ 
Therefore, we obtain $\kA_{s,x}(n)\setminus \kA_{s,x}^ {K'}(n)\subset  \kF$, and by using \eqref{eq:boundA2}, it follows that 
\begin{equation*}
    \P\left(\bigcup_{s\in[0,s_0],x\in[- s_0,s_0]^d}(\kA_{s,x}(n)\setminus \kA_{s,x}^ {K'}(n))\right)\le \P(\kF)\le e^{- n}\min_{s\in[0,s_0],x\in[- s_0,s_0]^d}\P(\kA_{s,x}(n)).
\end{equation*}

\end{proof}
\subsection{Resampling arguments}
\begin{lem}[Resampling lemma]\label{lem:resample}Let $A\in\mathbb N$.
Recall $\tau=(\tau_e)_{e\in \E^d}$ from \eqref{Chemical-FPP}. Let $E_0=E_0(\tau)$, $ E_1=E_1(\tau)$ be random sets of edges inside  $[-n^2,n^2]^d$ depending on $\tau$ such that   
\[E_0\cap E_1=\emptyset,\quad\max\{|E_0|,|E_1|\}\le A\quad\text{a.s.}\] 
 Then, there exists another  random configuration $(\tau_e^{\rm r})_{e\in \E^d}\in \{1,\infty\}^{\E^d}$ with the same law as $\tau$ such that for any event $\cE\subset \{1,\infty\}^{\E^d}$ and $n\geq\max\{4d, (p(1-p))^{-1}\}$,
\al{
&\P(\tau\in\cE;\,\forall e\in E_0(\tau),\,  \tau_ e^{\rm r}=+\infty;\, \forall e\in E_1(\tau),\, \tau_ e^{\rm r}=1;\,\forall e\notin E_0(\tau)\cup E_1(\tau),\,  \tau^{\rm r}_e=\tau_e)\\
&\ge e^{-8d\, A\log n}\P(\tau\in\cE).
}
\end{lem}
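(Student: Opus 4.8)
The plan is to realize $\tau^{\rm r}$ as an explicit coupling with $\tau$ under which, on a sub-event of probability at least $e^{-8dA\log n}\,\P(\tau\in\cE)$, one has $\tau^{\rm r}=\Phi(\tau)$, where for a configuration $\omega$ we write $\Phi(\omega)$ for the configuration equal to $+\infty$ on $E_0(\omega)$, to $1$ on $E_1(\omega)$, and to $\omega_e$ elsewhere; note the event in the statement is exactly $\{\tau\in\cE\}\cap\{\tau^{\rm r}=\Phi(\tau)\}$. The naive choice $\tau^{\rm r}:=\Phi(\tau)$ cannot work, since $E_0(\tau),E_1(\tau)$ are functions of $\tau$, so this deterministic map does not preserve the product law (a single edge already shows this). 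Instead I would use a resample‑and‑accept scheme confined to the box $B:=[-n^2,n^2]^d$.

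Let $F$ be the finite edge set of $B$, so that $E_0(\tau)\cup E_1(\tau)\subseteq F$ and $|E_0(\tau)\cup E_1(\tau)|\le 2A$ a.s., and let $\P_F$ be the law of $(\tau_e)_{e\in F}$ (a product of $p\delta_1+(1-p)\delta_\infty$'s). Since $\Phi$ only alters coordinates lying in $F$, freezing the coordinates $\rho\in\{1,\infty\}^{\E^d\setminus F}$ outside $B$ turns $\Phi$ into a map $\Phi_\rho:\{1,\infty\}^{F}\to\{1,\infty\}^{F}$. Put
\[
M\ :=\ \sup_{\rho}\ \sup_{\omega\,\in\,\Phi_\rho(\{1,\infty\}^{F})}\ \frac{\P_F\bigl(\Phi_\rho^{-1}(\omega)\bigr)}{\P_F(\{\omega\})}\,,
\]
and let $U,U'$ be independent $\mathrm{Unif}[0,1]$ variables, independent of $\tau$. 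Define $\tau^{\rm r}_e:=\tau_e$ for $e\notin F$; on $\{U\le M^{-1}\}$ set $\tau^{\rm r}|_F:=\Phi(\tau)|_F$; and on $\{U> M^{-1}\}$ let $\tau^{\rm r}|_F$ be sampled, using $U'$ and independently of $\tau|_F$, from $\nu_\rho:=(1-M^{-1})^{-1}\bigl(\P_F-M^{-1}(\Phi_\rho)_*\P_F\bigr)$ with $\rho:=\tau|_{\E^d\setminus F}$. By the definition of $M$, $\nu_\rho$ is a nonnegative measure of total mass $1$, hence a probability measure, and conditionally on $\tau|_{\E^d\setminus F}=\rho$ the law of $\tau^{\rm r}|_F$ equals $M^{-1}(\Phi_\rho)_*\P_F+(1-M^{-1})\nu_\rho=\P_F$, independently of $\rho$; thus $\tau^{\rm r}|_F$ has law $\P_F$ and is independent of $\tau^{\rm r}|_{\E^d\setminus F}=\tau|_{\E^d\setminus F}$, so $\tau^{\rm r}\overset{d}{=}\tau$. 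On $\{U\le M^{-1}\}$, since $\Phi$ fixes every coordinate outside $F$, one has $\tau^{\rm r}=\Phi(\tau)$ globally, and since $U\perp\tau$ this gives
\[
\P\bigl(\tau\in\cE,\ \tau^{\rm r}=\Phi(\tau)\bigr)\ \ge\ \P\bigl(\tau\in\cE,\ U\le M^{-1}\bigr)\ =\ M^{-1}\,\P(\tau\in\cE)\,.
\]

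The last step is then to bound $M\le n^{8dA}$, which I would do by a counting argument. Fix $\rho$ and $\omega$. Every $\eta\in\Phi_\rho^{-1}(\omega)$ agrees with $\omega$ off $E_0\bigl((\eta,\rho)\bigr)\cup E_1\bigl((\eta,\rho)\bigr)\subseteq F$, a set of at most $2A$ edges; since each edge carries only two values, $\eta$ is determined by $\{e:\eta_e\ne\omega_e\}$, so $|\Phi_\rho^{-1}(\omega)|\le\sum_{j=0}^{2A}\binom{|F|}{j}\le 2|F|^{2A}$, and for each such $\eta$ one has $\P_F(\{\eta\})/\P_F(\{\omega\})=\prod_{e:\,\eta_e\ne\omega_e}\P(\tau_e=\eta_e)/\P(\tau_e=\omega_e)\le(p(1-p))^{-2A}\le n^{2A}$ by $n\ge(p(1-p))^{-1}$. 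Hence $M\le 2|F|^{2A}n^{2A}$, and since $|F|\le d(2n^2+1)^d$ and $n\ge 4d$, a direct computation yields $2|F|^{2A}n^{2A}\le n^{8dA}$, which would finish the proof. The one genuine difficulty is the law‑preservation: forcing the pattern deterministically destroys the i.i.d.\ structure, while re‑randomizing the whole box from scratch would succeed only with probability $e^{-cn^{2d}}$; it is the acceptance mechanism with the correct probability $M^{-1}$, combined with the purely combinatorial bound on the pushforward density $M$, that makes the estimate hold.
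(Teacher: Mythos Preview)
Your argument is correct and gives the stated bound, but it takes a genuinely different route from the paper. The paper's construction is much more direct: one introduces an independent copy $\tau^*$ of $\tau$ together with two random edge sets $\mathfrak{E}_0,\mathfrak{E}_1$, drawn independently of everything and uniformly over subsets of size at most $A$ inside $[-n^2,n^2]^d$, and sets $\tau^{\rm r}_e:=\tau^*_e$ for $e\in\mathfrak{E}_0\cup\mathfrak{E}_1$ and $\tau^{\rm r}_e:=\tau_e$ otherwise. That $\tau^{\rm r}\overset{d}{=}\tau$ is then immediate (an independent resample on an independent random set), and the desired sub-event is simply $\{\mathfrak{E}_0=E_0(\tau),\ \mathfrak{E}_1=E_1(\tau)\}$ intersected with $\{\tau^*_e=\infty\ \forall e\in\mathfrak{E}_0,\ \tau^*_e=1\ \forall e\in\mathfrak{E}_1\}$; the cost is just the inverse of the number of admissible subsets times $(p(1-p))^{2A}$.

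Your acceptance--rejection scheme replaces this ``guess the sets, guess the values'' trick by a single change-of-measure step: you bound the Radon--Nikodym ratio $M$ of $(\Phi_\rho)_*\P_F$ against $\P_F$ and accept $\Phi(\tau)$ with probability $M^{-1}$, filling in with the residual measure $\nu_\rho$ otherwise. This is more abstract but has the virtue of isolating exactly the combinatorial quantity that matters (the preimage count times the worst likelihood ratio), and it would generalize cleanly to situations where the ``pattern'' to be forced is not simply setting a few coordinates to prescribed values. The paper's argument, by contrast, is shorter, requires no measure-theoretic verification (that $\nu_\rho\ge 0$, the $M=1$ edge case, conditional independence given $\rho$), and makes the law preservation transparent at a glance. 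Both approaches pay essentially the same price $|F|^{2A}(p(1-p))^{-2A}$, packaged differently.
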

The configuration $\tau^{\rm r}=(\tau_e^{\rm r})_{e\in \E^d}$ is called  the {\bf resampled configuration}.
\begin{proof} Let $\tau^*=(\tau_e^*)_{e\in \E^d}$ be an independent copy of $\tau$. Let $\mathfrak {E}_0$ and $\mathfrak{E}_1$ be two independent random variables distributed uniformly on the set 
\[ \{E\subset  \E^d:\quad \forall e\in E\quad e\subset [-n^2,n^2]^d,\quad |E|\le A\}.\]
In particular, we have
\[\P(\mathfrak{E}_0=E_0,\,\mathfrak{E}_1=E_1)\ge e^{-5d\,A\log n}.\]
We define the resampled configuration $\tau^{\rm r}$ as
$$\tau^{\rm r}_e=
\begin{cases}
\tau_e^*&\text{ if $e\in \mathfrak{E}_0\cup \mathfrak{E}_1$},\\
\tau_e&\text{ otherwise.}
\end{cases}$$
Recall that $n\geq\max\{4d, (p(1-p))^{-1}\}$.
We conclude
\al{
&  e^{- 8d\, A\log n}\,\P(\tau\in\cE)\\
&\leq \P(\tau\in\cE,\, \forall e\in \mathfrak{E}_1\,\,\tau_ e^{\rm r}=+\infty\,,\, \forall e\in \mathfrak{E}_0\,\,\tau_ e^{\rm r}=1\,,\,\mathfrak{E}_0=E_0, \mathfrak{E}_1=E_1 )\\
&\leq \P(\tau \in\cE;\,\forall e\in E_0(\tau),\, \tau_ e^{\rm r}=+\infty;\,\forall e\in E_1(\tau),\, \tau_ e^{\rm r}=1;\,\forall e\notin E_0(\tau)\cup E_1(\tau),\, \tau^{\rm r}_e=\tau_e).
}
\end{proof}
We say that an event $\cE$ is decreasing if for any $\tau\in\cE$ and $\tau'$ satisfying for all $e\in\E^ d$  $\tau'_e\ge \tau_e$,  $\tau'\in\cE$. The terminology decreasing is here related to percolation. In other words, the less open edges there are, the more likely the event  occurs. 
\begin{lem}[Making a cut-point]\label{lem:createcutpoin} Let $t_0\in\mathbb N$.
 Consider a family of decreasing events  $(\cE(w),w\in\Z^d)$, $\cE(w)\subset \{1,\infty\}^{\E^d}$. For any  $k\geq (4d+(p(1-p))^{-1})^2$, $t_0\geq \sqrt{k}$, and $\Lambda\subset\Z^d$,
\al{
&e^{-(8d)^2 \sqrt{k}\log{k}} \P(\exists t\ge t_0\quad \exists x\in \Lambda: |\rmB_t|\le k,\,x\in \rmB_t\backslash \rmB_{t-1},\,\tau\in \kE(x))\\
&\qquad\leq \P (\exists t\ge t_0\quad \exists x\in \Lambda: \,\rmB_t\setminus \rmB_{t-1}=\{x\},~ \tau\in \kE(x)).
}

\end{lem}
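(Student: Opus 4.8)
The plan is to apply the resampling lemma (Lemma~\ref{lem:resample}) with $E_1=\emptyset$ and a random set $E_0=E_0(\tau)$ of at most $8d\sqrt k$ edges, chosen so that closing the edges of $E_0$ turns the candidate point $x$ (which is at distance exactly $t$ from $0$ but need not yet be a cut‑point) into a genuine cut‑point. Because the events $\cE(w)$ are decreasing and the resampled configuration $\tau^{\rm r}$ dominates $\tau$, the membership $\tau\in\cE(x)$ will automatically pass to $\tau^{\rm r}$, so the only real work is the deterministic geometric construction of $E_0$.

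Write $A_1$ and $A_2$ for the events on the left‑ and right‑hand sides. On $A_1$, using deterministic tie‑breaking rules, select $t=t(\tau)\ge t_0$, $x=x(\tau)\in\Lambda$ with $|\rmB_t|\le k$ and $x\in\rmB_t\setminus\rmB_{t-1}$ and $\tau\in\cE(x)$, together with a geodesic $\gamma=(v_0=0,v_1,\dots,v_t=x)$. Since $x\in\rmB_t\setminus\rmB_{t-1}$ forces $\rmB_j\supsetneq\rmB_{j-1}$ for every $1\le j\le t$ (an empty sphere would exhaust the cluster $\mathcal C(0)$), one gets $t\le|\rmB_t|-1\le k-1$, hence $\rmB_t\subset[-k,k]^d$, while the spheres $S_j:=\rmB_j\setminus\rmB_{j-1}$ are nonempty with $\sum_{j=1}^t|S_j|<k$ and $t\ge t_0\ge\sqrt k$. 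By pigeonhole among the $\ge\sqrt k-1$ levels $j$ with $\max(1,t-\lfloor2\sqrt k\rfloor)\le j\le t-1$ (whose spheres sum to at most $|\rmB_{t-1}|<k$), there is such a $j^*$ with $|S_{j^*}|\le2\sqrt k$; note also $t-j^*\le2\sqrt k$. Then set
\[E_0:=F_{j^*+1}\cup\bigcup_{j=j^*+2}^{t}F_j,\]
where $F_{j^*+1}$ consists of all edges joining $\rmB_{j^*}$ to $S_{j^*+1}$ other than $\la v_{j^*},v_{j^*+1}\ra$, and $F_j$ (for $j^*+2\le j\le t$) consists of all edges incident to $v_{j-1}$ other than $\la v_{j-2},v_{j-1}\ra$ and $\la v_{j-1},v_j\ra$; off $A_1$ put $E_0:=\emptyset$. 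Since every edge leaving $\rmB_{j^*}$ has its inner endpoint in $S_{j^*}$, one has $|F_{j^*+1}|\le2d|S_{j^*}|\le4d\sqrt k$ and $|F_j|\le2d$ otherwise, so $|E_0|\le4d\sqrt k+2d(t-j^*-1)\le8d\sqrt k$, and all of these edges lie in $[-k,k]^d$.

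Let $\tau^{\rm r}$ be $\tau$ with the edges of $E_0(\tau)$ closed. No edge of $\gamma$ and no edge internal to $\rmB_{j^*}$ lies in $E_0$, so $\rmB_{j^*}$ is unchanged and $\cD^{\G_p}(0,x)=t$ in $\tau^{\rm r}$. One then checks, by induction on $j$ from $j^*$ to $t$, that in $\tau^{\rm r}$ the ball of radius $j$ equals $\rmB_{j^*}\cup\{v_{j^*+1},\dots,v_j\}$: in $\tau^{\rm r}$ the unique open edge out of $\rmB_{j^*}$ is $\la v_{j^*},v_{j^*+1}\ra$, and for $j^*+1\le i\le t-1$ the only open neighbours of $v_i$ are $v_{i-1}$ and $v_{i+1}$, so every $\tau^{\rm r}$‑geodesic from $0$ is forced onto $\gamma$ beyond level $j^*$. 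In particular the sphere of radius $t$ in $\tau^{\rm r}$ is $\{x\}$, i.e.\ $x$ is a cut‑point at level $t\ge t_0$ with $x\in\Lambda$, and $\tau^{\rm r}\in\cE(x)$ because $\cE(x)$ is decreasing and $\tau^{\rm r}\ge\tau$; hence $\tau^{\rm r}\in A_2$. To finish, apply Lemma~\ref{lem:resample} with $E_1=\emptyset$, $A=\lceil8d\sqrt k\rceil$ and $n=\lceil\sqrt k\rceil$ (legitimate since $k\ge(4d+(p(1-p))^{-1})^2$ gives $n\ge\max\{4d,(p(1-p))^{-1}\}$ and $[-k,k]^d\subset[-n^2,n^2]^d$); as $\log n\le\tfrac12\log k+1$ and $k\ge64$, the resampling factor satisfies $e^{-8dA\log n}\ge e^{-(8d)^2\sqrt k\log k}$. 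Since $\{\tau\in A_1\}\cap\{\forall e\in E_0(\tau):\tau^{\rm r}_e=\infty\}\cap\{\forall e\notin E_0(\tau):\tau^{\rm r}_e=\tau_e\}\subset\{\tau^{\rm r}\in A_2\}$ and $\tau^{\rm r}$ has the law of $\tau$, this yields $\P(A_2)\ge e^{-(8d)^2\sqrt k\log k}\,\P(A_1)$, which is the claim.

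The delicate point is precisely this geometric construction: one cannot in general make $x$ a cut‑point by pruning the sphere $S_{t-1}$ or $S_t$, since either may contain $\Theta(k)$ vertices, far above the budget $8d\sqrt k$. The fix is to locate a \emph{small} sphere $S_{j^*}$ that is still within distance $2\sqrt k$ of level $t$ — which must exist because the ball gains at least one vertex at each level, so among any $2\sqrt k$ consecutive levels some sphere has size at most $2\sqrt k$ — and then to ``thin'' the environment down to the single geodesic $\gamma$ from level $j^*+1$ up to $x$, so that each of these levels costs only $2d$ further closed edges. Ensuring that it is $x$ itself (rather than some other vertex) that becomes the unique vertex at its new distance from $0$ is essential here, because the family $\cE(\cdot)$ is arbitrary and only $\cE(x)$ is known to hold.
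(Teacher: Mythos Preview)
Your argument is correct and matches the paper's: locate a thin sphere near level $t$ by pigeonhole, then close $O(\sqrt{k})$ edges so that beyond that level the ball collapses onto the geodesic $\gamma$, forcing the sphere at level $t$ to be $\{x\}$; the decreasing property of $\cE(x)$ then carries the event to $\tau^{\rm r}$. The paper's write-up is terser (it does not spell out the induction on levels that you do), but the mechanism is identical.

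One slip worth fixing: the bound $|F_{j^*+1}|\le 2d|S_{j^*}|$ is false as written, because a \emph{closed} edge can join a vertex of $\rmB_{j^*-1}$ directly to a vertex of $S_{j^*+1}$, so its inner endpoint need not lie in $S_{j^*}$. The remedy is harmless --- either restrict $F_{j^*+1}$ to the \emph{open} edges from $\rmB_{j^*}$ to $S_{j^*+1}$ (those do have inner endpoint in $S_{j^*}$, and closing already-closed edges is free), or proceed as the paper does and bound via the outer sphere instead: pick $r$ with $|S_r|\le\sqrt{k}$ and close the edges between $S_r$ and $\rmB_{r-1}$, so that the count $\le 2d|S_r|$ holds regardless of edge states.
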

\begin{proof}Set $$\kE:=\{\tau\in \{1,\infty\}^{\E^d}:~\exists t\ge t_0\quad \exists x\in \Lambda: |\rmB_t|\le k,\,x\in \rmB_t\backslash \rmB_{t-1},\,\tau\in \kE(x)\}.$$
Given  $\tau\in\kE$,  let $t=t(\tau)\ge t_0$ and $x=x(\tau)\in\Lambda$ be such that $|\rmB_t|\le k,\,x\in \rmB_t\backslash \rmB_{t-1},\,\tau\in \kE(x)$. 
 We take a geodesic $\gamma_x$ from $0$ to $x$. In case there are several choices, we pick one of them according to a deterministic rule. 
 Since the sets $(\rmB_r\backslash \rmB_{r-1}, r=1\dots t)$ are disjoint, we have
 \[\sum_{r=t-\lceil\sqrt{k}\rceil}^t |\rmB_r\backslash \rmB_{r-1}|\le|\rmB_t|\le k.\]
Hence, there exists $r\in [t]\backslash [t-\lceil\sqrt{k}\rceil]$ such that $|\rmB_r\backslash \rmB_{r-1}|\leq \sqrt{k}$. Let $E_1(\tau)=\emptyset$ and
$$E_0(\tau):=\{\langle v,w\rangle\in \E^d\setminus\gamma_x:~v\in \gamma_x\setminus  \rmB_{r-1}\}\cup \{e\in \E^d\setminus \gamma_x:~e\text{ connects $\rmB_{r}\backslash \rmB_{r-1}$ and $\rmB_{r-1}$}\}.$$
Note that $|E_0(\tau)|\leq 4d\sqrt{k}$ and $E_0(\tau)\subset [-k,k]^d$. 
Hence, we can take a resampled configuration $\tau^{\rm r}=(\tau_e^{\rm r})_ {e\in  
\E^d}$ as in Lemma~\ref{lem:resample} with $\kE\subset\{1,\infty\}^{\mathbb{E}^d}$ so that $\tau_e^{\rm r}\geq \tau_e$. Denote by  $\rmB^{\rm r}_s$ the ball of radius $s$ centered at $0$ for $\tau^{\rm r}$. Since  $\cE(x)$ is decreasing, $\tau^{\rm r}\in \cE(x)$. Thus, we get
\al{
&e^{-(8d)^2\sqrt{k}\log{k}}\P(\exists t\ge t_0\quad \exists x\in \Lambda: |\rmB_t|\le k,\,x\in \rmB_t\backslash \rmB_{t-1},\,\tau\in \kE(x))\\
&\leq \P(\exists t\ge t_0\quad \exists x\in \Lambda: |\rmB_t|\le k,\,x\in \rmB_t\backslash \rmB_{t-1},\,\forall e\in E_0(\tau)\quad\tau_e^{\rm r}=\infty\,,\,\tau\in \kE(x))\\
&\leq \P(\exists t\ge t_0\quad \exists x\in \Lambda:|\rmB_t^{\rm r}|\le k,\,\rmB_t^{\rm r}\setminus \rmB_{t-1}^{\rm r}=\{x\},\,\tau^{\rm r}\in \kE(x))\\
 &=\P(\exists t\ge t_0\quad \exists x\in \Lambda:|\rmB_t|\le k\,,\,\rmB_t\setminus \rmB_{t-1}=\{x\},\,\tau\in \kE(x)).
}
\end{proof}
\subsection{Drilling free line} We define the number of lines in direction $\mathbf{e}_i$ intersecting $A\subset \Z^d$ as
\begin{equation}\label{def:Ni}
    \rN_i(A):=|\{z\in \Z^d:~z_i=0,\,\rL_i(z)\cap A\neq \emptyset\}|.
\end{equation}
Given $s>0$ and $x\in \R^d$, let us define
\aln{\label{free line def}
\kA^{\rm free}_{s,x}(n):=\left\{\begin{array}{c}\exists i\neq j\in [d],\quad \exists s_n\geq   s n-3n^{\alpha_d},\quad \exists w_n\in  \Lambda_{4n^{\alpha_d}}(nx); \,\rmB_{s_n}\setminus \rmB_{s_n-1}=\{w_n\},\\\rL_i(w_n)\cap \rmB_{s_n}=\{w_n\},\,\forall k\neq j\quad \rN_k(\rH_j(w_n)\cap \rmB _{s_n})\le n^{\alpha_d},\,|\rmB_{s_n}|\le n^{7/4} \end{array}\right\}.
}
The following is a key result of this section, which is the most technical part of this paper. 
\begin{lem}[Drilling free line at cut-point]\label{lem:drilling} Let $s_0>0$ and  $K=K(s_0)>0$ be as in Proposition \ref{prop:sizeboundary}. There exists  $n_0=n(s_0)\in \N$ such that the following holds.
For any $s\in[0,s_0]$ and $x\in [-s_0,s_0]^d$, and  $n\ge n_0$, 
$$e^{-n^{\alpha_d}}\P(\kA_{s,x}^K(n))\leq \P\left(\kA^{\rm free}_{s,x}(n)\right).$$
\end{lem}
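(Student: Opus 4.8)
\textbf{Proof proposal for Lemma~\ref{lem:drilling}.}

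The plan is to start from a configuration $\tau\in\kA_{s,x}^K(n)$, with associated cut-point $w_n\in\Lambda_{n^{\alpha_d}}(nx)$ at time $s_n\ge sn$, ball $\rmB_{s_n}$ of size at most $Kn$ (by the definition of $\kA_{s,x}^K(n)$, and after noting that $Kn\le n^{7/4}$ for $n$ large, so the last constraint in $\kA^{\rm free}_{s,x}(n)$ is automatic), and $\rmB_{s_n}\subset\mathrm{Int}(\Gamma)$ for some $\Gamma$ with $|\Gamma|\le Kn$. First I would use a counting/pigeonhole argument to locate a good direction $\mathbf e_i$ and a good line $\rL_i(w_n)$: since $|\rmB_{s_n}|\le Kn$, the average number of points of $\rmB_{s_n}$ on a line in direction $\mathbf e_i$ through a fixed hyperplane is small, and among the $d$ directions and the $O(n^{\alpha_d(d-1)})$ lines in each direction that pass within $n^{\alpha_d}$ of $w_n$ one can find a direction $i$ and a neighbouring line (or $w_n$'s own line) meeting $\rmB_{s_n}$ in at most $n^{\beta}$ points for some $\beta<\alpha_d$. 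The same averaging, applied to the slices $\rH_j(z)\cap\rmB_{s_n}$ for $z$ ranging over a short segment, gives a hyperplane direction $j\neq i$ and a translate $\rH_j(w_n)$ (after moving $w_n$ by $O(n^{\alpha_d})$) such that $\rN_k(\rH_j(w_n)\cap\rmB_{s_n})\le n^{\alpha_d}$ for all $k\neq j$; this is where the slack $4n^{\alpha_d}$ and $3n^{\alpha_d}$ in the definition of $\kA^{\rm free}_{s,x}(n)$ is consumed.

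Next, having fixed $i$, I would ``drill'': resample, via Lemma~\ref{lem:resample}, the $O(n^{\beta})$ edges incident to $\rmB_{s_n}\cap \rL_i(w_n)$ — more precisely, all edges of $\rL_i(w_n)$ and the ``rungs'' joining $\rL_i(w_n)$ to $\rmB_{s_n}$ other than those at $w_n$ — setting them closed, so that after resampling the only point of $\rmB_{s_n}^{\rm r}$ on $\rL_i(w_n)$ is $w_n$ itself. The key point that makes this work is that closing edges only \emph{increases} the $\tau_e$'s, hence $\rmB_{s'}^{\rm r}\subset\rmB_{s'}$ for every radius $s'$; in particular geodesics from $0$ that previously avoided $\rL_i(w_n)\setminus\{w_n\}$ are unaffected, the cut-point structure ${\rm B}_{s_n}\setminus{\rm B}_{s_n-1}=\{w_n\}$ is preserved up to a controlled shift of the time $s_n$ (the new time $s_n^{\rm r}$ differs from $s_n$ by at most the length of the drilled segment, $O(n^{\beta})\le O(n^{\alpha_d})$, whence $s_n^{\rm r}\ge sn-3n^{\alpha_d}$), and $w_n$ stays within $\Lambda_{4n^{\alpha_d}}(nx)$. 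One must check that $w_n$ is still reached at the same combinatorial ``time'' — i.e. that drilling does not create a new last point or make the ball fail to contain $w_n$ as its unique outer layer; this follows because the drilled edges all lie strictly outside $\rmB_{s_n-1}$ (they touch $\rmB_{s_n}$ only through $w_n$ at its boundary), so the metric from $0$ to every vertex of $\rmB_{s_n-1}$ is untouched and the only possible change is that some vertices leave the ball, which can only make the outer layer smaller, hence equal to $\{w_n\}$. The cost of the resampling is $e^{-(8d)\cdot O(n^{\beta})\log n}=e^{-o(n^{\alpha_d})}$ by Lemma~\ref{lem:resample} (choosing $\beta$ strictly below $\alpha_d$, which the pigeonhole step permits since $\alpha_d=1-\frac1{6d}$ leaves room), and this is absorbed into the $e^{-n^{\alpha_d}}$ prefactor.

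I would then take the union bound over the finitely many choices of $(i,j)$ and over the $O(n^{\alpha_d(d-1)})$ candidate lines and hyperplanes, each contributing only a polynomial factor, again absorbed into $e^{-n^{\alpha_d}}$; and I would quote Proposition~\ref{prop:sizeboundary}, which guarantees that outside an event of probability $e^{-n}\min\P(\kA_{s,x}(n))$ — negligible against the bound being proved, thanks to the lower bound $\P(\kA_{s,x}(n))\ge e^{-Cn}$ from \eqref{eq:boundA1} — the configuration indeed lies in $\kA_{s,x}^K(n)$, so the constraint $|\rmB_{s_n}|\le n^{7/4}$ and the existence of the enclosing $\Gamma$ with $|\Gamma|\le Kn$ are available. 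The main obstacle, and the place requiring the most care, is the \textbf{bookkeeping of the time and position shifts}: one must verify simultaneously that (a) the averaging produces a line and a hyperplane-slice with the required bounds while moving the cut-point by at most $3n^{\alpha_d}$ (and keeping it within $4n^{\alpha_d}$ of $nx$), (b) the drilled segment is short enough that its length is $o(n^{\alpha_d})$ \emph{and} that closing its edges does not destroy the cut-point nor shift its time by more than $3n^{\alpha_d}$, and (c) all of these controlled quantities genuinely fit inside the enlarged constants ($3n^{\alpha_d}$, $4n^{\alpha_d}$, $n^{\alpha_d}$) appearing in the definition of $\kA^{\rm free}_{s,x}(n)$ — this last point is the reason those particular constants were chosen, and getting the geometry to close up is the technical heart of the argument.
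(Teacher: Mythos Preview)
Your proposal has a genuine gap at its core. The drilling step assumes, in your own words, that ``the drilled edges all lie strictly outside $\rmB_{s_n-1}$ \dots\ so the metric from $0$ to every vertex of $\rmB_{s_n-1}$ is untouched.'' This is exactly what fails: the line $\rL_i(w_n)$ through the cut-point can, and in the hard cases does, pass through many interior points of $\rmB_{s_n-1}$ (think of the space-filling spiral example in the paper's introduction, where the ball fills most of a box of sidelength $n^{1/d}$). Closing edges at such interior points can disconnect $w_n$ from $0$, or push $\kD^{\rm r}(0,w_n)$ well beyond $s_n$, and in any case destroys the property $\rmB_{s_n}\setminus\rmB_{s_n-1}=\{w_n\}$: once both $\rmB^{\rm r}_{s_n}$ and $\rmB^{\rm r}_{s_n-1}$ shrink, their difference is not controlled. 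Your averaging argument does not rescue this: there are only $d$ lines through $w_n$ itself (no pigeonhole available), and if you move to a nearby sparse line you lose the cut-point and would have to rebuild it from scratch. Also, $\kA^K_{s,x}(n)$ only gives $|\Gamma|\le Kn$, hence $|\rmB_{s_n}|\le\kappa_d(Kn)^{d/(d-1)}$ by isoperimetry, not $|\rmB_{s_n}|\le Kn$ as you wrote; this is minor but affects your counting.

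The paper's proof is accordingly far more elaborate. It does \emph{not} close edges on a line; instead it looks at the last $n^{\alpha_d}$ steps of a geodesic to $w_n$, splits into two scenarios via Lemma~\ref{lem:4.2} (the geodesic tail has large diameter, or it has many points with pairwise-distinct coordinates in two directions), and in each scenario \emph{shifts the entire configuration} across one or two hyperplanes to open up a slot for the free line. The bulk of the work (Lemma~\ref{lem:drilling2}, especially case~\eqref{case2}) is verifying that the shift-plus-reconnection does not create shortcuts inside the ball, so that a new cut-point $w^*$ close to $w_n$ survives with the right time. That verification is the technical heart, and your proposal has no analogue of it.
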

We will need the following deterministic lemma. We postpone its proof until the appendix.
\begin{lem}\label{lem:4.2}
Suppose $d\geq 2$. Let $S\subset\Z^d$. We define
 $$m(d,S):= \left({\frac{|S|}{2^{d-1}\Diam(S)}}\right)^ {\frac{1}{d-1}}.$$
There exist $i\neq j\in[d]$ and $S'\subset S$ with $|S'|\ge m(d,S)$ such that $z_i\neq z'_i$ and $z_j\neq z_j'$ for any $z\neq z'\in S'$. 
\end{lem}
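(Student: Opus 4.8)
The plan is to recast the statement as a counting inequality and prove it by induction on the dimension using K\"onig's theorem. For a nonempty finite $S\subset\Z^d$, let $g(S)$ denote the largest cardinality of a subset $S'\subseteq S$ that is \emph{generic} in some pair of coordinates, i.e. for which there exist $i\neq j\in[d]$ with $z_i\neq z'_i$ and $z_j\neq z'_j$ whenever $z\neq z'\in S'$. The lemma is exactly the assertion $g(S)\ge m(d,S)$ (a maximising $S'$ realising this value), so it suffices to prove the bound
\[
|S|\ \le\ (\Diam(S)+1)\,g(S)^{d-1}\qquad\text{for all }d\ge 2\text{ and all nonempty }S\subset\Z^d .
\]
This yields $g(S)\ge(|S|/(\Diam(S)+1))^{1/(d-1)}\ge(|S|/(2^{d-1}\Diam(S)))^{1/(d-1)}=m(d,S)$ whenever $\Diam(S)\ge 1$, while the case $\Diam(S)=0$ (where $|S|\le 1$ and a single point works) is trivial.

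To prove the displayed inequality we induct on $d$. Form the bipartite graph $G$ whose left vertices are the values taken by the first coordinate on $S$, whose right vertices are the values taken by the second coordinate, and which has an edge $\{a,b\}$ whenever some $z\in S$ has $z_1=a$ and $z_2=b$. Choosing one point of $S$ on each edge of a matching of $G$ produces a subset of $S$ that is generic in coordinates $1$ and $2$, so the matching number of $G$ is at most $g(S)$; by K\"onig's theorem $G$ then has a vertex cover of that size, i.e. a set $R$ of first-coordinate values and a set $C$ of second-coordinate values with $|R|+|C|\le g(S)$ such that every $z\in S$ satisfies $z_1\in R$ or $z_2\in C$. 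Hence
\[
|S|\ \le\ \sum_{a\in R}\bigl|\{z\in S:z_1=a\}\bigr|+\sum_{b\in C}\bigl|\{z\in S:z_2=b\}\bigr|.
\]
For $d=2$ each slice appearing here has at most $\Diam(S)+1$ elements, so $|S|\le(|R|+|C|)(\Diam(S)+1)\le(\Diam(S)+1)g(S)$, which is the base case. For $d\ge 3$, every slice $\{z\in S:z_1=a\}$, viewed as a subset of $\Z^{d-1}$ in the remaining coordinates, has diameter at most $\Diam(S)$, and its own value of $g$ is at most $g(S)$, since a subset of the slice generic in two of those coordinates is in particular a generic subset of $S$; the same holds for the slices $\{z\in S:z_2=b\}$. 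The induction hypothesis in dimension $d-1$ bounds each slice by $(\Diam(S)+1)g(S)^{d-2}$, and summing gives $|S|\le(|R|+|C|)(\Diam(S)+1)g(S)^{d-2}\le(\Diam(S)+1)g(S)^{d-1}$, closing the induction.

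The argument is purely combinatorial, so there is no analytic estimate to control; the points that need (routine) care are the monotonicity of $g$ under passing to a coordinate slice, reading a K\"onig cover correctly as a union of ``row'' and ``column'' value sets, and the degenerate cases ($S$ empty, $\Diam(S)=0$, or a slice of diameter $0$ in the inductive step, all handled by the crude bound $|\cdot|\le 1$). I expect the only genuine thing to watch to be the bookkeeping in the inductive step — checking that the two ``special'' coordinates of each slice are among the $d-1$ coordinates left after fixing one — rather than any real obstacle. Note that this proof in fact gives the sharper constant $\Diam(S)+1$ in place of $2^{d-1}\Diam(S)$.
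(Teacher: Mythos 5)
Your proof is correct, and it takes a genuinely different route from the paper's. Both arguments are inductions on the dimension that reduce to coordinate slices, but the combinatorial engine differs. The paper runs a greedy selection: it repeatedly picks a point of $S$ outside the union of the two coordinate hyperplanes through all previously chosen points; either this produces $m(d,S)$ points differing pairwise in coordinates $d-1$ and $d$, or by pigeonhole a single hyperplane $\{x : x_r = c\}$ carries at least $|S|/(2m(d,S))$ points of $S$, and the induction descends into that one heavy slice. You instead apply K\"onig's theorem to the bipartite incidence graph of the first two coordinates: either the matching number (hence $g(S)$) is large, or $S$ is covered by at most $g(S)$ coordinate slices, each of which is bounded by the induction hypothesis, so the recursion descends into every slice of the cover rather than into one. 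Your version is arguably cleaner --- the dichotomy is packaged once and for all in ``minimum vertex cover equals maximum matching'' --- and it yields the sharper bound $|S|\le(\Diam(S)+1)\,g(S)^{d-1}$, i.e. the constant $\Diam(S)+1$ in place of $2^{d-1}\Diam(S)$, which of course implies the stated lemma. The monotonicity facts you flag (a generic subset of a slice is a generic subset of $S$; the diameter and the $g$-value of a slice are at most those of $S$) are indeed immediate, and the degenerate cases ($\Diam=0$, empty slices) are handled as you indicate; note that the paper's own greedy argument has the same implicit assumption $\Diam(S)\ge 1$.
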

From now on, if it is clear from context, we simply write $n^a$ instead of $\lfloor n^a\rfloor$ for $a>0$.  On the event $\kA_{s,x}^K(n)$,  let $s_n\ge sn$ and $w_n\in  \Lambda_{n^{\alpha_d}}(nx) $ such that $\rmB_{s_n}\setminus\rmB_{s_n-1}=\{w_n\}$. We take $\gamma_{0,w_n}=(w^1,\cdots, w^{s_n})$ to be a geodesic between $0$ and $w_n$ with a deterministic rule breaking ties. 
 Recall $\alpha_d=1-\frac{1}{6d}$. Set $\beta_d=\frac{5\alpha_d -4}{ {5}(d-1)}$. In particular, we have $1-\beta_d<\alpha_d$.

  If $s n<3n^{\alpha_d}$ and $x\in  {\Lambda_{4n^{\alpha_d}}}(0)$, then $\kA^{\rm free}_{s,x}(n)$ occurs with $s_n=0$ and $w_n=0$, in particular $$\P(\kA_{s,x}^K(n))\leq 1=\P(\kA^{\rm free}_{s,x}(n)).$$ Otherwise, then since $s_n\geq 3n^{\alpha_d}$ or $w_n\notin \Lambda_{3n^{\alpha_d}}(0)$, we eventually have $s_n\geq 3n^{\alpha_d}$  that we assume from now on.
Let \[S:=\{w^{s_n-2n^{\alpha_d}},\cdots, w^{s_n-n^{\alpha_d}}\}.\] We consider the events:
\al{
\kA^{[1]}_n&:=\kA_{s,x}^K(n)\cap \left\{\Diam(S)\ge  2^{-(d-1)}\,n^{\frac{4}{5}}\right\},\\
\kA^{[2]}_n&:=\kA_{s,x}^K(n)\cap \{\exists i\neq j\in [d],\, \exists S'\subset S;~|S'|=n^{\beta_d},\,\forall z\neq z'\in S'\,z_i\neq z'_{i},\,z_j\neq z'_{j},\,\}.
}
By Lemma \ref{lem:4.2},  $\kA_{s,x}^K(n)\subset \kA^{[1]}_n\cup\kA^{[2]}_n$. Hence, Lemma~\ref{lem:drilling} follows from the following.
\begin{lem}\label{lem:drilling2}
For $n\in\N$ large enough, 
\aln{
\P(\kA_n^{[1]})&\leq \frac{1}{2}e^{n^{\alpha_d}}\P\left(\kA^{\rm free}_{s,x}(n)\right),\label{case1}\\
\P( \kA_n^{[2]})&\leq \frac{1}{2}e^{n^{\alpha_d}}\P\left(\kA^{\rm free}_{s,x}(n)\right)\label{case2}.
}
\end{lem}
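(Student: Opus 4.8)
The plan is to derive both \eqref{case1} and \eqref{case2} from the resampling scheme already used for Lemma~\ref{lem:createcutpoin}. Concretely, I will show that from any configuration $\tau\in\kA_n^{[i]}$ ($i\in\{1,2\}$) one can select, by a deterministic rule depending on $\tau$, a set of edges $E_0=E_0(\tau)\subset[-n^2,n^2]^d$ with $|E_0|\le n^{\alpha_d}/(16d\log n)$ (the containment in $[-n^2,n^2]^d$ being automatic since $\rmB_{s_n}\subset\mathrm{Int}(\Gamma)$ with $|\Gamma|\le Kn$) such that the configuration obtained from $\tau$ by closing every edge of $E_0$ belongs to $\kA^{\rm free}_{s,x}(n)$. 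Granting this, Lemma~\ref{lem:resample} with $E_1=\emptyset$ and $\cE=\kA_n^{[i]}$ produces a coupled configuration $\tau^{\rm r}$ of the same law as $\tau$, equal to $\tau$ off $E_0$ and closed on $E_0$, hence $\tau^{\rm r}\in\kA^{\rm free}_{s,x}(n)$, so that for $n$ large
\[\P(\kA_n^{[i]})\le e^{8d|E_0|\log n}\,\P\bigl(\kA^{\rm free}_{s,x}(n)\bigr)\le\tfrac12 e^{n^{\alpha_d}}\,\P\bigl(\kA^{\rm free}_{s,x}(n)\bigr),\]
which is exactly the claim. Thus everything reduces to constructing $E_0$.

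For the surgery, recall that on $\kA_{s,x}^K(n)$ the isoperimetric inequality \eqref{eq:isoperimetry}, together with $|\Gamma|\le Kn$ and $d\ge3$, gives $|\rmB_{s_n}|\le n^{7/4}$, and that we have already reduced to $s_n\ge 3n^{\alpha_d}$. The cut-point is moved a little way back along the geodesic $\gamma_{0,w_n}$. As in Lemma~\ref{lem:createcutpoin}, the frontier layers $\rmB_r\setminus\rmB_{r-1}$ are disjoint and contained in $\rmB_{s_n}$, so every block of $\lceil n^{7/8}\rceil$ consecutive times contains a time $\ell$ with $|\rmB_\ell\setminus\rmB_{\ell-1}|\le n^{7/8}$, and such an $\ell$ can be found inside the window $[s_n-2n^{\alpha_d},s_n-n^{\alpha_d}]$ relevant to $\kA_n^{[1]}$ and $\kA_n^{[2]}$. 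Adding to $E_0$ the at most $2d\,n^{7/8}$ edges joining $\rmB_\ell\setminus(\rmB_{\ell-1}\cup\{w^\ell\})$ to $\rmB_{\ell-1}$ leaves all chemical distances below $\ell$ unchanged and makes $w^\ell$ the unique vertex at distance $\ell$, so for the new environment $w^\ell$ is a space-time cut-point at time $s'_n:=\ell$ with $\rmB'_{s'_n}=\rmB_{\ell-1}\cup\{w^\ell\}\subset\rmB_{s_n}$. Since $\ell\ge s_n-2n^{\alpha_d}\ge sn-3n^{\alpha_d}$, $\|w^\ell-w_n\|_\infty\le\|w^\ell-w_n\|_1\le2n^{\alpha_d}$ (so $w^\ell\in\Lambda_{4n^{\alpha_d}}(nx)$) and $|\rmB'_{s'_n}|\le n^{7/4}$, three of the five requirements in $\kA^{\rm free}_{s,x}(n)$ hold; it remains to choose coordinate directions $i\ne j$ and, using a few more edges of $E_0$, to make $\rL_i(w^\ell)$ avoid $\rmB_{\ell-1}$ and to thin the slice $\rH_j(w^\ell)\cap\rmB_{\ell-1}$ so that $\rN_k(\cdot)\le n^{\alpha_d}$ for all $k\ne j$.

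The choice of $i,j$ is where the dichotomy enters. In case \eqref{case2}, the set $S'\subset S$ supplied by the definition of $\kA_n^{[2]}$ consists of $n^{\beta_d}$ geodesic vertices with pairwise distinct $i$- and $j$-coordinates, so the lines $\{\rL_i(z):z\in S'\}$ are pairwise disjoint and so are the slices $\{\rH_j(z):z\in S'\}$; a multi-way pigeonhole of these two families (and of the cut-times $\ell(z)$, $z\in S'$) against $|\rmB_{s_n}|\le n^{7/4}$ isolates a single $z^\star=w^{\ell}\in S'$ for which the line overlap $\rL_i(w^\ell)\cap\rmB_{\ell-1}$ and the transverse cross-sections $\rN_k(\rH_j(w^\ell)\cap\rmB_{\ell-1})$ are as small as the exponents allow, and the surviving overlap vertices can be taken in the outer layers of $\rmB_\ell$ — where detaching them does not disturb the cut-point at time $\ell$ — and removed by a few further edges of $E_0$. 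It is here that the calibration $(d-1)\beta_d=\alpha_d-\tfrac45$ (which makes $|S'|\ge n^{\beta_d}$ in Lemma~\ref{lem:4.2}) and the inequality $1-\beta_d<\alpha_d$ are used, to keep $|E_0|$ below the budget. In case \eqref{case1}, $\Diam_i(S)\ge 2^{-(d-1)}n^{4/5}$ for some $i$ forces the geodesic to sweep an interval of $\mathbf{e}_i$-coordinates of length $\ge 2^{-(d-1)}n^{4/5}\gg n^{\beta_d}$ inside $S$; this gives an even larger pool of candidate cut-vertices, with the bonus that at an $\mathbf{e}_i$-extremal one the line $\rL_i$ already exits the relevant ball on one side so that line-clearing is essentially free, and pigeonholing this large pool against $|\rmB_{s_n}|\le n^{7/4}$ again furnishes a transverse $j$ with a thin slice.

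The main obstacle is carrying out the last two steps \emph{simultaneously} and within budget: exhibiting one choice of $(\ell,i,j)$ for which all of $|\rmB_\ell\setminus\rmB_{\ell-1}|$, $|\rL_i(w^\ell)\cap\rmB_{\ell-1}|$, the cross-sections $\rN_k(\rH_j(w^\ell)\cap\rmB_{\ell-1})$, and the chemical depth of the overlap vertices still to be removed are small enough that $|E_0|\le n^{\alpha_d}/(16d\log n)$, and then checking carefully that after closing $E_0$ the ball has only shrunk, the distances below time $\ell$ are untouched, the line-clearing and slice-thinning closures create no new vertex at distance $\ell$, so that $w^\ell$ remains the unique such vertex, and the free-line and thin-slice conditions genuinely hold for $\rmB'_{s'_n}=\rmB_{\ell-1}\cup\{w^\ell\}$ — i.e.\ that the resampled environment really belongs to $\kA^{\rm free}_{s,x}(n)$. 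Managing these nested pigeonholes and the attendant surgery, Lemma~\ref{lem:4.2} having already reduced the problem to the two clean cases, is the substance of the proof.
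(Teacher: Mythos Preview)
Your scheme has a genuine gap at the step ``using a few more edges of $E_0$, make $\rL_i(w^\ell)$ avoid $\rmB_{\ell-1}$ and thin the slice $\rH_j(w^\ell)\cap\rmB_{\ell-1}$''. Closing edges only \emph{increases} chemical distances, so a vertex $v\in\rL_i(w^\ell)\cap\rmB_{\ell-1}$ leaves the ball only if every open path from $0$ to $v$ of length $\le\ell$ is severed; nothing in your pigeonholes forces such $v$ to lie in ``outer layers'', and if $\kD(0,v)$ is small you must essentially disconnect $v$, possibly cutting $\gamma_{0,w^\ell}$ itself. Even granting the disconnection were harmless, the arithmetic does not close: in case~\eqref{case2} the $n^{\beta_d}$ disjoint lines and hyperplanes pigeonholed against $|\rmB_{s_n}|\le n^{7/4}$ only give $|\rL_i(w^\ell)\cap\rmB_{\ell-1}|$ and $|\rH_j(w^\ell)\cap\rmB_{\ell-1}|$ of order $n^{7/4-\beta_d}$, and since $\beta_d=(\alpha_d-4/5)/(d-1)$ is tiny (e.g.\ $\beta_3=13/180$), this is far above both your edge budget $n^{\alpha_d}/(16d\log n)$ and the required bound $\rN_k\le n^{\alpha_d}$. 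The exponent relation $1-\beta_d<\alpha_d$ you invoke does not rescue $7/4-\beta_d$. In case~\eqref{case1} the situation is no better: the extremal point of $S$ in direction~$i$ has no reason to be extremal for the full ball $\rmB_{\ell-1}$, so the ``line-clearing is essentially free'' claim is unsupported.

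The paper's proof avoids this obstruction by a different mechanism: it does not try to delete vertices from the existing ball but instead \emph{translates} the environment to manufacture empty space for the free line. In case~\eqref{case1} one pigeonholes the $\asymp n^{4/5}$ disjoint hyperplanes $\rH_1(z)$, $z\in S'$, against $|\Gamma|\le Kn$ to find $w^*$ with $|\overline{\rH}_1^*\cap\rmB_{s_n}|=O(n^{2/5})$, then shifts the two half-spaces apart by $\wp_n\asymp n^{2/5}$ in direction $\mathbf e_1$ and resamples inside the gap: the few geodesic crossings of $\rH_1^*$ are rerouted by disjoint detours avoiding $\rL_3(w^*)$, all other gap edges are closed, and one checks (this is the work) that no shortcut is created and the new ball satisfies (i)--(iv). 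Case~\eqref{case2} uses a four-quadrant shift along $\rH_1(w^*)\cup\rH_2(w^*)$ together with the good-box renormalisation at scale $N=\log^2 n$ to control reconnections across the cross; here one cannot afford to preserve all connections, only those of $\gamma_{0,w_n}$, and one must separately block connections near bad boxes and near $\Gamma$, which is where $1-\beta_d<\alpha_d$ actually enters. The shift being an isometry on each piece is what preserves the metric structure --- the ingredient your edge-closing approach lacks.
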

The proof of Lemma \ref{lem:drilling2} involves modifying the initial environment and adjusting the time of the edges such that a new ball in the resampled environment possesses a free line at its cut-point, meaning a line that intersects the ball solely at the cut-point. To achieve this, we drill a line near the end of geodesic $\gamma_{0,w_n}$, which is in close proximity to the cut-point $w_n$. The primary challenge in this process is maintaining the overall structure of the ball, ensuring the new space-time cut-point remains close to the original one in both position and time. We need to be cautious in order to prevent the creation of a shortcut within the ball during the resampling process.

The first case \eqref{case1} is simpler, as it pertains to situations where the geodesic $\gamma_{0,w_n}$ does not wiggle too much (i.e. the diameter of $S$ is sufficiently large). In this case, we can identify a hyperplane intersecting $S$ where the size of the ball's intersection with the hyperplane is negligible compared to $n$. As a result, we only incur a minimal cost to maintain the connections after the shift and avoid a specific line. However, the second case \eqref{case2} is considerably more difficult. This case applies to instances where the path has significant fluctuations around the endpoint, and the intersections of the ball with the hyperplane are not negligible. In this scenario, the shifting procedure differs; we employ two hyperplanes and shift each quadrant. It is not feasible to preserve all connections within the ball, as the expense may become exponentially high. Instead, we maintain the connections of the geodesic while blocking connections around bad boxes and the ball's boundary. We will show that this resampling process does not generate a shortcut inside the ball.
\begin{proof}[Proof of \eqref{case1}]
 We suppose the event $\kA_n^{[1]}$ occurs. Without loss of generality, we assume $\Diam_1(S)\ge  n^{\frac{4}{5}}/ 2^{d-1}$. Then, there exists $S'\subset S=\{w^{s_n-2n^{\alpha_d}},\cdots, w^{s_n-n^{\alpha_d}}\}$ such that $|S'|=n^{4/5}/ 2^{d-1}$ and  the hyperplanes $(\rH_{1}(z),\,z\in S') $ are disjoint.  Let $\Gamma\subset \Z^ d$ be such that $|\Gamma|\le K n$ and $\rmB_{s_n}\subset\mathrm{Int}(\Gamma)$.  Then, there exists $w^*=w^{s_*}\in S'$ such that 
$|(\rH_1(w^*-\mathbf{e}_1)\cup  \rH_1(w^*))\cap \Gamma|\leq 2^dK n^{\frac{1}{5}}.$ Hence, by the isoperimetry \eqref{eq:isoperimetry}, we have 
\ben{\label{scenario 1 estimate}
| (\rH_1(w^*-\mathbf{e}_1)\cup  \rH_1(w^*))\cap \rmB_{s_n}|\leq  C n^{2/5}
}
with some $C=C(d,K)\in \N$. We write $\overline \rH_1^*:=\rH_1(w^*-\mathbf{e}_1)\cup  \rH_1(w^*)$ and $\rH_1^*:= \rH_1(w^*)$. 

Recall $(\tau_e)_{e\in \E^d}$ from \eqref{Chemical-FPP}. Let $(\tau_e^*)$ be an independent copy of $(\tau_e)$, and $\mathbf{X}=(X_i)_{i=1}^d$ a uniform random variable on $[-Kn,Kn]^d\cap \Z^d$ independent both from $(\tau_e),(\tau^*_e)$. Let $\wp_n=2C  n^{2/5}\in 2\N$. We consider the following shift:
$$\SH(x)=\SH_{\mathbf{X}}(x):=
\begin{cases}
x-(\wp_n -1)\mathbf{e}_1 &\text{if $x_1< X_1$},\\
x+\wp_n \mathbf{e}_1 &\text{if $x_1\geq  X_1$}.
\end{cases}$$ 
Then, we define the resampled configuration $(\tau_e^{\rm r})$ as follows: for $e=\langle x,y\rangle$, 
\al{\tau_{e}^{\rm r}:=\left\{
\begin{array}{ll}
\tau_e^*&\text{ if $x_1$ or $y_1\in (X_1-\wp_n,X_1+\wp_n)$},\\
\tau_{\langle \SH^{-1}(x),\SH^{-1}(y)\rangle}&\text{ otherwise}.
\end{array}\right.
}
In the proof, we simply write $\kD=\kD^{\kG_p}$ for the chemical distance for the configuration $\tau$.
Denote by $\kD^{\rm r}$ the chemical distance for the configuration $\tau^{\rm r}$. We need the following claim.
\begin{claim}
\label{lem case 1}
Let $n\in \N$ and $(x^i)_{i=1}^n,(y^i)_{i=1}^n\subset \Z^d$ such that
$$2n=-x_1^i=y^i_1,\,x_j^i=y^i_j\,\forall j\geq 2.$$
Then, there exist disjoint paths $(\gamma^i)_{i=1}^n$ such that
$$\gamma^i:x^i\to y^i,\,\gamma^i\subset (-2n,2n)\times \Z^{d-1}\cup\{x^i,y^i\},\, \gamma^i\cap \rL_3(0)=\emptyset,\,|\gamma^i|\leq 8n.$$
\end{claim}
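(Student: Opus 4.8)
Claim~\ref{lem case 1} is a purely combinatorial statement about routing $n$ disjoint corridors in $\Z^d$, so the plan is to construct the paths $\gamma^i$ explicitly, layer by layer in the first coordinate. Fix $i$. The segment from $x^i=(-2n,x_2^i,\dots,x_d^i)$ to $y^i=(2n,x_2^i,\dots,x_d^i)$ is a straight line in direction $\mathbf e_1$ of length $4n$, which already satisfies the length bound and the containment in $(-2n,2n)\times\Z^{d-1}$ apart from the endpoints; the only defect is that it may hit the line $\rL_3(0)=\{k\mathbf e_3:k\in\Z\}$ (this can happen only when $x_2^i=0$ and $x_j^i=0$ for all $j\neq 3$, i.e. for at most one index $i$, but we do not want to rely on that). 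The fix is to detour: before reaching the hyperplane $\{x_1=0\}$ I would move the path one step in direction $\mathbf e_2$ (or, if $x_2^i$ is already the problematic value, in direction $\mathbf e_3$ or $\mathbf e_4$ — here we use $d\geq 3$, and in fact $d\geq 2$ suffices after a small case split since we only need to avoid a single \emph{line}), traverse the hyperplane $\{x_1=0\}$ at the shifted height, and then step back. This adds at most $4$ edges, so $|\gamma^i|\leq 4n+4\leq 8n$ for $n\geq 1$, and keeps the interior of $\gamma^i$ inside the open slab $(-2n,2n)\times\Z^{d-1}$.

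The real content is disjointness of the family $(\gamma^i)_{i=1}^n$. Two straight corridors at distinct heights $(x_2^i,\dots,x_d^i)\neq (x_2^{i'},\dots,x_d^{i'})$ are automatically disjoint, so the only danger is that the detours of two different corridors collide. To avoid this I would make the detour of $\gamma^i$ depend on $i$: for the $i$-th path, perform the $\mathbf e_2$-shift (or the alternative-direction shift) starting at first coordinate $-i$ and returning to the original height at first coordinate $+i$ — using here that the $x^i$ are listed with distinct labels $i\in\{1,\dots,n\}$ and $i\leq n$ so that $-i,+i\in(-2n,2n)$. Then the detour of $\gamma^i$ lives entirely in the ``time window'' $[-i,i]$ of the first coordinate and occupies height $(x_2^i,\dots,x_d^i)$ outside that window; a collision of $\gamma^i$ and $\gamma^{i'}$ with $i<i'$ would force a common vertex whose first coordinate lies in $[-i,i]$, but in that window $\gamma^{i'}$ sits at its shifted height which differs from $\gamma^i$'s height in coordinate $2$ unless the base heights already differ — a short case analysis handles the finitely many coincidences by choosing, for each $i$, the detour direction from $\{\mathbf e_2,\mathbf e_3\}$ (and the detour amount $i$ vs.\ $i$ in that direction) so that the shifted heights of distinct corridors remain pairwise distinct. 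Since there are only $n$ corridors and at least $2(2n+1)$ available shifted positions along two axes, such a choice exists.

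I would organize the write-up as: (1) reduce to the case where each corridor needs to avoid $\rL_3(0)$, noting it suffices to perturb only those $\gamma^i$ whose base height could meet the line; (2) define the detour template $\delta_i$ of length $\leq 4$ parametrized by a direction $v_i\in\{\mathbf e_2,\mathbf e_3\}$ and the window $[-i,i]$; (3) choose the $v_i$ greedily so that all detour-heights and all base-heights are pairwise distinct and none equals the height of $\rL_3(0)$; (4) verify the four required properties (source/target, slab containment, avoidance of $\rL_3(0)$, length $\leq 8n$) and pairwise disjointness from the height-separation. The main obstacle is purely bookkeeping: keeping the $O(n)$ detours mutually non-interfering \emph{and} simultaneously clear of the single forbidden line $\rL_3(0)$; this is where the freedom of having $\geq 2$ transverse directions (hence the hypothesis $d\geq 2$, with $d\geq 3$ giving extra room) is used, and where one must be careful that the detour windows $[-i,i]$ nest rather than cross.
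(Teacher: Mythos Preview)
Your plan has the right skeleton (nested detour windows in the first coordinate, transverse shifts to dodge $\rL_3(0)$), but the disjointness step has a real gap that your greedy/counting argument does not close.

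First, a minor correction: it is not true that at most one index can hit $\rL_3(0)$. In $d\ge 3$, every $i$ with $x_2^i=x_4^i=\dots=x_d^i=0$ (and arbitrary $x_3^i$) has its straight corridor meeting $\rL_3(0)$, and there can be many such $i$.

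The main gap is the annulus region. Suppose $i<i'$ with nested windows $[-i,i]\subset[-i',i']$. In the annulus $\{|x_1|\in(i,i']\}$, path $\gamma^i$ sits at its \emph{base} height while $\gamma^{i'}$ sits at its \emph{shifted} height; nothing in your choice of $v_{i'}\in\{\mathbf e_2,\mathbf e_3\}$ prevents these from coinciding. Your counting ``$2(2n+1)$ available shifted positions'' is not the relevant count: each detoured path has only $O(1)$ candidate shifted heights (and in $d=3$ only the two $\pm\mathbf e_2$ shifts actually leave the projection of $\rL_3(0)$), yet must avoid up to $n-1$ other base heights. Concretely, take $d=3$ and base heights $(0,0),(1,0),(-1,0)$ in coordinates $(x_2,x_3)$: the only bad corridor is the one at $(0,0)$, its two admissible unit shifts land on $(1,0)$ and $(-1,0)$, both occupied, so the detoured path collides with a straight one inside its window. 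Allowing variable shift amounts might rescue this, but then the whole argument (including the annulus interaction between two detoured paths) has to be redone, and your sketch does not do that.

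The paper's construction bypasses all of this with one idea you are missing: \emph{sort by the second coordinate}. Relabel so that $x_2^1\le x_2^2\le\dots\le x_2^n$; take the straight segment for every $i$ with $x_2^i<0$, and for every $i$ with $x_2^i\ge 0$ detour by exactly $+\mathbf e_2$ in the window $[-2i,2i]$. Now the nesting of windows and the ordering of second coordinates are aligned: for $i<i'$ both detoured, in the annulus $\gamma^i$ has second coordinate $x_2^i$ while $\gamma^{i'}$ has $x_2^{i'}+1>x_2^i$, so no collision; inside the smaller window both are shifted by the same $+\mathbf e_2$, so distinctness is inherited from distinctness of the $x^i$. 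Avoidance of $\rL_3(0)$ is automatic since at first coordinate $0$ the second coordinate is either $x_2^i<0$ or $x_2^i+1\ge 1$. No greedy choice, no second transverse direction---the sorting is the whole trick.
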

    \begin{proof}
    Without loss of generality, we can suppose $x^1_2\leq x^2_2\leq \cdots x^n_2$. We denote by $L(x,y)$ the straight line between $x$ and $y$. By abuse of notation, we can see $L(x,y)$ as a path by considering all the edges $e$ such that $e\subset L(x,y)$. 
    Set \[\ell:=\inf\{k\ge 1: x_2^k\ge 0\}.\]
   For $i<\ell$,  we take $\gamma^i=L(x^i,y^i)$; for $i\ge \ell$, we take $\gamma^i$ to be
    \al{
    &L(x^i,x^i+2(n-i)\mathbf{e}_1)\oplus L(x^i+2(n-i)\mathbf{e}_1,x^i+2(n-i)\mathbf{e}_1+\mathbf{e}_2)\\
    &\qquad \oplus L(x^i+2(n-i)\mathbf{e}_1+\mathbf{e}_2,y^i-2(n-i)\mathbf{e}_1+\mathbf{e}_2) \oplus L(y^i-2(n-i)\mathbf{e}_1+\mathbf{e}_2,y^i-2(n-i)\mathbf{e}_1) \\
    &\qquad\qquad\oplus L(y^i-2(n-i)\mathbf{e}_1,y^i).
    }
From the definition, $(\gamma^i)_{i=1}^n$ are disjoint paths and do not intersect $\rL_3(0)$ (See Figure \ref{fig5}).
\begin{figure}[!ht]
\def\svgwidth{0.4\textwidth}
 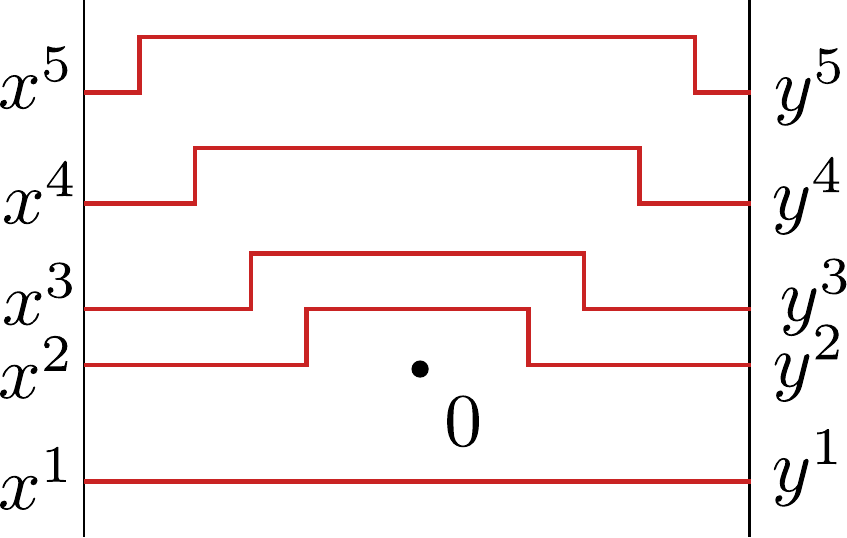
 \caption[fig5]{\label{fig5}Construction of $(\gamma^i)$ colored in  red.}
\end{figure}
        \end{proof}
From now on, we suppose $\mathbf{X}=w^*$. Since $|\overline \rH_1^*\cap \rmB_{s_n}|\leq \wp_n/2$, by Claim~\ref{lem case 1} with $\wp_n/2$ in place of $n$,  there exist disjoint paths $\bar{\gamma}_{x,y}=\bar{\gamma}_{y,x}:x-\wp_n \mathbf{e}_1\to x+\wp_n \mathbf{e}_1$ for each pair $x,y\in \overline \rH_1^*\cap \rmB_{s_n}$ with $y=x-\mathbf{e}_1$ such that $$\bar{\gamma}_{x,y}\backslash\{x-\wp_n\mathbf{e}_1,x+\wp_n\mathbf{e}_1\}\subset \Z^d\backslash (\SH(\Z^d)\cup \rL_{3}(w^*)),$$ and $|\bar{\gamma}_{x,y}|\leq 4\wp_n$.  We say that $e\in \E^d$ crosses $\rH^*_1$ if there exists $z\in  \rH_1^*$ such that $e=\langle z-\mathbf{e}_i,z\rangle$. Note that here we make no distinction on the orientation of the crossing. Let $(i_j)_{j=1}^{k}$ be such that $\langle w^{i_j},w^{i_j+1}\rangle$ is the j-th crossing of $\gamma_{0,w_n}$ over $\rH^*_1$ until $s_*$. 
Let us define 
\al{
\mathbf{E}_1&:=\left\{e\in \E^d:~e\in \bar{\gamma}_{w^{i_j},w^{i_j+1}}\text{ with  }j\leq k,\text{ or }e\subset L(w^*,\SH(w^*))\right\},\\
\mathbf{E}_2&:=\{e\in \E^d\backslash \mathbf{E}_1:~|e\cap e'|=1\text{ with some $e'\in \mathbf{E}_1$ or  $e\cap \SH (\rmB_{s_n}\cap \overline \rH_1^*)\neq \emptyset$}\}.
}
 Then, we consider the event 
\al{
\mathcal{A}^{\rm r}:=\left\{\mathbf{X}=w^*;\,
\tau^*_e=1\,\text{ for }e\in \mathbf{E}_1;\,\tau^*_e=\infty\,\text{ for }e\in \mathbf{E}_2
\right\}.
}
    Since $|\mathbf{E}_1\cup\mathbf{E}_2|\leq 4d\wp_n^2\le  16C^ 2dn^{4/5}$ on $\mathcal{A}^{[1]}_n$, for $n$ large enough depending on $p$, $d$, and $s_0$,
\begin{equation}\label{eq:addar}
\begin{split}
\P(\mathcal{A}^{\rm r}\cap\mathcal{A}^{[1]}_n)&= \E[\P(\mathbf{X}=w^*,\,
\tau^*_e=1\,\text{ for }e\in \mathbf{E}_1;\,\tau^*_e=\infty\,\text{ for }e\in \mathbf{E}_2|~\mathcal{A}^{[1]}_n)\,\mathbf{1}_{\mathcal{A}^{[1]}_n}]\\\
&\geq  \E[(2Kn+1)^{-d} p^{|\mathbf{E}_1|} (1-p)^{|\mathbf{E}_2|}\mathbf{1}_{\mathcal{A}^{[1]}_n}]\geq e^ {-c_0n^{4/5}}\P(\mathcal{A}^{[1]}_n),
\end{split}
\end{equation}
where $c_0$ is a positive constant depending on $d$, $s_0$ and $p$.
Set $\tilde{0}:=\SH_{\mathbf{X}}(0)$ and  $t_*:=\kD^{\rm r}(\tilde{0},w^*)$.  We will
prove that on $\mathcal{A}^{\rm r}\cap \mathcal{A}^{[1]}_n$, the following occur:
\begin{enumerate}[label=(\roman*)]
    \item \label{case:i} $t_* \in [s n-3n^{\alpha_d},\infty)$ and $w^* \in \Lambda_{4n^{\alpha_d}}(nx+\tilde{0})$;
    \item \label{case:ii}$\rL_3(w^*)\cap {\rm B}^{\rm r}_{t_*}(\tilde{0})=\{w^*\}$;
    \item \label{case:iii}$\forall k\ne 1\quad \rN_k( \rH_1^*\cap \rmB^{\rm r}_{t_*}(\tilde{0}))\le C_0n^{4/5}$ where $C_0:=16dC^2$;
    \item\label{case:iv}$|\rmB_{t_*}^{\rm r}(\widetilde {0})|\le n^{7/4}$. 
\end{enumerate}
Since $(\tau^{\rm r}_{e+\tilde{0}})\overset{\rm law}{=}(\tau_e)$, together with \eqref{eq:addar}, this will imply
\al{
&\P\left(\begin{array}{c}
\exists i\neq j\in [d],\quad \exists t\geq  s n-3n^{\alpha_d},\quad \exists w_n\in \Lambda_{4n^{\alpha_d}}(nx);\\ 
w_n\in {\rm B}_{t}\setminus {\rm B}_{t-1},\,\rL_i(w_n)\cap \rmB_t=\{w_n\},\, \rN_k(\rH_j(w_n)\cap \rmB_t)\le C_0n^{4/5}\,\forall k\ne j,|\rmB_{t}|\le n^{7/4}
\end{array}\right)\\
&\geq \P\left(\begin{array}{c}
\exists i\neq j\in [d],\quad \exists t\geq  s n-3n^{\alpha_d},\quad \exists w_n\in \Lambda_{4n^{\alpha_d}}(nx+\tilde{0});\,w_n\in {\rm B}^{\rm r}_{t}(\tilde{0})\setminus {\rm B}^{\rm r}_{t-1}(\tilde{0}),\\ 
\rL_i(w_n)\cap B^{\rm r}_t(\tilde{0})=\{w_n\}, \,\rN_k(\rH_j(w_n)\cap \rmB^{\rm r}_t(\tilde{0}))\le C_0n^{4/5}\,\forall k\ne j,|\rmB_{t}^{\rm r}(\widetilde {0})|\le n^{7/4}
\end{array}\right)\\
&\geq e^ {-c_0n^{4/5}}\P(\mathcal{A}^{[1]}_n).
}
 Together with Lemma~\ref{lem:createcutpoin}, this yields the claim. Let us now prove the conditions (i)--(iv).

Since $\|\widetilde 0-0\|_\infty \le \wp_n$ and $\|w^*-w_n\|_\infty \le 2n^{\alpha_d}$, for $n$ large enough depending on $d,s_0$, we have
$w^*\in \Lambda_{4n^{\alpha_d}}(nx+\tilde{0})$. We define a modified path:
\al{
\tilde{\gamma}&:=(\SH(w^{1}),\SH(w^{2}),\cdots,\SH(w^{i_1}))\oplus \bar{\gamma}_{w^{i_1},w^{i_1+1}}\oplus (\SH(w^{i_1+1}),\cdots,\SH(w^{i_2}))\\
&\qquad\qquad \oplus  \bar{\gamma}_{w^{i_2},w^{i_2+1}}\oplus \cdots \oplus (\SH(w^{i_{k}+1}),\cdots,\SH(w^{*}))\oplus L(\SH(w^*),w^*).
}
On the event $\kA^{\rm r}\cap\kA_n^{[1]}$, by \eqref{scenario 1 estimate},  $\tilde{\gamma}$ is an open path from $\tilde{0}=\SH(w^{1})$ to $w^*$ for $\tau^{\rm r}$ such that 
$$|\tilde{\gamma}|\leq s_*+|\overline \rH_1^*\cap \rmB_{s_n}|\times 8\wp_n+\wp_n\leq s_*+ 5\wp_n^2.$$
Hence, since $s_*\le s_n-n ^{\alpha_d}$, for $n$ large enough depending on $p,d,s_0$, 
\aln{\label{upper case1}
\max\{\kD^{\rm r}(\tilde{0},\SH(w^*)),\kD^{\rm r}(\tilde{0},w^*)\}\leq s_*+20C^2n^{4/5}<s_n.
}
Next, we will prove that
\aln{\label{case1 long time}
t_*:=\kD^{\rm r}(\tilde{0},w^*)\geq s_*-\wp_n.
}
To this end, we take a geodesic $\gamma^{\rm r}=(v^i)_{i=1}^\ell$ from $\tilde{0}$ to $\SH(w^*)$ for $\tau^{\rm r}.$ Let \al{
i_1&:=\inf\{k\ge 1:~\,v^k_1\in (w^*_1-\wp_n,w^*_1+\wp_n) \}-1,\\
j_1&:=\inf\{k\geq i_1:~\,v^k_1\in (w^*_1-\wp_n,w^*_1+\wp_n)^c\}.
}
Assuming that $i_1,j_1,\dots,i_{l-1},j_{l-1}$ have been defined, we define 
\al{
i_l&:=\inf\{k>j_{l-1}:~\,v^k_1\in (w^*_1-\wp_n,w^*_1+\wp_n) \}-1,\\
j_l&:=\inf\{k\geq i_l:~\,v^k_1\in (w^*_1-\wp_n,w^*_1+\wp_n)^c \}.
}
We continue this procedure until $i_l=\infty$  (using the convention $\inf\emptyset=\infty$). Let $m$ be the first number such that $i_{m+1}=\infty$.  Note that $i_{l+1}>j_l$ for any $l<m$, since $\gamma^{\rm r}$ is self-avoiding. Let us prove by induction that for all $ {l}\le m$,
\begin{equation}\label{eq:inductioncase1}
\SH^{-1}(v^{i_l})\in\rmB_{s_n-1},\,\SH^{-1}(v^{j_l})\in\rmB_{s_n},\,\tau_{\langle \SH^{-1}(v^{i_l}),\SH^{-1}(v^{j_l})\rangle}=1,\text{ and } \kD^{\rm r}(\tilde{0},v^{i_{l}})\ge \kD(0,\SH^{-1}(v^{i_{l}})).
\end{equation}
We first note that the path $(\SH^{-1}(v^{1}),\dots,\SH^{-1}(v^{i_{1}}))$ is open for $\tau$ by definition of the resampled environment and $(v^1,\cdots, v^{i_1})\subset \SH(\Z^d)$. Hence,  by \eqref{upper case1},
\al{
  s_n>\kD^{\rm r}(\tilde{0},v^{i_{1}})\ge  \kD(0,\SH^{-1}(v^{i_{l}})),
  }which implies $\SH^{-1}(v^{i_1})\in\rmB_{s_n-1}$. Let us assume $\SH^{-1}(v^{i_l})\in \rmB_{s_n-1}$ with $l<m$. Since $\SH^{-1}(v^{i_l})\in\overline \rH_1^*$,  $\SH^{-1}(v^{i_l})\in \rmB_{s_n}\cap \overline \rH_1^*$. On the event $\kA^{\rm r}\cap\kA_n^{[1]}$, we have necessarily that $\langle \SH^{-1}(v^{i_l}),\SH^{-1}(v^{j_l})\rangle$ is an edge colinear to $\mathbf{e}_1$ and corresponds to a crossing of $\gamma_{0,w_n}$ over $\rH^ *_1$. In particular, the edge $\langle \SH^{-1}(v^{i_l}),\SH^{-1}(v^{j_l})\rangle$ is open for $\tau$, and it follows that   $\SH^{-1}(v^{j_l})\in\rmB_{s_n}$. The path $(\SH^{-1}(v^{j_l}),\dots,\SH^{-1}(v^{i_{l+1}}))$ is  open for $\tau$ by definition of the resampled environment. Recalling  $0=\SH^{-1}(v^1)$, by induction hypothesis and \eqref{upper case1},
\begin{equation*}
    \begin{split}
        s_n>\kD^{\rm r}(\tilde{0},v^{i_{l+1}})&= \kD^{\rm r}(\tilde{0},v^{i_{l}})+ \kD^{\rm r}(v^{i_l},v^{j_{l}})+\kD^{\rm r}(v^{j_l},v^{i_{l+1}})\\&\ge  \kD(0,\SH^{-1}(v^{i_{l}}))+1+\kD(\SH^{-1}(v^{j_{l}}),\SH^{-1}(v^{i_{l+1}}))\\&\ge \kD(0,\SH^{-1}(v^{i_{l+1}})).
    \end{split}
\end{equation*}
 It follows that  $\SH^{-1}(v^{i_{l+1}})\in \rmB_{s_n-1}$. Since $\langle \SH^{-1}(v^{i_{l+1}}),\SH^{-1}(v^{j_{l+1}})\rangle$ is an open edge for $\tau$ on $\kA^{\rm r}\cap\kA_n^{[1]}$ as before, $\SH^{-1}(v^{j_{l+1}})\in \rmB_{s_n}$. This concludes the induction.
 
 Since $\SH(w^*)\in\SH(\Z^d)$, by maximality of $m$, the path $(v^{j_m},\dots,v^\ell)$ stays in $\SH(\Z^d)$. In particular, by definition of the resampling, the path
$(\SH^{-1}(v^{j_m}),\cdots, \SH^{-1}(v^\ell))$
is open for $\tau$. Thanks to \eqref{eq:inductioncase1},
\aln{\label{case 1 shape}
\kD^{\rm r}(\tilde{0},\SH(w^*)) &=\kD^{\rm r}(\tilde{0},v^{i_m})+\kD^{\rm r}(v^{i_m},v^{j_m})+\kD^{\rm r}(v^{j_m},\SH(w^*))\\
&\geq \kD(0,\SH^{-1}(v^{i_m}))+1+\kD(\SH^{-1}(v^{j_m}),w^*)\geq  \kD(0,w^*)=s_*.\notag
}
Thus, we obtain  for $n$ large enough depending on $s_0,p,d$, 
\al{
t_*&=\kD^{\rm r}(\tilde{0},w^*)\geq \kD^{\rm r}(\tilde{0},\SH(w^*))-\kD^{\rm r}(w^*,\SH(w^*))\\
&\geq s_*-\wp_n\geq s n-2n^{\alpha_d}-2Cn^{2/5}\ge s n -3n^{\alpha_d}.
}
This concludes the proof of \ref{case:i}.

Next, we will prove that
\aln{\label{case1 shape}
\rmB^{\rm r}_{t_*}(\tilde{0})\subset \SH (\rmB_{s_n})\cup \bigcup_{j=1}^k \bar{\gamma}_{w^{i_j},w^{i_j+1}}\cup L(w^*,\SH(w^*)).
}
The proof is similar to that of \eqref{case1 long time}. To this end, we take $v\in \rmB^{\rm r}_{t_*}(\tilde{0})$ arbitrary and take a geodesic $\gamma^{\rm r}=(v^i)_{i=1}^\ell$ from $\tilde{0}$ to $v$. We define $(i_l,j_l)_{l=1}^m$ as in \eqref{case1 long time} with $v$ in place of $\SH(w^*)$. If $v\in \SH(\Z^d)$, then by \eqref{upper case1}, the same argument as in \eqref{case 1 shape} shows 
$$s_n> t_*\geq \kD^{\rm r}(\tilde{0},v)\geq \kD(0,\SH^{-1}(v)).$$
Hence, $v\in \SH (\rmB_{s_n})$.  Let us assume $v\notin \SH(\Z^d)$. Thanks to \eqref{eq:inductioncase1}, we have $\SH^{-1}(v^{i_m})\in  \rmB_{s_n}\cap \rH^ *_1$. Thus, on the event $\kA^{\rm r}\cap\kA_n^{[1]}$, $v\in \bar{\gamma}_{w^{i_j},w^{i_j+1}}$ with some $j\leq k$ or $v\in L(w^*,\SH(w^*))$, and 
 \eqref{case1 shape} follows.
 {By the definition of $\bar{\gamma}_{w^{i_j},w^{i_j+1}}$ and isoperimetry \eqref{eq:isoperimetry}, since $\SH$ is injective}, 
 one can check that \ref{case:iv}  follows from \eqref{case1 shape}.
 
Finally, by $|\rmB_{s_n}\cap \rH^ *_1|\leq \wp_n/2$, \eqref{case1 shape}, and $ \rH_1^*\cap \SH(\Z^d)=\emptyset$, we have 
$$| \rH_1^*\cap {\rm B}^{\rm r}_{t_*}(\tilde{0})|\leq \left|\bigcup_{j=1}^k \bar{\gamma}_{w^{i_j},w^{i_j+1}}\cup L(w^*,\SH(w^*))\right|\leq 4d\wp_n^2\leq  16dC^2n^{4/5}.$$
Thus, we have $\rN_k( \rH_1^*\cap \rmB^{\rm r}_t(\tilde{0}))\le 16dC^2n^{4/5}$ for $k\neq 1$. Therefore, we obtain \ref{case:iii} for $n$ large enough depending on $s_0,p,d$. This concludes the proof.
\end{proof}

\begin{proof}[Proof of \eqref{case2}]
 Without loss of generality, we assume $\kA^{[2]}_n$ occurs with $i=1,\,j=2$.  Thanks to \eqref{eq:antalpisztora2}, there exists $c\in (0,1)$ depending only on $s_0,p,d$ such that
 \[\P(\exists w\in\Z^d;\,\|w\|_1\le 2s_0 n, \, n\log n\le\cD^{\kG_p}(0,w)<\infty)\le  (5s_0n)^de^{-cn\log n}.\] 
 Let $\ep:=1/2$. Set $N:=\log^2{n}$ and \[\mathbf{V}_n:=\{x\in [-n^2,n^2]^d{\cap \Z^d}:~x\in\underline{\Lambda}_N(\mathbf{i})\text{ with $\mathbf{i}$: $\ep$-bad}\}.\]
Thanks to Lemma \ref{lem:nobadbox},
we have
\begin{equation}\label{eq:controlBn}
    \P(|\mathbf{V}_n|\ge (2N)^d n)\le e^{-n\log n}.
\end{equation}
Therefore, we have 
 \[\P(\kA_n^{[2]})\le \P(\kA_n^{[2]},|\mathbf{V}_n|\le (2N)^d n,s_n\le n\log n)+   2 (5s_0n)^d e^{-cn\log n}.\]
 We distinguish two cases, either $\P(\kA_n^{[2]})\le e^{-\frac{c}{2}n\log n}$ or $\P(\kA_n^{[2]})> e^{-\frac{c}{2}n\log n}$. There is no need to examine the first case since it is negligible compared to $\P(\kA_n^{[1]})$ by \eqref{eq:boundA1}. For the second case, we have for $n$ large enough,
  \[\P(\kA_n^{[2]})\le 2\P(\kA_n^{[2]},|\mathbf{V}_n|\le (2N)^d n,s_n\le n\log n).\]
 From now on, with abuse of notation, we write $\kA_n^{[2]}$ instead of $\kA_n^{[2]}\cap\{|\mathbf{V}_n|\le (2N)^d n\}\cap\{s_n\le n\log n\}$.
 Let $S'\subset S$ and $\Gamma\subset \Z^ d$ be such that $|\Gamma|\le K n$ and $\rmB_{s_n}\subset\mathrm{Int}(\Gamma)$  as in the event $\kA_n^{[2]}$.  Recall that $\gamma_{0,w_n}$ is a geodesic from $0$ to $w_n$. Since  
 the hyperplanes $(\rH_1(z),z\in S')$ are disjoint by definition of $S'$,  there exist $(z^k)_{k=1}^{n^{\beta_d}/2}\subset S'$ such that for any $k\in[n^{\beta_d}/2]$, \[|({\rm H}_{1}(z^k)\cup {\rm H}_{1}(z^k-\mathbf{e}_1))\cap (\gamma_{0,w_n}\cup \mathbf{V}_n\cup \Gamma)|\leq \frac{5|\gamma_{0,w_n}\cup \mathbf{V}_n\cup \Gamma|}{n^{\beta_d}}.\]Similarly, there exists $ w^* \in  (z^k)_{k=1}^{n^{\beta_d}/2}$ such that \[|({\rm H}_{2}(w^*)\cup {\rm H}_{2}(w^*-\mathbf{e}_2))\cap (\gamma_{0,w_n}\cup \mathbf{V}_n\cup \Gamma)|\leq \frac{5|\gamma_{0,w_n}\cup \mathbf{V}_n\cup \Gamma|}{n^{\beta_d}}.\] Therefore, 
\al{
&|({\rm H}_{1}(w^*)\cup {\rm H}_{1}(w^*-\mathbf{e}_1)\cup {\rm H}_{2}(w^*)\cup {\rm H}_{2}(w^*-\mathbf{e}_2))\cap (\gamma_{0,w_n}\cup \mathbf{V}_n\cup \Gamma)|\\
&\quad\leq  |({\rm H}_{1}(w^*)\cup {\rm H}_{1}(w^*-\mathbf{e}_1))\cap (\gamma_{0,w_n}\cup \mathbf{V}_n\cup \Gamma)|+|({\rm H}_{2}(w^*)\cup {\rm H}_{2}(w^*-\mathbf{e}_2))\cap (\gamma_{0,w_n}\cup \mathbf{V}_n\cup \Gamma)|\\&\quad\leq \frac{10|\gamma_{0,w_n}\cup \mathbf{V}_n\cup \Gamma|}{n^{\beta_d}}.
}
Recall that $s_n\le n\log n$ on $\kA_n^{[2]}$ and so $|\gamma_{0,w_n}|\le n\log n$. Thus, on the event $\kA_n^{[2]}$, we have $|\gamma_{0,w_n}\cup \mathbf{V}_n\cup \Gamma|\leq 4^d n N^{d}$. Let $s_*\in  \{s_n-2n^{\alpha_d}+1,\cdots,s_n-n^{\alpha_d}\}$ be such that $w^*=w^{s_*}$. Set $\rH^*:={\rm H}_1(w^*)\cup {\rm H}_2(w^*)\cup {\rm H}_1(w^*-\mathbf{e}_1)\cup {\rm H}_2(w^*-\mathbf{e}_2).$ It yields 
\aln{\label{H star condition}
|\rH^* \cap (\gamma_{0,w_n}\cup \mathbf{V}_n\cup \Gamma)|\leq 40^d n^{1-\beta_d}N^{d}.
}

Let $(\tau_e^*)$ be an independent copy of $(\tau_e)$, and $\mathbf{X}=(X_i)_{i=1}^d$ an independent uniform random variable on $[-Kn,Kn]^d\cap \Z^d$. We consider the following shift: for $x=(x_i)_{i=1}^d\in\Z^ d$,
$$\SH(x)=\SH_{\mathbf{X}}(x):=
\begin{cases}
x+\mathbf{e}_1+\mathbf{e}_2 &\text{if $x_1\geq X_1$ and $x_2\geq X_2$},\\
x-\mathbf{e}_1+\mathbf{e}_2 &\text{if $x_1< X_1$ and $x_2\geq X_2$},\\
x-\mathbf{e}_1-\mathbf{e}_2 &\text{if $x_1< X_1$ and $x_2< X_2$},\\
x+\mathbf{e}_1-\mathbf{e}_2 &\text{if $x_1\ge  X_1$ and $x_2< X_2$}.
\end{cases}$$ 
 \begin{figure}[!ht]
\def\svgwidth{0.3\textwidth}
 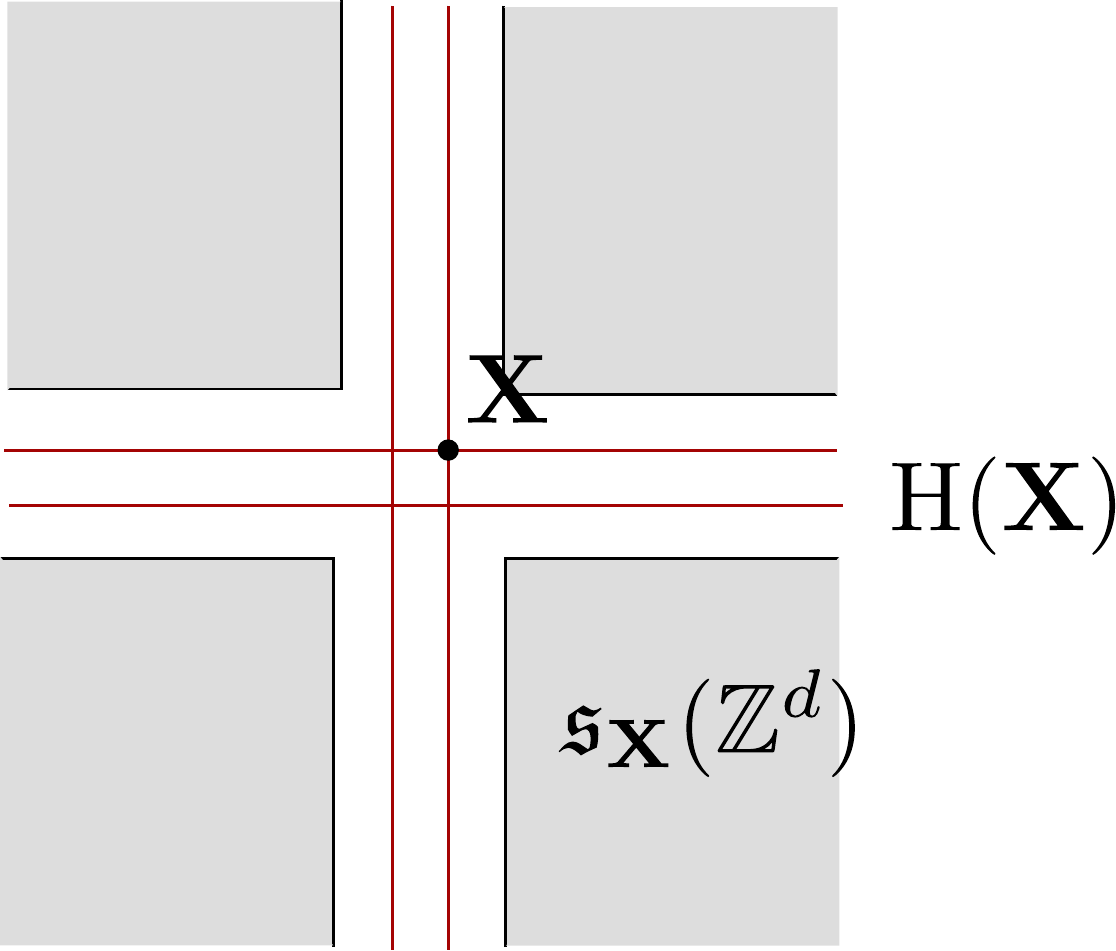
 \caption[fig4]{\label{fig4}Grey region represents $\SH_{\mathbf{X}}(\Z^d)$. Red lines represent $\rH(\mathbf{X})$}
\end{figure}
We note that for any $x\in\Z^d$, $\SH_{\mathbf{X}}(x)_i\neq X_i$ for $i=1,2$, $\rH^*=\Z^d\setminus \SH_{\mathbf{X}}(\Z^d)$ and $\SH_{\mathbf{X}}$ is injective. We define the resampled configuration $(\tau_e^{\rm r})$: for $e=\langle x,y\rangle$, 
\al{\tau^{\rm r}_e=\left\{
\begin{array}{ll}
\tau_{\langle \SH^{-1}(x),\SH^{-1}(y)\rangle}&\text{ if $x,y\in \SH(\Z^d)$},\\
\tau_e^*,&\text{ otherwise}.
\end{array}\right.
}
From now on, we assume $\mathbf{X}=w^*$. We write $\rH_1^*=\rH_{1}(w^*),\rH_2^*=\rH_{2}(w^*)$. For any $x\in \rH^*_1$, we consider the straight path $\bar{\gamma}_{x-\mathbf{e}_1,x}=\bar{\gamma}_{x,x-\mathbf{e}_1}$ between $\SH(x)$ and $\SH(x-\mathbf{e}_1)$; for any $x\in \rH^*_2$, we consider the straight path $\bar{\gamma}_{x-\mathbf{e}_2,x}=\bar{\gamma}_{x,x-\mathbf{e}_2}$ between $\SH(x)$ and $\SH(x-\mathbf{e}_2)$. In particular, these paths are made of three edges.  We say that $e\in \E^d$ crosses $\rH^*$ if there exist $i\in \{1,2\}$ and $z\in \rH^*_i$ such that $e=\langle z-\mathbf{e}_i,z\rangle $.  Let $(i_j)_{j=1}^{k}$ be such that $\langle w^{i_j},w^{i_j+1}\rangle$ is the $j$-th crossing of $\gamma_{0,w_n}$ over $\rH^*$ until $s_*$.  Since $\gamma_{0,w_n}$ is self-avoiding and $w^{s_*}=w^*$, we have $w^{i_j}\ne w^{s_*}$. Moreover, there exists a path $\bar\gamma_{\SH(w^*),w^*}$ between $\SH(w^*)$ and $w^*$  with $|\bar\gamma_{\SH(w^*),w^*}|=2$ that do not have any edge in common with $\bar{\gamma}_{w^{i_j},w^{i_j+1}}$ $j\leq k$;  namely
\[\bar\gamma_{\SH(w^*),w^*}:=\left\{\begin{array}{ll}(\SH_{w^*}(w^*), \SH_{w^*}(w^*)-\mathbf{e}_1, w^*)&\mbox{if $w^{s_*-1}=w^{s_*}-\mathbf{e}_2$},\\
(\SH_{w^*}(w^*), \SH_{w^*}(w^*)-\mathbf{e}_2, w^*)&\mbox{otherwise.}
\end{array}\right.\]
Define
\al{
\mathbf{E}_1&:=\{e\in \E^d:~e\in \bar{\gamma}_{w^{i_j},w^{i_j+1}}\text{ with  }j\leq k,\,\text{ or }e\in \bar\gamma_{\SH(w^*),w^*}\},\\
\mathbf{E}_2&:=\{e\in \E^d\setminus \mathbf{E}_1:~\mathrm{d}_\infty(e,(\gamma_{0,w_n}\cup  \mathbf{V}_n\cup \Gamma)\cap \rH^*)\leq 2N\},\\
\mathbf{E}_3&:=\{e\in \E^d\setminus \mathbf{E}_1:~|e\cap \rL_3(w^*)|=1,\,\mathrm{d}_\infty(e,\mathrm{Int}(\Gamma)\cap \rH^*)\leq N\}.
}Let $\overline{\rm H}_1^*={\rm H}_1(w^*)\cup {\rm H}_1(w^*-\mathbf{e}_1)$ and $\overline{{\rm H}}_2^*={\rm H}_2(w^*)\cup {\rm H}_2(w^*-\mathbf{e}_2)$. We now prove \aln{\label{E3 estimate}
|\mathbf{E}_3|\leq 12d N (\rN_2(\mathrm{Int}(\Gamma)\cap \overline{\rm H}_1^*)+\rN_1(\mathrm{Int}(\Gamma)\cap \overline{\rm H}_2^*)).
}
 Let us take $e=\langle u,v\rangle \in \mathbf{E}_3$ with $u=u^e\in \rL_3(w^*)$. Note that $u_3=v_3$. Then, we select $z^e\in \mathrm{Int}(\Gamma)\cap \rH^*$ such that $|z^e_3-u_3|\leq N$. If $z^e\in \mathrm{Int}(\Gamma)\cap \overline{\rH}_1^*$, setting $g^e_2=0$ and $g_i^e=z_i^e$ for $i\neq 2$, we have $z^e\in\rL_2(g^e)\cap \mathrm{Int}(\Gamma)\cap \overline{\rm H}_1^*$, in particular $\rL_2(g^e)\cap \mathrm{Int}(\Gamma)\cap \overline{\rm H}_1^*\neq \emptyset$. Note that $u^e_3\in [g_3^e-N,g_3^e+N]\cap \Z$. Similarly, if $z^e\in \mathrm{Int}(\Gamma)\cap \rH_2^*$, setting $r^e_1=0$ and $r_i^e=z_i^e$ for $i\neq 1$, we have $\rL_1(r^e)\cap \mathrm{Int}(\Gamma)\cap \overline{\rm H}_2^*\neq \emptyset$. Note that $u^e_3\in [r_3^e-N,r_3^e+N]\cap \Z$. Therefore, \eqref{E3 estimate} follows from the fact that $|A|\leq  |B|\max_{y\in B}|f^{-1}(y)|$ for any map $f:A\to B$, and for any $g=(g_i)\in \Z^d$ and $r=(r_i)\in \Z^d$ with $g_2=0$ and $r_1=0$, $$\max\{|\{e\in \mathbf{E}_3:\,~g=g^e\}|,|\{e\in \mathbf{E}_3:\,~ r=r^e\}|\}\leq  12dN.$$   

    Next we estimate $\rN_k(\{z\in \Z^d:~\mathrm{d}_\infty(z,\mathrm{Int}(\Gamma)\cap \overline{\rm H}_1^*)\leq N\})$ for $k\neq 1$. Note that we here consider a stronger estimate for a later purpose. Let $z\in\Z^d$ be such that $z_k=0$ and $\rL_k(z)\cap \{z'\in \Z^d:~\mathrm{d}_\infty(z',\mathrm{Int}(\Gamma)\cap \overline{\rm H}_1^*)\leq N\} \neq \emptyset$. Then, there exists $z^0=(z^0_i)\in \Z^d$ such that $\|z-z^0\|_\infty\leq N$, $z^0_k=0$, and $ \rL_k(z^0)\cap \mathrm{Int}(\Gamma)\cap \overline{\rm H}_1^*\neq \emptyset$. 
It follows that $ \rL_k(z^0)\cap \Gamma\cap \overline{\rm H}_1^*\neq \emptyset$. Recalling that $|\Gamma\cap \rH^*|\le 40^d n^{1-\beta_d}N^d$, this yields
\begin{equation}\label{3 estimate}\begin{split}
  \rN_k&(\{z\in \Z^d:~\mathrm{d}_\infty(z,\mathrm{Int}(\Gamma)\cap \overline{\rm H}_1^*)\leq N\})\\
 &\hspace{1cm}\leq  |\{z^0\in \Z^d:~\|z^0\|_\infty\leq N\}|\times |\Gamma\cap \overline{\rm H}_1^*|\le 10^{2d} n^{1-\beta_d} N^{2d}.
 \end{split}
\end{equation}
 By the same argument, we have for $k\neq 2$,
 $$\rN_k(\{z\in \Z^d:~\mathrm{d}_\infty(z,\mathrm{Int}(\Gamma)\cap \overline{\rm H}_2^*)\leq N\})\le  10^{2d} n^{1-\beta_d}N^{2d}.$$
 Combined with \eqref{E3 estimate} and \eqref{3 estimate}, this gives
 \begin{equation}\label{eq:E3estimate}
 |\mathbf{E}_3|\le 10^{3d}n^{1-\beta_d}N^{2d+1}.
 \end{equation}

We consider the event 
\al{
\mathcal{A}^{\rm r}:=\left\{\mathbf{X}=w^*;\,
\tau^*_e=1\,\text{ for }e\in \mathbf{E}_1;\,\tau^*_e=\infty\,\text{ for }e\in \mathbf{E}_2\cup \mathbf{E}_3
\right\}.
}
Thanks to \eqref{H star condition}, $|\mathbf{E}_1\cup\mathbf{E}_2|\le  {10^{3d}}n^{1-\beta_d} N^{2d}$. Recall that $1-\beta_d<\alpha_d$. Combining  this with \eqref{eq:E3estimate}, we have for $n$ large enough depending on $s_0,d,p$,
\al{
\P(\mathcal{A}^{\rm r}\cap \mathcal{A}^{[2]}_n)&=\E[\P(\mathcal{A}^{\rm r}|~\mathcal{A}^{[2]}_n)\mathbf{1}_{\mathcal{A}^{[2]}_n}]\\
&\geq  \E[(2Kn+1)^{-d} p^{|\mathbf{E}_1|} (1-p)^{|\mathbf{E}_2\cup \mathbf{E}_3 |}
 \mathbf{1}_{\mathcal{A}^{[2]}_n}]\geq e^ {-n^{\alpha_d}}\P(\mathcal{A}^{[2]}_n).
 }
 Let $\tilde{0}:=\SH_{\mathbf{X}}(0)$ and $t_*:=\kD^{\rm r}(\tilde{0}, w^*)$. We will prove that on $\mathcal{A}^{\rm r}\cap \mathcal{A}^{[2]}_n$, the following occur:
\begin{enumerate}[label=(\roman*)]
    \item \label{case2:i}  $t_* \in [ s n-3n^{\alpha_d},\infty)$ and $w^* \in \Lambda_{4n^{\alpha_d}}(nx+\tilde{0})$;
    \item \label{case2:ii}$\rL_3(w^*)\cap {\rm B}^{\rm r}_{t_*}(\tilde{0})=\{w^*\}$;
    \item \label{case2:iii}$\forall k\ne 1\quad \rN_k( \rH_1^*\cap \rmB^{\rm r}_{t_*}(\tilde{0}))\le n^{\alpha_d}$;
     \item\label{case2:iv} $|\rmB_{t_*}^{\rm r}(\widetilde {0})|\le n^{7/4}$.
\end{enumerate}
Since $(\tau^{\rm r}_{e+\tilde{0}})\overset{\rm law}{=}(\tau_e)$, this implies
\al{
&\P\left(\begin{array}{c}
\exists i\neq j\in [d],\quad \exists t\geq  s n-3n^{\alpha_d},\quad \exists w_n\in \Lambda_{4n^{\alpha_d}}(nx);\\ 
w_n\in {\rm B}_{t}\setminus {\rm B}_{t-1},\,\rL_i(w_n)\cap \rmB_t=\{w_n\},\, \rN_k(\rH_j(w_n)\cap \rmB_t)\le n^{\alpha_d}\,\forall k\ne j,\,|\rmB_{t}|\le n^{7/4}
\end{array}\right)\\
&\geq \P\left(\begin{array}{c}
\exists i\neq j\in [d],\quad \exists t\geq  s n-3n^{\alpha_d},\quad \exists w_n\in \Lambda_{4n^{\alpha_d}}(nx+\tilde{0});\,w_n\in {\rm B}^{\rm r}_{t}(\tilde{0})\setminus {\rm B}^{\rm r}_{t-1}(\tilde{0}),\\ 
\rL_i(w_n)\cap \rmB^{\rm r}_t(\tilde{0})=\{w_n\},\,\rN_k(\rH_j(w_n)\cap \rmB^{\rm r}_t(\tilde{0}))\le n^{\alpha_d}\,\forall k\ne j,\,|\rmB_{t}^{\rm r}(\widetilde {0})|\le n^{7/4}
\end{array}\right)\\
&\geq e^ {-n^{\alpha_d}}\P(\mathcal{A}^{[2]}_n).
}
Combined with Lemma~\ref{lem:createcutpoin}, this yields the claim. 

We prepare some claims to prove \ref{case2:i}--\ref{case2:iv}.
 Given $x\neq y\in \rH^*$, we define
$$x\overset{\rm H}{\sim} y \overset{\rm def}{\Leftrightarrow} 
\exists \gamma:x\leftrightarrow y\subset \rH^*\setminus (\gamma_{0,w_n}\cup\Gamma \cup \mathbf{V}_n),$$
    with the convention $x\overset{\rm H}{\sim}x$. Let $\kC_{\rH}(x)$ be the set of all points connected to $x\in\rH^* $ for the relation $\overset{\rm H}{\sim}$.  Note that this relation only depends on the configuration $\tau$. Note that 
\ben{\label{H relation conclustion}
x\in \gamma_{0,w_n}\Rightarrow \kC_\rH(x)=\{x\},\quad \langle x,y\rangle\in \gamma_{0,w_n}\Rightarrow y\notin\kC_H(x).
}

\begin{claim}\label{lem En relation}
Assume $\mathcal{A}^{\rm r}\cap \mathcal{A}^{[2]}_n$. Let $a, b\in \SH(\rH^*)$ and a self-avoiding open path $\gamma:a\leftrightarrow b$   for $\tau^{\rm r}$
 such that $\gamma\backslash\{a,b\} \subset \rH^*$.
Then $\SH^{-1}(a)\overset{\rm H}{\sim}\SH^{-1}(b)$ or $\langle \SH^{-1}(a),\SH^{-1}(b)\rangle\in\gamma_{0,w_n}$.

\end{claim}

\begin{proof}
Note that $\SH^{-1}(a)\oplus (\gamma\setminus\{a,b\})\oplus\SH^{-1}(b)$ is a $\Z^d$-path inside $\rH^*$. 
We extract a self-avoiding path $\gamma'$ from this path. If $\gamma'\cap (  {\gamma_{0,w_n}\cup}\Gamma\cup \mathbf{V}_n)=\emptyset$, then by definition we have
$\SH^{-1}(a)\overset{\rm H}{\sim}\SH^{-1}(b)$.  Otherwise, there exists $e\in\gamma\setminus\{a,b\}$ at $\ell_1$-distance less than $1$ from $ {\gamma_{0,w_n}\cup}\Gamma\cup \mathbf{V}_n$. On the event $\kA^{\rm r}\cap\kA_n^{[2]}$, this implies that $e\in\mathbf{E}_1$ since otherwise the edge would be closed for $\tau^{\rm r}$.  On the event $\kA^{\rm r}\cap\kA_n^{[2]}$, since the edges with one endpoint in common with the set $\mathbf{E}_1$ are closed, the path $\gamma\setminus \{a,b\}$ has to be included in  $\bar \gamma_{\SH(w^*),w^*}$ or $\bar{\gamma}_{w^{i_j},w^{i_j+1}}$ for some $j$.  {However, since $\gamma$ is a path between $a,b\in \SH(\rH^*)$, it cannot be included in $\bar \gamma_{\SH(w^*),w^*}$. Therefore,} $a$ and $b$ are the extremities of $\bar{\gamma}_{w^{i_j},w^{i_j+1}}$ for some $j$ and $\langle\SH^{-1}(a),\SH^{-1}(b)\rangle\in\gamma_{0,w_n}$.

\end{proof}
Recall that $\sC(\mathbf{a})$ denotes the largest open cluster of the $\ep$-good box $\underline{\Lambda}'_N(\mathbf{a})$. 
\begin{claim}\label{claim:bounddis}  Let $\mathbf{a},\mathbf{b}\in\Z^ d$ be good macroscopic sites. 
 Let $a\in \rH^*\cap\sC(\mathbf{a})$ and $b\in  \sC(\mathbf{b})\cap  \kC_{\rH}(a)$. It holds that
$$\kD(a,b)\leq  8d^2\mu(\mathbf{e}_1)N |\{e\in \E^d:~e\subset \rH^*,\,|e\cap \kC_{\rH}(a)|=1\}|.$$
\end{claim}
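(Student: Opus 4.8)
The statement to prove, Claim~\ref{claim:bounddis}, asserts that if $a\in\rH^*\cap\sC(\mathbf a)$ and $b\in\sC(\mathbf b)\cap\kC_\rH(a)$ with $\mathbf a,\mathbf b$ good, then $\kD(a,b)\le 8d^2\mu(\mathbf e_1)N\,|\{e\in\E^d:e\subset\rH^*,\,|e\cap\kC_\rH(a)|=1\}|$. Here $\kC_\rH(a)$ is the $\overset{\rm H}{\sim}$-cluster of $a$, i.e.\ the set of points reachable from $a$ by an open $\Z^d$-path inside $\rH^*\setminus(\gamma_{0,w_n}\cup\Gamma\cup\mathbf V_n)$.

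\medskip
\noindent\textbf{Plan.} The idea is to bound the chemical distance $\kD(a,b)$ by travelling through the good macroscopic boxes that $\kC_\rH(a)$ meets. First I would let $\Gamma^{\rm mac}$ denote the set of macroscopic sites $\mathbf i$ such that $\underline\Lambda_N(\mathbf i)$ intersects $\kC_\rH(a)$. Since $\kC_\rH(a)$ is a connected subset of $\Z^d$ (it is the open cluster of $a$ in a subgraph), the set $\Gamma^{\rm mac}$ is $\Z^d$-connected, hence in particular $*$-connected. The key observation is that every macroscopic site in $\Gamma^{\rm mac}$ is $\ep$-good: indeed, if $\mathbf i\in\Gamma^{\rm mac}$ were $\ep$-bad, then $\underline\Lambda_N(\mathbf i)\subset\mathbf V_n$ by definition of $\mathbf V_n$, so $\kC_\rH(a)\cap\mathbf V_n\ne\emptyset$, contradicting $\kC_\rH(a)\cap(\gamma_{0,w_n}\cup\Gamma\cup\mathbf V_n)=\emptyset$. (One must be slightly careful: $\mathbf V_n$ was only defined inside $[-n^2,n^2]^d$, but on $\kA_n^{[2]}$ all the relevant balls and clusters lie well inside that box, so this causes no issue.) Both $a$ and $b$ lie in $\kC_\rH(a)$ (as $b\in\kC_\rH(a)$), so the macroscopic boxes $\mathbf a_0$ and $\mathbf b_0$ containing $a$ and $b$ respectively belong to $\Gamma^{\rm mac}$; moreover $a\in\sC(\mathbf a)\cap\sC(\mathbf a_0)$ forces $a\in\sC(\mathbf a_0)\cap\underline\Lambda_N(\mathbf a_0)$ (the good box has a unique macroscopic cluster, and $a$ being in the open cluster $\kC_\rH(a)$ of large diameter is in it), and similarly $b\in\sC(\mathbf b_0)\cap\underline\Lambda_N(\mathbf b_0)$.

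\medskip
\noindent\textbf{Main step.} Now I apply Lemma~\ref{lem:connexiongoodbox} to the $*$-connected set $\Gamma^{\rm mac}$ of $\ep$-good sites, with $x=a\in\sC(\mathbf a_0)\cap\underline\Lambda_N(\mathbf a_0)$ and $y=b\in\sC(\mathbf b_0)\cap\underline\Lambda_N(\mathbf b_0)$: this yields a microscopic open path joining $a$ and $b$ of length at most $2d\mu(\mathbf e_1)N\,|\Gamma^{\rm mac}|$, whence
\[\kD(a,b)\le 2d\mu(\mathbf e_1)N\,|\Gamma^{\rm mac}|.\]
It remains to bound $|\Gamma^{\rm mac}|$ by $4d\,|\{e\in\E^d:e\subset\rH^*,\,|e\cap\kC_\rH(a)|=1\}|$. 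For this I would argue that, since $\kC_\rH(a)\subset\rH^*$ and $\rH^*$ is a union of (four) hyperplanes of codimension one, the set $\kC_\rH(a)$ is essentially $(d-1)$-dimensional; more robustly, one can bound $|\Gamma^{\rm mac}|$ directly by counting boundary edges. Each box $\underline\Lambda_N(\mathbf i)$ with $\mathbf i\in\Gamma^{\rm mac}$ contains a point of $\kC_\rH(a)$; since $\kC_\rH(a)$ is connected and not contained in a single box when $|\Gamma^{\rm mac}|\ge 2$, each such box contributes at least one vertex of $\kC_\rH(a)$ that has a neighbour, hence an incident edge $e$ with $e\subset\rH^*$ (as $\rH^*$ is a union of hyperplanes, a connected subset's edges stay in $\rH^*$) — but we need edges with exactly one endpoint in $\kC_\rH(a)$. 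The cleaner route: the exterior edge-boundary of $\kC_\rH(a)$ inside $\rH^*$, namely $\partial_e:=\{e\subset\rH^*:|e\cap\kC_\rH(a)|=1\}$, has size comparable to the ``surface'' of $\kC_\rH(a)$, and since $\kC_\rH(a)$ is a connected set living in the union of hyperplanes $\rH^*$, any box it meets contributes boundary. Concretely, for each $\mathbf i\in\Gamma^{\rm mac}$, pick $v_{\mathbf i}\in\kC_\rH(a)\cap\underline\Lambda_N(\mathbf i)$; if $|\Gamma^{\rm mac}|\ge 2$ then $\kC_\rH(a)\not\subset\underline\Lambda_N(\mathbf i)$, so there is a nearest-neighbour edge of the path from $v_{\mathbf i}$ leaving $\underline\Lambda_N(\mathbf i)$; among all such one finds, for each box, an edge of $\partial_e$ or an edge fully in $\kC_\rH(a)$ leading to a box-boundary... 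The map from $\Gamma^{\rm mac}$ to incident edges in $\partial_e$ is at most $(2N)^d$-to-one trivially, but to get the clean constant $4d$ I would instead note $|\partial_e|\ge |\Gamma^{\rm mac}|/(4dN^{?})$ — and here I realize the constants in the statement ($8d^2\mu N$ versus $2d\mu N\cdot 4d$) are exactly arranged so that $|\Gamma^{\rm mac}|\le 4d\,|\partial_e|$, which is a standard isoperimetric-type bound: each box in $\Gamma^{\rm mac}$ forces the connected set $\kC_\rH(a)$ to ``exit'', producing at least one of the $|e\cap\kC_\rH(a)|=1$ edges, and each such edge can be charged to at most $4d$ boxes (its two endpoints and their neighbours).

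\medskip
\noindent\textbf{Expected main obstacle.} The delicate point is the combinatorial bound $|\Gamma^{\rm mac}|\le 4d\,|\{e\subset\rH^*:|e\cap\kC_\rH(a)|=1\}|$ and, upstream of it, verifying that every macroscopic site meeting $\kC_\rH(a)$ is indeed good — this uses crucially that $\kC_\rH(a)$ avoids $\mathbf V_n$ (all points outside the infinite-cluster structure of good boxes), and that $\kC_\rH(a)$ is $\Z^d$-connected so $\Gamma^{\rm mac}$ inherits $*$-connectedness, allowing Lemma~\ref{lem:connexiongoodbox} to apply. The isoperimetric counting should follow from \eqref{eq:isoperimetry} applied within the hyperplane $\rH^*$ (treating $\rH^*$ as a copy of $\Z^{d-1}$-like structure) together with the fact that $\kC_\rH(a)$ spans at least $|\Gamma^{\rm mac}|$ boxes of side $N$, hence has diameter and boundary of the right order; the precise constant $8d^2$ then drops out of $2d\cdot 4d$.
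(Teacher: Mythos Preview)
Your overall strategy—show that every macroscopic box meeting $\kC_\rH(a)$ is good, assemble these boxes into a $*$-connected set $\Gamma^{\rm mac}$, and invoke Lemma~\ref{lem:connexiongoodbox}—is close in spirit to the paper, but the step you yourself flag as the main obstacle is a genuine gap: the bound $|\Gamma^{\rm mac}|\le 4d\,|\partial_e|$ (with $\partial_e=\{e\subset\rH^*:|e\cap\kC_\rH(a)|=1\}$) is simply false in general.

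To see why, take $d=3$ and suppose $\kC_\rH(a)$ is a disc of radius $R$ inside one of the hyperplanes of $\rH^*$. Then $\kC_\rH(a)$ meets of order $(R/N)^{2}$ macroscopic boxes, while $|\partial_e|$ is only of order $R$; your inequality would force $R\lesssim N^{2}$, which nothing in the setup guarantees (in the application, $\kC_\rH(x^i)\subset\mathrm{Int}(\Gamma)\cap\rH^*$ can have diameter of order $n^{1-\beta_d}N^{d}\gg N^{2}$). Your heuristic ``each box forces $\kC_\rH(a)$ to exit, producing a boundary edge'' fails for boxes lying deep inside $\kC_\rH(a)$: such boxes contain no point of the inner boundary at all. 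Isoperimetry goes the wrong way here—it lower-bounds $|\partial_e|$ by a power of the volume, not the volume by $|\partial_e|$.

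The paper avoids this by \emph{not} using all of $\Gamma^{\rm mac}$. Instead it first builds a short $\Z^d$-path $\gamma\subset\kC_\rH(a)$ from $a$ to $b$: take a shortest $\Z^d$-path $\gamma'\subset\rH^*$ from $a$ to $b$ and extract a path inside
\[
(\gamma'\cap\kC_\rH(a))\;\cup\;\{x\in\kC_\rH(a):\exists\,y\in\rH^*\setminus\kC_\rH(a),\ \|x-y\|_\infty=1\}.
\]
The second set (the inner $*$-boundary of $\kC_\rH(a)$ in $\rH^*$) is $\Z^d$-connected by \cite[Lemma~2.23]{Kesten:StFlour}, and both its cardinality and $\|a-b\|_1$ are bounded by $2d|\partial_e|$, giving $|\gamma|\le 4d|\partial_e|$. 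Since $\gamma\subset\kC_\rH(a)$ avoids $\gamma_{0,w_n}\cup\Gamma\cup\mathbf V_n$, every box it visits is good; now Lemma~\ref{lem:connexiongoodbox} is applied with the set of (at most $|\gamma|$) boxes along $\gamma$, yielding $\kD(a,b)\le 2d\mu(\mathbf e_1)N|\gamma|\le 8d^2\mu(\mathbf e_1)N|\partial_e|$.

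So the missing idea is: do not try to control the whole renormalized cluster $\Gamma^{\rm mac}$; route a single microscopic path through the inner boundary of $\kC_\rH(a)$, whose length is directly governed by $|\partial_e|$.
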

\begin{proof}
We claim that there exists a $\Z^d$-path $\gamma\subset \kC_H(a)$ from $a$ to $b$ in $\rH^*$ such that
 \[|\gamma|\leq  {4 d}|\{e\in \E^d:~e\subset \rH^*,\,|e\cap \kC_{\rH}(a)|=1\}|.\]
Indeed, consider a $\Z^d$-path $\gamma'$ from $a$ to $b$ such that $\gamma'\subset \rH^*$ and  $|\gamma'|=\|a-b\|_1$; we arbitrarily take a path  $\gamma$ from $a$ to $b$ in 
\[(\gamma'\cap \kC_{\rH}(a))\cup \{x\in \kC_{\rH}(a): \exists y\in {\rH^*\setminus}\kC_{\rH}(a)\,\, \|x-y\|_ {\infty}=1\}.\]
 {Note that the set $\{x\in \kC_{\rH}(a): \exists y\in {\rH^*\setminus}\kC_{\rH}(a)\,\, \|x-y\|_ {\infty}=1\}$ is $\Z^d $-connected (see for instance \cite[Lemma 2.23]{Kesten:StFlour}).
Moreover, we have 
\[|\{x\in \kC_{\rH}(a): \exists y\in {\rH^*\setminus}\kC_{\rH}(a)\,\, \|x-y\|_ {\infty}=1\}|\le 2d|\{e\in \E^d:~e\subset \rH^*,\,|e\cap \kC_{\rH}(a)|=1\}|.\]
}
Note that since $b\in   \kC_{\rH}(a)$,
$$\|a-b\|_1\leq |\{x\in \kC_{\rH}(a): \exists y\in {\rH^*\setminus}\kC_{\rH}(a)\,\, \|x-y\|_1=1\}|\leq |\{e\in \E^d:~e\subset \rH^*,\,|e\cap \kC_{\rH}(a)|=1\}|.$$ 
This yields the existence of such a path $\gamma$. By construction, $\gamma$ only intersects good boxes since $\kC_{\rH}(a)\cap (\gamma_{0,w_n}\cup\Gamma\cup\mathbf{V}_n)=\emptyset$ and $\gamma \subset \kC_{\rH}(a)$. 
The path $\gamma$ intersects at most $|\gamma|$ boxes and all these boxes are good by construction and connected. By Lemma \ref{lem:connexiongoodbox}, there exists an open path from $a$ to $b$ of length at most $2d\mu(\mathbf e_1)N|\gamma|$ and
{
\al{
\kD(a,b)&\leq 2d\mu(\mathbf{e}_1)N |\gamma|\leq 8d^2\mu(\mathbf{e}_1)N|\{e\in \E^d:~e\subset \rH^*,\,|e\cap \kC_{\rH}(a)|=1\}|.
}}
\end{proof}

\noindent{\bf Proof of \ref{case2:i}.}
Note that  $\|0-\widetilde{0}\|_\infty \le 1$ and $\|w^*-w_n\|_\infty <  {2n^{\alpha_d}}$. It follows from $w_n\in \Lambda_{n^{\alpha_d}}(nx)$ that $w^*\in\Lambda_{4n^{\alpha_d}}(\widetilde{0}+nx)\,.$ We define a modified path as
\al{\tilde{\gamma}&:=(\tilde{0},\SH(w^{2})\cdots,\SH(w^{i_1}))\oplus \bar{\gamma}_{w^{i_1},w^{i_1+1}}\oplus (\SH(w^{i_1+1}),\cdots,\SH(w^{i_2}))\oplus  \bar{\gamma}_{w^{i_2},w^{i_2+1}}\\
&\qquad \oplus \cdots\oplus (\SH(w^{i_k+1}),\cdots,\SH(w^*))\oplus \bar{\gamma}_{\SH(w^*),w^*}.
}
Recall that $s_*\le s_n-n ^{\alpha_d}$. On the event  $\mathcal{A}^{\rm r}$,  $\tilde{\gamma}$ is still an open path from $\tilde{0}$ to $w^*$ for $\tau^{\rm r}$. Hence, we have using \eqref{H star condition} for $n$ large enough depending on $d,s_0,p$, \aln{\label{cases 2 t star estimate}
\kD^{\rm r}(\tilde{0},w^*)\leq |\tilde{\gamma}|\leq |\gamma|+3|\rH^*\cap\gamma_{0,w_n}|+ {|\bar{\gamma}_{\SH(w^*),w^*}|}\leq s_*+10^{3d} n^{1-\beta_d}N^d<s_n- \frac{1}{2}n^{\alpha_d}.
}

Let $v\in \rmB^{\rm r}_{s_n-n^{\alpha_d}/2}$. Let us take a geodesic $\gamma^{\rm r}:\tilde{0}\to v$ with $\gamma^{\rm r}=(v^i)_{i=1}^{\ell}$  for $\tau^{\rm r}$. 
We define $i_1,j_1,\dots,i_m,j_m$ inductively as follows: Let
\al{
i_1&:=\inf\{k:~\,v^k\in\SH(\rH^* ),v^{k+1}\in\rH^*\},\\
j_1&:=\sup\{k\geq i_1:~v^k\in \SH(\kC_{\rH}(\SH^{-1}(v^{i_1})))\text{ or } \langle \SH^{-1}(v^{i_1}),\SH^{-1}(v^k) \rangle\in \gamma_{0,w_n}\}.
}
Assuming that $i_1,j_1,\dots,i_{l-1},j_{l-1}$ have been defined, we define 
\al{
i_l&:=\inf\{k\geq j_{l-1}:~\,v^k\in\SH(\rH^* ){\setminus \SH(\kC_H(\SH^{-1}(v^{i_{l-1}}))}),v^{k+1}\in\rH^*\},\\
j_l&:=\sup\{k\geq i_l:~v^k \in \SH(\kC_{\rH}(\SH^{-1}(v^{i_l})))\text{ or } \langle \SH^{-1}(v^{i_l}),\SH^{-1}(v^k) \rangle\in \gamma_{0,w_n} \}.
}
Let $m$ be the smallest integer such that $i_{m+1}{=\infty}$. 
For $k\le m$, we set \[x^k:=\SH^{-1}(v^{i_k})\quad\text{and} \quad y^k:=\SH^{-1}(v^{j_k}).\]
 {Note that if $i_{l+1}<\infty$,  by Claim~\ref{lem En relation}, then $v^k \in \SH(\kC_{\rH}(\SH^{-1}(v^{i_l})))\text{ or } \langle \SH^{-1}(v^{i_l}),\SH^{-1}(v^k) \rangle\in \gamma_{0,w_n}$ with $k:=\min\{j>i_l:~v^j\in \SH(\Z^d)\}$, thus we have $i_l<j_{l}$. Therefore, $m$ is finite.}
\begin{claim}\label{claim:openpath}  The paths $(v^{j_k},\cdots, v^{i_{k+1}})$ for $k<m$ are contained in $\SH(\Z^d)$. In particular, the paths $(\SH^{-1}(v^{j_m}),\cdots,\SH^{-1}( v^{\ell}))$ for $k<m$ are open for $\tau$.
Moreover, the path $(v^{j_m+1},\cdots,v^{\ell})$ is either included in $\SH(\Z^d)$ or included in $\rH^*$.
\end{claim}
\begin{proof}
Let $k<m$.  {Note that $j_k>i_k$ mentioned above, and thus $x^k\neq y^k$.} If $j_k=i_{k+1}$, then the claim is trivial. Hence, we assume $j_k<i_{k+1}.$ 
To prove the claim, we assume the contrary, i.e. there exists $i\in (j_k,i_{k+1})$ such that $v^{i}\in  \rH^*$. Consider the smallest such $i.$
If $i>j_k+1$, then the edge $\langle v^{i-1}, v^{i} \rangle $ crosses $\SH(\rH^*)$ before $i_{k+1}$ and $v^{i-1}\in \SH(\rH^* ) \setminus \SH(\kC_H(x^k))$ due to $i-1>j_k$, which contradicts the minimality of $i_{k+1}$. Hence, we have  $i=j_k+1$, i.e. $v^{j_k+1}\in  \rH^*$. Note that, by \eqref{H relation conclustion}, $\langle x^k,y^k\rangle\in\gamma_{0,w_n}$ {implies $y^k\notin \kC_{\rm H}(x^k)$ and  $j_k=i_{k+1}$, which contradicts the assumption  $j_k<i_{k+1}$.   Thus, it holds  $x^k\overset{H}{\sim} y^k$. Let $j:=\min\{i'\geq i:~v^{i'}\in \SH(\Z^d)\}$. Since $\langle y^k,\SH^{-1}(v^j)\rangle\in\gamma_{0,w_n}$ again implies $j_k=i_{k+1}$ by \eqref{H relation conclustion},  we have $v^j\in \SH(\kC_H(y^k))=\SH(\kC_H(x^k))$ by Claim~\ref{lem En relation}, which contradicts the maximality of $j_{k}$.} Therefore the path $(v^{j_k},\cdots, v^{i_{k+1}})$ is contained in $\SH(\Z^d)$. Hence, by definition of $\tau^{\rm r}$, the path $(\SH^{-1}(v^{j_k}),\cdots,\SH^{-1}( v^{i_{k+1}}))$ is open for $\tau$. 

We assume  $(v^{j_m+1},\cdots,v^{\ell})$ is not included in $\SH(\Z^d)$, i.e. there exists $i\in \{j_m+1,\dots,\ell\}$ such that  $v^{i}\in \rH^*$. We take the smallest such $i$. If $i>j_{m}+1$, then by the same reasoning as above,  it contradicts the maximality of $m$. Hence, we have $i=j_{m}+1$. We further suppose that  $(v^{j_m+1},\cdots,v^{\ell})$ is not included in $\rH^*$.  Define $j=\min\{j> j_{m}:~v^j\in \SH(\Z^d)\}$. 
If $y^m\in \kC_\rH(x^m)$, then  by \eqref{H relation conclustion} and Claim~\ref{lem En relation}, $\langle x^m,\SH^{-1}(v^{j})\rangle=\langle y^m,\SH^{-1}(v^{j})\rangle\in \gamma_{0,w_n}$ or  $\SH^{-1}(v^{j})\in \kC_{\rH}(y^m)=\kC_{\rH}(x^m)$, which contradicts the maximality of $j_m.$ 
If $y^m\notin \kC_\rH(x^m)$, then 
we have $j_m=i_{m+1}$ by definition of $i_k$, which contradicts the maximality of $m.$ Therefore, $(v^{j_m+1},\cdots,v^{\ell})$ is included in $\rH^*$. 
\end{proof}
Let us prove by induction that \aln{\label{second trial first goal}
 \text{$\forall k< m$, $x^k,y^k\in \rmB_{s_n-1}$  and $x^m\in \rmB_{s_n-1}$.}}
Let $k\leq m$ and suppose that we have proved the claim for any $i< k$, i.e.  $x^i,y^i\in \rmB_{s_n-1}$. Thanks to Claim \ref{claim:openpath}, we have
\aln{\label{triangle ineq}
\kD(0,x^1)+\sum_{i=1}^{k-1} \kD(y^i,x^{i+1})\leq\kD^{\rm r}(\tilde{0},\SH(x^1))+\sum_{k=1}^{k-1} \kD^{\rm r}(\SH(y^i),\SH(x^{i+1}))\leq \kD^{\rm r}(\tilde{0},v^{i_{k}})\leq \ell.
}
Let $C:=10^{3d}\mu(\mathbf{e}_1)$. Next, let us prove
\ben{\label{Hn relation bound}
\sum_{i=1}^{k-1} \kD(x^i,y^i)\leq C N^{d+1} n^{1-\beta_d}.
}
For any $i<k$,  either (1) $\langle x^i,y^i\rangle\in\gamma_{0,w_n}$, 
    or (2) $y^i\in \kC_{\rm H}(x^i)$ holds. 
 For case (1), we have $\kD(x^i,y^i)= 1$.
For case (2), by induction hypothesis $x^i,y^i\in\rmB_{s_n-1}$, they belong to the largest open cluster of their respective good boxes.
Thanks to Claim \ref{claim:bounddis}, we have 
$$\kD(x^i,y^i)\leq   8d^2\mu(\mathbf{e}_1)N|\{e\in \E^d:~e\subset \rH^*,\,|e\cap \kC_{\rH}(x^i)|=1\}|.$$
Finally, since $|\{i\leq k:~|e\cap  \kC_{\rH}(x^i)|=1\}|\leq 2$ for any  $e\subset  \rH^*$, using \eqref{H star condition}
\begin{equation}\label{eq:conttaille}
\begin{split}
\sum_{i=1}^{k-1} \kD(x^i,y^i)&\leq   8d^2\mu(\mathbf{e}_1)N \sum_{i=1}^{k-1} |\{e\in \E^d:~e\subset \rH^*,\,|e\cap \kC_{\rH}(x^i)|=1\}|+|\rH^*\cap \gamma_{0,w_n}|\\
&\leq    {32d^3\mu(\mathbf{e}_1)N}\mu(\mathbf{e}_1)  |\rH^*\cap (\mathbf{V}_n\cup \Gamma\cup  \gamma_{0,w_n})|\leq C N^{d+1} n^{1-\beta_d}.
\end{split}
\end{equation}
Therefore, since $\ell\le  s_n- \frac{1}{2}n^{\alpha_d}$ by $v\in \rmB^{\rm r}_{s_n- \frac{1}{2}n^{\alpha_d}}$, \eqref{triangle ineq} and \eqref{Hn relation bound} imply
\aln{\label{case2 SH w star2}
\kD(0,x^{k})&{\leq \kD(0,x^{1}) +\sum_{i=1}^{k-1} \kD(y^i,x^{i+1})+\sum_{i=1}^{k-1} \kD(x^i,y^i)} \\
&\leq \ell+\sum_{i=1}^{k-1} \kD(x^i,y^i)\leq \ell+C N^{d+1} n^{1-\beta_d}\leq s_n-\frac{1}{4}n^{\alpha_d}.\notag
}
This, in particular, yields  $x^{k}\in \rmB_{s_n-2}$.

It remains to prove $y^{k}=\SH^{-1}(v^{j_{k}})\in\rmB_{s_n-1}$ for $k<m$. 
Let us assume $y^{k}\notin\rmB_{s_n-1}$. Then, since $\cD(x^{k},y^{k})\ge 2$, $\langle x^{k},y^{k}\rangle\notin \gamma_{0,w_n}$ and thus $y^{k}\in\kC_{\rH}(x^{k})\setminus\{x^{k}\}$. In particular, the boxes containing $x^{k}$ and $y^{k}$ are good, and  the two points  $x^{k}$ and $y^{k}$ must be connected by a path in $\rH^*\setminus (\gamma_{0,w_n}\cup \Gamma\cup \mathbf{V}_n)$. If $y^{k}$ is in the largest open cluster, since $x^{k}\in \rmB_{s_n}$ is in the largest open cluster of its good box, by the same argument as in \eqref{eq:conttaille}, we have  $\kD(x^{k},y^{k})\le C N^{d+1} n^{1-\beta_d}$, which  implies $y^{k}\in\rmB_{s_n-1}$ by \eqref{case2 SH w star2}.
Otherwise, if $y^{k}$ is not in the largest open cluster, since by Claim \ref{claim:openpath}, the path $(y^{k},\SH^{-1}(v^{j_{k}+1}),\cdots,\SH^{-1}(v^{i_{k+1}-1}), x^{k+1})$ is open for $\tau$. Since there are no two distinct large open clusters in $\underline{\Lambda}'_N(y^k)$,  $\|y^{k}-x^{k+1}\|_\infty\leq \frac N 2$. {Moreover, since $x^{k+1}\in \gamma_{0,w_n}$ implies that $y^{k}$ must be in the largest open cluster, $x^{k+1}=\SH^{-1}(v^{i_{k+1}})\notin \gamma_{0,w_n}$. Thus, the edge $\langle v^{i_{k+1}},v^{i_{k+1}+1}\rangle$ does not belong 
 to $\mathbf{E}_1$.} 
Moreover, since  the edge $\langle v^{i_{k+1}},v^{i_{k+1}+1}\rangle$ crosses $\SH(\rH^*)$ and all the edges in $\mathbf{E}_2$ are closed, ${\rm d}_\infty(x^{k+1},\rH^*\cap (\gamma_{0,w_n}\cup\Gamma\cup {\rm V}_n))>2N$. 
Thus, there exists a path from $y^{k}$ to $x^{k+1}$ inside $\rH^*$ not intersecting with $\rH^*\cap (\gamma_{0,w_n}\cup\Gamma\cup {\rm V}_n)$, and  $x^{k+1}\in \kC_{\rH}(y^{k}){=\kC_{\rH}(x^{k})}$, which derives a contradiction.  Therefore, we have \eqref{second trial first goal} with $k<m$.

We can also conclude from what is above that either $y^m \in \rmB_{s_n-1}$ or $y^m\in\kC_{\rH}(x^m)\setminus\{x^m\}$. In both cases, we have that $y^m\in\mathrm{Int}(\Gamma)\cap \rH^*$. When $y^m \in \rmB_{s_n-1}$, by the same argument as in \eqref{triangle ineq} and \eqref{Hn relation bound}, we have 
\ben{\label{case2 SH w star2 star}
\kD(0,y^{m})\leq \kD^{\rm r}(\tilde{0},\SH(y^{m}))+CN^{d+1}n^{1-\beta_d}.
}
If $y^m \notin \rmB_{s_n-1}$, then $y^m$ does not belong to the largest cluster of its good box, as we proved  its contrapositive. If, in addition, $v\in\SH(\Z^d)$, by Claim \ref{claim:openpath}, the path $(v^ {j_m},\dots,v^{\ell})$ is included in $\SH(\Z^d)$. Since there are no two distinct large open clusters in $\underline{\Lambda}'_N(y^m)$,  $(\SH^{-1}(v^ {j_m}),\dots,\SH^{-1}(v^{\ell}))$ is open for $\tau$ and  inside $\Lambda_{N/2}(y)$. In particular,
\ben{\label{eq:toreferyi}
\kD(y^m,\SH^{-1}(v))\leq N^d.
}
\indent Let us now furthermore assume that $v\in \SH(\rmB_{s_n-n^{\alpha_d}})\cap {\rm B}^{\rm r}_{s_n-\frac{1}{2}n^{\alpha_d}}(\tilde{0})$. We will prove that
\aln{\label{new trial goal}
\kD^{\rm r}(\tilde{0},v)\geq \kD(0,\SH^{-1}(v))-\frac{1}{2}n^{\alpha_d}.
}
If $y^m\notin \rmB_{s_n-1}$, then by \eqref{eq:toreferyi}, we have 
 $$\kD(0,y^m)\leq \kD(0,\SH^{-1}(v))+N^d\leq s_n-n^{\alpha_d}+N^d<s_n,$$ 
 which contradicts $y^m\notin \rmB_{s_n-1}.$  Therefore, we have $y^m\in \rmB_{s_n-1}.$ 
 Moreover, by \eqref{case2 SH w star2 star},
\begin{equation}
    \begin{split}\label{ym estimate}
\kD(0,\SH^{-1}(v))&\leq \kD(0,y^m)+\kD(y^m,\SH^{-1}(v))\\
&\leq \kD^{\rm r}(\tilde{0},v)+C N^{d+1} n^{1-\beta_d} \leq \kD^{\rm r}(\tilde{0},v)+\frac{1}{2}n^{1-\alpha_d}.
\end{split}
\end{equation}
 This yields \eqref{new trial goal}.
In particular,  since $\kD^{\rm r}(\tilde{0}, w^*)\leq s_n-\frac{1}{2}n^{\alpha_d}$ by \eqref{cases 2 t star estimate},  applying the previous inequality for $v=w^*$, we have
$$\kD^{\rm r}(\tilde{0}, w^*)\geq \kD^{\rm r}(\tilde{0}, \SH(w^*))-\kD^{\rm r}(w^*, \SH(w^*))\geq s_*-C N^{d+1} n^{1-\beta_d}-2>s_n- 3 n^{\alpha_d}.$$
~\\
\noindent{\bf Proof of \ref{case2:iv}.}
Recall $t_*:=\kD^{\rm r}(\tilde{0}, w^*)\le s_n-\frac{1}{2}n^{\alpha_d}$. Let us prove that
\aln{\label{case2 shape}
\rmB^{\rm r}_{t_*}(\tilde{0})\subset \SH(\rmB_{s_n})\cup \{z\in \Z^d:~\mathrm{d}_\infty(z,\mathrm{Int}(\Gamma)\cap \rH^*)\leq N\}.
} 

From now on, we assume that $v\in \rmB^{\rm r}_{t_*}(\tilde{0})$. We keep the same notation $\gamma^{\rm r}$ to be a geodesic for $\tau^{\rm r}$ between $\widetilde 0$ and $v$, and $i_k,j_k$ are defined in the same way as before.
Thanks to \eqref{case2 SH w star2}, we have $x^m\in\rmB_{s_n-2  }$.  Recall that we have proved $y^m\in\mathrm{Int}(\Gamma)\cap \rH^*$ above \eqref{eq:toreferyi}. We divide the proof into several cases.
Let us first assume that $v\notin \SH(\Z^d)$.   By Claim~\ref{claim:openpath}, the path $(v^{j_m+1},\dots,v)$ is included in $\rH^*$.  On the event $\kA^{\rm r}\cap\kA_n^{[2]}$, since $y^m\in\mathrm{Int}(\Gamma)\cap \rH^*$ and the path $(v^{j_m+1},\dots,v)$ is open for $\tau^{\rm r}$, we have either that the path is included in $\mathbf{E}_1$ or it cannot go at distance less than $2N$ from $\Gamma$. In both cases, $\mathrm{d}_\infty(v,\mathrm{Int}(\Gamma)\cap \rH^*)\leq N$.

Let us next assume that $y^m\in \rmB_{s_n-1}$ and $v\in\SH(\Z^ d)$.  By \eqref{case2 SH w star2 star}, we have 
$$\kD(0,y^m)\le\kD^{\rm r}(\tilde{0},\SH(y^m))+CN^{d+1} n^{1-\beta_d}.$$
By Claim~\ref{claim:openpath}, the path $(v^{j_m},\dots,v)$ is in $\SH(\Z^d)$ and the path $(\SH^{-1}(v^{j_m}),\dots,\SH^{-1}(v))$ is open for $\tau$. Hence, by $\SH(y^m)=v^{j_m}\in \gamma^{\rm r}$ and $t_*\le s_n-\frac{1}{2}n^{\alpha_d}$, those imply
\al{
\kD(0,\SH^{-1}(v))&\leq \kD(0,y^m)+\kD(y^m,\SH^{-1}(v))\\
&\leq \kD^{\rm r}(\tilde{0},\SH(y^m))+CN^{d+1} n^{1-\beta_d}+\kD^{\rm r}(\SH(y^m),v)\\
&= \kD^{\rm r}(\tilde{0},v)+CN^{d+1} n^{1-\beta_d}\leq t_*+CN^{d+1} n^{1-\beta_d}\le s_n,
}
which yields $v\in\SH(\rmB_{s_n})$.

Finally, assume $y^m\notin\rmB_{s_n-1}$ and $v\in\SH(\Z^ d)$. Thanks to \eqref{eq:toreferyi} and a claim thereover, we have $y^m\in\mathrm{Int}(\Gamma)\cap \rH^*$ and 
\[\mathrm{d}_\infty(v,\mathrm{Int}(\Gamma)\cap\rH^*)\le N.\]
Thus,  we have \eqref{case2 shape}. Condition-\ref{case2:iv} follows from \eqref{eq:isoperimetry}, $\rmB_{s_n}\subset \mathrm{Int}(\Gamma)$, $|\Gamma|\leq K n$, and \eqref{case2 shape}.
\newline

\noindent {\bf Proof of \ref{case2:ii}.}
Let us prove that $\rmB^{\rm r}_{t_*}(\tilde{0})\cap \rL_3(w^*)=\{w^*\}$ on the event $\kA^{\rm r}\cap \kA^{[2]}_n$. To this end, let us take $v\in \rmB^{\rm r}_{t_*}(\tilde{0})\cap \rL_3(w^*)$ and a geodesic $\gamma^{\rm r}=(v^i)_{i=1}^\ell$ from $\tilde{0}$ to $v$ for $\tau^{\rm r}$.
Let \[i_*:=\min\{i\leq \ell:~v^i\in \rL_3(w^*)\}.\] If $v^{i_*}=w^*$, then $i_*=\ell$ and $v=w^*$.  Indeed, since any edge $e\in \mathbf{E}_2$ is closed and  the path  $\gamma^{\rm r}$ is self-avoiding, it must end at $w^*$. Otherwise, if $v^{i_*}\neq w^*$, then  $\mathrm{d}_\infty(v^{i_*},\mathrm{Int}(\Gamma)\cap \rH^*)>N$ since any edge $e\in \mathbf{E}_3$ is closed. However, since $\SH(\Z^d)\cap \rL_3(w^*)=\emptyset$, $v^{i_*}\notin \SH(\rmB_{s_n})$. By \eqref{case2 shape}, this implies $\mathrm{d}_\infty(v^{i_*},\mathrm{Int}(\Gamma)\cap \rH^*)\leq N$, which leads to a contradiction.\newline
 
 \noindent {\bf Proof of \ref{case2:iii}.}
 Finally, we prove $\rN_k( \rH_1^*\cap \rmB ^{\rm r}_{t_*}(\tilde{0}))\leq n^{\alpha_d}$ for any $k\neq 1$. We fix $k\neq 1$. Suppose $\rL_k(z')\cap \rH_1^*\cap \rmB^{\rm r}_{t_*}(\tilde{0})\neq \emptyset$ with $z'\in \Z^d$ such that $z_k'=0$. By \eqref{case2 shape} and $ \rH_1^*\cap \SH(\Z^d)=\emptyset$, 
 \aln{\label{first implication}
  \rL_k(z')\cap \{z\in \Z^d:~\mathrm{d}_\infty(z,\mathrm{Int}(\Gamma)\cap  \rH_1^*)\leq N\}\neq \emptyset.
 }
Combined with \eqref{3 estimate}, this implies that for $n$ large enough depending on $s_0,p,d$,
 \al{
 \rN_k( \rH_1^*\cap \rmB ^{\rm r}_{t_*}(\tilde{0}))
 \leq \rN_k(\{z\in \Z^d:~\mathrm{d}_\infty(z,\mathrm{Int}(\Gamma)\cap  \rH_1^*)\leq N\})\le n^{\alpha_d}.
 }
 \end{proof}

\subsection{Proof of Lemma \ref{lem:aux}}\label{subsection:proofmainlemma}
We will need the following lemma that gives a lower bound for the number of almost disjoint paths connecting two given subsets of two separate hyperplanes. We postpone its proof until the appendix.
\begin{lem}\label{lem:disjpathscont}Let $d\ge 3$. There exists $C_d>0$ such that the following holds.
Let $S^1,S^2\subset \Z^d$ that satisfy one of the following.
\begin{itemize}
    \item[-] There exist $i\in[d]$, $K\ge \ell\ge 1$ such that $S^1\subset \rH_i(0)$, $S^2\subset \rH_i(\ell\mathbf{e}_i)$ and 
\aln{
\max_{x\in S_1,\,y\in S_2} \|x-y\|_{\infty}= K.
}
    \item[-] There exist  $i\ne j\in[d]$, $K\ge 1$ such that $S^ 1\subset \rH_i(0)$,  $S^ 2\subset \rH_j(0)$ and $S^1\cup S^2\subset [-K,K]^d$.
\end{itemize}
Then, we can find $\Z^d$-paths $(\fp_i)_{1\le i\le m}$ from $S^ 1$ to $S^ 2$ with $m:=\min(|S^ 1|,|S^ 2|)/2$ such that the length of each path is less than $2d K$ and  there exists a constant $\chi_d\in\N$ depending only on $d$ such that
\[\forall x\in\Z^ d\quad|\{i\in[m]:x\in \fp_i\}|\le \chi_d {\left(\frac K \ell\right)^{d-1}}.\]
\end{lem}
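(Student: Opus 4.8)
\emph{Normalization and the length bound.} In the first case, after permuting coordinates and translating I may assume $i=d$, $S^1\subset\Z^{d-1}\times\{0\}$, $S^2\subset\Z^{d-1}\times\{\ell\}$, and, using that $\|x-y\|_\infty\le K$ for all $x\in S^1,\ y\in S^2$, that the projections of $S^1$ and $S^2$ to $\Z^{d-1}$ both lie in $[-2K,2K]^{d-1}$. In the second case I may assume $i=d-1$, $j=d$, $S^1\subset\{x_{d-1}=0\}$, $S^2\subset\{x_d=0\}$, everything in $[-K,K]^d$, and I set $\ell:=1$. I may also assume $|S^1|\le|S^2|$, so it suffices to pick $m$ distinct points of $S^1$, an injection $\sigma$ of them into $S^2$, and for each chosen $x$ a $\Z^d$-path $\fp_x$ from $x$ to $\sigma(x)$ with the stated congestion. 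For any matched pair, $\|x-y\|_1\le(d-1)\|x-y\|_\infty+|x_i-y_i|\le(d-1)K+\ell\le dK<2dK$ in the first case, and $\|x-y\|_1\le(2d-2)K<2dK$ in the second; hence I only ever use \emph{monotone} lattice paths (monotone in every coordinate), for which the length bound is automatic. In particular the construction cannot afford detours through auxiliary grid points: the $\fp_x$ must be direct monotone paths.

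\emph{Routing and the congestion bound.} Take $\sigma$ to be an arbitrary injection. For a matched pair $(x,y)=((\bar x,0),(\bar y,\ell))$ in the first case, I route $\fp_x$ as a monotone path that occupies height $0$ only at $\bar x$ and height $\ell$ only at $\bar y$, performing its $\ell$ vertical steps interleaved with a direct monotone lateral path from $\bar x$ to $\bar y$, and I choose \emph{which} lateral path and \emph{where} to insert the vertical steps according to the decomposition of $\Z^{d-1}$ into cubes of side $\ell$, so that the height of $\fp_x$ encodes which $\ell$-cube its lateral position currently occupies. Then each of the two hyperplanes is met by at most one path. For an interior vertex $z$ (with $0<z_d<\ell$), the plan is that the pair $(\bar z,z_d)$ pins down the $\ell$-cube of the relevant endpoint together with enough coordinates of $\bar x$ and $\bar y$ to leave only $O_d(1)$ matched pairs through $z$ per cube; since $[-2K,2K]^{d-1}$ meets only $O_d((K/\ell)^{d-1})$ such cubes, the congestion at $z$ is at most $\chi_d(K/\ell)^{d-1}$, and each path has length at most $dK<2dK$.

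\emph{The second case and the main obstacle.} When $\ell=1$ the required bound is $\chi_d K^{d-1}$, which is essentially free: any monotone path from $x\in S^1$ to $\sigma(x)\in S^2$ works, and the congestion never exceeds $m\le\tfrac12|S^1|\le c_dK^{d-1}$. The hard part is the first case: one must design the interleaving of vertical and lateral moves so that it is simultaneously grid-adapted (to control interior congestion), leaves the two hyperplanes almost untouched, and does not lengthen the path; in particular the regime where $\ell$ is comparable to $K$ (where a single $\ell$-cube does not by itself limit congestion) needs extra care, for instance a finer decomposition or, alternatively, a Menger-type internally-disjoint-paths argument, which yields congestion $1$ but a priori gives no control on path length and so does not by itself settle the lemma.
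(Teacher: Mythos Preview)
Your write-up is not a proof but a sketch with an acknowledged gap, and the gap is real. In Case~1 you propose to route monotone paths whose vertical coordinate ``encodes which $\ell$-cube'' the endpoint lies in, but with only $\ell$ available heights you cannot distinguish the $\ell^{d-1}$ points of a single cube; the claim that $(\bar z,z_d)$ pins down the pair up to $O_d(1)$ is not justified and, as stated, fails by a factor $\ell^{d-2}$ for $d\ge3$. You concede this yourself for $\ell$ comparable to $K$, but that is precisely the regime used in the paper's application (where $K=80\ell$). Also, your insistence that paths must be monotone because ``the construction cannot afford detours'' is wrong: the length budget is $2dK$, twice the monotone length, and the paper's paths are in fact not monotone.

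Your Case~2 is also weaker than what is stated and needed. Setting $\ell:=1$ gives congestion $\chi_d K^{d-1}$, which is the trivial bound $m\le c_dK^{d-1}$; the lemma (as used later with $K/\ell$ bounded) requires \emph{constant} congestion in Case~2. The paper handles both cases by a completely different and clean device: it invokes the matching lemma (Lemma~\ref{lem:dislines}) to obtain a bijection $\sigma:S_1\to S_2$ such that the straight segments $[x,\sigma(x)]$ are pairwise at Euclidean distance at least $\frac{\ell}{\sqrt2\,K}$, then takes each $\fp_i$ to be a lattice path within the unit tube around its segment. Congestion at a vertex is then bounded by the number of segments meeting a unit cube, which a volume argument shows is at most $c_d(K/\ell)^{d-1}$. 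For Case~2 the paper first sends $S^2$ along the diagonal direction $\mathbf e_i+\mathbf e_j$ to the hyperplane $\rH_i(K\mathbf e_i)$ (these diagonal pieces have constant congestion), reducing to Case~1 with $\ell\asymp K$ and hence constant congestion. The missing idea in your approach is exactly this: a \emph{geometric} matching producing well-separated straight lines, rather than a combinatorial routing scheme.
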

\begin{proof}[Proof of Lemma \ref{lem:aux}]
Let $s_0>0$.  Let $K$ be as in Proposition \ref{prop:sizeboundary} with $2s_0$ in place of $s_0$. Let $n_0$ be so that  Proposition \ref{prop:sizeboundary} and Lemma \ref{lem:drilling} hold. Let $s,s'\in[0,s_0],\,x,x'\in[-s_0,s_0]^d$ and $n\ge m\ge n_0$.
Denote
\[\kA^{{\rm free},K}_{s,x}(n):=\left\{\begin{array}{c}\exists i\neq j\in [d],\quad \exists s_n\geq  s n-3n^{\alpha_d},\quad \exists w_n\in \Lambda_{4n^{\alpha_d}}(nx);\\  \,B_{s_n}\setminus \rmB_{s_n-1}=\{w_n\},\,\rL_i(w_n)\cap \rmB_{s_n}=\{w_n\},\,\forall k\neq j\quad \rN_k(\rH_j(w_n)\cap \rmB _{s_n})\le n^{\alpha_d},\\
\exists \Gamma\subset \Z^d;\quad \rmB_{s_n}\subset\mathrm{Int}(\Gamma),|\Gamma|\le K n
\end{array}\right\}.\]
 By  Proposition \ref{prop:sizeboundary}, we have 
$$\P(\kA^{\rm free}_{s,x}(n)\setminus \kA^{{\rm free},K}_{s,x}(n))\le \P\left(\bigcup_{t\in[0,2s_0],y\in[-2s_0,2s_0]^d}(\kA_{t,y}(n)\setminus \kA_{t,y}^ K(n))\right)\le e^{-n}\P(\kA_{s,x}(n)).$$
It follows that by Lemma \ref{lem:drilling} and Proposition \ref{prop:sizeboundary},
\begin{equation*}
    \frac{1}{2}\P(\kA_{s,x}(n))\le \P( \kA^{K}_{s,x}(n))\le e^{n^{\alpha_d}}\P(\kA^{{\rm free}}_{s,x}(n))\le e^{n^{\alpha_d}}(\P(\kA^{{\rm free},K}_{s,x}(n))+  e^{- n}\P(\kA_{s,x}(n))).
\end{equation*}
Finally, we have for $n$ large enough depending on $s_0$,
\begin{equation}\label{eq:combiningproplem}
    \P(\kA_{s,x}(n))\le 4e^{n^{\alpha_d}}\P(\kA^{{\rm free},K}_{s,x}(n)).
\end{equation}
Define for $t\ge 1$, $i\ne j \in[d]$, $w\in\Z^d$,
\al{
\kE^ {\rm e}_{t,w,i,j}&:=\left\{\begin{array}{c} \rmB_{t}\setminus \rmB_{t-1}=\{w\}, \rL_{i}(w)\cap \rmB_{t}=\{w\},\,\exists \Gamma\subset \Z^d;\\ \rmB_t\subset\mathrm{Int}(\Gamma),|\Gamma|\le K n,\, \forall k\neq j\,\rN_k(\rH_j(w)\cap \rmB _{s_n})\le n^{\alpha_d}\end{array}\right\},\\
\kE^ {\rm b}_{t,w,i,j}&:=\left\{\begin{array}{c} \rmB_{t}\setminus \rmB_{t-1}=\{w\}, \rL_{i}(0)\cap \rmB_{t}=\{0\},\,\exists \Gamma\subset \Z^d;\\ \rmB_t\subset\mathrm{Int}(\Gamma),|\Gamma|\le K n,\,\forall k\neq j\,\rN_k(\rH_j(w)\cap \rmB _{s_n})\le n^{\alpha_d}\end{array}\right\}.
}
Note that on the event  $\kA_{s,x}^{{\rm free},K}(n)$, since there exists $\Gamma\subset \Z^ d$ such that $|\Gamma|\le K n$ and $\rmB_t\subset \mathrm{Int}(\Gamma)$, it follows that 
$t\le |\mathrm{Int}(\Gamma)|\le (Kn)^ {d}.$
   By pigeon-hole principle, using \eqref{eq:combiningproplem}, there exist $s_n\in[s n -3n^{\alpha_d},(Kn)^ d]$, $w_n\in\Lambda_{4n^{\alpha_d}}(nx)$, $i_n\ne j_n\in[d]$ such that
\begin{equation}\label{eq:drillpige}
    \P(\kE^{\rm e}_{s_n,w_n,i_n,j_n})\ge \frac{1}{(Kn)^{2d}}e^{-n^{\alpha_d}}\P(\kA_{s,x}(n)).
\end{equation}
 In order to make $\kE^{\rm b}_{s_m,w_m,i_m,j_m}$ occur from $\kA_{s',x'}(m)$, we first apply Lemma \ref{lem:createcutpoin} to create a cut point at $0$, i.e. $\rmB_{s_n}(w)\setminus \rmB_{s_n-1}(w)=\{0\}$, and then we apply Lemma \ref{lem:drilling} to create a free line at $0$. Next, we use Lemma \ref{lem:createcutpoin} again to make $w$ a cut-point.  {Finally, by the same argument as in \eqref{eq:combiningproplem}, we can find a desired $\Gamma$ as in $\kE^{\rm b}_{s_m,w_m,i_m,j_m}$.}  Hence, there exist $s_m\in[s' m -3m^{\alpha_d},(Kn)^ d]$, $w_m\in\Lambda_{4m^{\alpha_d}}(x'm)$, and $i_m\ne j_m\in[d]$ such that
\begin{equation}\label{eq:drillpige2}
    \P(\kE^{\rm b}_{s_m,w_m,i_m,j_m})\ge \frac{1}{(Kn)^{2d}}e^{-m^{\alpha_d}}\P(\kA_{s',x'}(m)).
\end{equation}
Denote by $\sC^{\rm e}_{t,w,i,j}$ (respectively $\sC^{\rm b}_{t,w,i,j}$) the set of admissible connected graph $C=\rm B_{t}$ for the event $\kE^{\rm e}_{t,w,i,j}$ (respectively $\kE^{\rm b}_{t,w,i,j}$).
We have
\begin{equation*}
    \begin{split}
        \P(\kE^{\rm e}_{s_n,w_n,i_n,j_n})\P(\kE^{\rm b}_{s_m,w_m,i_m,j_m})=\sum_{{\rm C}_1\in\sC^{\rm e}_{s_n,w_n,i_n,j_n} }\sum_{{\rm C}_2\in\sC^{\rm b}_{s_m,w_m,i_m,j_m}}\P(\rmB_{s_n}={\rm C}_1)\P(\rmB_{s_m}={\rm C}_2).
    \end{split}
\end{equation*}
Let ${\rm C}_1\in\sC^{\rm e}_{s_n,w_n,i_n,j_n}$ and ${\rm C}_2\in\sC^{\rm b}_{s_m,w_m,i_m,j_m}$. Let us first prove that we can do a small translation of $w_n+{\rm C}_2$ such that its intersection with ${\rm C}_1$ is empty. We take {$\Gamma_1 ,\Gamma_2\subset\Z^d$ such that $\max\{|\Gamma_1|,|\Gamma_2|\}\le K n$ and ${\rm C}_1\subset\mathrm{Int}(\Gamma_1)$, ${\rm C}_2\subset\mathrm{Int}(\Gamma_2)$. By isoperimetry of $\Z^d$ (see \eqref{eq:isoperimetry}),
\ben{\label{isoperimetry Gamma}
\max\{|\mathrm{Int}(\Gamma_1)|,|\mathrm{Int}(\Gamma_2)|\}\le \kappa_d(Kn)^{\frac{d}{d-1}}\le\kappa_d K^{2} n^{\frac{d}{d-1}}.
}}
 {Since $\sum_{z\in\Z^d}\mathbf{1}_{x\in (z+A)}=|A|$ for any $x\in \Z^d$ and $A\subset \Z^d$,} we have
\begin{equation*}
\begin{split}
&\left|\left\{z\in \Lambda_{9n^{\alpha_d}}(w_n)\cap \Z^d:\,(\mathrm{Int}(\Gamma_1)\cup \rL_{i_n}^{Kn}(w_n))\cap (z+(\Gamma_2\cup \rL_{i_m}^{Kn}(0)))\neq \emptyset\right\}\right|\\
   &\leq \sum_{z\in \Lambda_{9n^{\alpha_d}}(w_n)\cap \Z^d}|(\mathrm{Int}(\Gamma_1)\cup \rL_{i_n}^{Kn}(w_n))\cap (z+(\Gamma_2\cup \rL_{i_m}^{Kn}(0)))|\\
    &=\sum_{x\in \mathrm{Int}(\Gamma_1)\cup \rL_{i_n}^{Kn}(w_n)}\sum_{z\in \Lambda_{9n^{\alpha_d}}(w_n)\cap \Z^d}\mathbf{1}_{x\in (z+\Gamma_2\cup\rL_{i_m}^{Kn}(0) )}\\
    &\le (|\Gamma_2|+4Kn)(|\mathrm{Int}(\Gamma_1)|+4Kn)\le   c_d\,n^{1+\frac{d}{d-1}}.
    \end{split}
\end{equation*}
for some constant $c_d>0$ depending only on $d,K$.
Besides,  {since 
$\|nx-w_n\|_\infty\le 4n^{\alpha_d}$ and $\|mx'-w_m\|_\infty \le4 m^{\alpha_d}\leq 4n^{\alpha_d}$,   $z\notin\Lambda_{9n^{\alpha_d}}(w_n)$ (or equivalently $\|z-w_n\|_\infty>9n^{\alpha_d}$) implies $$\|z+w_m-nx-mx'\|_\infty\geq \|z-w_n\|_\infty-\|nx-w_n\|_\infty-\|mx'-w_m\|_\infty  >  n^{\alpha_d}.$$ 
Therefore, we reach 
\al{
&\left|\{z\in\Lambda_{9n^{\alpha_d}}(w_n)\cap \Z^d:~\|z+w_m-nx-mx'\|_\infty\le n^{\alpha_d}\}\right|\\
&=\left|\{z\in\Z^d:~\|z+w_m-nx-mx'\|_\infty\le n^{\alpha_d}\}\right|
=|\Lambda_{n^ {\alpha_d}}(nx+mx'-w_m)\cap \Z^ d|\ge 2n^ { {d}\,\alpha_d}.
}
As a consequence, we have
\al{
&\left|\left\{z\in\Lambda_{ {9n}^{\alpha_d}}(w_n)\cap \Z^d:\begin{array}{c} (\mathrm{Int}(\Gamma_1)\cup \rL_{i_n}^{Kn}(w_n))\cap (z+(\Gamma_2\cup \rL_{i_m}^{Kn}(0)))=\emptyset,\\ \|z+w_m-nx-mx'\|_\infty\le n^{\alpha_d}\end{array}\right\}\right|\\
&\ge 2n^{d\,\alpha_d}-c_d n^{1+\frac{d}{d-1}}\geq  {n^{d\,\alpha_d}}.
}

}
  {Thus, by pigeon-hole argument and $\max\{|\Gamma_1|,|\Gamma_2|\}\le K n$}, there exists  $z\in \Lambda_{9n^{\alpha_d}}(w_n)$;
\begin{itemize}
    \item[-] $ (\mathrm{Int}(\Gamma_1)\cup \rL_{i_n}^{Kn}(w_n))\cap (z+(\Gamma_2\cup \rL_{i_m}^{Kn}(0)))=\emptyset$;
      \item[-]  $\|(z+w_m)-(nx+mx')\|_\infty\le n^{\alpha_d}$;
    \item[-] $|\Gamma_1\cap \rH_{j_m}(z)|\le Kn^{1-\alpha_d} \le n^{\alpha_d}$;
    \item[-] $|(z+\Gamma_2)\cap \rH_{j_n}(w_n)|\le Kn^{1-\alpha_d} \le n^{\alpha_d}$;
    \item[-] if $j_m=j_n$,   $|(z-w_n)\cdot \mathbf{e}_{j_n}|>n^{\alpha_d}/4$.
\end{itemize}
Let us fix such a $z$.  {By the first condition together with $|\Gamma_2|\leq K n$,}   $\mathrm{Int}(\Gamma_1)\cap (z+\mathrm{Int}(\Gamma_2))=\emptyset$. We will establish a  path $w_n$ with $z$  avoiding ${\rm C}_1$ and $z+{\rm C}_2$.
Let $k_n\in [d]\setminus\{i_n,j_n\}$. 
Define 
\[E_n:=\{\ell\in\{5n^{\alpha_d},\dots,10n^ {\alpha_d}\} : \rL_{k_n}(w_n+\ell {\mathbf e}_{i_n})\cap ({\rm C}_1\cup (z+\Gamma_2))=\emptyset\}.\]
Since the lines are disjoint, $|\rH_{j_n}(w_n)\cap  (z+\Gamma_2)|\le n^ {\alpha_d}$, and $\rN_{k_n}(\rH_{j_n}(w_n)\cap {\rm C}_1)\le n^{\alpha_d}$, we have  $|E_n|\ge n^ {\alpha_d}$.
It follows that  all these lines do not intersect with either ${\rm C}_1$ or $z+{\rm C}_2$.
Similarly, let $k_m\in [d]\setminus\{i_m,j_m\}$ and
\[ E_m:=\{\ell\in\{5n^{\alpha_d},\dots,10n^ {\alpha_d}\} : \rL_{k_m}(z+\ell {\mathbf e}_{i_m})\cap ( {\rm C}_1\cap (z+\Gamma_2))=\emptyset\}.\]
It holds that $|E_m|\ge n^{\alpha_d}$. Denote
\al{
\underline{E}_n:=\{x\in \rL_{k_n}^{n^{\alpha_d}}(w_n+\ell {\mathbf e}_{i_n}): \ell \in E_n\},\quad\text{  and }\quad
\underline {E}_m:=\{x\in \rL_{k_m}^{n^{\alpha_d}}(z+\ell {\mathbf e}_{i_m}): \ell \in E_m\}.
}
It is easy to check that the sets $\underline{E}_m$, $\underline{E}_n$ are contained in 
$\Lambda_{20n^{\alpha_d}}(w_n)$. In the case $j_n=j_m$,  {recall that} we   {chose} $z$ so that the hyperplanes $\rH_{j_n}(w_n)$ and $\rH_{j_n}(z)$ are separated  at distance at least $n^{\alpha_d}/4$. 

 Thanks to Lemma \ref{lem:disjpathscont} with $K=20n^{\alpha_d}=80\ell$,  there exist $\Z^d$-paths $(\fp_i)_{1\le i\le n^{2\alpha_d}}$ from  $\underline{E}_n$ to $\underline{E}_m$ such that  each path has a length less than $100dn ^{\alpha_d}$ and
\[\forall x\in\Z^ d\quad|\{i\in[n^{2\alpha_d} ]:x\in \fp_i\}|\le {80^{d-1} \chi_d}=:\chi'_d.\]
Let us assume all these paths intersect ${\rm C}_1\cup (z+{\rm C}_2)$. Since each vertex cannot be contained in more than ${\chi'_d}$ paths, it follows that
$|{\rm C}_1\cup (z+{\rm C}_2)|\ge n^ {2\alpha_d}/ {\chi'_d}$, which contradicts \eqref{isoperimetry Gamma}. It follows that, there exists at least one path $\fp_i$ that does not intersect ${\rm C}_1\cup (z+{\rm C}_2)$ between some $x_1\in\rL_{k_n}^{n^{\alpha_d}}(w_n+\ell_1 {\mathbf e}_{i_n})$ and  $x_2\in  \rL_{k_m}^{n^{\alpha_d}}(z+\ell_2 {\mathbf e}_{i_m})$ with $\min\{|\ell_1|,|\ell_2|\}\geq 5n^{\alpha_d}$. Consider the path:
 \[\fp:=L(w_n,w_n+\ell_1 {\mathbf e}_{i_n})\oplus L(w_n+\ell_1 {\mathbf e}_{i_n},x_1)\oplus \fp_i\oplus L(x_2,{z}+\ell_2 {\mathbf e}_{i_m})\oplus L({z}+\ell_2 {\mathbf e}_{i_m},z).\]
It is easy to check that $\fp\setminus \{w_n,z\}\subset\Z^d \setminus ({\rm C}_1\cup (z+{\rm C}_2))$ and $10n^{\alpha_d}\leq |\fp|\le 200d n^{\alpha_d}$.  Moreover, by construction, $\rmB_{s_n+s_m+|\fp|}={\rm C}_1\cup \fp\cup (z+{\rm C}_2)$ implies  $\rmB_{s_n+s_m+|\fp|}\setminus \rmB_{s_n+s_m+|\fp|-1}=\{z+w_m\}$.

Let us now build a resampled configuration. We denote by $\mathbf{E}({\rm C}_1)$ the set of edges whose values are involved with the event $ \{\rmB_{s_n}={\rm C}_1\}$. We define $\mathbf{E}({\rm C}_2)$ similarly. Given a set $E\subset {\rm E}^d$ and $z\in \Z^d,$ we define $E+z:=\{\langle x+z,y+z\rangle:~\langle x,y\rangle\in E\}$. 
Let $\tau^*$ be a configuration independent of $\tau$. Consider the following resampled configuration: 
$$\forall e\in\E^d\qquad\tau^{\rm r}_e:=
\begin{cases}
\tau_e&\text{ if $e\in \mathbf{E}({\rm C}_1)\cup (\mathbf{E}({\rm C}_2)+z)$},\\
\tau_e^*&\text{ otherwise.}
\end{cases}$$
We denote by $\rmB^{\rm r}$ the ball for the configuration $\tau ^{\rm r}$. 
It follows that there exists $c'>0$ depending on $p$ and $d$ such that
\begin{equation*}
    \begin{split}
        e^ {-c' {n^{\alpha_d}}}\P&(\rmB_{s_n}={\rm C}_1)\P(\rmB_{s_m}={\rm C}_2)\\
        &\le\P(\rmB_{s_n}={\rm C}_1, \rmB_{s_m}(z)={\rm C}_2,\forall e\in\fp\quad \tau_e^*=1 , \forall |e\cap\fp|=1\quad \tau_e^*=\infty)\\
        &\le \P(\rmB^{\rm r}_{s_n+s_m+|\fp|}={\rm C}_1\cup \fp\cup (z+{\rm C}_2))\\
        &=\P(\rmB_{s_n+s_m+|\fp|}={\rm C}_1\cup \fp\cup (z+{\rm C}_2))=:\P(\rmB_{s_n+s_m+|\fp|}=\pi({\rm C}_1,{\rm C}_2)),
    \end{split}
\end{equation*}
where we denote by $\pi$ the map associating  $({\rm C}_1,{\rm C}_2)\in\sC^{\rm e}_{s_n,w_n,i_n,j_n}\times \sC^{\rm b}_{s_m,w_m,i_m,j_m}$ to the graph ${\rm C}_1\cup \fp\cup (z+{\rm C}_2)$.
 One can check that $\pi$ is injective.
Indeed, if $\pi({\rm C}_1,{\rm C}_2)$ is given, then  ${\rm C}_1$ is recovered by considering all the points at distance  at most $s_n$ from $0$. To recover ${\rm C}_2$, we find $z+w_m$ as the furthest point from $0$ in $\pi({\rm C}_1,{\rm C}_2)$, this enables us to recover $z$ and then ${\rm C}_2$ by considering all the points that are at distance  at most $s_m $ from  $z$  not connected to ${\rm C}_1$ by a path in $\pi({\rm C}_1,{\rm C}_2)\setminus\{z\}$.
 Note that thanks to our choice of $z$, we have 
\[\|(z+w_m)-(nx+mx')\|_\infty\le n^{\alpha_d}\le (n+m)^{\alpha_d}.\] Finally, since $$s_n+s_m+|\fp|\geq sn+s'm-6n^{\alpha_d}+10 n^{\alpha_d}\geq sn+s'm,$$ we have
\begin{equation}\label{eq:almostfinal1}
    \begin{split}
        &\P(\kE^{\rm e}_{s_n,w_n,i_n,j_n})\P(\kE^{\rm b}_{s_m,w_m,i_m,j_m})\\
        &=\sum_{{\rm C}_1\in\sC^{\rm e}_{s_n,w_n,i_n,j_n} }\sum_{{\rm C}_2\in\sC^{\rm b}_{s_m,w_m,i_m,j_m}}\P(\rmB_{s_n}={\rm C}_1)\P(\rmB_{s_m}={\rm C}_2)\\
        &\leq\sum_{{\rm C}_1\in\sC^{\rm e}_{s_n,w_n,i_n,j_n} }\sum_{{\rm C}_2\in\sC^{\rm b}_{s_m,w_m,i_m,j_m}}e^{c' n^{\alpha_d}}\P\left(\rmB_{s_n+s_m+|\fp|}=\pi({\rm C}_1,{\rm C}_2)\right)\\
        &\le e^{c'n^{\alpha_d}}\P(\kA_{\frac{n}{n+m}s+\frac{m}{n+m}s',\frac{n}{n+m}x+\frac{m}{n+m}x'}(n+m)).
    \end{split}
\end{equation}
Moreover, when $n=m$, 
$$       \P(\kE^{\rm e}_{s_n,w_n,i_n,j_n})\P(\kE^{\rm b}_{s_m,w_m,i_m,j_m})\le e^{c'n^{\alpha_d}}\P(\kA_{s+s',x+x'}(n)).$$
The result follows by combining the last three inequalities with \eqref{eq:drillpige} and \eqref{eq:drillpige2}.
\end{proof}

\section{On properties of $J_x$}\label{section: Jx}
  {
 {The following lemma shows that we can constrain a range of the infimum in \eqref{def J} to a compact set.}
 \begin{lem}\label{lem:J prop} Let $x\in \R^d\setminus\{0\}$. We define $R=R_{x,\xi}:=J_x(\xi)/I(1,0)\geq 0$. It holds:
 \ben{\label{restricted J}
 J_x(\xi)=\inf_{\substack{y\in[-R,R]^d,s\in [0,R]:\\s+\mu(y-x)\ge (1+\xi)\mu(x)}}I(s,y).
 }
\end{lem}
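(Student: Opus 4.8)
The plan is to show the two-sided inequality for \eqref{restricted J}. The inequality ``$\le$'' is immediate: the right-hand side is an infimum over a subset of the constraint set in \eqref{def J} (we must just check the subset is nonempty, see below), so it is at least $J_x(\xi)$. The content is the reverse inequality, namely that no admissible pair $(s,y)$ with $\max(s,\|y\|_\infty)>R$ can do better than $J_x(\xi)$, so that restricting to $[0,R]\times[-R,R]^d$ does not change the infimum.

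First I would record the elementary facts I will use: $I$ is homogeneous and $I(\lambda s,\lambda x)=\lambda I(s,x)$ with $I(1,0)>0$ (Theorem~\ref{prop:existencelimit}, using that $\mu$ is a norm and \eqref{eq:antalpisztora2} to get positivity), and that $I(s,y)\ge I(s,0)=sI(1,0)$ since $\kA_{s,y}(n)\subset\kA_{s,0}(n)$ — wait, that inclusion is false in general, so instead I would use $I(s,y)\ge I(s-|{\cdot}|,0)$ type bounds; more safely, by homogeneity and the fact that $\kA_{s,0}(n)\supset\kA_{s,y}(2n)$ after translation one gets $I(s,0)\le I(s/2,y/2)\cdot 2$; but the cleanest route is: for any $s\ge 0$ and $y$, $I(s,y)\ge \tfrac12 I(2s,0) = s I(1,0)$, which follows from \eqref{eq:propI} applied to $(s,y)$ and $(s,-y)$ together with homogeneity, exactly as in the last lines of the proof of Theorem~\ref{prop:existencelimit}. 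Separately I need a lower bound forcing $s$ to be small when $\|y\|$ is large: since $I$ is a norm-like convex homogeneous function that is positive off a cone, and in particular $I(0,y)>0$ for $y\ne 0$ would be needed — but $I(0,y)$ may be $0$. So the bound on $\|y\|$ must come through the constraint $s+\mu(y-x)\ge(1+\xi)\mu(x)$ combined with $I(s,y)\ge sI(1,0)$: if $\|y\|_\infty$ is large then $\mu(y-x)$ is large (as $\mu$ is a norm), hence the constraint is automatically satisfied with even $s=0$, and we may compare against the pair $(s',y')$ with the same $y$ but $s'=0$... except $(0,y)$ need not beat $J_x(\xi)$.

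The correct argument, then, is: take any admissible $(s,y)$. If $s>R=J_x(\xi)/I(1,0)$ then $I(s,y)\ge sI(1,0) > J_x(\xi)$, so such pairs are irrelevant to the infimum and we may assume $s\le R$. Now suppose $s\le R$ but $\|y\|_\infty>R$. I would like to replace $y$ by a rescaled point on the segment from $x$ toward $y$ that still satisfies the constraint, using convexity and homogeneity of $I$ to show the value does not increase. Concretely, consider $y_t:=x+t(y-x)$ for $t\in[0,1]$; the constraint $s+\mu(y_t-x)=s+t\mu(y-x)\ge(1+\xi)\mu(x)$ holds for $t$ in an interval $[t_0,1]$ with $t_0\in[0,1]$ (it holds at $t=1$). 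Along $I(s,y_t)$, convexity of $I$ in its second argument for fixed $s$ gives $I(s,y_t)\le (1-t)I(s,x)+tI(s,y)$; this is increasing in $t$ when $I(s,y)\ge I(s,x)$, so the minimum over the admissible $t$-range is at $t=t_0$. Then I bound $\|y_{t_0}\|_\infty$: since $s+t_0\mu(y-x)=(1+\xi)\mu(x)$ (or $t_0=0$), we get $t_0\mu(y-x)\le(1+\xi)\mu(x)$, hence $\mu(y_{t_0}-x)\le(1+\xi)\mu(x)$, hence $\|y_{t_0}\|_\infty\le \|x\|_\infty+C_\mu(1+\xi)\mu(x)$ for the norm-equivalence constant $C_\mu$; choosing the constant in the definition of $R$ (or absorbing it) one arranges $\|y_{t_0}\|_\infty\le R$ after possibly enlarging $R$ by an $x$-and-$\xi$-dependent constant. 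Since the statement only asserts existence of \emph{some} compact box and $R$ is defined as $J_x(\xi)/I(1,0)$, I would either (i) verify the paper's specific $R$ suffices using the explicit relation $J_x(\xi)\le I((1+\xi)\mu(x),0)=(1+\xi)\mu(x)I(1,0)$ hence $R\le(1+\xi)\mu(x)$ and separately that the constraint with $s\le R\le(1+\xi)\mu(x)$ forces $\mu(y-x)\le(1+\xi)\mu(x)$ so $\|y-x\|_\infty\le$ const, or (ii) note the infimum over the (possibly larger) compact box equals the infimum over $[0,R]\times[-R,R]^d$ because any minimizing sequence can be pushed into the smaller box by the convexity move above without increasing $I$.

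The main obstacle I anticipate is the direction-reduction step: I must ensure that moving $y$ toward $x$ (to shrink $\|y\|$) genuinely does not increase $I(s,y)$ while keeping the constraint. This needs the case analysis on whether $I(s,y)\ge I(s,x)$; if $I(s,y)<I(s,x)$ then $(s,y)$ with small $\|y\|$... but then $\|y-x\|_\infty$ could still be large only if $\|x\|$ is large, which is fine since $x$ is fixed, so in fact $I(s,y)<I(s,x)$ combined with the constraint already bounds things, or we simply observe $I(s,y)\ge J_x(\xi)$ trivially and $(s,x)$ itself may or may not be admissible. The clean way to sidestep all of this: use homogeneity of $I$ directly. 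If $(s,y)$ is admissible and $\lambda:=\max(s,\|y-x\|_\infty\,/\,M, 1)>1$ for a suitable constant $M$, rescale; but $I$ is homogeneous about the origin, not about $x$, so rescaling $y$ toward $0$ changes $\mu(y-x)$ unfavourably. Hence the honest argument is the convexity-along-the-segment-to-$x$ argument above, and the key lemma to isolate and prove carefully is: for fixed $s\ge0$, $t\mapsto I(s, x+t(y-x))$ is convex on $[0,1]$ — which follows directly from convexity of $I$ on $[0,\infty)\times\R^d$ (Theorem~\ref{prop:existencelimit}) restricted to the line segment $\{(s, x+t(y-x)):t\in[0,1]\}$ — together with the positivity lower bound $I(s,y)\ge sI(1,0)$ to kill the large-$s$ regime. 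I will assemble these two ingredients and conclude.
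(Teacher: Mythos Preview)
Your handling of the case $s>R$ is correct and matches the paper: the bound $I(s,y)\ge sI(1,0)$ (from subadditivity applied to $(s,y)$ and $(s,-y)$, plus homogeneity and the symmetry $I(s,y)=I(s,-y)$) immediately disposes of it.

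The gap is in the case $s\le R$, $\|y\|_\infty>R$. Your segment-to-$x$ convexity argument only yields $I(s,y_{t_0})\le I(s,y)$ under the extra hypothesis $I(s,y)\ge I(s,x)$, and you do not resolve the complementary case; the rescaling attempt fails for the reason you note. Moreover, even when the convexity move works, the box you land in has half-width $\|x\|_\infty+C_\mu(1+\xi)\mu(x)$ rather than $R=J_x(\xi)/I(1,0)$, and you never close this discrepancy.

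The missing observation, which replaces all of the segment-pushing, is the identity
\[
I(s,y)=I\bigl(s\vee\|y\|_1,\,y\bigr).
\]
This holds because any cut-point $x_n\in\Lambda_{n^{\alpha_d}}(ny)$ automatically satisfies $s_n=\cD^{\G_p}(0,x_n)\ge\|x_n\|_1\ge n\|y\|_1-o(n)$, so for $s<\|y\|_1$ the time constraint in $\kA_{s,y}(n)$ is asymptotically redundant; combined with monotonicity of $I$ in $s$ and continuity, equality follows. With this in hand, if $(s,y)\notin[0,R]\times[-R,R]^d$ then $s\vee\|y\|_1\ge s\vee\|y\|_\infty\ge R$, and the bound you already derived gives
\[
I(s,y)=I(s\vee\|y\|_1,y)\ge I(R,y)=\tfrac12\bigl(I(R,y)+I(R,-y)\bigr)\ge\tfrac12 I(2R,0)=R\,I(1,0)=J_x(\xi).
\]
This is the paper's entire proof: one line once the identity is recorded. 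No convexity along segments, no case split on $I(s,y)$ versus $I(s,x)$, and the specific value $R=J_x(\xi)/I(1,0)$ comes out exactly.
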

\begin{proof}
Recall that $I(s,y)= I(s\lor \|y\|_1,y)$. If $s\land \|y\|_{\infty}\geq R$, by Theorem~\ref{prop:existencelimit}, then  $$I(s,y)\geq I(R,y)= \frac{1}{2}(I(R,y)+I(R,-y))\geq \frac{1}{2}I(2R,0)=R\, I(1,0)= J_x(\xi).$$
 Therefore, the further restriction in the infimum does not change the value.
\end{proof}
}
\begin{proof}[Proof of Proposition \ref{prop:Jcont}]
By definition, it is trivial to see that $J_x$ is  non-decreasing. { Moreover, by Lemma~\ref{lem:J prop}, since $I$ and $\mu$ are continuous (see a remark below \eqref{time constant} and Theorem~\ref{prop:existencelimit}), there is a minimizer in \eqref{restricted J}, say $(s_*,y_*)$, such that $(s_*,y_*)\neq (0,0)$ because of the condition $s+\mu(y-x)\ge (1+\xi)\mu(x)$. By  $I(s_*,y_*)= I(s_*\lor \|y_*\|_1,y_*)>0$, we have $J_x(\xi)>0$ for any $\xi>0$.}

Next, we will prove the continuity. Let $\xi,\ep>0$. We take $y\in\R^d,s\ge 0$ to be such that \[s+\mu(y-x)\ge (1+\xi)\mu(x),\quad\text{ and }
\quad J_x(\xi)\ge I(s,y)-\ep.\]
By continuity of $I$ (Theorem~\ref{prop:existencelimit}), there exists $\delta>0$ small enough such that
\[I(s+\delta\mu(x),y)\le I(s,y)+\ep.\]
We have
\[s+\delta\mu(x)+\mu(y-x)\ge (1+\xi+\delta)\mu(x).\]
Hence, it yields
\[J_x(\xi)\leq J_x(\xi+\delta)\le I(s+\delta\mu(x),y)\le I(s,y)+\ep\le J_x(\xi)+2\ep.\]
Therefore, $J_x(\xi)=\lim_{a\to\xi+}J_x(a)$.

Finally, we consider the left-limit. Given $n\in\N$, by Lemma~\ref{lem:J prop}, since the function $\xi\to R_{x,\xi}$ is non-decreasing, for any $\ep>0$, there exists $(s_n,y_n)\in [0,R]\times [R,R]^d$ such that  
$$s_n+\mu(y_n-x)\geq (1+\xi-(1/n))\mu(x),\quad J_x(\xi-(1/n))\geq I(s_n,y_n)-\ep.$$
By the Bolzano–Weierstrass theorem, we can find a subsequence $(n_k)_{k\in\N}$ and $(s_*,y_*)\in [0,\infty)\times\R^d$ such that $s_{n_k}\to s_*$ and $y_{n_k}\to y_*$. By continuity of the time constant $\mu$ and $I$, since $J$ is non-decreasing, $(s_*,y_*)$ satisfies $$s_*+\mu(y_*-x)\geq (1+\xi)\mu(x),\quad\lim_{k\to\infty}J_x(\xi-(1/n_k))\geq I(s_*,y_*)-\ep.$$
Therefore, since $J_x$ is non-decreasing, we have
$$\lim_{a\to \xi-}J_x(a)=\lim_{k\to\infty}J_x(\xi-(1/n_k))\ge I(s_*,y_*)-\ep \geq J_x(\xi)-\ep\geq \lim_{a\to \xi-}J_x(a)-\ep .$$
We conclude the proof by letting $\ep$ go to $0$.
\end{proof}
{ We introduce $\xi_0$ appearing in  Theorem~\ref{thm:main}.}
\begin{lem}\label{lem:xi_0} Let $x\in \R^d\setminus\{0\}$. { The following quantity is positive:}
\begin{equation}\label{eq:defxi0}
    \xi_0(x):=\sup\left\{\xi>0: J_x(\xi)<\inf_{y\in\R^d, s\ge 1/2}I\left(s,y\right)\right\}.
\end{equation} 
\end{lem}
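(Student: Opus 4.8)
The plan is to introduce the constant $m_0:=\inf_{y\in\R^d,\,s\ge 1/2}I(s,y)$ appearing on the right of \eqref{eq:defxi0}, prove that $m_0>0$, and then show that $J_x(\xi)<m_0$ for all sufficiently small $\xi>0$. Since $J_x$ is non-decreasing by Proposition~\ref{prop:Jcont}, the set in \eqref{eq:defxi0} will then contain an interval $(0,\xi_1)$ with $\xi_1>0$, which gives $\xi_0(x)\ge\xi_1>0$.

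The first step is to show $m_0>0$. Here I would use that $I(s,y)=I(s,-y)$, which follows from the invariance of Bernoulli percolation under the reflection $z\mapsto -z$ of $\Z^d$: this reflection maps $\kA_{s,y}(n)$ onto $\kA_{s,-y}(n)$, so the two events have equal probability. The same symmetry is invoked in the last lines of the proof of Theorem~\ref{prop:existencelimit}. Combining it with the subadditivity \eqref{eq:propI} and the homogeneity of $I$, for every $s\ge 1/2$ and $y\in\R^d$,
\[
2\,I(s,y)=I(s,y)+I(s,-y)\ \ge\ I(2s,0)\ =\ 2s\,I(1,0)\ \ge\ I(1,0),
\]
the final inequality using $s\ge 1/2$. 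By Theorem~\ref{prop:existencelimit} we have $I(1,0)>0$, hence $I(s,y)\ge\tfrac12 I(1,0)$ for every admissible pair, and therefore $m_0\ge\tfrac12 I(1,0)>0$.

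The second step is an upper bound on $J_x(\xi)$. Since $x\ne 0$ and $\mu$ is a norm, $\mu(x)>0$ and $\mu(-x)=\mu(x)$, so for any $\xi>0$ the pair $(s,y)=(\xi\mu(x),0)$ is admissible in \eqref{def J} because $s+\mu(y-x)=\xi\mu(x)+\mu(x)=(1+\xi)\mu(x)$. Using the homogeneity of $I$ once more,
\[
J_x(\xi)\ \le\ I\big(\xi\mu(x),0\big)\ =\ \xi\,\mu(x)\,I(1,0).
\]
Consequently $J_x(\xi)<m_0$ whenever $0<\xi<m_0/(\mu(x)I(1,0))$, a strictly positive threshold because $\mu(x)>0$ and $I(1,0)>0$. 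Thus the set in \eqref{eq:defxi0} contains $\big(0,\,m_0/(\mu(x)I(1,0))\big)$, giving $\xi_0(x)\ge m_0/(\mu(x)I(1,0))>0$.

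The only non-routine ingredient is the strict positivity $m_0>0$, and this rests entirely on the strict positivity $I(1,0)>0$ supplied by Theorem~\ref{prop:existencelimit}; the rest is bookkeeping with the subadditivity, homogeneity and reflection symmetry of $I$ together with the fact that $\mu$ is a genuine norm. I therefore expect no real obstacle here: the statement is essentially a corollary of Theorem~\ref{prop:existencelimit} and Proposition~\ref{prop:Jcont}.
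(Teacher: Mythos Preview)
Your proof is correct and follows essentially the same route as the paper: both arguments establish $m_0:=\inf_{s\ge 1/2,\,y}I(s,y)\ge \tfrac12 I(1,0)>0$ via the symmetry $I(s,y)=I(s,-y)$ combined with subadditivity \eqref{eq:propI} and homogeneity, and then conclude by showing $J_x(\xi)<m_0$ for small $\xi$. The only difference is cosmetic: the paper invokes continuity of $J_x$ together with $J_x(0)=I(0,0)=0$, whereas you give the explicit linear bound $J_x(\xi)\le \xi\,\mu(x)\,I(1,0)$ from the admissible test pair $(\xi\mu(x),0)$, yielding the quantitative threshold $\xi_0(x)\ge m_0/(\mu(x)I(1,0))$.
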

\begin{proof}
By \eqref{eq:propI}, 
\aln{\label{I center}
I(2s,0)\leq I(s,x)+I(s,-x)=2I(s,x).
} Thus, 
\[\inf_{y\in\R^d, s\ge 1/2}2I\left(s,y\right)\geq I(1,0)>0.\]
Since  $J_x$ is continuous and $J_x(0)=I(0,0)=0$, $\xi_0(x)>0$.
\end{proof}
\section{Rate function for upper tail large deviations (Theorem~\ref{thm:main})}\label{sec:5}
The aim of this section is to prove that the rate function for upper tail large deviations coincides with our function $J_x$. Hereafter, we focus only on the direction $\mathbf{e}_1$ and  we write $J(\xi):=J_{\mathbf{e}_1}(x)$ and $\xi_0:=\xi_0(\mathbf{e}_1)$. However, all of our results can be  extended to any direction without difficulty. In particular, the symmetries in regards to the direction $\mathbf{e}_1$ are never used in our proofs. We split the proof of the theorem into two subsections; namely the lower bound and the upper bound. 
\subsection{Lower bound}\label{section:lower bound}
We will need the following lemma for the lower bound.  
Let $\beta\in(\alpha_d,1)$. 
\begin{lem}\label{lem:toinfinity} {There exists $n_0\in\N$ such that for any $s\in[0,1/2)$, $\|x\|_1\le 1/2$} with $I(s,x)\le \inf_{y\in \mathbb R^d}I(1/2,y)$, and $n\ge n_0$, 
\begin{equation*}
    \begin{split}
        \P( \exists s_n\ge sn\quad w\in \Lambda_{n^{\alpha_d}}(nx): \,{\rm B}_{s_n}\setminus {\rm B}_{s_n-1}=\{w\}, {n\mathbf{e}_1\notin {\rm B}_{s_n}}, w\leftrightarrow n\mathbf{e}_1)\ge e^{-n^{{\beta}}\log ^2n}\P(\kA_{s,x}(n)).
    \end{split}
\end{equation*}
\end{lem}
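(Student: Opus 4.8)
The plan is to push the cut‑point event $\kA_{s,x}(n)$ through the free‑line machinery of Section~\ref{sec:3}, and then, using the free line, to open a corridor that leaves the ball and hooks the cut‑point onto the infinite cluster, hence onto $n\mathbf{e}_1$. Since $\beta<1$, the budget $e^{-n^{\beta}\log^2 n}$ is $e^{-o(n)}$, so there is room to throw away genuinely rare configurations at cost $e^{-\Omega(n)}$; this is what makes the constraint $n\mathbf{e}_1\notin\rmB_{s_n}$ free.

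\emph{Reductions.} First I would observe that the hypothesis $I(s,x)\le\inf_yI(1/2,y)$ confines $(s,x)$ to a fixed compact set: by \eqref{I center}, homogeneity and $I(s,y)=I(s\vee\|y\|_1,y)$ one gets $\inf_yI(1/2,y)=I(1/2,0)=\tfrac12I(1,0)$, $I(s,x)\ge sI(1,0)$ and $I(s,x)\ge\|x\|_1I(1,0)$, so $s,\|x\|_1\le\tfrac12$; thus all uniform estimates of Section~\ref{sec:3} apply and $\P(\kA_{s,x}(n))\ge e^{-Cn}$ by \eqref{eq:boundA1}. Combining Proposition~\ref{prop:sizeboundary} with Lemma~\ref{lem:drilling}, $\P(\kA^{\rm free}_{s,x}(n))\ge\tfrac12e^{-n^{\alpha_d}}\P(\kA_{s,x}(n))$. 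Next, with $N:=\log^2n$, I would discard from $\kA^{\rm free}_{s,x}(n)$ the configurations with $s_n\ge n-2N$: this event is contained in $\kA^{\rm free}_{1-2N/n,x}(n)$, and covering $\Lambda_{4n^{\alpha_d}}(nx)$ by $O(1)$ translates of $\Lambda_{n^{\alpha_d}}$ and using Corollary~\ref{continuity of cutpoint} and Theorem~\ref{prop:existencelimit} gives $\P(\kA^{\rm free}_{1-2N/n,x}(n))\le e^{-I(1,x)n+o(n)}$; since $I(1,x)\ge I(1,0)$ by \eqref{I center} while $I(s,x)\le\tfrac12I(1,0)$, we have $I(1,x)-I(s,x)\ge\tfrac12I(1,0)=:c_0>0$, so this contribution is at most $e^{-c_0n+o(n)}\P(\kA_{s,x}(n))$, negligible next to $e^{-n^{\alpha_d}}\P(\kA_{s,x}(n))$. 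Hence for $n$ large, uniformly in $(s,x)$,
\[
\P\big(\kA^{\rm free}_{s,x}(n)\cap\{s_n<n-2N\}\big)\ \ge\ \tfrac14 e^{-n^{\alpha_d}}\P(\kA_{s,x}(n)).
\]
On $\{s_n<n-2N\}$ one has $\rmB_{s_n}\subset\Lambda_{n-2N}(0)$, so $n\mathbf{e}_1$ lies at $\ell_\infty$–distance $\ge 2N$ from $\rmB_{s_n}$; in particular the environment near $n\mathbf{e}_1$ is essentially independent of the conditioning on $\rmB_{s_n}$ and $\{n\mathbf{e}_1\in\sC_\infty\}$ still has probability bounded below by a positive constant given the rest.

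\emph{Opening the corridor.} On the event above, fix $i$, $w_n$, $s_n$ as in $\kA^{\rm free}_{s,x}(n)$ and set $T:=\lceil n^{\beta}\rceil$ (so $T\ge 3n^{\alpha_d}+1\ge\lceil sn\rceil-s_n$). Using Lemma~\ref{lem:resample}, I would resample $\tau$ into $\tau^{\rm r}$ so that: the length‑$T$ segment of $\rL_i(w_n)$ issuing from $w_n$ (along the half‑line that best leaves $\rmB_{s_n}$, using $\rL_i(w_n)\cap\rmB_{s_n}=\{w_n\}$ and the thinness of $\rmB_{s_n}$ near $\rH_j(w_n)$ exactly as in Lemma~\ref{lem:drilling2}) is open, while the edges incident to this segment but off it, together with the remaining open edges joining $w_n$ to $\Z^d\setminus\rmB_{s_n}$, are closed; and the end of this segment is joined to the infinite cluster by a short open path (length $O(N)$, possible with probability $\ge1-e^{-cN^{d-1}}$ by Theorem~\ref{thm:holes}), with its spurious incident edges closed, and with $n\mathbf{e}_1\in\sC_\infty$. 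By the argument of Lemma~\ref{lem:drilling2}, $\rmB_{s_n}$ is left intact and the corridor is the only exit out of it, so its vertices are the successive unique vertices of $\rmB^{\rm r}_{s_n+1},\rmB^{\rm r}_{s_n+2},\dots$. I would then take $w:=w_n+(\lceil sn\rceil-s_n)\mathbf{e}_i$ and $s_n':=\lceil sn\rceil\ (\ge sn)$: then $\rmB^{\rm r}_{s_n'}\setminus\rmB^{\rm r}_{s_n'-1}=\{w\}$; since the corridor joins $0$ to $\sC_\infty\ni n\mathbf{e}_1$ we have $w\leftrightarrow n\mathbf{e}_1$; and the chemical distance from $0$ to $n\mathbf{e}_1$ in $\tau^{\rm r}$ is $\ge\|n\mathbf{e}_1\|_1=n>s_n'$, so $n\mathbf{e}_1\notin\rmB^{\rm r}_{s_n'}$. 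As $(\tau^{\rm r}_e)\overset{\rm law}{=}(\tau_e)$ and the total cost of the resampling plus the two percolation requirements is at least $e^{-8d(T+O(N))\log n}(1-e^{-cN^{d-1}})\theta(p)\ge e^{-n^{\beta}\log^2 n}$ for $n$ large (because $T\le n^{\beta}$ and $N=\log^2 n$, so $(T+O(N))\log n\le n^{\beta}\log^2 n$), combining with the displayed inequality yields the claim — up to the value of the localization‑box constant multiplying $n^{\alpha_d}$, which is absorbed exactly as in the proof of Lemma~\ref{lem:aux}.

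\emph{Main obstacle.} The delicate step is keeping the cut‑point structure intact while opening the corridor and wiring its end to $\sC_\infty$: one must rule out a chemical‑distance shortcut back into $\rmB_{s_n}$ (which would create extra vertices at each distance and destroy the uniqueness that defines the cut‑point), i.e.\ guarantee that the point where the corridor meets $\sC_\infty$ is chemically far from $\rmB_{s_n}$. This is precisely the type of difficulty overcome in Lemma~\ref{lem:drilling2}, using the free line $\rL_i(w_n)\cap\rmB_{s_n}=\{w_n\}$ and the bound $\rN_k(\rH_j(w_n)\cap\rmB_{s_n})\le n^{\alpha_d}$ built into $\kA^{\rm free}_{s,x}(n)$, and I would adapt that argument rather than reprove it.
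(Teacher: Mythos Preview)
Your overall strategy (pass to $\kA^{\rm free}_{s,x}(n)$, rule out large $s_n$, hook the cut-point to $\sC_\infty$ and thence to $n\mathbf{e}_1$ via Lemma~\ref{lem:resample}) has the right shape, but the hooking step fails as written. You search for $\sC_\infty$ in a box of side $O(N)=O(\log^2 n)$ at the corridor end, citing Theorem~\ref{thm:holes} for a failure probability $e^{-cN^{d-1}}=e^{-c(\log n)^{2(d-1)}}$. This is \emph{much larger} than $\P(\kA_{s,x}(n))\asymp e^{-I(s,x)n}$, so when you intersect the source event with ``$\sC_\infty$ nearby'' (which is what you must do---the factor $(1-e^{-cN^{d-1}})$ does not come out of Lemma~\ref{lem:resample}) the lower bound collapses. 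The same difficulty, worse, hits $\{n\mathbf{e}_1\in\sC_\infty\}$, whose complement has constant probability $1-\theta(p)$; your ``essential independence'' is not a proof, since that event is not local. Separately, closing $O(n^\beta)$ edges incident to the corridor can sever the $\sC_\infty$-connection from the corridor end to $n\mathbf{e}_1$; Lemma~\ref{lem:drilling2} (which shifts the environment and rewires a single geodesic's crossings) is a different operation and does not address this.

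The paper avoids all of this by \emph{never closing an edge} (Lemma~\ref{lem:resample} with $E_0=\emptyset$) and by working at polynomial scale. It intersects $\kA^{\rm free}_{s,x}(n)\setminus\kA_{2/3}(n)$ with $\kF^c$, where $\kF$ is the union of ``$\sC_\infty$ has low density in some $\Lambda_{n^\beta}(w)$, $w\in\Lambda_n(nx)$'' and ``$\sC_\infty\cap\Lambda_{n^{\alpha_d}}(n\mathbf{e}_1)=\emptyset$''; both have probability $\le e^{-cn^{2\alpha_d}}$, which is genuinely $\ll e^{-Cn}$ because the box sides are powers of $n$. The idea you are missing is a density argument: from the free line and the thinness $\rN_k(\rH_j(w_n)\cap\rmB_{s_n})\le n^{\alpha_d}$, one iteratively grows sets $E_n^1=\{w_n\},E_n^2,\dots,E_n^d$ of coordinate lines avoiding $\rmB_{s_n-1}$, producing a set $\mathcal P\subset\Lambda_{n^\beta}(w_n)\setminus\rmB_{s_n}$ of density $\ge 1-\tfrac23\theta(p)$, each point reachable from $w_n$ by a $\Z^d$-path of length $\le dn^\beta$ inside $\Z^d\setminus\rmB_{s_n-1}$. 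On $\kF^c$, the cluster density in $\Lambda_{n^\beta}(w_n)$ exceeds $\tfrac34\theta(p)$, so $\mathcal P\cap\sC_\infty\ne\emptyset$; one then merely \emph{opens} the corresponding $\Z^d$-path $\mathfrak p$ and a short $\Z^d$-path $\mathfrak p'$ from $n\mathbf{e}_1$ to $\sC_\infty\cap\Lambda_{n^{\alpha_d}}(n\mathbf{e}_1)$. Since $\mathfrak p,\mathfrak p'$ lie outside $\rmB_{s_n-1}$, the cut-point $(s_n,w_n)$ is preserved automatically, and since nothing is closed, $w_n\leftrightarrow n\mathbf{e}_1$ is immediate. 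The ``shortcut back into $\rmB_{s_n}$'' you flag as the main obstacle simply does not arise in this approach.
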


\begin{proof}
Let $n_0$ be as in the statement of Lemma \ref{lem:drilling} and $K$ be as in the statement of Proposition \ref{prop:sizeboundary}. 
Thanks to Lemma \ref{lem:drilling} and Proposition \ref{prop:sizeboundary} for $n\ge n_0$
\begin{equation}\label{eq:applydrilling}
    \frac{1}{2}e^{-n^{\alpha_d}}\P(\kA_{s,x}(n))\le e^{-n^{\alpha_d}}\P( \kA_{s,x}^K(n))\leq \P\left(\kA^{\rm free}_{s,x}(n)\right).
\end{equation}
Let $\beta\in(\alpha_d,1)$.
We define the event $\kF$ as
\[\kF:=\left\{\exists w\in\Lambda_n(nx): \frac{|\sC_\infty \cap \Lambda_{n^\beta}(w)|}{|\Lambda_{n^\beta}(w)|}\le \frac{3}{4}\theta(p)\right\}\cup \{\sC_\infty \cap \Lambda_{n^{\alpha_d}}(n\mathbf{e}_1)=\emptyset\}.\]
 Thanks to Theorem~\ref{thm:holes} and Theorem~\ref{thm:density}, we have for $n$ large enough depending on $p,d$,
\begin{equation}
    \begin{split}
        \P(\kF)&\le 2e^{-cn^{2\alpha_d}}.
    \end{split}
\end{equation}
Besides, note that \[I\left(\frac 2 3,x\right)=\frac{4}{3}I\left(\frac 1 2,\frac 3 4x\right)\ge \frac{4}{3} \inf_{y\in\R^d}I\left(\frac 1 2,y\right)>I(s,x).\]
Denote 
\[\kA_{2/3}(n):=\{\exists s_n\ge \frac 2 3 n,\, \exists w_n\in \Lambda_{n}; \,\rmB_{s_n}\setminus\rmB_{s_n-1}=\{w_n\}\}.\]
In particular, this yields for $n$ large enough depending on $p,d$,
\[\P(\kA^{\rm free}_{s,x}(n))\le \P(\kA^{\rm free}_{s,x}(n)\setminus \kA_{2/3}(n))+\P( \kA_{2/3}(n))\le 2\P(\kA^{\rm free}_{s,x}(n)\setminus \kA_{2/3}(n)).\]
Therefore, we have for $n$ large enough depending on $p,d$,
\begin{equation}\label{eq:lemtoinf}
\P(\kA^{\rm free}_{s,x}(n))\le \P(\kA^{\rm free}_{s,x}(n)\cap \kF^ c{\setminus \kA_{2/3}(n)})+\P(\kF)\le 3\P(\kA^{\rm free}_{s,x}(n)\cap \kF^ c{\setminus \kA_{2/3}(n)}).
\end{equation}
On the event $\kA^{\rm free}_{s,x}(n)\cap\kF^c {\setminus \kA_{2/3}(n)}$, denote by $w_n$ the cut-point and $s_n$ the associated time, where we have  $s_n\le 2n/3$ and $\Lambda_{n^{\alpha_d}}(n\mathbf e_1)\cap \rmB_{s_n}=\emptyset$.
Denote by $\mathcal P$ the set of points in $\Lambda_{n^{\beta}}(w_n)\setminus \rmB_{s_n}$ connected to $w_n$ in $\Z^d\setminus \rmB_{s_n-1}$ with less than $dn^ {\beta}$ edges. 
On the event ${\kA^{\rm free}_{s,x}(n)\cap \kF^ c{\setminus \kA_{2/3}(n)}}$, 
we claim that
\begin{equation}\label{eq:boundonP}
    \frac{|\mathcal P|}{|\Lambda_{n^\beta}|}\ge 1-\frac 2 3\theta(p).
\end{equation}
Since $\frac{|\sC_\infty\cap \Lambda_{n^\beta}|}{|\Lambda_{n^\beta}|}>3\theta(p)/4$,  this implies $\mathcal P\cap\sC_\infty \ne \emptyset$.

We move on to \eqref{eq:boundonP}. Let $i,j$ the indices in the event $\kA^{\rm free}_{s,x}(n)$. Without loss of generality, we assume that $i=2$ and $j=1$.
In particular, we have
$\rL_2(w_n)\cap \rmB_{s_n}=\{w\}$ and $\forall k\neq 1\quad \rN_k(\rH_1(w_n)\cap \rmB _{s_n})\le n^{\alpha_d}$.
We define  $E_n^1,E_n^2,\dots,E_n^d$ inductively as follows: Set
\[E_n^1:=\{w_n\}.\]
 Assume $E_n^1,E_n^2,\dots,E_n^{k-1}$ have already been defined. We define
 \[E_n^{k}:=\bigcup_{y\in E_n^{k-1}:\, \rL_{k}(y)\cap \rmB_{s_n-1}=\emptyset} \rL_{k}^{n^\beta}(y).\]
Finally, define 
 \[\mathcal T:= \bigcup _{x\in E_n^d: \,\rL_1(x)\cap \rmB_{s_n-1}=\emptyset }\rL_1^{n^{\beta}}(x).\]
 By construction, all the points in $E_n^{k+1}$ are connected to a point in $E_n^k$ by a path in $\Z^d\setminus \rmB_{s_n-1}$ of length at most $n^ {\beta}$. It yields that all the points in
 $\mathcal T$ are connected to $w_n$ by a path in $\Z^d\setminus \rmB_{s_n-1}$ of length at most $dn^ {\beta}$.
 Since $\rN_{k+1}(\rH_1(w_n)\cap \rmB _{s_n})\le n^{\alpha_d}$, we have
 $|E_n^{k+1}|\ge 2n ^\beta  {(|E_n^k|- n^{\alpha_d})}$. 
 Moreover, since $|\Gamma|\le K n$, we have 
 and $|\mathcal T|\ge 2n^\beta {(|E_n^d|- Kn)}$.
 Finally it is easy to check that for $n$ large enough the density of $\mathcal T$ is larger than $1- 2\theta(p)/3$ and that $\mathcal T\subset \mathcal P$.
 

As we mentioned below \eqref{eq:boundonP}, on the event $\kA^{\rm free}_{s,x}(n)\cap\kF^c$, the set $\mathcal P$ must intersect $\sC_\infty$. On the event ${\kA^{\rm free}_{s,x}(n)\cap \kF^c\setminus \kA_{2/3}(n)}$, there exists a path $\mathfrak{p}$ in $\Z^ d\setminus \rmB_{s_n-1}$ starting at $w_n$ and ending in $\mathcal P\cap \sC_\infty$ of length at most $d n^{\alpha_d}$. Moreover, on the event ${\kA^{\rm free}_{s,x}(n)\cap \kF^c\setminus \kA_{2/3}(n)}$, since $s_n\leq 2n/3,$ there exists a path $\mathfrak{p}'$ between $n\mathbf{e}_1$ and $\sC_\infty{\setminus \rmB_{s_n}}$ of length at most $d n^{\alpha_d}$.
Applying Lemma \ref{lem:resample} with $E_1=\mathfrak p \cup \mathfrak p '$ and $E_0=\emptyset$, we have for $n$ large enough depending on $d,p$,
\al{
 10 e^{-(\log n)^2 n^ {\beta}}&\P({\kA^{\rm free}_{s,x}(n)\cap\kF^c \setminus \kA_{2/3}(n)})
\\&\le  \P( \exists t\geq sn\quad \exists w_n\in \Lambda_{n^{\alpha_d}}(nx); \,{\rm B}_{t}\setminus {\rm B}_{t-1}=\{w_n\}, w_n\leftrightarrow n\mathbf{e}_1, n\mathbf e_1\notin\rmB_{s_n}).
}
Combining this with \eqref{eq:applydrilling} and \eqref{eq:lemtoinf}, The result follows.
\end{proof}

\begin{prop}\label{prop:LLD}Let {$\xi\in(0,\xi_0)$}. it holds that
\[\liminf_{n\rightarrow\infty}\frac{1}{n}\log \P(\mu(\mathbf{e}_1)(1+\xi)n<\cD^{\G_p}(0,n\mathbf{e}_1)<\infty)\ge - J(\xi).\]
\end{prop}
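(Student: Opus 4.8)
The plan is to establish the lower bound by exhibiting an explicit (but probabilistic) scenario that forces the upper tail large deviation event, and then optimizing over all such scenarios. The scenario is precisely the one suggested by the definition of $J$ in \eqref{def J}: we create a space-time cut-point near $ny$ whose chemical distance from $0$ is at least $sn$, where $(s,y)$ realizes (up to $\varepsilon$) the infimum defining $J(\xi)$, subject to $s+\mu(y-\mathbf{e}_1)\ge(1+\xi)\mu(\mathbf{e}_1)$. First I would fix $\xi\in(0,\xi_0)$ and $\varepsilon>0$, and pick $(s,y)$ with $s+\mu(y-\mathbf{e}_1)\ge(1+\xi)\mu(\mathbf{e}_1)$ and $I(s,y)\le J(\xi)+\varepsilon$. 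By Lemma~\ref{lem:J prop} and the definition of $\xi_0$ in \eqref{eq:defxi0}, since $J(\xi)<\inf_{y\in\R^d,\,s\ge 1/2}I(s,y)$, the minimizing pair can be taken with $s<1/2$ and $\|y\|_1$ bounded; up to rescaling I may reduce to the situation covered by Lemma~\ref{lem:toinfinity} (i.e. $s<1/2$, $\|y\|_1\le 1/2$, $I(s,y)\le\inf_{z}I(1/2,z)$), which is the technical reason the statement is restricted to $\xi<\xi_0$.

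Next I would use Lemma~\ref{lem:toinfinity}: on the event therein, there is a cut-point $w$ at some time $s_n\ge sn$ with $w\in\Lambda_{n^{\alpha_d}}(ny)$, with $n\mathbf{e}_1\notin\rmB_{s_n}$, and with $w\leftrightarrow n\mathbf{e}_1$; moreover this event has probability at least $e^{-n^{\beta}\log^2 n}\P(\kA_{s,y}(n))$, which by Theorem~\ref{prop:existencelimit} is $e^{-nI(s,y)+o(n)}\ge e^{-n(J(\xi)+\varepsilon)+o(n)}$. On this event, every geodesic from $0$ to $n\mathbf{e}_1$ must pass through $w$ (since $w$ is a cut-point separating $0$ from everything outside $\rmB_{s_n}$, and $n\mathbf{e}_1$ lies outside $\rmB_{s_n}$), hence $\cD^{\G_p}(0,n\mathbf{e}_1)\ge s_n+\cD^{\G_p}(w,n\mathbf{e}_1)\ge sn+\cD^{\G_p}(w,n\mathbf{e}_1)$. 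Now I need the chemical distance $\cD^{\G_p}(w,n\mathbf{e}_1)$ to be at least roughly $n\mu(y-\mathbf{e}_1)$; this does not follow from \eqref{time constant} alone, since conditioning on the cut-point event biases the environment near $w$ and $n\mathbf{e}_1$. To handle this I would intersect with a "typicality" event: using the Garet--Marchand upper bound \eqref{eq:garetmarchand} (applied to the pair $w$, $n\mathbf{e}_1$, or rather to the displacement $n\mathbf{e}_1-w$ which is within $n^{\alpha_d}$ of $n(\mathbf{e}_1-y)$), the probability that $\cD^{\G_p}(w,n\mathbf{e}_1)<(1-\varepsilon')\mu(n\mathbf{e}_1-w)\approx(1-\varepsilon')n\mu(\mathbf{e}_1-y)$ while still being finite is at most $e^{-cn}$; but since the cut-point event already localizes the modified environment to a region of size $o(n)$ around $w$ (the ball $\rmB_{s_n}$ has $|\rmB_{s_n}|\le n^{7/4}$ by the free-line construction, and in Lemma~\ref{lem:toinfinity} the connection to $n\mathbf{e}_1$ uses only $O(n^{\alpha_d})$ extra edges), the event $\{\cD^{\G_p}(w,n\mathbf{e}_1)\text{ small}\}$ is essentially independent of the cut-point event, so intersecting costs only a factor that does not affect the exponential rate. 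Combining, on the good event $\cD^{\G_p}(0,n\mathbf{e}_1)\ge sn+(1-\varepsilon')n\mu(\mathbf{e}_1-y)\ge (1+\xi)\mu(\mathbf{e}_1)n - \varepsilon' n\mu(\mathbf{e}_1-y)$, which after absorbing $\varepsilon'$ by a slight perturbation of $(s,y)$ (legitimate by continuity of $I$, $\mu$, exactly as in the proof of Proposition~\ref{prop:Jcont}) gives $\cD^{\G_p}(0,n\mathbf{e}_1)>(1+\xi)\mu(\mathbf{e}_1)n$ with $\cD^{\G_p}(0,n\mathbf{e}_1)<\infty$.

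Putting the pieces together, for every $\varepsilon>0$,
\[
\P(\mu(\mathbf{e}_1)(1+\xi)n<\cD^{\G_p}(0,n\mathbf{e}_1)<\infty)\ge e^{-n(J(\xi)+\varepsilon)+o(n)},
\]
and letting $n\to\infty$ then $\varepsilon\to 0$ yields $\liminf_n \frac1n\log\P(\cdots)\ge -J(\xi)$. The main obstacle is the second step: showing that conditioning on the space-time cut-point event (which distorts the environment inside $\rmB_{s_n}$) does not prevent $\cD^{\G_p}(w,n\mathbf{e}_1)$ from behaving like $n\mu(\mathbf{e}_1-y)$. The reason this works is the \emph{locality} of the cut-point scenario — the free-line version $\kA^{\rm free}_{s,y}(n)$ built in Lemma~\ref{lem:drilling} guarantees $|\rmB_{s_n}|\le n^{7/4}=o(n^2)$, so the modified region has negligible influence compared to the $\Theta(n)$-scale fluctuations controlled by \eqref{eq:garetmarchand} and \eqref{time constant}, and a resampling/union-bound argument (in the spirit of Lemma~\ref{lem:resample}) makes this precise; I would also need to check that $n\mathbf{e}_1$ itself is not atypically surrounded, handled by the holes estimate Theorem~\ref{thm:holes}, which is already built into the definition of $\kA^{\rm free}$ via the $|\rmB_{s_n}|$ bound and into $\kF$ in Lemma~\ref{lem:toinfinity}.
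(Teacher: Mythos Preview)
Your strategy matches the paper's: fix a near-optimal $(s,y)$, invoke Lemma~\ref{lem:toinfinity} to obtain a cut-point $w$ connected to $n\mathbf{e}_1$ with $n\mathbf{e}_1\notin\rmB_{s_n}$, and argue that the remaining distance from $w$ to $n\mathbf{e}_1$ is at least about $n\mu(y-\mathbf{e}_1)$ except on an event of probability $e^{-cn}$. One minor slip: the estimate you need is the \emph{lower} tail \eqref{lower ldp}, not the upper tail \eqref{eq:garetmarchand}.

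The substantive gap is in the decoupling step. Your proposed mechanism---locality via the size bound $|\rmB_{s_n}|\le n^{7/4}$ plus a resampling argument---does not quite work: the event output by Lemma~\ref{lem:toinfinity} carries no such size bound (that bound is a feature of $\kA^{\rm free}_{s,y}(n)$, not of the event in the lemma's conclusion), and in any case the size of $\rmB_{s_n}$ is irrelevant here. What the paper does instead is condition on the realization $\rmB_{s_n}=C$. The event $\{\rmB_{s_n}=C\}$ is measurable with respect to the edge set $E(C):=\{\langle u,v\rangle:u\in C\}\setminus\{\langle w_n,v\rangle:v\notin C\}$, while on the cut-point event any geodesic from $w_n$ to $n\mathbf{e}_1\notin\rmB_{s_n}$ uses only edges outside $E(C)$; hence $\cD^{\G_p\setminus E(C)}(w_n,n\mathbf{e}_1)$ is genuinely independent of $\{\rmB_{s_n}=C\}$ and dominates $\cD^{\G_p}(w_n,n\mathbf{e}_1)$. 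Summing over $C$ and bounding $\cD^{\G_p\setminus E(C)}\ge\cD^{\G_p}$ gives the factorization
\[
\P\bigl(\kA'_{s,y}(n),\ \cD^{\G_p}(0,n\mathbf{e}_1)\le(1+\xi)\mu(\mathbf{e}_1)n\bigr)\le \P(\kA_{s,y}(n))\,\P\bigl(\cD^{\G_p}(\Lambda_{n^{\alpha_d}}(ny),n\mathbf{e}_1)\le(1+\xi)\mu(\mathbf{e}_1)n-sn\bigr),
\]
after which \eqref{lower ldp} applies directly. The paper also starts from the \emph{strict} inequality $s+\mu(y-\mathbf{e}_1)>(1+\xi)\mu(\mathbf{e}_1)$ so that the lower-tail bound yields $e^{-cn}$ without slack, optimizes over the open condition, and only at the end passes to the closed condition via continuity of $I$; this is equivalent to your perturbation but slightly cleaner.
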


\begin{proof}
Let $s> 0$ and $x\in\R^d$ be such that
\begin{equation}\label{eq:condref}
s+\mu(x-\mathbf{e}_1)>  (1+\xi)\mu(\mathbf{e}_1).
\end{equation}
Set 
\[\kA'_{s,x}(n):=\{\exists s_n\ge sn\quad x_n\in \Lambda_{n^{\alpha_d}}(nx): \,{\rm B}_{s_n}\setminus {\rm B}_{s_n-1}=\{x_n\}, {n\mathbf{e}_1\notin {\rm B}_{s_n}}\}.\]
Thanks to the lower tail large deviation {\eqref{lower ldp}}, there exists $c>0$ such that for $n$ large enough,
\begin{equation}\label{eq:lld}
   \P(\cD^{\G_p}(\Lambda_{n^{\alpha_d}}(nx),n\mathbf{e}_1)\leq \mu(\mathbf{e}_1)(1+\xi)n-sn)<e^{-cn}.
\end{equation}
On the event  $\kA'_{s,x}(n)$, let $s_n$ be the smallest integer at least $sn$ such that $|\rmB_{s_n}\setminus\rmB_{s_n-1}|=1$ and $\rmB_{s_n}\setminus\rmB_{s_n-1}\subset \Lambda_{n^{\alpha_d}}(nx)$. 
Let $\sC_n$ be the set of admissible $\rmB_{s_n}$ where $s_n$ is the smallest integer at least $sn$ such that $|\rmB_{s_n}\setminus\rmB_{s_n-1}|=1$,  $n\mathbf{e}_1\notin {\rm B}_{s_n}$ and $\rmB_{s_n}\setminus\rmB_{s_n-1}\subset \Lambda_{n^{\alpha_d}}(nx)$. For $C\in\sC_n$, denote by $E(C)$ the edges that determine $\{\rmB_{s_n}=C\}$:
\[E(C):=\{\langle x,y\rangle\in \E^d: x\in  C\}\setminus \{\langle w_n,y\rangle\in \E^d:y\notin C\},\]
where 
$ \{w_n\}:= \rmB_{s_n}\setminus\rmB_{s_n-1}$.
We have
\begin{equation*}
    \begin{split}
    &\P(\kA'_{s,x}(n),\cD^{\G_p}(0,n\mathbf{e}_1)\leq\mu(\mathbf{e}_1)(1+\xi)n)\\
        &\leq \P(\exists s_n\ge sn, \exists w_n\in \Lambda_{n^{\alpha_d}}(nx);~\rmB_{s_n}\setminus \rmB_{s_n-1}=\{w_n\},\cD^{\G_p\setminus E(\rmB_{s_n})}(w_n,n\mathbf{e}_1)\leq \mu(\mathbf{e}_1)(1+\xi)n-s_n)\\
         &\le\P(\exists s_n\ge sn, \exists w_n\in \Lambda_{n^{\alpha_d}}(nx);~\rmB_{s_n}\setminus \rmB_{s_n-1}=\{w_n\},\cD^{\G_p\setminus E(\rmB_{s_n})}(\Lambda_{n^{\alpha_d}}(nx),n\mathbf{e}_1)\leq  \mu(\mathbf{e}_1)(1+\xi)n-sn).
    \end{split}
\end{equation*}
{This is further bounded from above by}
\begin{equation*}
    \begin{split}
        &\sum_{C\in\sC_n}\P(\rmB_{s_n}=C)\,\P(\cD^{\G_p\setminus E(C)}(\Lambda_{n^{\alpha_d}}(nx),n\mathbf{e}_1)\leq \mu(\mathbf{e}_1)(1+\xi)n-sn)\\
        &\le \P(\kA'_{s,x}(n))\P(\cD^{\G_p}(\Lambda_{n^{\alpha_d}}(nx),n\mathbf{e}_1)<\mu(\mathbf{e}_1)(1+\xi)n-sn) {\leq \P(\kA'_{s,x}(n))e^{-cn}\le \P(\kA_{s,x}(n))e^{-cn}.}
    \end{split}
\end{equation*}
where in the last inequality we have used \eqref{eq:lld}. Therefore, we have
\begin{equation*}
    \begin{split}
       \P&(\exists s_n\ge sn, w_n\in \Lambda_{n^{\alpha_d}}(nx), \rmB_{s_n}\setminus \rmB_{s_n-1}=\{w_n\},{n\mathbf{e}_1\notin {\rm B}_{s_n}}, w_n\leftrightarrow n\mathbf{e}_1)\\&\le\P(\kA'_{s,x}(n), \cD^{\G_p}(0,n\mathbf{e}_1)<\infty)
       \\&\leq \P(\kA'_{s,x}(n),\mu(\mathbf{e}_1)(1+\xi)n<\cD^{\G_p}(0,n\mathbf{e}_1)<\infty)+ \P(\kA_{s,x}(n),\cD^{\G_p}(0,n\mathbf{e}_1)\le \mu(\mathbf{e}_1)(1+\xi)n)\\
       &\le \P(\mu(\mathbf{e}_1)(1+\xi)n<\cD^{\G_p}(0,n\mathbf{e}_1)<\infty)+\P(\kA_{s,x}(n))e^{-cn}.
    \end{split}
\end{equation*}
Besides, thanks to Lemma \ref{lem:toinfinity}, we have
\begin{equation*}
    e^{ {o(n)}}\P(\kA_{s,x}(n))\le   {\P}(\mu(\mathbf{e}_1)(1+\xi)n<\cD^{\G_p}(0,n\mathbf{e}_1)<\infty)+ \P(\kA_{s,x}(n))e^{-cn}.
\end{equation*}
Using Theorem \ref{prop:existencelimit}, we get by taking the liminf in the previous inequality
\[\liminf_{n\rightarrow\infty}\frac{1}{n}\log \P(\mu(\mathbf{e}_1)(1+\xi)n<\cD^{\G_p}(0,n\mathbf{e}_1)<\infty)\ge -I(s,x)\,.\]
Taking the supremum over $(s,x)$ with \eqref{eq:condref} and the continuity of $I$ (Theorem~\ref{prop:existencelimit}), the claim follows.
\end{proof}

\subsection {Upper bound}\label{sec:4}
 We first prove that on the upper tail large deviation event, there is a space-time cut-point with high probability. 
\begin{prop}[Creation of  cut-points on the large deviation event]\label{prop:creationcutpoint} For any $\xi>0$, there exists $s_0>0$ such that for any $\ep>0$,
\begin{equation*}
\begin{split}
   \limsup_{n\rightarrow\infty}\frac{1}{n}\log\P&(\mu(\mathbf{e}_1)(1+\xi)n<\cD^{\G_p}(0,n\mathbf{e}_1)<\infty) \\&\le \limsup_{n\rightarrow\infty}\frac{1}{n}\log\P\left(\bigcup_{\substack{s,t\in(\Z/n)\cap [0,s_0],\,x,y\in(\Z^ d/n)\cap [-s_0,s_0]^d:\\s+t+(1+\ep)\mu(x-y-\mathbf{e}_1){+\ep}\ge (1+\xi)\mu(\mathbf{e}_1)}}\kA_{s,x}(n)\cap(n\mathbf{e}_1+\kA_{t,y}(n))\right).
   \end{split}
\end{equation*}

\end{prop}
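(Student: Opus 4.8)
The plan is to show that the upper tail event essentially forces a pair of space-time cut-points — one within bounded rescaled chemical distance of $0$ and one within bounded rescaled chemical distance of $n\mathbf e_1$ — and that promoting near-cut-points to genuine ones costs only $e^{o(n)}$; this is exactly the heuristic of Section~\ref{sec:idea}. First I would reduce to a bounded chemical distance. Fix $\xi>0$. The one-sided construction behind \eqref{eq:boundA1} (Figure~\ref{fig1}) gives $C(\xi)>0$ with $\P(\mu(\mathbf e_1)(1+\xi)n<\cD^{\G_p}(0,n\mathbf e_1)<\infty)\ge e^{-C(\xi)n}$, while Antal--Pisztora \eqref{eq:antalpisztora2} gives $\P(s_0 n\le \cD^{\G_p}(0,n\mathbf e_1)<\infty)\le e^{-c_0 s_0 n}$. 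Choosing $s_0=s_0(\xi)$ with $c_0 s_0>2C(\xi)$, the probability in the statement and $\P(\kL_n)$ differ by a factor $e^{o(n)}$, where $\kL_n:=\{\mu(\mathbf e_1)(1+\xi)n<\cD^{\G_p}(0,n\mathbf e_1)\le s_0 n\}$, so it suffices to bound $\P(\kL_n)$.

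\textbf{Stopping times, a regularity event, and the two points.} On $\kL_n$ I introduce the stopping times $s_n,t_n$ of \eqref{def of s and t} — the times at which $r\mapsto|\rmB_r(0)|$ and $r\mapsto|\rmB_r(n\mathbf e_1)|$ reach level $n^{7/4}$ — so that, by the shell bound $|\rmB_r\setminus\rmB_{r-1}|\le 2d|\rmB_{r-1}|$, both balls $\rmB_{s_n}(0)$ and $\rmB_{t_n}(n\mathbf e_1)$ have size at most $(2d+1)n^{7/4}$, which is $o(n^2/\log^2 n)$. Intersecting with a regularity event $\kR_n$ of probability $\ge 1-e^{-cn}$ (negligible against $e^{-C(\xi)n}$ once $s_0$ is fixed), built from Theorems~\ref{thm:density}, \ref{thm:holes}, \ref{thm:distclus} and the Antal--Pisztora comparison $\cD^{\G_p}\le C\|\cdot\|_1$ on the infinite cluster, I may assume $s_n,t_n\le s_0 n$, that both balls lie inside the unique macroscopic cluster, and that the renormalized slab geometry of Section~\ref{sec:2} is available near $0$ and near $n\mathbf e_1$. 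Then, arguing as in Proposition~\ref{prop:connexpoints}, I select $x_0\in\rmB_{s_n}(0)\setminus\rmB_{s_n-1}(0)$ and $y_0\in\rmB_{t_n}(n\mathbf e_1)\setminus\rmB_{t_n-1}(n\mathbf e_1)$, both in $\sC_\infty$ and with $x_0-y_0$ essentially axis-parallel, such that $\cD^{\G_p}(x_0,y_0)\le(1+\ep)\mu(x_0-y_0)+\ep n$.

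\textbf{Deriving the constraint and completing to cut-points.} Setting $s:=s_n/n$, $t:=t_n/n\in\Z/n$ and $x:=x_0/n$, $y:=(y_0-n\mathbf e_1)/n$ rounded into $\Z^d/n$, one has $s,t\in[0,s_0]$, $x,y\in[-s_0,s_0]^d$, $\cD^{\G_p}(0,x_0)=s_n$, $\cD^{\G_p}(y_0,n\mathbf e_1)=t_n$, and the triangle inequality $\cD^{\G_p}(0,n\mathbf e_1)\le s_n+\cD^{\G_p}(x_0,y_0)+t_n$ together with $\cD^{\G_p}(0,n\mathbf e_1)>\mu(\mathbf e_1)(1+\xi)n$ and the homogeneity $\mu(x_0-y_0)=n\mu(x-y-\mathbf e_1)$ yields, after absorbing rounding errors into $\ep$,
\[ s+t+(1+\ep)\mu(x-y-\mathbf e_1)+\ep\ \ge\ (1+\xi)\mu(\mathbf e_1), \]
so $(s,t,x,y)$ is one of the tuples indexing the union. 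Applying the resampling of Lemma~\ref{lem:createcutpoin} simultaneously, on disjoint edge-sets, around $x_0$ and around $y_0$ (with $k=(2d+1)n^{7/4}$, at cost $e^{-(8d)^2\sqrt k\log k}=e^{-o(n)}$) produces a configuration lying in $\kA_{s,x}(n)\cap(n\mathbf e_1+\kA_{t,y}(n))$; the neighbourhoods $\Lambda_{n^{\alpha_d}}(nx)$ and $\Lambda_{n^{\alpha_d}}(n\mathbf e_1+ny)$ can be kept disjoint (the degenerate case $x-y\approx\mathbf e_1$, where $\mu(x-y-\mathbf e_1)\approx 0$ and the two balls overlap so that $s+t\ge(1+\xi)\mu(\mathbf e_1)$ already, is treated separately). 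Summing over the $O(\mathrm{poly}(n))$ admissible tuples gives $\P(\kL_n)\le e^{o(n)}\,\P\big(\bigcup \kA_{s,x}(n)\cap(n\mathbf e_1+\kA_{t,y}(n))\big)$, which is the claim.

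\textbf{Main obstacle.} The technical heart is the step ``select $x_0,y_0$ and complete them to cut-points'': one must arrange the definition \eqref{def of s and t} so that $\rmB_{s_n}(0)$ and $\rmB_{t_n}(n\mathbf e_1)$ are simultaneously thin (size $o(n^2/\log^2 n)$) at a radius compatible with the hypothesis $t_0\ge\sqrt k$ of Lemma~\ref{lem:createcutpoin} — for $d\ge 3$, where balls around well-connected points grow like $r^d$, reconciling the threshold $n^{7/4}$, the hitting radius and this hypothesis requires care, and the atypical ``fat'' or ``too-thin'' scenarios must be discarded with error negligible before the (possibly small) large-deviation probability — and the chosen $x_0,y_0$ must at once lie on the ball boundaries, stay inside the infinite cluster, and admit a near-geodesic slab connection so that Proposition~\ref{prop:connexpoints}-type estimates apply. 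Controlling all of this uniformly is the main work.
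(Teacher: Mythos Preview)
Your overall strategy matches the paper's: reduce to bounded chemical distance, grow the balls at both endpoints to size $n^{7/4}$, find well-connected representatives $x_0,y_0$ near each ball, derive the constraint on $(s,t,x,y)$ from the triangle inequality, and then promote the near-cut-points to genuine ones by a resampling at cost $e^{o(n)}$. The split into the overlapping case $s_n^*+t_n^*\ge\cD^{\G_p}(0,n\mathbf e_1)$ (your ``degenerate case'') and the non-overlapping case is also the paper's dichotomy.

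The gap is in the sentence ``arguing as in Proposition~\ref{prop:connexpoints}, I select $x_0\in\rmB_{s_n}\setminus\rmB_{s_n-1}$ and $y_0\in\rmB_{t_n}(n\mathbf e_1)\setminus\rmB_{t_n-1}(n\mathbf e_1)$, both in $\sC_\infty$ and with $x_0-y_0$ essentially axis-parallel, such that $\cD^{\G_p}(x_0,y_0)\le(1+\ep)\mu(x_0-y_0)+\ep n$.'' Proposition~\ref{prop:connexpoints} controls the chemical distance between two \emph{prescribed} axis-aligned macroscopic sites, both assumed to lie in the infinite good cluster of a slab. Here the ball boundaries are dictated by the rare event and may be geometrically pathological; nothing in your regularity event guarantees the existence of an axis-aligned pair on the two boundaries that sits in good boxes of a common slab. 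The paper does real work at this step: it uses the projection lemma (Lemma~\ref{lem:proj}) to show each ball projects onto $\gtrsim n^{10/9}$ macroscopic lines, extends along good lines to sets $C_1',C_2'$ of size $\gtrsim n^{19/9-\delta}$, pigeonholes to two parallel hyperplanes carrying many points from each, and then invokes the separated-segments lemma (Lemma~\ref{lem:dislines}) to produce $\gtrsim n^{10/9-\delta}$ candidate segments between them that are pairwise at distance $\gtrsim n^{\delta}$. Only because the number of bad boxes is $\le n$ (Lemma~\ref{lem:nobadbox}) does at least one segment avoid all bad boxes, yielding the desired $x_n,y_n$ with $\cD^{\G_p}(x_n,y_n)\le(1+\ep)\mu(x_n-y_n)$. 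Note also that the paper does \emph{not} insist $x_n,y_n$ lie on the exact ball boundaries; it only gets $\cD^{\G_p}(0,x_n)\le s_n^*+o(n)$, which is what makes the selection possible.

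Two smaller points: (i) your ``degenerate case'' needs an actual argument --- in the paper this is Case~1, handled by creating a single cut-point at $n\mathbf e_1$ via an explicit $E_0$, not by the two-cut-point mechanism; (ii) applying Lemma~\ref{lem:createcutpoin} simultaneously at both endpoints is not quite what the paper does --- it writes down one explicit edge set $E_0$ handling both balls at once, which avoids checking that two successive applications of the lemma do not interfere.
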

We need the following lemmas. We postpone their proofs to appendix.
\begin{lem}\label{lem:proj}
For any finite $S\subset \Z^d$, there exists $i\in[d]$ such that $$|P_i(S)|\ge \frac{1}{2}|S|^{\frac{2}3}.$$
\end{lem}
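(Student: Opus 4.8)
The plan is to derive Lemma~\ref{lem:proj} directly from the discrete Loomis–Whitney inequality. Recall that here $P_i$ denotes the projection that forgets the $i$-th coordinate, so $P_i(S)\subset\Z^{d-1}$. The discrete Loomis–Whitney inequality asserts that for every finite $S\subset\Z^d$,
\[
|S|^{\,d-1}\ \le\ \prod_{i=1}^{d}|P_i(S)|\,.
\]
I would simply invoke this classical fact; if a self-contained argument is preferred, it admits the short standard proof, namely $\mathbf 1_S(x)\le\prod_{i=1}^d\mathbf 1_{P_i(S)}\big(P_i(x)\big)^{1/(d-1)}$ for all $x\in\Z^d$ (since $x\in S$ forces $P_i(x)\in P_i(S)$ for every $i$), followed by summation over $x$ and the usual Loomis–Whitney estimate, proved by induction on $d$ via Hölder's inequality, which yields $|S|\le\prod_{i=1}^{d}|P_i(S)|^{1/(d-1)}$.

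Granting the inequality, pick $i_0\in[d]$ attaining $\max_{i\in[d]}|P_i(S)|$. Then
\[
|P_{i_0}(S)|^{\,d}\ \ge\ \prod_{i=1}^{d}|P_i(S)|\ \ge\ |S|^{\,d-1}\,,
\]
so $|P_{i_0}(S)|\ge|S|^{\,1-1/d}$. Since we work in dimension $d\ge 3$, we have $1-1/d\ge 2/3$, and because $|S|\ge 1$ this gives $|P_{i_0}(S)|\ge|S|^{2/3}\ge\tfrac12|S|^{2/3}$, which is the claim. The factor $\tfrac12$ and the exponent $2/3$ in place of the sharper $1-1/d$ are only slack retained for convenience in later applications; note that for $d=2$ the bound would degrade to $|S|^{1/2}$, consistent with the fact that the arguments of this section operate under $d\ge 3$.

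I do not expect any genuine difficulty here: the sole point requiring attention is the correct normalization in the Loomis–Whitney inequality — that $P_i$ discards coordinate $i$ (so $P_i(S)\subset\Z^{d-1}$) and that the product runs over all $d$ coordinate projections — after which the lemma follows by the one-line computation above.
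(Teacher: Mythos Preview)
Your argument is correct. The Loomis--Whitney inequality $|S|^{d-1}\le\prod_{i=1}^d|P_i(S)|$ immediately gives $\max_i|P_i(S)|\ge|S|^{1-1/d}\ge|S|^{2/3}$ for $d\ge3$, which is even slightly stronger than the stated bound.

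The paper does \emph{not} invoke Loomis--Whitney; instead it gives a direct pigeonhole argument that only uses the first three coordinates. Roughly: if $|P_1(S)|<\tfrac12|S|^{2/3}$, then the fibers of $P_1$ of size at least $|S|^{1/3}$ carry at least half the points of $S$; one then looks at the $P_2$-projection of the base points of these heavy fibers and, according to whether this projection is large or small, concludes either $|P_2(S)|\ge|S|^{2/3}$ or $|P_3(S)|\ge\tfrac12|S|^{2/3}$. Your route is shorter, more conceptual, and yields the sharper exponent $1-1/d$ for free; the paper's proof is entirely elementary and self-contained, avoiding any appeal to an external inequality. Both approaches tacitly require $d\ge3$ (the paper's proof explicitly uses a third coordinate direction), which is the standing assumption of the section where the lemma is applied.
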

For $x,y\in\R^ d$, we denote by $[x,y]$ the segment joining $x$ and $y$. 
\begin{lem}\label{lem:dislines}
Let $d\ge 3$, $K\ge\ell\ge  1$ and  $m\ge 1$.
Consider $S_ 1\subset \rH_1(0)\cap \Z^d$ and $S_ 2\subset \rH_1(\ell\mathbf{e}_1)\cap \Z^d$ such that $|S_1|=|S_2|=m$ and
\aln{\label{hypothesis star}
\max_{x\in S_1,\,y\in S_2} \|x-y\|_{\infty}{\leq} K.
}
Then, there exists  a bijection $\sigma$ from $S_1$ to $S_2$ such that 
\[\forall x\neq x'\in S_1\qquad \mathrm{d}_2([x,\sigma(x)], [x',\sigma(x')]) \ge \frac{1}{\sqrt{2}}\frac{\ell}{K},\]
{where ${\rm d}_2$ is the Euclidean distance.} In particular, the segments do not intersect.
\end{lem}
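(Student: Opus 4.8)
The plan is to take for $\sigma$ the \emph{optimal transport matching for the quadratic cost}: among the finitely many bijections $\tau\colon S_1\to S_2$, pick one, call it $\sigma$, minimizing $\sum_{x\in S_1}\|x-\tau(x)\|_2^2$ (a minimizer exists since there are finitely many bijections; if $m=1$ the statement is vacuous, so assume $m\ge 2$). For $z\in\Z^d$ write $\bar z:=(z_2,\dots,z_d)\in\Z^{d-1}$, so $x=(0,\bar x)$ for $x\in S_1$ and $y=(\ell,\bar y)$ for $y\in S_2$. The first step is the classical ``no beneficial swap'' observation: for distinct $x,x'\in S_1$, exchanging the images of $x$ and $x'$ in $\sigma$ cannot decrease the cost, so $\|x-\sigma(x')\|_2^2+\|x'-\sigma(x)\|_2^2\ge\|x-\sigma(x)\|_2^2+\|x'-\sigma(x')\|_2^2$; expanding the squares gives the cyclical monotonicity $\langle x-x',\,\sigma(x)-\sigma(x')\rangle\ge 0$. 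Since the first coordinate of $x-x'$ and of $\sigma(x)-\sigma(x')$ both vanish, this reads
\[\langle\bar x-\bar x',\ \overline{\sigma(x)}-\overline{\sigma(x')}\rangle\ \ge\ 0 .\]

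Next I would fix distinct $x,x'\in S_1$ and parametrise the two segments by the first coordinate $s\in[0,\ell]$: the point of $[x,\sigma(x)]$ at height $s$ is $\big(s,\,g(s)\big)$ with $g(s)=(1-\tfrac s\ell)\bar x+\tfrac s\ell\,\overline{\sigma(x)}\in\R^{d-1}$, and likewise $\big(s,g'(s)\big)$ for $[x',\sigma(x')]$. Put $u:=\bar x-\bar x'$ and $w:=\overline{\sigma(x)}-\overline{\sigma(x')}$; these are \emph{nonzero} integer vectors (distinct points of $S_1$, resp.\ $S_2$, have distinct projections), so $\|u\|_2,\|w\|_2\ge 1$, and $\langle u,w\rangle\ge 0$ by the previous step. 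Hence, for every $s\in[0,\ell]$ with $t:=s/\ell$,
\[\|g(s)-g'(s)\|_2^2=(1-t)^2\|u\|_2^2+2t(1-t)\langle u,w\rangle+t^2\|w\|_2^2\ \ge\ \big[(1-t)^2+t^2\big]\min(\|u\|_2^2,\|w\|_2^2)\ \ge\ \tfrac12 .\]
(It is precisely this step that uses the quadratic‑cost minimiser: a matching sorted along a generic direction would not rule out two segments fanning out in nearly opposite directions, and for large $m$ consecutive sorted values are within $O(K/m)\ll \ell/K$.)

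Then I would pass from the equal‑height estimate to a bound for $\mathrm d_2([x,\sigma(x)],[x',\sigma(x')])$. The map $g'$ is affine with $\|g'(s)-g'(s')\|_2=\tfrac{|s-s'|}{\ell}\|\overline{\sigma(x')}-\bar x'\|_2\le\Lambda|s-s'|$, where $\Lambda:=\tfrac1\ell\|\overline{\sigma(x')}-\bar x'\|_2\le\tfrac{\sqrt{d-1}}{\ell}K$, the last inequality because $\|\sigma(x')-x'\|_\infty\le K$ by hypothesis. Therefore, for all $s,s'\in[0,\ell]$,
\[\big\|(s,g(s))-(s',g'(s'))\big\|_2^2=(s-s')^2+\|g(s)-g'(s')\|_2^2\ \ge\ (s-s')^2+\Big(\tfrac1{\sqrt2}-\Lambda|s-s'|\Big)_+^2 ,\]
and an elementary one‑variable minimisation of the right‑hand side over $|s-s'|\ge0$ (minimum $\tfrac{1}{2(1+\Lambda^2)}$, attained at $|s-s'|=\tfrac{\Lambda}{\sqrt2(1+\Lambda^2)}$) gives
\[\mathrm d_2\big([x,\sigma(x)],[x',\sigma(x')]\big)\ \ge\ \frac{1}{\sqrt2\sqrt{1+\Lambda^2}}\ \ge\ \frac{\ell}{\sqrt2\sqrt{\ell^2+(d-1)K^2}}\ \ge\ \frac{1}{\sqrt d}\cdot\frac{1}{\sqrt2}\cdot\frac{\ell}{K},\]
using $\ell\le K$ in the last step; the ``in particular'' follows since the bound is positive.

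The main obstacle is purely cosmetic: this argument delivers $c_d\,\ell/K$ with $c_d=(2d)^{-1/2}$ rather than the stated $\tfrac1{\sqrt2}\tfrac\ell K$. Since only a lower bound of the form $c_d\,\ell/K$ is ever used downstream (it feeds into the dimensional constant $\chi_d$ of Lemma~\ref{lem:disjpathscont}), I would either simply record the constant this computation produces, or tighten the Lipschitz step (e.g.\ estimate the distance between the two affine lines directly) to recover the precise constant. Everything else — existence of the minimiser, the swap computation, the inequalities $(1-t)^2+t^2\ge\tfrac12$ and $\|\cdot\|_\infty\le K\Rightarrow\|\cdot\|_2\le\sqrt{d-1}\,K$, and the scalar minimisation — is routine.
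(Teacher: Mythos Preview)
Your argument is correct and complete up to the constant: the quadratic optimal-transport matching together with cyclical monotonicity gives $\|g(s)-g'(s)\|_2\ge 1/\sqrt2$ at every common height, and your Lipschitz-plus-minimisation step then yields $\mathrm d_2([x,\sigma(x)],[x',\sigma(x')])\ge (2d)^{-1/2}\,\ell/K$. You are also right that only a bound of the form $c_d\,\ell/K$ is ever needed downstream (it is absorbed into the constant $\chi_d$ of Lemma~\ref{lem:disjpathscont}), so the loss is harmless.

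The route is genuinely different from the paper's. The paper first dilates by $K/\ell$ in the $\mathbf e_1$ direction to reduce to $\ell=K$, and then takes $\sigma$ to minimise the non-standard cost $\sum_i\sum_{k\ge 2}\|\pi_{1,k}(x^i-y^{\sigma(i)})\|_2$ (a sum of two-dimensional projected lengths, not a squared Euclidean cost). Minimality of that cost forces, for each pair $i\ne j$, the existence of a coordinate $k$ with $\pi_{1,k}(L_i)\cap\pi_{1,k}(L_j)=\emptyset$; the $\ell=K$ condition then makes the unit normal to $\pi_{1,k}(L_j)$ satisfy $|\mathbf u\cdot\mathbf e_k|\ge 1/\sqrt2$, and a one-line planar computation gives the dimension-free $1/\sqrt2$. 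Your approach trades that projection casework for standard cyclical monotonicity and an elementary Lipschitz estimate; this is conceptually cleaner but the Lipschitz constant $\Lambda$ inherits a factor $\sqrt{d-1}$ from $\|\,\cdot\,\|_\infty\le K\Rightarrow\|\,\cdot\,\|_2\le\sqrt{d-1}\,K$, which is exactly where your extra $\sqrt d$ comes from. Either proof is perfectly adequate for the paper's purposes; if you want to recover the sharp constant, the dilation-to-$\ell=K$ trick combined with a planar projection (rather than tightening the Lipschitz step in $\R^{d-1}$) is what does it.
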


\begin{proof}[Proof of Proposition \ref{prop:creationcutpoint}] Let $\xi,\ep>0$. {Thanks to \eqref{eq:antalpisztora2}, there exists $K>0$ depending on $\xi$ such that for $n$ large enough,
\[\P(\mu(\mathbf{e}_1)(1+\xi)n<\cD^{\G_p}(0,n\mathbf{e}_1)<\infty)\le 2\P(\mu(\mathbf{e}_1)(1+\xi)n<\cD^{\G_p}(0,n\mathbf{e}_1)<Kn).\]
From now on, we will work on the event $\{\mu(\mathbf{e}_1)(1+\xi)n<\cD^{\G_p}(0,n\mathbf{e}_1)<Kn\}$.} 
 Set 
\aln{\label{def of s and t}
s^*_n:=\inf\{i\ge 1: |\rmB_i|\ge n^{7/4}\}\qquad\text{and}\qquad t^*_n:=\inf\{i\ge 1: |\rmB_i(n\mathbf{e}_1)|\ge n^{7/4}\},
}
with the convention $\inf \emptyset=+\infty$.  {Let $s_0:=3K+1$.} We consider two cases. 

 \noindent{\bf Case 1.} {Suppose that} $ s^*_n+t^*_n\ge\cD^ {\kG_p}(0,n\mathbf{e}_1)$.
Then, $\rmB_{s^*_n}\cap\rmB_{t^*_n}(n\mathbf{e}_1)\ne\emptyset$ and there exists $x_n\in\Z^ d$ such that 
\[\cD^ {\kG_p}(0,x_n)\le s^*_n,\quad \cD^ {\kG_p}(x_n,n\mathbf{e}_1)\le t^*_n,\quad \text{and}\quad \cD^ {\kG_p}(0,n\mathbf{e}_1)=\cD^ {\kG_p}(0,x_n)+ \cD^ {\kG_p}(x_n,n\mathbf{e}_1).\]
Denote by $s_n=\cD^ {\kG_p}(0,x_n)$ and $t_n=\cD^ {\kG_p}(x_n,n\mathbf{e}_1)$.  We claim that if
\aln{\label{goal shuta}
\rmB_{s_n}\setminus \rmB_{s_n-1}=\{x_n\}\text{  and }\rmB_{t_n}(x_n)\setminus (\rmB_{t_n-1}(x_n)\cup\rmB_{s_n})=\{n\mathbf{e}_1\},
}
then $\kA_{\mu(\mathbf{e}_1)(1+\xi),\mathbf{e}_1}(n)$ occurs. Indeed let $y\in \rmB_{s_n+t_n}\setminus  \rmB_{s_n+t_n-1}$ and $\gamma$ be a geodesic from $0$ to $y$. By hypothesis, $\gamma$ passes through $x_n$ at time $s_n$ and then passes through $n\mathbf{e}_1$ at time $s_n+t_n{=\cD^ {\kG_p}(0,n\mathbf{e}_1)>} \mu(\mathbf{e}_1)(1+\xi)n$, which implies $y=n\mathbf{e}_1$ and the claim follows.

Let us resample the configuration to create a cut-point at $n\mathbf{e}_1$ {via Lemma \ref{lem:resample}}.
We take a geodesic $\gamma$ from $0$ to $n\mathbf{e}_1$ with a deterministic rule breaking ties. 
Since $|\rmB_{s_n-1}|\le n^{7/4}$ and $|\rmB_{t_n}(x_n)\cap \rmB_{t_n-1}(n\mathbf{e}_1)|\le n^{7/4}$,  {if $s_n\ge 2n^{7/8}$ and $t_n\ge 2n^{7/8}$, then there exist $r_0\in  [s_n-1]\backslash [s_n-2n^{7/8}]$ and  $r_1\in [t_n]\backslash [t_n-2n^{7/8}]$} such that $$|\rmB_{r_0}\backslash \rmB_{r_0-1}|\leq n^{7/8}\text{ and   }|(\rmB_{r_1}(x_n)\backslash \rmB_{r_1-1}(x_n))\cap\rmB_{t_n-1}(n\mathbf{e}_1)|\leq n^{7/8}.$$
 If $s_n< 2n^{7/8}$, then we can simply set $r_0:=0$; if $t_n<2n^{7/8}$, we set $r_1:=0$.
Set 
\begin{equation*}
    \begin{split}
        E_0:=& {\left\{\langle v,w\rangle\in \E^d\setminus \gamma:~v\in \gamma\cap \Big[(\rmB_{s_n}\backslash \rmB_{r_0})\cup ((\rmB_{t_n}(x_n)\backslash \rmB_{r_1}(x_n))\cap\rmB_{t_n-1}(n\mathbf{e}_1))\Big]\right\}}\\
        &\cup \{e\in \E^d\setminus \gamma:~e\text{ connects $\rmB_{r_0}\backslash \rmB_{r_0-1}$ and $\rmB_{r_0-1}$}\}\\
        &\cup \{e\in \E^d\setminus \gamma:~e\text{ connects $(\rmB_{r_1}(x_n)\backslash \rmB_{r_1-1}(x_n))\cap \rmB_{t_n-1}(n\mathbf{e}_1))$ and $\rmB_{{r_1-1}}(x_n)$}\}.
    \end{split}
\end{equation*}
Consider a resampled configuration $(\tau^{\rm r}_e)_{e\in \E^d}$ as in Lemma \ref{lem:resample} with $E_0$ defined above and  $E_1=\emptyset$. We now prove that if all the edges of $\tau^{\rm r}$ in $E_0$ are closed, then \eqref{goal shuta} holds for $\tau^{\rm r}$, and thus $\kA_{\mu(\mathbf{e}_1)(1+\xi),\mathbf{e}_1}(n)$ occurs for $\tau^{\rm r}$ as well. The first part of \eqref{goal shuta} follows easily. For the second part, by the choice of $E_0$, ${\rmB^{\rm r}_{t_n}(x_n)}\subset \rmB_{s_n}\cup \rmB_{r_1-1}(x_n)\cup \gamma$. Therefore, the only vertex in $\rmB_{t_n}(x_n)\setminus \rmB_{s_n}$ at distance $t_n$ from $x_n$ is $n\mathbf{e}_1$, and thus \eqref{goal shuta} follows. Since $|E_0|\leq (2d)^{2d} n^{7/8}$ and $E_0\subset [-n^2,n^2]^d$ for $n$ large enough, by Lemma \ref{lem:resample},  
\begin{equation}\label{eq:creationcut-point1}
\begin{split}
    &e^{-{(8d)^{2d}}(\log n)\,n^{7/8}}\P(\mu(\mathbf{e}_1)(1+\xi)n<\cD^{\G_p}(0,n\mathbf{e}_1)<\infty,s^*_n+t^*_n\ge\cD^ {\kG_p}(0,n\mathbf{e}_1))\\
   &\le \P\left(\kA_{\mu(\mathbf{e}_1)(1+\xi),\mathbf{e}_1}(n)\right) \le \P\left(
  \bigcup_{\substack{s,t\in(\Z/n)\cap [0,s_0],\,x_n,y_n\in(\Z^ d/n)\cap [-s_0,s_0]^d,\\s+t+(1+\ep)\mu(x-y-\mathbf{e}_1)+\ep\ge (1+\xi)\mu(\mathbf{e}_1)}}\kA_{s,x}(n)\cap(n\mathbf{e}_1+\kA_{t,y}(n))\right),
   \end{split}
\end{equation}
 where {$\kA_{0,0}=\{\rmB_0=\{0\}\}$ always occurs}.

\noindent{\bf Case 2.} Suppose that $s^*_n+t^*_n<\cD^ {\kG_p}(0,n\mathbf{e}_1)$.
In particular, we have $s_n^*+t_n^*\leq K n$ and $\rmB_{s^*_n}\cap\rmB_{t^*_n}(n\mathbf{e}_1)=\emptyset$. Let $\delta:=1/(18d)$, $N=\log ^ 2 n$ and $\ep>0$ be chosen later. We consider the macroscopic lattice of sidelength $N$ for the parameter $\ep$. 
 Set
\[\cE_n:=\left\{\#\{\mathbf{i}\in \Lambda_{n^2}\cap\Z^ d: \mathbf{i}\text{ is $\ep$-bad}\}>n \right\}.\]
 By Lemma~\ref{lem:nobadbox}, we have $\P(\cE_n)\leq e^{-n \log{n}}$. Hence, we suppose $\cE^c_n$ from now on. Let ${\rm C}_1$ be the set of boxes intersected by $\rmB_{s^*_n}$ and ${\rm C}_2$ the set of boxes intersected by $\rmB_{t^*_n}(n\mathbf{e}_1)$.
Note that $|{\rm C}_1|\ge n^ {7/4}/(2^dN^ d)$.
Thanks to Lemma \ref{lem:proj}, there exists ${\rm i}_1{\in [d]}$ such that  for $n$ large enough,
\[|P_{{\rm i}_1}({\rm C}_1)|\ge \frac{n^{7/6}}{2^{d+1}\log ^{2d} n}\geq 2n^{10/9}.\]
Denote by $C'_1$ the set of macroscopic sites that are connected in the macroscopic lattice by a macroscopic, good path of length at most $n^{1-\delta}$ to a site in ${\rm C}_1$.
The number of disjoint macroscopic lines $\rL_{{\rm i}_1}(w)$ with $w\in {\rm C}_1$ is at least $ 2n^{10/9}$.  We say that a line $\rL_{{\rm i}_1}(w)$ is good if all sites in $\rL_{{\rm i}_1}^n(w)$ are good. On the event $\cE^c_n$,  there are at least $2 n^{10/9}-n\ge n^{10/9}$ good lines of these disjoint macroscopic lines.
It follows that
$|C'_1|\ge n^{19/9-\delta}.$  We can define similarly $C'_2$.
 Since $s_n^*<\kD^{\kG_p}(0,n\mathbf e_1 )\le Kn$, we have $C'_1,C'_2\subset \cup_{k=-Kn}^{2Kn}\rH_1(k\mathbf{e}_1).$

Besides, since $(\rH_1(k\mathbf{e}_1), -Kn \le k \le 2Kn)$ are disjoint, by pigeon-hole principle, there exists $w_1\in \{k\mathbf{e}_1: -Kn\le k\le 2Kn\}$ such that
\[|\rH_{1}(w_1)\cap C'_1|\ge \frac{1}{4K}n^ {10/9-\delta}.\]
Similarly,  there exists $w_2\in \{k\mathbf{e}_1: -Kn\le k\le 2Kn\}$ such that
\[|\rH_{1}(w_2)\cap C'_2|\ge \frac{1}{4K} n^ {10/9-\delta }.\]
Let us take $S_i\subset \rH_{1}(w_i)\cap C'_i$ for $i=1,2$ such that $|S_1|=|S_2|=:m\geq  \frac{1}{4K} n^ {10/9-\delta }.$ We write $S_1=:\{x^1,\cdots,x^m\}$ and $S_2=:\{y^1,\cdots,y^m\}$.

We take an integer $|k|\le n ^{1-\delta}$ such that
\[|(w_1-w_2)\cdot \mathbf{e}_1-k|\ge n^{1-\delta}.\]
Consider now the set $S'_2:=S_2+k\mathbf{e}_1$. 
 {By using Lemma \ref{lem:dislines}, we obtain a bijection $\sigma:S_1\to S_2'$ and straight lines $(L'_i)_{i=1}^m$ joining points $x_i$ and $\sigma(x_i)$, where ${\rm d_2}(L'_i,L'_j)\geq n^{\delta}/4K$. 
 We denote by $L_i$ the concatenation of $L'_i$ with the straight line joining $\sigma(x_i)$ and $\sigma(x_i)-k\mathbf{e}_1\in S_2$.}
This implies that for each macroscopic site $x\in\Z^d$, we have
\[\#\{i:\mathrm{d}_\infty(x,L_i)\le 1\}\le O(n^{\delta(d-1)})\le n^{\delta d} .\]
Note that $1+\delta d< \frac{10}{9}-\delta$ with $\delta=1/(18d)$. On the event $\cE^c_n$, the number of $L_i$ crossing at least one bad box is at most $n^{1+\delta d}$, hence there exists at least one path $L_i$ between some $\mathbf{x}_1\in S_1$ and $\mathbf{x}_2\in S_2$ such that all the macroscopic sites $\mathbf{x}$ with $\mathrm{d}_\infty(L_i,\mathbf{x})\le 1$ are good. In particular, there exists a macroscopic path of good sites joining $\mathbf{x}_1\in S_1$ and $\mathbf{x}_2\in S_2$.
Let $x_n,y_n,z_n$ be microscopic points in the largest open cluster of  {the boxes corresponding to $\mathbf{x}_1,\mathbf{x}_2,\mathbf{x}_2+k\mathbf{e}_1$, respectively.} By definition of $C'_1$ and $C'_2$, there exist good, macroscopic paths of length at most $n^ {1-\delta}$ to boxes in ${\rm C}_1$ and ${\rm C}_2$, respectively. We conclude  using Lemma \ref{lem:connexiongoodbox} that
\[\cD^ {\kG_p}(0,x_n)<s^*_n+{O(N^d n^{1-\delta})},\quad
\cD^ {\kG_p}(y_n,n\mathbf{e}_1)<t^*_n+O(N^d n^{1-\delta}).\]
Moreover, since the path $L'_i$ joining $\mathbf{x}_1$ and $\mathbf{x}_2+k\mathbf{e}_1$ is a straight line, and all the macroscopic sites $\mathbf{x}$ with $\mathrm{d}_\infty(L_i,\mathbf{x})\le 1$ are good,  using a similar argument as in the proof of Proposition \ref{prop:connexpoints}, we have
\al {\cD^ {\kG_p}(x_n,y_n)&\le \cD^ {\kG_p}(x_n,z_n)+\cD^ {\kG_p}(z_n,y_n)\\
&\le (1+2d\ep)\mu(x_n-z_n)+O(N^d n^{1-\delta}) \leq  (1+2d\ep)\mu(x_n-y_n)+O(N^d n^{1-\delta}).
}

Next, we create two cut-points at $x_n$ and $y_n$.
Let $\gamma_{v,w}$ be a geodesic from $v$ to $w$ with a deterministic rule breaking ties. Set  $s_n:=\kD(0,x_n)\leq s^*_n+O(N^dn^{1-\delta})$ and $t_n:=\kD(n\mathbf{e}_1,y_n)\leq t^*_n+O(N^d n^{1-\delta})$. We denote by $\gamma'$ the concatenation of $\gamma_{0,x_n}$, $\gamma_{x_n,y_n}$, and $\gamma_{y_n,n\mathbf{e}_1}$, i.e. $\gamma':=\gamma_{0,x_n}\oplus \gamma_{x_n,y_n}\oplus\gamma_{y_n,n\mathbf{e}_1}$. First, since we assumed $\kD^ {\kG_p}(0,n\mathbf{e}_1)\ge \mu(\mathbf{e}_1)(1+\xi)n$,  we have
\[s_n+t_n+ (1+2d\ep)\mu(x-y){+2d\ep n} \ge \kD^ {\kG_p}(0,x_n)+ \kD^ {\kG_p}(x_n,y_n)+\kD^ {\kG_p}(y_n,n\mathbf{e}_1)\ge \mu(\mathbf{e}_1)(1+\xi)n.\]
Since $|\rmB_{\min(s^*_n-1,s_n)}|\le n^{7/4}$ and $|\rmB_{\min(t^*_n-1,t_n)}(n\mathbf{e}_1)|\le n ^{7/4}$, there exist $r_0\in [\min(s^*_n-1,s_n)]\backslash [\min(s^*_n-1,s_n)-n^{7/8}]$ 
and $r_1\in[\min(t^*_n-1,t_n)]\backslash [\min(t^*_n-1,t_n)-n^{7/8}]$ such that 
$$ {|\rmB_{r_0}\backslash \rmB_{r_0-1}|\leq n^{7/8},\quad|\rmB_{r_1}(n\mathbf{e}_1)\backslash \rmB_{r_1-1}(n\mathbf{e}_1)|\leq n^{7/8},}$$
where we can simply set $r_0:=0$ if $\min(s^*_n-1,s_n)<n^{7/8}$; $r_1:=0$ if $\min(t^*_n-1,t_n)<n^{7/8}$. Set 
\begin{equation*}
    \begin{split}
        E_0:=&\{\langle v,w\rangle \in \E^d\setminus \gamma':~v\in (\gamma_{0,x_n}\setminus  \rmB_{r_0})\cup(\gamma_{y_n,n\mathbf{e}_1}\setminus\rmB_{r_1}(n\mathbf{e}_1))\}\\&\cup \{e\in \E^d\setminus \gamma':~e\text{ connects $\rmB_{r_0}\backslash \rmB_{r_0-1}$ and $\rmB_{r_0-1}$}\}\\&\cup \{e\in \E^d\setminus \gamma':~e\text{ connects $\rmB_{r_1}(n\mathbf{e}_1)\backslash \rmB_{r_1-1}(n\mathbf{e}_1)$ and $\rmB_{r_1}(n\mathbf{e}_1)$}\}.
    \end{split}
\end{equation*}
Note that 
$$\max\{|\gamma_{0,x_n}\setminus \rmB_{r_0}|,|\gamma_{y_n,n\mathbf{e}_1}\setminus \rmB_{r_1}(n\mathbf{e}_1)|\}= O(N^d n^{1-\delta}).$$
It follows that for $n$ large enough, $|E_0|= O(N^d n^{1-\delta}).$
 {By similar reasoning as in Case 1, we can create the two space-time cut-points at $(s_n,x_n)$ and $(t_n,y_n)$ for the resampled configuration $\tau^{\rm r}$ as in  Lemma \ref{lem:resample} with $E_0$ defined above and $E_1=\emptyset$, since $\gamma'$ is still open and the chemical distances cannot become larger after resampling.} Recall that $s_0=3K+1$. Therefore, {transforming $x=x_n/n,y=y_n-\mathbf{e}_1$, $s=s_n/n$ and $t=t_n/n$}, by Lemma \ref{lem:resample}, we have
\begin{equation}\label{eq:creationcut-point2}
\begin{split}
   \P&(\mu(\mathbf{e}_1)(1+\xi)n<\cD^{\G_p}(0,n\mathbf{e}_1)<\infty,s^*_n+t^*_n<\cD^ {\kG_p}(0,n\mathbf{e}_1)) \\
   &\le \exp(O(N^d n^{1-\delta}))\P\left(\bigcup_{\substack{s,t\in(\Z/n)\cap [0,s_0],\,x_n,y_n\in(\Z^ d/n)\cap [-s_0,s_0]^d\\s+t+(1+2d\ep)\mu(x-y-\mathbf{e}_1)+2d\ep n\ge (1+\xi)\mu(\mathbf{e}_1)}}\kA_{s,x}(n)\cap(n\mathbf{e}_1+\kA_{t,y}(n))\right).
   \end{split}
\end{equation}
Together with \eqref{eq:creationcut-point1}, the result follows by taking the $\limsup$ and changing $2d\ep$ by $\ep$.
\end{proof}
Using the proposition, we prove the upper bound.
\begin{prop}\label{prop:ULD}Let $\xi\in(0,\xi_0)$. We have
\[\limsup_{n\rightarrow\infty}\frac{1}{n}\log \P(\mu(\mathbf{e}_1)(1+\xi)n<\cD^{\G_p}(0,n\mathbf{e}_1)<\infty)\le - J(\xi).\]
\end{prop}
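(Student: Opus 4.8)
The plan is to combine Proposition~\ref{prop:creationcutpoint} with a union bound, and then, for each admissible pair of candidate space-time cut-points, to control $\kA_{s,x}(n)\cap(n\mathbf{e}_1+\kA_{t,y}(n))$ by separating the configurations according to whether the two balls anchored at $0$ and at $n\mathbf{e}_1$ can be forced into opposite half-spaces or not. Fix $\ep>0$ and let $s_0=s_0(\xi)$ be given by Proposition~\ref{prop:creationcutpoint}, so that it is enough to bound $\P\big(\bigcup\kA_{s,x}(n)\cap(n\mathbf{e}_1+\kA_{t,y}(n))\big)$, the union being over $s,t\in(\Z/n)\cap[0,s_0]$ and $x,y\in(\Z^d/n)\cap[-s_0,s_0]^d$ subject to $s+t+(1+\ep)\mu(x-y-\mathbf{e}_1)+\ep\ge(1+\xi)\mu(\mathbf{e}_1)$. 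This union has $O(n^{2d+2})$ terms, a subexponential number, so it suffices to bound $\P(\kA_{s,x}(n)\cap(n\mathbf{e}_1+\kA_{t,y}(n)))$ uniformly over the admissible, compactly contained parameters.

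For a fixed admissible tuple, I would decompose the joint event according to whether $\kA_{s,x}(n)$, resp.\ $n\mathbf{e}_1+\kA_{t,y}(n)$, admits a witnessing radius $r$ (i.e.\ with $\rmB_r\setminus\rmB_{r-1}$ a single point in the prescribed small box) strictly below $n/2-1$. Call $\kA^{(1)}_{s,x}(n)$ the sub-event where such an $r$ exists. Since $\rmB_r\subset\Lambda_r(0)$ for every $r$, the event $\kA^{(1)}_{s,x}(n)$ is measurable with respect to the edges inside $\Lambda_{n/2-1}(0)$, and likewise $(n\mathbf{e}_1+\kA_{t,y}(n))^{(1)}$ with respect to the edges inside $n\mathbf{e}_1+\Lambda_{n/2-1}(0)$; these edge sets are disjoint for $n$ large, so the two events are independent. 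Hence the part of the joint event where both localizations occur has probability at most $\P(\kA_{s,x}(n))\P(\kA_{t,y}(n))$, while on the complement at least one of the two events has all its witnessing times $\ge n/2-1$, hence the configuration lies in $\kA_{1/2-1/n,x}(n)\cup(n\mathbf{e}_1+\kA_{1/2-1/n,y}(n))$. Altogether
\[\P\big(\kA_{s,x}(n)\cap(n\mathbf{e}_1+\kA_{t,y}(n))\big)\le\P(\kA_{s,x}(n))\,\P(\kA_{t,y}(n))+\P(\kA_{1/2-1/n,x}(n))+\P(\kA_{1/2-1/n,y}(n)).\]

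I would then estimate each term via Theorem~\ref{prop:existencelimit} (locally uniform convergence of $-\tfrac1n\log\P(\kA_{s,x}(n))$ to $I(s,x)$ on compacts) and Corollary~\ref{continuity of cutpoint}. For the product, $\P(\kA_{s,x}(n))\P(\kA_{t,y}(n))\le e^{-(I(s,x)+I(t,y))n+o(n)}$ with $o(n)$ uniform; by the reflection symmetry of Bernoulli percolation $I(t,y)=I(t,-y)$, so the subadditivity \eqref{eq:propI} gives $I(s,x)+I(t,y)\ge I(s+t,x-y)$; since $\mu(x-y-\mathbf{e}_1)$ is bounded by some $C_0=C_0(s_0)$ on the compact parameter set, the constraint yields $(s+t)+\mu\big((x-y)-\mathbf{e}_1\big)\ge(1+\xi_\ep)\mu(\mathbf{e}_1)$ with $\xi_\ep:=\xi-\ep(C_0+1)/\mu(\mathbf{e}_1)$, so $I(s+t,x-y)\ge J(\xi_\ep)$ directly from \eqref{def J}. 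For the other two terms, Corollary~\ref{continuity of cutpoint} with $\delta=1/n$ together with Theorem~\ref{prop:existencelimit} give $\P(\kA_{1/2-1/n,x}(n))\le e^{c(1+n^{\alpha_d})}\P(\kA_{1/2,x}(n))\le e^{-I(1/2,x)n+o(n)}\le e^{-mn+o(n)}$, where $m:=\inf_{y\in\R^d,\,\sigma\ge1/2}I(\sigma,y)$. Summing over the $O(n^{2d+2})$ tuples therefore gives $\limsup_{n}\tfrac1n\log\P\big(\bigcup\kA_{s,x}(n)\cap(n\mathbf{e}_1+\kA_{t,y}(n))\big)\le-\min\big(J(\xi_\ep),m\big)$.

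Finally, this is exactly where the restriction $\xi<\xi_0$ enters: by the definition \eqref{eq:defxi0}, Lemma~\ref{lem:xi_0}, and the monotonicity and continuity of $J$ (Proposition~\ref{prop:Jcont}), one has $J(\xi_\ep)\le J(\xi)<m$, so the minimum equals $J(\xi_\ep)$. Combining with Proposition~\ref{prop:creationcutpoint} yields $\limsup_n\tfrac1n\log\P(\mu(\mathbf{e}_1)(1+\xi)n<\cD^{\G_p}(0,n\mathbf{e}_1)<\infty)\le-J(\xi_\ep)$ for every sufficiently small $\ep>0$; letting $\ep\to0$, so that $\xi_\ep\uparrow\xi$, the left-continuity of $J$ (Proposition~\ref{prop:Jcont}) gives the claimed bound $-J(\xi)$. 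The main obstacle is the overlapping-balls alternative: one must show that if the two balls cannot be localized to opposite half-spaces, then one of them already carries a space-time cut-point at chemical time of order $n/2$, whose cost $m$ strictly exceeds $J(\xi)$ precisely because $\xi<\xi_0$; the secondary technical point is the careful half-space localization of $\kA_{s,x}(n)$ needed to obtain genuine independence of the two cut-point events rather than only a BK-type inequality.
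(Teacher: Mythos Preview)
Your proof is correct and takes a genuinely different route from the paper's. The paper first invokes the lower bound (Proposition~\ref{prop:LLD}) to get $\limsup \ge -J(\xi) > -J(\xi_0)$, then argues by contradiction: restricting to $\bar\kA_{s^*,x^*}(n)=\kA_{s^*,x^*}(n)\setminus\kA_{s^*+1/n,x^*}(n)$ so that the cut-time is exactly $s^*n$, if the two cut-point events do not occur \emph{disjointly} the balls $\rmB_{s^*n}(0)$ and $\rmB_{t^*n}(n\mathbf{e}_1)$ must overlap, forcing $s^*+t^*\ge 1$; this would give an upper bound $-\inf_{s\ge1/2,y}I(s,y)=-J(\xi_0)$, contradicting the already-established lower bound. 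Once disjoint occurrence is secured, the paper applies the BK inequality to factor $\P(\bar\kA_{s^*,x^*}(n)\cap(n\mathbf{e}_1+\bar\kA_{t^*,y^*}(n)))$.

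Your argument replaces both the BK step and the contradiction with a direct spatial localization: the sub-event $\kA^{(1)}_{s,x}(n)$ (some witnessing radius $<n/2-1$) is measurable with respect to edges in an $\ell_1$-ball disjoint from its counterpart at $n\mathbf{e}_1$, so you get genuine independence; the residual ``large-radius'' alternative is handled as an additive error of exponential rate $m=\inf_{\sigma\ge1/2,y}I(\sigma,y)$, and the hypothesis $\xi<\xi_0$ enters exactly once as the inequality $J(\xi_\ep)\le J(\xi)<m$, which absorbs that term. This makes the upper bound self-contained (no appeal to Proposition~\ref{prop:LLD}) and avoids BK altogether. Both approaches are valid; yours is the more elementary and decoupled of the two.
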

\begin{proof}
Let $\xi\in(0,\xi_0)$ and 
 $\ep>0$. {By continuity of $J$ (Proposition~\ref{prop:Jcont}), $J(\xi_0)=\inf_{x\in\R^d, s\ge 1/2}I\left(s,x\right)$. Hence, by definition of $\xi_0$, $J(\xi)<J(\xi_0)$ for any $\xi<\xi_0$.} Thanks to Proposition \ref{prop:LLD}, we have
\ben{\label{claim:contradiction}
\limsup_{n\rightarrow\infty}\frac{1}{n}\log\P(\mu(\mathbf{e}_1)(1+\xi)n<\cD^{\G_p}(0,n\mathbf{e}_1)<\infty)\geq -J(\xi)>-J(\xi_0).
} 

Thanks to Proposition \ref{prop:creationcutpoint}, we have
 \begin{equation*}
\begin{split}
  & \limsup_{n\rightarrow\infty}\frac{1}{n}\log\P(\mu(\mathbf{e}_1)(1+\xi)n<\cD^{\G_p}(0,n\mathbf{e}_1)<\infty) \\&\le \limsup_{n\rightarrow\infty}\frac{1}{n}\log\P\left(\bigcup_{\substack{s,t\in(\Z/n)\cap [0,s_0],\,x,y\in(\Z^ d/n)\cap [-s_0,s_0]^d\\s+t+(1+\ep)\mu(x-y-\mathbf{e}_1)+\ep\ge (1+\xi)\mu(\mathbf{e}_1)}}\kA_{s,x}(n)\cap(n\mathbf{e}_1+\kA_{t,y}(n))\right).
   \end{split}
\end{equation*}
Let $n_0$ be as in the statement of Lemma \ref{lem:aux} (depending on $s_0,d,p$). Let $n\ge n_0$.
By pigeon-hole principle,  there exist $x^*,y^*\in (\mathbb Z^d/n)\cap [-s_0,s_0]^d$ and $s^*,t^*\in (\mathbb Z/n)\cap [0,s_0]$ (that may depend on $\ep,n$) such that
\[s^*+t^*+(1+\ep)\mu(x^*-y^* {-\mathbf{e}_1})+\ep\ge (1+\xi)\mu(\mathbf{e}_1),\] and
\aln{\label{choice upper bound}
&\frac{1}{{(4s_0 n)^{2(d+1)}}}\P\left(\bigcup_{\substack{s,t\in(\Z/n)\cap [0,s_0],\,x,y\in(\Z^ d/n)\cap [-s_0,s_0]^d\\s+t+(1+\ep)\mu(x-y-\mathbf{e}_1)+\ep\ge (1+\xi)\mu(\mathbf{e}_1)}}\kA_{s,x}(n)\cap(n\mathbf{e}_1+\kA_{t,y}(n))\right)\notag\\
&\leq \P(\bar{\kA}_{s^*,x^*}(n)\cap(n\mathbf{e}_1+\bar{\kA}_{t^*,y^*}(n)),
}
where $\bar{\kA}_{s^*,x^*}(n):=\kA_{s^*,x^*}(n)\setminus \kA_{s^*+(1/n),x^*}(n)$.

Let us first assume that  these two events  $\bar{\kA}_{s^*,x^*}(n)$ and $(n\mathbf{e}_1+\bar{\kA}_{t^*,y^*}(n))$ do not occur disjointly, that is, there exists an edge used to achieve both two events. Note that on the event $\bar{\kA}_{s,x}(n)$, the cut-point is exactly located at time $s n$ for $s\in \Z/n$. Since we assumed $\rmB_{s^* n}(0)\cap \rmB_{t^* n}(n\mathbf{e}_1)\neq \emptyset$, this implies that $s^*+t^*\ge 1$. Without loss of generality, let us assume $s^*\ge 1/2$.
 By the uniform convergence on a compact set in Theorem~\ref{prop:existencelimit}, together with \eqref{choice upper bound}, we have
 \begin{equation}\label{eq:tocontradict}
    \begin{split}
        \limsup_{n\rightarrow \infty}&\frac{1}{n}\log \P\left(\bigcup_{\substack{s,t\in(\Z/n)\cap [0,s_0],\,x,y\in(\Z^ d/n)\cap [-s_0,s_0]^d\\s+t+(1+\ep)\mu(x-y-\mathbf{e}_1)+\ep\ge (1+\xi)\mu(\mathbf{e}_1)}}\kA_{s,x}(n)\cap(n\mathbf{e}_1+\kA_{t,y}(n))\right)\\
        &\le -\inf_{x\in \R^d, s\ge 1/2}I(s,x)=-J(\xi_0).
    \end{split}
\end{equation}
{which contradicts \eqref{claim:contradiction}. Therefore, the occurrences $\bar{\kA}_{s^*,x^*}(n)$ and $(n\mathbf{e}_1+\bar{\kA}_{t^*,y^*}(n))$ are disjoint.}

 Since it is a disjoint occurrence, by BK inequality, we have
\[\P(\bar{\kA}_{s^*,x^*}(n)\cap(n\mathbf{e}_1+\bar{\kA}_{t^*,y^*}(n)))\le \P(\kA_{s^*,x^*}(n))\P(n\mathbf{e}_1+\kA_{t^*,y^*}(n))=\P(\kA_{s^*,x^*}(n))\P(\kA_{t^*,y^*}(n)).\]
It follows that for $n\in\N$ large enough, the right-hand side of \eqref{choice upper bound} is bounded from above by 
\begin{equation*}
    \begin{split}
        \max_{\substack{s,t\in [0,s_0],\,x,y\in [-s_0,s_0]^d\\s+t+(1+\ep)\mu(x-y-\mathbf{e}_1)+\ep\ge (1+\xi)\mu(\mathbf{e}_1)}}\P(\kA_{s,x}(n))\P(\kA_{t,y}(n)).
    \end{split}
\end{equation*}
 By the uniform convergence on a compact set in Theorem~\ref{prop:existencelimit}, 
\begin{equation*}
    \begin{split}
        \limsup_{n\rightarrow \infty}&\frac{1}{n}\log \P\left(\bigcup_{\substack{s,t\in(\Z/n)\cap [0,s_0],\,x,y\in(\Z^ d/n)\cap [-s_0,s_0]^d\\s+t+(1+\ep)\mu(x-y-\mathbf{e}_1)+ {\ep}\ge (1+\xi)\mu(\mathbf{e}_1)}}\kA_{s,x}(n)\cap(n\mathbf{e}_1+\kA_{t,y}(n))\right)\\
        &\le -\inf_{\substack{s,t\in [0,s_0],\,x,y\in [-s_0,s_0]^d\\s+t+(1+\ep)\mu(x-y-\mathbf{e}_1)+ {\ep}\ge (1+\xi)\mu(\mathbf{e}_1)}}(I(s,x)+I(t,y)).
    \end{split}
\end{equation*}
Putting things together {with letting $\ep\to 0$}, we have
\begin{equation*}
    \begin{split}
       & \limsup_{n\rightarrow \infty}\frac{1}{n}\log \P\left(\bigcup_{\substack{s,t\in \Z/n,\,x,y\in \Z^ d/n\\s+t+\mu(x-y-\mathbf{e}_1)\ge (1+\xi)\mu(\mathbf{e}_1)}}\kA_{s,x}(n)\cap(n\mathbf{e}_1+\kA_{t,y}(n))\right)\\
        &\leq -\inf_{\substack{x,y\in\R^d,s,t\ge 0:\\s+t+\mu(x-y-\mathbf{e}_1)\geq (1+\xi)\mu(\mathbf{e}_1)}} (I(s,x)+I(t,y)).
    \end{split}
\end{equation*}
Moreover, by \eqref{eq:propI} and $I(t,y)=I(t,-y)$,
\al{\inf_{\substack{x,y\in\R^d,s,t\ge 0:\\s+t+\mu(x-y-\mathbf{e}_1)\geq (1+\xi)\mu(\mathbf{e}_1)}}I(s,x)+I(t,y)&\geq \inf_{\substack{x,y\in\R^d,s,t\ge 0:\\s+t+\mu(x-y-\mathbf{e}_1)\geq (1+\xi)\mu(\mathbf{e}_1)}} I(s+t,x-y)\\
&= \inf_{\substack{x\in\R^d,s\ge 0:\\s+\mu(x-\mathbf{e}_1)\ge (1+\xi)\mu(\mathbf{e}_1)}}I(s,x)=J(\xi).
}
This yields the result.
\end{proof}

\appendix
\section{Combinatorial Lemmas}

Let us denote by $\mathfrak{S}_{m}$  the set of all the  permutations of $[m]$. Recall that $[x,y]$ stands for the segment between $x$ and $y$.
\begin{proof}[Proof of Lemma \ref{lem:dislines}] Let us prove the result in the case $\ell=K$. The result for general $\ell$ follows by dilating the space by a factor $\ell/K$ in the $\mathbf e_1$ direction.

Let $S_1=\{x^1,\dots,x^{m}\}$ and $S_2=\{y^1,\dots,y^{m}\}$ be such that $\max_{x\in S_1,\,y\in S_2}\|x-y\|_\infty\leq K$. We will find a permutation $\sigma\in \mathfrak{S}_{m}$ such that $${\rm d}_2([x^i,y^{\sigma(i)}],[x^j,y^{\sigma(j)}])\geq \frac{1}{\sqrt{2}},\qquad\forall i\neq j\in [m].$$
For $k,l\in [d]$, denote by $\pi_{k,l}$ the projection on the plan spanned by $\mathbf{e}_k$ and $\mathbf{e}_l$.
Let $\sigma\in\mathfrak{S}_{m}$ be such that $\sigma$ minimizes
\[\sum_{i=1}^ {m}\sum_{k=2}^d\|\pi_{1,k}(x^i-y^{\sigma(i)})\|_2.\]
 Denote by $L_i$ the segment joining $x^i$ and $y^{\sigma(i)}$. And denote by $v^i$ the unit vector associated to the direction of the segment $L_i$.
We claim $L_i\cap L_j= \emptyset$ for all $i\neq j$. To this end, we first suppose that $L_i\cap L_j\ne \emptyset$ for some $i\neq j\in [m]$, and we shall derive a contradiction. We note that for all $k\ne 1$, we have $\pi_{1,k}(L_i)\cap \pi_{1,k}(L_j)\ne \emptyset$, so we can take $z\in \pi_{1,k}(L_i)\cap \pi_{1,k}(L_j)$. { By the triangular inequality,
\begin{equation}\label{eq:triangineq}
\begin{split}
   &  \|\pi_{1,k}(x^i- y^{\sigma(j)})\|_2+\|\pi_{1,k}(x^j- y^{\sigma(i)})\|_2\\
  &\leq  \|\pi_{1,k}(x^i)- z\|_2+\|z-\pi_{1,k}(y^{\sigma(j)})\|_2+\|\pi_{1,k}(x^j)- z\|_2+\|z-\pi_{1,k}(y^{\sigma(i)})\|_2\\&= \|\pi_{1,k}(x^i- y^{\sigma(i)})\|_2+\|\pi_{1,k}(x^j- y^{\sigma(j)})\|_2.
    \end{split}
\end{equation}
Moreover, since $v^i,v^j$ are not colinear due to $L_i\cap L_j\ne \emptyset$, there exists $k\in \{2,\cdots,d\}$ such that $\pi_{1,k}(v^i)$ is not colinear to $\pi_{1,k}(v^j)$, where $\pi_{1,k}(L_i)$ and  $\pi_{1,k}(L_j)$ intersect only at one point. Then, the inequality in \eqref{eq:triangineq}  becomes strict, i.e.  $$\|\pi_{1,k}(x^i- y^{\sigma(j)})\|_2+\|\pi_{1,k}(x^j- y^{\sigma(i)})\|_2< \|\pi_{1,k}(x^i- y^{\sigma(i)})\|_2+\|\pi_{1,k}(x^j- y^{\sigma(j)})\|_2.$$
}
Therefore, we arrive at 
\[\sum_{k=2}^d\|\pi_{1,k}(x^i-y^{\sigma(j)})\|_2+\sum_{k=2}^d\|\pi_{1,k}(x^j-y^{\sigma(i)})\|_2<\sum_{k=2}^d\|\pi_{1,k}(x^i-y^{\sigma(i)})\|_2+\sum_{k=2}^d\|\pi_{1,k}(x^j-y^{\sigma(j)})\|_2,\]
which contradicts that $\sigma$ is a minimizer. Therefore, we have  $L_i\cap L_j= \emptyset$ for all $i\neq j$.

Let us next assume that for all $k\ne 1$ the projection of the segments $\pi_{1,k}(L_i)$ and $\pi_{1,k}(L_j)$ intersect. By similar reasoning as above, $\pi_{1,k}(v^i)$ and $\pi_{1,k}(v^j)$ are colinear for all $k\ne 1$, since otherwise it contradicts the minimality of $\sigma$. Hence $v^i$ and $v^j$ are colinear. 
{However, since  $x^i\neq x^j$ and $x^i_1=x^j_1$, there exists  $k\in\{2,\cdots,d\}$ such that the $k$-th coordinates of $x^i$ and $x^j$ do not coincide, i.e. $x^i_k\neq x^j_k$. Thus, $\pi_{1,k}(L_i)$ and $\pi_{1,k}(L_j)$ do not intersect, which is a contradiction. Therefore, there exists $k\ne 1$ such that $\pi_{1,k}(L_i)$ and $\pi_{1,k}(L_j)$ do not intersect.

 Let us now compute the distance between $L_i$ and $L_j$.
 Since the minimal distance of non-intersecting segments in $\R^2$ is attained at one of the endpoints, without loss of generality, we can assume  that
\[\mathrm{d}_2(\pi_{1,k}(L_i),\pi_{1,k}(L_j))= \mathrm{d}_2(\pi_{1,k}(x^i),\pi_{1,k}(L_j)).\]
Denote by $\mathbf{u}$ a unit vector in $\R^2$ orthogonal to $\pi_{1,k}(L_j).$ 
By  $\max_{x\in S_1,\,y\in S_2}\|x-y\|_\infty\leq K$, we have 
 $|\mathbf{u}\cdot \mathbf{e}_1|\le\frac{1}{\sqrt{2}}\le |\mathbf{u}\cdot \mathbf{e}_k|$.  
Besides, since $\pi_{1,k}(L_i)$ and $\pi_{1,k}(L_j)$ do not intersect, $x^i,x^j\in \Z^d$, and $x_1^i=x_1^j$, $|x^i_k-x^j_k|\geq 1$. Thus, we have
\al{
\mathrm{d}_2({ \pi_{1,k}(x^i)},\pi_{1,k}(L_j))&= |\pi_{1,k}(x^i-x^j)\cdot \mathbf{u}|\\
&\ge |\pi_{1,k}(x^i-x^j)\cdot \mathbf{e}_k| |\mathbf{u}\cdot \mathbf{e}_k|\\
&=|x^i_k-x^j_k||\mathbf{u}\cdot \mathbf{e}_k|\ge\frac{1}{\sqrt{2}}.
}}
Therefore, we conclude
\[\mathrm{d}_2(L_i,L_j)\ge \mathrm{d}_2(\pi_{1,k}(L_i),\pi_{1,k}(L_j)) \ge \frac{1}{\sqrt{2}}\,.\]
\end{proof}

\begin{proof} [Proof of Lemma~\ref{lem:disjpathscont}] By taking a subset, without loss of generality, we assume $m:=|S_1|=|S_2|$.  
\noindent{\bf Case 1.}
Let us first study the case where $i=j$. We write 
 \[S_1:=\{x^1,\dots,x^{m}\}\qquad \text{and}\qquad S_2:=\{y^1,\dots,y^{m}\}.\]
 We can assume without loss of generality that $i=1$.  Consider $\sigma\in \mathfrak S_{m}$ as in Lemma \ref{lem:dislines} applied to the sets $S_1$ and $S_2$. In particular, for $i\neq j$, we have
\[\mathrm{d}_2([x^i,y^{\sigma(i)}], [x^j,y^{\sigma(j)}]) \ge \frac{\ell}{K\sqrt{2}}.\]
It is easy to check that there exists a $\Z^d$-path $\fp_i $ from $x^i$ and $y^{ \sigma(i)}$ of length at most $2dK$ included in   \[\{x\in\Z^d:[x^i,y^{ \sigma(i)}]\cap (x+[-1,1]^ d)\ne \emptyset\} .\]
We claim that 
\[\forall x\in\Z^ d\quad\#\{i\in[m]:x\in \fp_i\}\le (2d)^{2d}{\left(\frac K \ell\right)^{d-1}}.\]
To this end, fix $x\in\Z^ d$. We have
\[\#\{i\in[m]:x\in \fp_i\}\le \#\{i\in[m]:[x^i,y^{ \sigma(i)}]\cap (x+ [-1,1]^ d)\ne \emptyset\}.\]
Since the minimal distance between two lines is at least $\frac{\ell}{K\sqrt{2}}$, the number of lines intersecting the cube  $x+[-1,1]^ d$ is at most $(2d)^{2d}\left(\frac K \ell\right)^{d-1}$.

\noindent{\bf Case 2.} Let us now assume that $i\ne j$. Let $K\ge 1$ such that $S_1\cup S_2\subset [-K,K]^d$. Let $m':=\lceil m/2\rceil$ where we assumed $m:=|S_1|=|S_2|$. By reflection symmetry, without loss of generality, we can assume that 
\[\#\{x\in S_1: x\cdot \mathbf{e}_j\ge 0\}\ge \frac{|S_1|}{2},\quad\text{ 
and }\quad
\#\{x\in S_2: x\cdot \mathbf{e}_i\ge 0\}\ge \frac{|S_2|}{2}.\] 
Hence, we can take  $S_1^+\subset \{x\in S_1: x\cdot \mathbf{e}_i\ge 0\}$ and $S_2^+\subset \{x\in S_2: x\cdot \mathbf{e}_i\ge 0\}$ such that 
\[S_1^+=\{x^1,\dots,x^{m'}\},\qquad \text{and}\qquad S_2^+=\{z^1,\cdots,z^{m'}\}.\]
Set $y^i$ to be the intersection between $\rH_i(K\mathbf{e}_i)$ and the line passing through $y^i\in S_2^+$ and of direction $\mathbf{e}_i+\mathbf{e}_j$. 
Denote  $\widetilde{S}_2:=\{y^1,\cdots,y^{m'}\}$.
One can check that 
\[\max_{x\in S_1^+, y\in \widetilde{S}_2}\|x-y\|_\infty \le { 4}K.\]
 We can apply Lemma \ref{lem:dislines}, to find a matching $\sigma\in\mathfrak{S}_{m'}$ such that the corresponding segments are at distance  at least $1/({4}\sqrt{2})$ from each other.
We find $\Z^d$-paths $(\fp_i^1)_{i=1}^{m'}$ joining $z^i\in S_2^+$ to $y^i$ such that $\fp_i^1\subset\{x^i+t(\mathbf{e}_i+\mathbf{e}_j):~t\in\R\}$, which implies that each vertex is crossed by at most $4^d$ paths.
By a similar argument as in Case 1, we find  $\Z^d$-paths $(\fp_i^2)_{i=1}^{m'}$ joining $x^i$ to $y^{\sigma(i)}$ such that each vertex is crossed by at least $(2d)^{2d}$ paths.
We can obtain a  path going from $x^i$ to $y^{\sigma(i)}$ by considering  concatenation of $\fp_i^2$ and $\fp_{\sigma(i)}^1$. This concludes the proof.

\end{proof}

\begin{proof}[Proof of Lemma \ref{lem:proj}]
Our goal is to prove that there exists $i\in [d]$ such that $P_i(S)\geq |S|^{2/3}/2$.

Let $n=|S|$. If $|P_1(S)|\ge n^{2/3}/2$, then the claim holds with $i=1$. Hence,  we assume $|P_1(S)|< n^{2/3}/2$. Define $S_1:=P_1(S)$. Since
\[\sum_{z\in S_1}|P_1^ {-1}(z)\cap S|\mathbf{1}_{|P_1^ {-1}(z)\cap S|\le n^ {1/3}}\le |S_1|n^ {1/3}< \frac{n}{2},\]
we have
\[\sum_{z\in S_1}|P_1^ {-1}(z)\cap S|\mathbf{1}_{|P_1^ {-1}(z)\cap S|\ge n^ {1/3}}\ge \frac{n}{2}.\]
Set
\[S_1':=\{z\in S_1: |P_1^ {-1}(z)\cap S|\ge n^ {1/3}\},\quad S_2:=\{P_2(z): z\in S_1'\}.\]
Let us first assume that  $|S_2|\ge n^{1/3}.$ Then, there exist $z^1,\dots,z^m\in S_1'$ with $m\ge n^{1/3}$ such that $(P_2(z^i))_{i\in[m]}$ are all distinct. Hence, $(P_2(x),~x\in \cup_{i=1}^m P_1^ {-1}(z^i)\cap S)$ are also all distinct. It follows that
\[|P_2(S)|\ge \left|\bigcup_{i=1}^m P_1^ {-1}(z^i)\cap S\right|=\sum_{i=1}^ m|P_1^ {-1}(z^i)\cap S|\ge n^{2/3}.\]
Otherwise, if $|S_2|<n^{1/3}$, then 
\begin{equation*}
    \begin{split}
        \sum_{z'\in S_2}\#\{w\in S: \,P_2\circ P_1(w)=z'\}&\geq   \sum_{z'\in S_2} \sum_{z\in S_1'}\#\{w\in S: \,z=P_1(w),\,P_2(z)=z'\}\\&= \sum_{z\in S_1}|P_1^ {-1}(z)\cap S|\mathbf{1}_{|P_1^ {-1}(z)\cap S|\ge n^ {1/3}}\ge \frac{n}{2}.
    \end{split}
\end{equation*}
Hence, by pigeon-hole principle, there exists $z\in S_2$ such that
\[\#\{w\in S: \,P_2\circ P_1(w)=z\}\ge \frac{n}{2|S_2|}> \frac{n^{2/3}}{2},\]
which implies $|P_3(S)|\ge  \frac{n^{2/3}}{2}.$ 
This concludes the proof.
\end{proof}

\begin{proof}[Proof of Lemma \ref{lem:4.2}] 
Recall $m(d,S)=\left(\frac{|S|}{2^{d-1}\Diam(S)}\right)^{\frac{1}{d-1}}.$ {We will find, by induction on dimension, $i\neq j\in[d]$,  $S'\subset S$ with $|S'|\ge m(d,S)$ such that $z_i\neq z'_i$ and $z_j\neq z'_j$ for any $z\neq z'\in S'$.}

Let us start by proving the result for $d=2$. Let $S\subset \Z^2$.
Given $u=(u_1,u_2)\in S$, set \[\rL'(u):=\rL_1(u)\cup \rL_2(u).\]
Now, we construct $u^k$ as follows:  Set $u^1=u$ for some arbitrary $u\in S$. Suppose that $u^1,\cdots,u^\ell$ have been defined. If there exists $u\in S \backslash \bigcup_{l=1}^\ell \rL'(u^l)$, then we set $u^{\ell+1}=u$; otherwise, we stop this procedure and set $N=\ell$. Since the set $S$ is finite, this procedure will eventually stop. Since $|S\cap \rL'(u^k)|\leq 2\Diam{(S)},$
\[2\Diam{(S)} N\ge |S|,\]
so \[N\ge \frac{|S|}{2\Diam(S)}=m(2,S).\]
The family $S'=(u^k)_{k=1}^N$ satisfies the requirements. The proof is completed.

Let us now assume that the claim holds for $d-1\ge 2$.
Let $S\subset \Z^d$.
 Given $z\in S$, set \[\rL(z):=\{w\in\Z^d: w_{d-1}=z_{d-1}\}\cup \{w\in\Z^d: w_d=z_d\}.\] We define $v^k$ recursively as follows: Set $v^1=z$ for some arbitrary $z\in S$. Suppose that $v^1,\cdots,v^\ell$ have been defined. If there exists $z\in S \backslash \bigcup_{l=1}^\ell \rL(v^l)$, then we set $v^{\ell+1}=z$; otherwise, we stop this procedure and set $N=\ell$. Since the set $S$ is finite, this procedure will eventually stop.
If $N\geq m(d,S)$, then the proof is completed with $i=d-1,j=d$ and $S'=(v_k)_{k=1}^N$. Otherwise, if $N<m(d,S)$, then by pigeon-hole principle, there exists $k\in [N]$ such that $$|\{x\in S:~x\in \rL(v^k)\}|\geq \frac{|S|}{m(d,S)}.$$
By pigeon-hole principle, there exists $r\in\{d-1,d\}$ such that
$\#\{x\in S: x_r=v^k_r\}\geq \frac{|S|}{2m(d,S)}.$ Without loss of generality, we suppose $r=d$, i.e. \[\#\{x\in S: x_d=v^k_d\}\geq \frac{|S|}{2m(d,S)}.\] 
It follows that 
the set $\bar{S}:=\{(x_1,\dots,x_{d-1}):x\in S\}$ is of size at least $|S|/(2m(d,S)).$
Applying the induction hypothesis to the set $\bar{S}\subset \Z^{d-1}$, we find $i\neq j\in [d-1]$ and $\bar{S}'\subset \bar{S}$ such that for all $y\ne y'\in \bar{S}'$, $y_i\ne y'_i$ and $y_j\ne y'_j$, and   
\al{|\bar{S}'|&\ge \left(\frac{|\bar{S}|}{2^{d-2}\Diam(\bar{S})}\right)^{1/(d-2)}\\
&\ge \left(\frac{|S|}{2^{d-1}\Diam(S)\,m(d,S)}\right)^{1/(d-2)}\\
&\ge \left(\frac{|S|}{2^{d-1}\Diam(S)}\right)^{\frac{1}{d-2}\left(1-\frac{1}{d-1}\right)}=m(d,S).
}
We take $u^y\in S$ for $y\in \bar{S}'$  such that $(u^y_1,\dots,u^{y}_{d-1})=y$. The result follows with the family $S':=(u^{y})_{y\in \bar{S}'}$ and $i,j$ chosen above. This completes the induction.

\end{proof}

\bibliographystyle{plain}
\bibliography{biblio}
\end{document}